\numberwithin{equation}{section}
\DeclareFontFamily{OT1}{pzc}{}
\DeclareFontShape{OT1}{pzc}{m}{it}{<-> s * [1.2] pzcmi7t}{}
\DeclareMathAlphabet{\mathpzc}{OT1}{pzc}{m}{it}
\def\Aut{\operatorname{Aut}}
\def\Hom{\operatorname{Hom}}
\def\ker{\operatorname{Ker}}
\def\dim{\operatorname{dim}}
\def\rd{\operatorname{d}\!}
\def\C{\mathbb{C}}
\def\R{\mathbb{R}}
\def\N{\mathbb{N}}
\def\Z{\mathbb{Z}}
\newtheorem{thm*}{Theorem}
\newtheorem{thm}{Theorem}[section]
\newtheorem{cor}[thm]{Corollary}
\newtheorem{lemma}[thm]{Lemma}
\newtheorem{prop}[thm]{Proposition}
\theoremstyle{definition}
\newtheorem{definition}[thm]{Definition}
\newtheorem{ass}{Assumption}
\theoremstyle{remark}
\newtheorem{remark}[thm]{Remark}
\newtheorem{remark*}[thm*]{Remark}
\newtheorem{example}[thm]{Example}
\begin{document}

\date{\today}

\title[The index problem for BGG-sequences]{Solving the index problem for (curved) Bernstein-Gelfand-Gelfand sequences}

\author{Magnus Goffeng}

\address[M. Goffeng]{Department of Mathematics, Lund University, Sweden}
\email{magnus.goffeng@math.lth.se}

\maketitle

\begin{abstract}
We study the index theory of curved Bernstein-Gelfand-Gelfand (BGG) sequences in parabolic geometry and their role in $K$-homology and noncommutative geometry. The BGG-sequences fit into $K$-homology, and we solve their index problem. We provide a condition for when the BGG-complex on the flat parabolic geometry $G/P$ of a semisimple Lie group $G$ fits into $G$-equivariant $K$-homology by means of Heisenberg calculus. For higher rank Lie groups, we prove a no-go theorem showing that the approach fails. 
\end{abstract}

\section{Introduction}

In this paper we study curved Bernstein-Gelfand-Gelfand sequences in noncommutative geometry and index theory with the purpose of forming a global perspective on parabolic geometries. Parabolic geometry \cite{capslovak} can be seen as a vast generalization of Riemannian geometry, describing a plethora of geometric structures such as contact or CR structures, hyper-Kähler structures and the geometry of flag manifolds $\mathsf{G}/\mathsf{P}$, for examples see \cite[Chapter 4]{capslovak} and \cite{Cartan2,Cartan1,Cartan4}. In noncommutative geometry \cite{connesbook}, parabolic geometries arising from CR-structures and quaternionic analogues thereof have been of interest in the study of the Baum-Connes conjecture \cite{julgsun1,julgsp}. In representation theory function spaces on flat parabolic geometries play an important role in parabolic induction and Langlands classification of admissible representations \cite{knapp, wallach}, for instance through the Casselman-Wallach theorem.

 Parabolic geometries arise as a special case of Cartan geometries. Its name is derived from Fefferman's work \cite{feffpara} on the relation between parabolic invariant theory and local invariants on the boundary of pseudoconvex domains in complex manifolds, see \cite{fea}. In the spirit of geometric analysis and noncommutative geometry, a natural starting point to study the global aspects of a geometry is to study its geometric differential operators, e.g. the role that Laplace operators, the de Rham or Dirac operators play to Riemannian geometry. As pioneered by Čap-Slovák-Souček \cite{morecap}, building on work by Bernstein-Gelfand-Gelfand (BGG) \cite{bggoriginal} and Lepowsky \cite{lepog}, the natural geometric differential operators on a parabolic geometry arise from curved BGG sequences. We study such geometric operators in this paper standing on the shoulders of the recently developed Heisenberg calculus on Carnot manifolds \cite{androerp,Dave_Haller1, ewertjncg, goffkuz,vanerpyuncken}.

The idea to describe parabolic geometry by means of curved BGG-sequences is well studied in the literature, for instance in \cite{GenericDave,Dave_Haller1,Dave_Haller2,Hallerzeta}. The special case of Rumin complexes arising on contact manifolds, as introduced by Rumin \cite{rumincomp}, are studied in more detail \cite{julgsun1}. BGG-sequences have been used for applications in the Baum-Connes conjecture \cite{bch} in operator $K$-theory, most notably by Yuncken \cite{yunckensl3} for $SL(3,\C)$, by Julg-Kasparov \cite{julgsun1} for $SU(n,1)$ and by Julg as an approach towards $Sp(n,1)$ \cite{julgsp}. An important feature needed in the applications to the Baum-Connes conjecture is equivariance, for non-compact groups this causes substantial analytic subtleties. Furthermore, BGG-sequences hold hope for extending the ideas of noncommutative geometry to quantum groups through Heckenberger-Kolb's \cite{hkbggq} quantized BGG-sequence, first described by Rosso \cite{rossoslq}. Such ideas saw applications in work of Voigt-Yuncken \cite{voigtyunck} to the equivariant index theory of $SU_q(3)$ and its Drinfeld double, notably proving the Baum-Connes with trivial coefficients for $\widehat{SU_q(3)}$.

 A curved BGG-sequence often arises from compressing a de Rham or Dolbeault type complex to a subquotient complex, leaving the Fredholm index unaltered. The analytic machinery for studying curved BGG-sequences was set up by Dave-Haller \cite{Dave_Haller1} and refined in \cite{goffkuz} to show that curved BGG-sequences define elements in $K$-homology. This line of thought opens up two natural questions: 
\begin{enumerate}
\item The Fredholm index is unaltered by BGG-compression. Does the same statement hold for the associated $K$-homology classes?
\item A curved BGG-sequence captures the differential geometric features of a parabolic geometry, does it fit into Connes' program for spectral noncommutative geometry?
\end{enumerate} 
We study and solve the first problem in this paper with a view towards the equivariant case as needed for application to the Baum-Connes conjecture. This solves the index problem for curved BGG-sequences in the Baum-Douglas sense of describing the associated $K$-homology classes geometrically. Further clarifying the construction of $K$-homology classes from \cite{goffkuz} will indeed provide us with a better grasp on equivariance properties. The solution to the index problem for curved BGG-sequences can be seen as the first step towards incorporating methods of noncommutative local index theorems \cite{connesmosc} and its associated spectral geometric invariants into the study of parabolic geometry, and would be of relevance for better understanding a potential solution to the second listed problem above. The problem can be set up also in the larger generality of graded Rockland sequences, and in the equivariant setting there is an additional abstract existence assumption that we in this paper only can guarantee for real rank one or less.     

\subsection*{Acknowledgements} The author was supported by the Swedish Research Council Grant VR 2018-0350. We wish to thank Magnus Fries, Ada Masters, Ryszard Nest, Christian Voigt and Robert Yuncken for stimulating conversations and encouragement at various stages.

\section{Main results}

The geometric objects we study in this paper are Carnot manifolds $\mathfrak{X}$. The precise definition is reviewed below in Definition \ref{carnotdef} and further context can be found in Section \ref{subsec:carnottnlknad}. For now, the reader unacquainted with this notion should think of $\mathfrak{X}$ carrying a filtration of the tangent bundle by sub-bundles such that the Lie bracket of vector fields induces a fibrewise structure of a nilpotent Lie algebra on the associated graded bundle $\mathfrak{t}_H\mathfrak{X}$ integrating to a bundle $T_H \mathfrak{X}\to  \mathfrak{X}$ of osculating Lie groups in each fibre. We also consider group actions on a Carnot manifold $\mathfrak{X}$, i.e. groups acting by diffeomorphisms preserving the filtration, and therefore lifts to fibrewise Lie algebra isomorphisms on $\mathfrak{t}_H\mathfrak{X}$. The bundle of osculating Lie groups $T_H\mathfrak{X}\to \mathfrak{X}$ and the affiliated Heisenberg calculus will play a prominent role in this work, for more details see Section \ref{subsec:heis}.

The main results of the paper concern curved BGG-complexes, and to some extent we work in the generality of graded Rockland sequences. To describe the main results of the paper, we will provide a colloquial description of a low-dimensional example showcasing the overall structure and difficulties, that we connect to in parallell to the general results in this section. A more precise overview can be found below in Section \ref{subsec:bggrock}, more precisely Example \ref{sl3c}, or in the literature \cite{Dave_Haller1,goffkuz,julgsun1,yunckensl3}. Take as base field $F=\R$ or $F=\C$ and write $\mathsf{H}_3(F)$ for the three-dimensional Heisenberg group. That is, the Lie algebra $\mathfrak{h}_2(F)$ of $\mathsf{H}_3(F)$ is spanned over $F$ by $X$, $Y$, and $Z$ with Lie bracket defined from the commutation relation $[X,Y]=Z$. If $\mathfrak{X}$ is a compact Carnot manifold (assumed to be complex if $F=\C$) whose osculating groups are isomorphic to $\mathsf{H}_3(F)$, we can construct an associated curved BGG-complex of the form
\small
\begin{equation}
\label{bgg13d}
\begin{tikzcd}
&&C^\infty(\mathfrak{X};L_{\alpha_{\mathsf{x}}})\ar{rr}{-D_{XY+Z}}\ar{ddrr}{\qquad-D_{Y^2}}&&C^\infty(\mathfrak{X};L_{2\alpha_{\mathsf{x}}\!+\!\alpha_{\mathsf{y}}})\arrow{dr}{-D_Y}&& \\
0\ar[r]&C^\infty(\mathfrak{X})\ar{ur}{D_X}\ar{dr}[swap]{D_Y} &&&&C^\infty(\mathfrak{X};L_{2\alpha_{\mathsf{x}}+2\alpha_{\mathsf{y}}}) \arrow{r}&0\\
&&C^\infty(\mathfrak{X};L_{\alpha_{\mathsf{y}}}) \ar[rr,"D_{YX-Z}"]\ar{uurr}[swap]{\!\!\!\!\!\!\!\!\!\!\!\!\!\!\!\!\!\!\!\!\!\!\!\!\!\!\!\!\!\!\!\!\!\!\!\!\!\!\!\!\!\!\!\!D_{X^2}}&&C^\infty(\mathfrak{X};L_{\alpha_{\mathsf{x}}\!+\!2\alpha_{\mathsf{y}}})\ar{ur}[swap]{D_X}&&
\end{tikzcd}
\end{equation}
\normalsize
The operators in the complex \eqref{bgg13d} can in each point on $\mathfrak{X}$ up to lower order terms be written as the indicated element of the universal enveloping algebra $\mathcal{U}(\mathfrak{h}_3(F))$, e.g. $D_{XY+Z}$ is $XY+Z$ up to lower order terms. Here the complex line bundles $L_\alpha\to \mathcal{X}$ are summands in a subquotient of the exterior bundle $\wedge^*T^*\mathfrak{X}$ derived from taking fibrewise Lie algebra cohomology of $\mathfrak{t}_H\mathfrak{X}$. To make this example more precise, the reader can keep in mind the following two cases: firstly the full flag manifold $\mathfrak{X}=SL(3,F)/B(F)$, where $B(F)$ is the Borel subgroup of upper triangular matrices in $SL(3,F)$, and secondly the nilmanifold $\mathfrak{X}=H_3(F)/\Gamma$ where $\Gamma$ is a lattice. For $\mathfrak{X}=SL(3,F)/B(F)$, the inclusion $\mathsf{H}_3(F)\subseteq SL(3,F)$ as strictly lower triangular matrices produces an embedding of $\mathsf{H}_3(F)$ as a Zariski open subset of $\mathfrak{X}$. The nilmanifold  $\mathfrak{X}=H_3(F)/\Gamma$ is locally diffeomorphic as a Carnot manifold to $H_3(F)$. As such, in both cases the operators in the complex \eqref{bgg13d} are well defined up to lower order terms. In both cases, $H^*(\mathfrak{h}_{3}(F))$ decomposes as indicated in the diagram and the subscripts in $L_\alpha$ indicates an appropriate character that describes $L_\alpha$ as a homogeneous vector bundle, for more details see Subsection \ref{susubbgg}.  We can also collect the vertical terms in the diagram above to a complex
\begin{equation}
\label{bgg23d}
0\to C^\infty\left(\mathfrak{X}\right)\xrightarrow{D_1}
C^\infty\left(\mathfrak{X};\begin{matrix}L_{\alpha_{\mathsf{x}}}\\\oplus \\L_{\alpha_{\mathsf{y}}}\end{matrix}\right)\xrightarrow{D_2}
C^\infty\left(\mathfrak{X};\begin{matrix}L_{2\alpha_{\mathsf{x}}+\alpha_{\mathsf{y}}}\\\oplus \\ L_{\alpha_{\mathsf{x}}+2\alpha_{\mathsf{y}}}\end{matrix}\right)\xrightarrow{D_3}
C^\infty\left(\mathfrak{X};L_{2\alpha_{\mathsf{x}}+2\alpha_{\mathsf{x}}}\right)\to 0.
\end{equation}
The cohomology of the complex \eqref{bgg23d} coincides with that of the de Rham complex if $F=\R$ and with that of the Dolbeault-de Rham complex if $F=\C$. In this particular case, the curved BGG-complex \eqref{bgg13d} is a Rockland complex. In general, we arrive at a sequence that is a complex only in leading term and is only Rockland in a graded sense, for precise definitions see Section \ref{subsec:bggrock}.

In Section \ref{seconrock} we study how curved BGG-complexes, or more generally graded Rockland sequences, fit into $K$-homology. Phrased in terms of Atiyah's initial idea for $K$-homology \cite{atiyahog}, we ask for an abstract elliptic operator on $\mathfrak{X}$ that encodes a graded Rockland sequence. The following result can be found in Subsection \ref{subsdsseconrock}.

\begin{thm*}
\label{firstmain}
Let $\mathfrak{X}$ be a Carnot manifold with a continuous filtered action by diffeomorphisms of a locally compact group $G$. Assume that $D_\bullet$ is a $G$-equivariant graded Rockland sequence admitting $G$-equivariant Rockland splittings $B_\bullet$ in the sense of Definition \ref{splittindefedf}. Then the operator $F_\bullet:=D_\bullet+B_\bullet$ defines a $K$-homology cycle for $C(\mathfrak{X})$ such that $g^{-1} F_\bullet g-F_\bullet$ is a compact operator depending continuously on $g\in G$.
\end{thm*}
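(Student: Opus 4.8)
The plan is to assemble the cycle from three ingredients: the Hilbert-space representation of $C(\mathfrak{X})$, the self-adjoint operator $F_\bullet$, and the verification of the compactness conditions of a Kasparov cycle, all while keeping track of the $G$-action. First I would fix the geometric Hilbert space. Choosing a smooth positive density on $\mathfrak{X}$ and fibrewise Hermitian metrics on the bundles carrying $D_\bullet$, one forms $\HH = \bigoplus_i L^2(\mathfrak{X}; E_i)$, a $\Z/2$- (or $\Z$-) graded Hilbert space on which $C(\mathfrak{X})$ acts diagonally by multiplication, and on which $G$ acts by a (projective-free, after the usual modular correction) unitary representation because the action is by filtered diffeomorphisms. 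The operator $F_\bullet = D_\bullet + B_\bullet$ is odd for the grading by construction: $D_\bullet$ raises degree by one, and $B_\bullet$, being a Rockland splitting, lowers it by one. The key structural input from Definition \ref{splittindefedf} is that $B_\bullet$ is a Heisenberg pseudodifferential operator of the correct (negative) Heisenberg orders so that, on the one hand, $D_\bullet B_\bullet + B_\bullet D_\bullet - \id$ is smoothing (hence $\HH$-compact), and on the other $(\id + F_\bullet^2)^{-1}$ is an order $<0$ Heisenberg operator, hence compact on $\HH$ by the standard Rellich-type compactness in Heisenberg calculus. This gives that $(C(\mathfrak{X}), \HH, F_\bullet)$ is an (ungraded-algebra, graded-module) Fredholm module once we also check $[F_\bullet, f]$ is compact for $f \in C^\infty(\mathfrak{X})$: the commutator of a multiplier with a Heisenberg $\Psi$DO of order $0$ has negative order, hence is compact, and one passes to continuous $f$ by density.

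Next I would address equivariance. Writing $U_g$ for the unitary implementing $g$, the point is to control $U_g^{-1} F_\bullet U_g - F_\bullet$. Since the action is filtered, $U_g^{-1} D_\bullet U_g$ is again a Heisenberg $\Psi$DO with the same principal Heisenberg symbol as $D_\bullet$ (the action intertwines the osculating-group structures), so the difference $U_g^{-1} D_\bullet U_g - D_\bullet$ drops Heisenberg order and is therefore compact. By $G$-equivariance of the splitting family $B_\bullet$ — which is exactly what the hypothesis supplies — one either has $U_g^{-1} B_\bullet U_g = B_\bullet$ on the nose, or at worst $U_g^{-1} B_\bullet U_g - B_\bullet$ is again of negative order, hence compact. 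Combining, $U_g^{-1} F_\bullet U_g - F_\bullet$ is compact for every $g$. Continuity in $g$ then follows from continuity of $g \mapsto U_g$ in the strong operator topology together with norm-continuity of the Heisenberg-calculus constructions in the parameter $g$ (the symbols of $U_g^{-1} D_\bullet U_g$ depend continuously on $g$, and composition/order-reduction in Heisenberg calculus is continuous); concretely one writes the compact-valued map as a composition of a fixed compact resolvent-type factor with a norm-continuous family of bounded operators.

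The main obstacle I expect is not the algebraic bookkeeping but the analytic regularity needed to invoke Heisenberg-calculus compactness and continuity uniformly: one must know that the Rockland splitting $B_\bullet$ can genuinely be taken inside the Heisenberg calculus with the stated orders (this is where the Rockland condition enters, guaranteeing invertibility of the relevant operator modulo smoothing symbols), and that the $G$-conjugates stay in a class where order-reduction and the continuity estimates apply with bounds locally uniform in $g$. A secondary technical point is the modular/density correction making $U_g$ genuinely unitary when $G$ does not preserve the chosen density; this is handled by the usual $L^2$-normalization and does not affect the compactness statements since the correction is by multiplication by a smooth positive function. Once these analytic facts are in place — all of which are available in the cited Heisenberg-calculus literature and encoded in Definition \ref{splittindefedf} — the theorem follows by assembling the pieces above.
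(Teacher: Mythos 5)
Your proposal has two genuine gaps, both stemming from where the operator $F_\bullet$ is supposed to live and what the hypothesis on $B_\bullet$ actually provides.

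First, you build the cycle on $\mathcal{H}=\bigoplus_j L^2(\mathfrak{X};E_j)$, but for a \emph{graded} Rockland sequence the components $D_{j,\mu\nu}$ have Heisenberg orders $m_j+\mu-\nu$, which are in general positive even after normalizing $\pmb{m}=0$; so $F_\bullet$ is unbounded on $L^2$ and the statements ``$F_\bullet$ defines a Fredholm module'' and ``$g^{-1}F_\bullet g-F_\bullet$ is compact'' do not parse there. You drift between the bounded and unbounded pictures (invoking $(1+F_\bullet^2)^{-1}$ compact, which belongs to the unbounded picture, while asserting compact commutators, which belongs to the bounded one). The paper instead takes $\pmb{s}$ with $s_{j-1}-s_j=m_j$ and works on the graded Heisenberg--Sobolev space $W^{\pmb{s}}_{H,{\rm gr}}(\mathfrak{X};\pmb{E}_\bullet)$, where every $D_j$ and $B_j$ is bounded; the $C(\mathfrak{X})$-module structure is transported there by conjugation with a unitary built from order-reducing operators, and one must check that this representation still commutes with $F_0:=D_\bullet+B_\bullet$ and with the $G$-action up to compacts. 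One then normalizes $F:=F_0|F_0|^{-1}$. You also omit the adjoint issue entirely: on the Sobolev space one needs $F_0^*$ (not just $F_0$) to almost commute with the algebra and the group, which the paper extracts from the conformal-unitarity of the $G$-action on $\pmb{E}$ via the contragredient representation; without this, $F=F_0|F_0|^{-1}$ need not almost commute.

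Second, you read Definition \ref{splittindefedf} as saying that $B_\bullet$ is a Heisenberg pseudodifferential operator of negative order, and you deduce equivariance up to compacts from homogeneity of the calculus. But the theorem assumes only a $G$-equivariant \emph{Rockland} splitting: abstract bounded operators $B_j:W^{s_j}_H\to W^{s_{j-1}}_H$ that commute with the $G$-action and with $C^\infty(\mathfrak{X})$ up to compacts, and satisfy $B_jD_j+D_{j-1}B_{j-1}-1$ compact. This weaker hypothesis is the whole point of the theorem (the paper's no-go result shows equivariant Heisenberg splittings fail to exist in higher rank), so your calculus-based argument for the compactness of $g^{-1}B_\bullet g-B_\bullet$ uses an assumption you do not have; the correct move is simply to quote that compactness from the definition. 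Relatedly, the compactness of $[F_0,\pi(a)]$ for the $B_\bullet$-part is also a hypothesis, not a consequence of symbol calculus.
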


The fact that graded Rockland sequences admit Heisenberg splittings (in the sense of Definition \ref{splittindefedf}) when dropping the group action can be found in Proposition \ref{splittinprop}. Let us return to our three dimensional example \eqref{bgg13d}-\eqref{bgg23d}. In this case as in most other in this paper, there is no obvious method to prove existence of $G$-equivariant Rockland splittings (in the sense of Definition \ref{splittindefedf}) beyond the Heisenberg calculus. If $\mathfrak{X}=SL(3,F)/B(F)$ the complex $D_\bullet$ is an $SL(3,F)$-equivariant graded Rockland sequence but as we shall see below, we do not have any $SL(3,F)$-equivariant Heisenberg splittings but $G$-equivariant Heisenberg splittings for $G\subseteq SL(3,F)$ of rank one or less. It is unclear to the author if there is a more relevant method for finding $G$-equivariant Rockland splittings, but the example of $SL(3,F)$ shows how a Heisenberg calculus for multifiltered manifolds in the sense of \cite{yunckenhab} holds potential to solve the problem. For $\mathfrak{X}=\mathsf{H}_3(F)/\Gamma$ the complex $D_\bullet$ is an $\mathsf{H}_3(F)$-equivariant graded Rockland sequence and by building a Heisenberg split locally, we see that there are equivariant Heisenberg splittings.

\begin{remark}
We note that Theorem \ref{firstmain} is stated in the context of a group action. In most interesting situation, the action of $G$ is not unitary (unless $G$ is compact). Therefore Theorem \ref{firstmain} does not produce a bona fide cycle for equivariant $KK$-theory. However, for an appropriate length function $\ell$, such as that in Proposition \ref{lengthsknon}, we can therefore from Theorem \ref{lknlknjkjaudn} conclude that $F_\bullet$ forms a well defined class associated with the equivariant graded Rockland sequence in a $KK$-group $KK^{G,\ell}_0(C(\mathfrak{X}),\C)$ defined using exponentially bounded group actions on Hilbert spaces.
\end{remark}

\begin{remark}
\label{lknlknadrem}
If $\mathfrak{X}$ is a Carnot manifold with a filtered action by diffeomorphisms of a compact group $G$, and $D_\bullet$ is a $G$-equivariant graded Rockland sequence we show that $D_\bullet$ even admits admitting $G$-equivariant Heisenberg splittings $B_\bullet$ in the sense of Definition \ref{splittindefedf}. This is done via an algebraic trick (see Proposition \ref{splittinprop}) and averaging over the group. In particular, for a compact group $G$ a $G$-equivariant graded Rockland sequence canonically defines a $G$-equivariant $K$-homology cycle for $C(X)$ via the operator $F_\bullet$.
\end{remark}

The assumption in Remark \ref{lknlknadrem} that the group $G$ is compact can in some cases be relaxed. We do however note that there are rigidity results for the size of the automorphism groups of parabolic geometries, such as Ferrand-Obata's theorem on conformal actions, and Schoen's generalization \cite{schoencr} to $CR$-manifolds, as well as Bader-Frances-Melnick's work \cite{badframel} in higher rank that we recall in Theorem \ref{thmbaderfranc} below. Such rigidity results forces automorphism groups of parabolic geometries to be lower rank than the geometry, and in rank one even to be compact \cite{francesjnkjnkjad} unless $\mathfrak{X}=\mathsf{G}/\mathsf{P}$ is the flat parabolic manifold. However, for many applications the flat parabolic manifold $\mathfrak{X}=\mathsf{G}/\mathsf{P}$ is of primary importance. For this situation we have the following no-go result for constructing equivariant splittings in the Heisenberg calculus found in Subsection \ref{nogosubsec}. For notational clarity, \emph{we write $\mathsf{G}$ for Lie groups used to define parabolic geometries and $G$ for groups we consider actions of}. In the case of flat parabolic geometries, $\mathsf{G}/\mathsf{P}$ we often have $G=\mathsf{G}$.

\begin{thm*}
Let $G=\mathsf{G}$ be a connected, semisimple Lie group and $\mathsf{P}\subseteq \mathsf{G}$ a parabolic subgroup. Write $\mathfrak{X}=\mathsf{G}/\mathsf{P}$ for the flat parabolic manifold of type $(\mathsf{G},\mathsf{P})$ with $\mathsf{G}$ acting on $\mathfrak{X}$ as filtered diffeomorphisms. We write $D_\bullet^{\mathsf{BGG}(V)}$ for the BGG-complex associated with a finite-dimensional $\mathsf{G}$-representation $V$. Then $D_\bullet$ admits $\mathsf{G}$-equivariant Heisenberg splittings $B_\bullet$ in the sense of Definition \ref{splittindefedf} if $\mathsf{G}$ has rank less than or equal to one but does not if the rank of $\mathsf{G}/\mathsf{P}$ is greater than one.
\end{thm*}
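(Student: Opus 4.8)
The plan is to reduce both halves to the structure of $\mathsf{G}$-equivariant operators in the Heisenberg calculus on the open dense Bruhat cell of $\mathfrak{X}$. Let $\mathsf{N}=\exp(\mathfrak{g}_{<0})\hookrightarrow\mathfrak{X}$ be the cell through the base point; it is a Carnot submanifold canonically isomorphic to the flat osculating group $N$. The subgroup $\mathsf{N}\subseteq\mathsf{G}$ preserves $\mathsf{N}$ and acts there by left translation, whereas the split centre $\mathsf{A}_0=\exp(\mathfrak{z}(\mathfrak{g}_0))$ of the Levi acts on $\mathsf{N}$ through the grading automorphisms of $N$, twisted on the BGG-bundles by the $\mathfrak{z}(\mathfrak{g}_0)$-weights carried by $V$. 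Consequently any $\mathsf{G}$-equivariant operator in the Heisenberg calculus restricts over $\mathsf{N}$ to a left-$N$-invariant operator commuting with the whole torus $\mathsf{A}_0$, and hence --- by continuity --- to convolution by an $\mathsf{A}_0$-homogeneous distribution $k$ on $N$; if the operator moreover has strictly negative Heisenberg order, then $k$ is smooth away from the identity. The entire argument now turns on the rigidity of such kernels, which depends only on whether $\mathsf{A}_0$ induces just the Carnot dilations of $N$ or a strictly larger torus, that is, on whether $\dim\mathfrak{z}(\mathfrak{g}_0)$ equals $1$ or is at least $2$.

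For the no-go half, assume the rank of $\mathsf{G}/\mathsf{P}$, i.e.\ $\dim\mathfrak{z}(\mathfrak{g}_0)$, is at least $2$, and suppose $B_\bullet$ were a $\mathsf{G}$-equivariant Heisenberg splitting. The BGG-operators are differential operators of positive Heisenberg order, so every entry of every $B_i$ is a $\mathsf{G}$-equivariant Heisenberg operator of strictly negative order, hence by the previous paragraph restricts over $\mathsf{N}$ to convolution by an $\mathsf{A}_0$-homogeneous distribution on $N$ that is smooth off the identity. The crux is then a rigidity lemma: when $\dim\mathfrak{z}(\mathfrak{g}_0)\geq 2$, every distribution on $N$ that is homogeneous for the whole torus $\mathsf{A}_0$ and smooth away from the identity is a polynomial in exponential coordinates. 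The mechanism is that $\mathfrak{g}_{-1}$ then carries at least two linearly independent $\mathfrak{z}(\mathfrak{g}_0)$-weights --- the restricted simple roots crossed by $\mathsf{P}$ form a basis of $\mathfrak{z}(\mathfrak{g}_0)^*$ --- so $\mathsf{A}_0$-homogeneity forces $k$ to be a single monomial on the complement of each coordinate hyperplane, and smoothness across those hyperplanes forces non-negative integer exponents; the representative computation is on the Heisenberg group $\mathsf{H}_3(F)$ of \eqref{bgg13d}, where $\mathsf{A}_0$ is the two-torus $(x,y,z)\mapsto(\mu x,\nu y,\mu\nu z)$ and one sees directly that a bi-homogeneous kernel smooth off the origin is a polynomial. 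A polynomial convolution kernel, however, yields a smoothing (indeed finite-rank) operator, not one of finite negative Heisenberg order; so each $B_i$ must vanish, contradicting the defining identity of a Rockland splitting, since the harmonic projections $\Pi_i$ are smoothing and cannot equal the identity on infinite-dimensional spaces of sections. Therefore no $\mathsf{G}$-equivariant Heisenberg splitting exists.

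For the existence half, take $\mathsf{G}$ of real rank at most $1$, so $\mathsf{P}$ is the unique proper (minimal) parabolic, $\mathfrak{z}(\mathfrak{g}_0)$ is one-dimensional, and $\mathsf{A}_0$ induces exactly the Carnot dilations of $N$. First produce a splitting on the osculating group: the model complex is graded Rockland, so the homogeneous pseudodifferential calculus on $N$ underlying Proposition \ref{splittinprop} yields a splitting $B_\bullet^N$ by homogeneous convolution operators, automatically left-$N$-invariant and $\mathsf{A}_0$-homogeneous; averaging over the compact part $\mathsf{M}$ of the Levi makes it $\mathsf{M}$-equivariant as well, hence equivariant for the whole opposite parabolic $\mathsf{P}^{\mathrm{op}}=\mathsf{G}_0\ltimes\mathsf{N}$. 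Since $\mathsf{G}$ is generated, in rank one, by $\mathsf{P}^{\mathrm{op}}$ together with any representative $w_0$ of the nontrivial Weyl element, it then suffices to check $w_0$-equivariance; here $w_0$ acts on $\mathsf{N}\cong N$ as the Carnot inversion, and because the $\mathsf{A}_0$-weights of the $B_i^N$ are dictated by those of the $D_i$ and hence are the critical ones for the BGG-bundles, the homogeneous kernels transform correctly under the inversion. Gluing the resulting $\mathsf{G}$-equivariant family of kernels over the finitely many Bruhat charts then gives a genuine $\mathsf{G}$-equivariant Heisenberg splitting on $\mathfrak{X}$ (alternatively one may import the explicit rank-one constructions, $SU(n,1)$ and $Sp(n,1)$ being treated in \cite{julgsun1,julgsp} and $SO(n,1)$, $F_4^{-20}$ being analogous). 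I expect the real obstacle to be exactly this last step --- verifying $w_0$-equivariance and controlling the kernels along the complement of the big cell so that the model splitting genuinely extends to a Heisenberg operator on the compact manifold $\mathfrak{X}=\mathsf{G}/\mathsf{P}$ --- together with, on the no-go side, making the polynomiality lemma uniform over all parabolic types of rank at least two.
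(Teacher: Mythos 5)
Your no-go argument is in substance the paper's: restrict a putative equivariant splitting to the open Bruhat cell, observe that each entry becomes convolution by a kernel homogeneous under the full torus $\mathsf{A}_\mathsf{P}$, and invoke a rigidity statement when that torus has rank $\geq 2$. Two corrections are needed. Your rigidity lemma as stated (``every distribution on $N$ homogeneous for the whole torus and smooth away from the identity is a polynomial'') is false: $\delta_e$ and its derivatives are homogeneous and smooth off the identity but are not polynomials. The correct statement --- the Lemma preceding Corollary \ref{jnljnjknkjnad} --- is that such a distribution is a polynomial plus a distribution supported at the identity, i.e.\ modulo $\mathcal{P}^m$ it lies in $\mathcal{U}_m(\overline{\mathfrak{n}}_\mathsf{P})$ and is the symbol of a differential operator. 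Your conclusion survives because a differential-operator symbol of strictly negative Heisenberg order vanishes, so $\sigma_H(B_1)=0$ and the symbol-level identity $\sigma_H(B_1)\sigma_H(D_1)=1$ fails; but note you may only conclude that the \emph{principal symbols} of the $B_i$ vanish (Definition \ref{splittindefedf} constrains only $\sigma_H(B_\bullet)$), not that the operators are smoothing or zero. Also, your claim that bi-homogeneity forces the kernel to be a single monomial off the coordinate hyperplanes is not right: on $\mathsf{H}_3$ bi-homogeneity only gives $k=x^ay^b f(z/xy)$ for an arbitrary function $f$ of the torus-invariant $z/xy$, and one must use smoothness along the whole singular set to eliminate $f$; this is the content of the paper's (also terse) assertion that nontrivial homogeneous invariants would have nontrivial singular support away from the identity.

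The existence half is where your proposal genuinely diverges from the paper, and the gap you flag yourself is real. You build an $\mathsf{M}$-averaged, $\mathsf{P}^{\mathrm{op}}$-equivariant splitting on the big cell and then propose to verify equivariance under a Weyl representative $w_0$ and to glue over Bruhat charts; neither step is carried out, and extending a convolution kernel from the dense cell to a Heisenberg symbol on the compact $\mathfrak{X}$ requires exactly the boundary control you admit you do not have. The paper avoids all of this: the Proposition describing $\Sigma^m_H(\mathfrak{X};E_1,E_2)$ as sections of a $\mathsf{G}$-equivariant bundle of algebras identifies $\mathsf{G}$-equivariant symbols with $\mathsf{M}_\mathsf{P}\mathsf{A}_\mathsf{P}$-invariant elements of the single fibre at $e\mathsf{P}$, so a global equivariant symbol is \emph{produced} by transitivity of the $\mathsf{G}$-action, not checked chart by chart, once one has an $\mathsf{M}_\mathsf{P}\mathsf{A}_\mathsf{P}$-invariant splitting of the fibre symbol complex. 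In rank one, $\mathsf{A}_\mathsf{P}$-invariance holds for degree reasons by the same Lemma (the twisted symbol space of the correct order is entirely $\mathsf{A}_\mathsf{P}$-invariant), and $\mathsf{M}_\mathsf{P}$-invariance follows by averaging over the compact group $\mathsf{M}_\mathsf{P}$ --- compactness of $\mathsf{M}_\mathsf{P}$, i.e.\ minimality of $\mathsf{P}$, being the reason the hypothesis here is on the rank of $\mathsf{G}$ rather than of $\mathsf{G}/\mathsf{P}$. You should replace your $w_0$-and-gluing step by this argument.
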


This result can be exemplified for our three-dimensional example \eqref{bgg13d}-\eqref{bgg23d} for $G=\mathsf{G}=SL(3,F)$ and $\mathfrak{X}=SL(3,F)/B(F)$ where the rank is $2$. Indeed, if we look for the first Heisenberg splitting among Heisenberg operators
$$B_1:C^\infty\left(\mathfrak{X};\begin{matrix}L_{\alpha_{\mathsf{x}}}\\\oplus \\L_{\alpha_{\mathsf{y}}}\end{matrix}\right)\to C^\infty(X),$$
then the incompatibility of the characters and the homogeneity of the Heisenberg calculus forces any such operator $B_1=(B_{11},B_{12})$ in the Heisenberg calculus to be a differential operator, for more details see Corollary \ref{jnljnjknkjnad}. But $B_1D_1=1$ has no solution $B_1$ among the space of differential operators. Further discussion on the higher rank situation can be found in \cite{yunckenhab}, where multifiltered geometries pose a potential solution to the issues at hand.

We have by Theorem \ref{firstmain}, and the accompanying Remark \ref{lknlknadrem}, that a graded Rockland sequence defines an equivariant $K$-cycle for a compact group action. We describe the associated $K$-homology class in the case of BGG-complexes on parabolic geometries in Section \ref{sec:aojdnajdnadojn}. The results of that section can be summarized in the following theorem.

\begin{thm*}
\label{mainthm3}
Let $\mathfrak{X}$ be a parabolic manifold with a filtered action by diffeomorphisms of a compact group $G$. Assume that $\mathsf{BGG}(\nabla_{\pmb{E}})$ is the $G$-equivariant curved BGG-sequence associated with a tractor bundle $\pmb{E}$ with tractor connection $\nabla_{\pmb{E}}$. Then the class of $\mathsf{BGG}(\nabla_{\pmb{E}})$ in $K_0^G(\mathfrak{X})$ is explicitly described by the Euler class $\mathrm{Euler}(\mathfrak{X})\in K_0^G(\mathfrak{X})$ as
$$[\mathsf{BGG}(\nabla_{\pmb{E}})]=\mathrm{Euler}(\mathfrak{X})\cap [\pmb{E}].$$
If $\mathfrak{X}$ is a complex parabolic manifold, and the $G$-action as well as the tractor bundle are holomorphic, then the class of $\mathsf{BGG}(\nabla_{\pmb{E}})$ in $K_0^G(\mathfrak{X})$ is explicitly described by the fundamental class $[\mathfrak{X}]\in K_0^G(\mathfrak{X})$ (associated with the complex structure) as
$$[\mathsf{BGG}(\nabla_{\pmb{E}})]=[\mathfrak{X}]\cap [\pmb{E}].$$
\end{thm*}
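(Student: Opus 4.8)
The plan is to deduce the statement from two facts: a comparison of the curved BGG cycle with the $K$-homology cycle of the twisted de Rham complex it compresses, and the classical identification of that cycle with $\mathrm{Euler}(\mathfrak{X})\cap[\pmb{E}]$. Write $F^{\mathsf{dR}}_\bullet$ for the $G$-equivariant $K$-homology cycle over $C(\mathfrak{X})$ obtained from the twisted de Rham complex $(\Omega^\bullet(\mathfrak{X};\pmb{E}),d^{\nabla_{\pmb{E}}})$ by rolling it up to the Euler characteristic operator $d^{\nabla_{\pmb{E}}}+(d^{\nabla_{\pmb{E}}})^*$ for a $G$-invariant metric and taking the bounded transform; since $G$ is compact this is a genuine class in $K_0^G(\mathfrak{X})$. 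I would then establish $[\mathsf{BGG}(\nabla_{\pmb{E}})]=[F^{\mathsf{dR}}_\bullet]$ and $[F^{\mathsf{dR}}_\bullet]=\mathrm{Euler}(\mathfrak{X})\cap[\pmb{E}]$, with the holomorphic case obtained by replacing the de Rham complex throughout by the Dolbeault-de Rham complex with values in the holomorphic tractor bundle.

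The second identity is the standard part. The class $\mathrm{Euler}(\mathfrak{X})$ is, by any of its standard descriptions, the $G$-equivariant $K$-homology class of the Euler characteristic operator $d+d^*$ on $\wedge^\bullet T^*\mathfrak{X}$; twisting an elliptic complex by the $G$-equivariant vector bundle $\pmb{E}$ (with any $G$-invariant connection, the class being independent of this choice) multiplies its $K$-homology class by $[\pmb{E}]\in K^0_G(\mathfrak{X})$ under the $K^\bullet_G(\mathfrak{X})$-module structure on $K_\bullet^G(\mathfrak{X})$, so $[F^{\mathsf{dR}}_\bullet]=\mathrm{Euler}(\mathfrak{X})\cap[\pmb{E}]$. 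In the complex case the relevant rolled-up operator is instead the twisted Dolbeault operator $\bar\partial^{\pmb{E}}+(\bar\partial^{\pmb{E}})^*$, with $\bar\partial^{\pmb{E}}=(\nabla_{\pmb{E}})^{0,1}$ the holomorphic structure of the tractor bundle; its class is the fundamental class $[\mathfrak{X}]\in K_0^G(\mathfrak{X})$ of the complex structure capped with $[\pmb{E}]$. The holomorphicity hypotheses on the $G$-action and on the tractor bundle are precisely what is needed for this Dolbeault refinement to be $G$-equivariant and for the comparison of the next paragraph to remain complex-linear.

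The first identity, $[\mathsf{BGG}(\nabla_{\pmb{E}})]=[F^{\mathsf{dR}}_\bullet]$, is the substance. I would invoke the structure of the BGG machinery of \cite{morecap}, in the analytic form developed by Dave-Haller \cite{Dave_Haller1} and refined in \cite{goffkuz}: the curved BGG-complex is the compression of $(\Omega^\bullet(\mathfrak{X};\pmb{E}),d^{\nabla_{\pmb{E}}})$ onto the subquotient computing the fibrewise Lie algebra homology $H_\bullet(\mathfrak{t}_H\mathfrak{X};\pmb{E})$ of the osculating algebra, implemented by a $G$-natural differential splitting operator together with homotopy operators. This data exhibits $(\Omega^\bullet(\mathfrak{X};\pmb{E}),d^{\nabla_{\pmb{E}}})$, $G$-equivariantly, as an isomorphic copy of the direct sum of $\mathsf{BGG}(\nabla_{\pmb{E}})$ with an acyclic sequence $\mathcal{A}^\bullet$ whose contracting homotopy lies in the Heisenberg calculus. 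Passing to the bounded-transformed rolled-up operators, and keeping track of the fact that the identifying map is only an isomorphism (not an isometry) and acts on multiplication operators through differential commutators of lower order, one obtains an operator homotopy of $G$-equivariant $K$-homology cycles from $F^{\mathsf{dR}}_\bullet$ to $F^{\mathsf{BGG}}_\bullet\oplus F^{\mathcal{A}}_\bullet$, where $F^{\mathsf{BGG}}_\bullet=D_\bullet+B_\bullet$ is the cycle of Theorem \ref{firstmain} (available for compact $G$ by Remark \ref{lknlknadrem}) and $F^{\mathcal{A}}_\bullet$ represents the zero class since $\mathcal{A}^\bullet$ is acyclic with a contracting homotopy in the calculus. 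Two further routine points are that $[\mathsf{BGG}(\nabla_{\pmb{E}})]$ is independent of the auxiliary $G$-equivariant Heisenberg splitting $B_\bullet$ in the sense of Definition \ref{splittindefedf}, by the straight-line homotopy between two admissible splittings, and that $G$-equivariance (continuity of $g\mapsto g^{-1}F_\bullet g-F_\bullet$ into the compacts) is preserved along all of these moves, by the $G$-naturality of the splitting and homotopy operators together with Theorem \ref{firstmain}.

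The main obstacle is the comparison across two different pseudodifferential calculi: $F^{\mathsf{dR}}_\bullet$ comes from a classical first-order elliptic operator, while $F^{\mathsf{BGG}}_\bullet$ is built in the Heisenberg calculus from operators of higher and mixed Heisenberg order, and the chain isomorphism furnished by the BGG construction mixes the two. Making the operator homotopy above rigorous requires realizing both cycles inside a single operator algebra on $\mathfrak{X}$ and controlling exactly which remainder terms are compact on the relevant Heisenberg-Sobolev scales; equivalently, one must verify that the twisted de Rham complex equipped with the parabolic weight filtration is itself a graded Rockland sequence whose cycle from Theorem \ref{firstmain} coincides with the classical de Rham $K$-homology cycle --- a homotopy interpolating between the classical Hodge adjoint $(d^{\nabla_{\pmb{E}}})^*$ and a Heisenberg Rockland splitting. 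I expect this calculus bookkeeping, and its counterpart in the complex-Heisenberg setting, to carry essentially all of the analytic weight; once it is in place, both displayed formulas of the theorem follow formally from the two identities above.
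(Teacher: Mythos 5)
Your overall architecture agrees with the paper's: both reduce the theorem to (i) the standard identification of the (twisted) de Rham, resp.\ Dolbeault, class with $\mathrm{Euler}(\mathfrak{X})\cap[\pmb{E}]$, resp.\ $[\mathfrak{X}]\cap[\pmb{E}]$, and (ii) a comparison showing that BGG-compression does not change the $K$-homology class, with the acyclic complement $\mathcal{A}^\bullet$ contributing zero. Step (i) and the reduction of the BGG complex to the de Rham complex modulo an acyclic piece (via the conjugating operators $L_\bullet$ of Lemma \ref{decomkdh}) are handled essentially as you describe; the paper's version of the latter is Lemma \ref{comaodmoadm}, carried out purely at the level of Heisenberg symbol classes in $K_0^G(C^*(T_H\mathfrak{X}))$.

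The gap is in how you propose to carry out the cross-calculus comparison, which you correctly identify as the substance but then defer to ``calculus bookkeeping.'' Your plan is a direct operator homotopy on $\mathfrak{X}$ interpolating between the classical Hodge adjoint $(d^{\nabla_{\pmb{E}}})^*$ and a graded Heisenberg Rockland splitting $B_\bullet$. A straight-line interpolation $tB_\bullet+(1-t)(d^{\nabla_{\pmb{E}}})^*$ does not stay in either calculus: the two splittings have different (and mixed) orders in the two filtrations, so at intermediate $t$ there is no symbolic mechanism forcing $F_t^2-1$ to be compact, and no argument is offered for why the remainders are controlled on any fixed Sobolev scale. This is not routine; it is precisely the point where the paper substitutes a different mechanism. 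The paper never homotopes operators across calculi. Instead it works entirely with symbol classes: the deformation from the classical symbol $\sigma(\nabla_{\pmb{E}})$ to the graded Heisenberg symbol $\sigma_{H,\mathrm{gr}}(\nabla_{\pmb{E}})$ is packaged as a single Fredholm complex over the adiabatic groupoid $C^*(A_H\mathfrak{X}|_{\mathfrak{X}\times[0,1]})$, where the splitting at $t=0$ is extended by openness and gluing in $t$ (the parameter being a rescaling, not a convex combination), yielding $[\sigma_{H,\mathrm{gr}}(\mathsf{BGG}_\bullet(\nabla_{\pmb{E}}))]=\psi[\sigma(\nabla_{\pmb{E}})]$ (Lemma \ref{lknlknad}). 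The passage back to $K$-homology is then done by the tangent-groupoid boundary maps $\mathsf{PD}^{\rm an}$ and $\mathsf{PD}^{\rm an}_H$ together with the compatibility $\mathsf{PD}^{\rm an}=\mathsf{PD}^{\rm an}_H\circ\psi$ (Theorem \ref{aldknadjn}) and $\mathsf{PD}^{\rm an}_H[\sigma_H(D_\bullet)]=[D_\bullet]$ (Proposition \ref{bnkbhb}). To complete your argument you would either need to supply the missing compactness estimates for your interpolating family (which amounts to constructing a calculus containing both endpoints), or replace that step by the adiabatic/Connes--Thom argument, which is the route the paper takes.
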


\begin{remark}
By a result of Rosenberg \cite{rosentrivial}, the Euler class of $\mathfrak{X}$ in $K$-homology is ``trivial'' in the sense that $\mathrm{Euler}(\mathfrak{X})=\chi(\mathfrak{X})[\mathrm{pt}]$. Here $\chi(\mathfrak{X})\in \Z$ is the Euler characteristic and $[\mathrm{pt}]\in K_0(\mathfrak{X})$ the class defined from including a point into $\mathfrak{X}$. In particular, in the case of $G$ being trivial and $\mathfrak{X}$ a real parabolic geometry
$$[\mathsf{BGG}(\nabla_{\pmb{E}})]=\mathrm{Euler}(\mathfrak{X})\cap [\pmb{E}]=\chi(\mathfrak{X})\mathrm{rk}(\pmb{E})[\mathrm{pt}].$$ 
\end{remark}

\section{Carnot manifolds}
\label{subsec:carnottnlknad}

The main source of geometric examples in this paper is that of parabolic geometry, that in turn fits into a larger scheme of Carnot manifolds. We recall the notion of a Carnot manifold and after that we specialize to parabolic manifolds.

\begin{definition}
\label{carnotdef}
A Carnot manifold of depth $r\in \N_+$ is a manifold $\mathfrak{X}$ equipped with a filtration 
$$T\mathfrak{X}=T^{-r}\mathfrak{X}\subsetneq T^{-r+1}\mathfrak{X}\subsetneq \ldots\subsetneq T^{-2}\mathfrak{X}\subsetneq T^{-1}\mathfrak{X}\subsetneq 0,$$
of sub-bundles such that $[T^{j}\mathfrak{X},T^{k}\mathfrak{X}]\subseteq T^{j+k}\mathfrak{X}$ for any $j,k$. For simplicity, we set $T^{j}\mathfrak{X}=T\mathfrak{X}$ for $j\leq -r$ and $T^{j}\mathfrak{X}=0$, $j\geq 0$.

If $\mathfrak{X}$ is a complex manifold, and each $T^{-j}\mathfrak{X}\subseteq T\mathfrak{X}$ is a holomorphic subbundle, we say that $\mathfrak{X}$ is a complex Carnot manifold.
\end{definition}

Associated with a Carnot structure, we have a fibrewise Lie algebra structure on the graded bundle 
$$\mathrm{gr}(T\mathfrak{X}):=\oplus_j T^{-j}\mathfrak{X}/T^{-j+1}\mathfrak{X}.$$
More precisely, the Lie bracket on vector fields induces a bundle map $(T^{-j}\mathfrak{X}/T^{-j+1}\mathfrak{X})\times (T^{-k}\mathfrak{X}/T^{-k+1}\mathfrak{X})\to T^{-j+k}\mathfrak{X}/T^{-j-k+1}\mathfrak{X}$ for any $j,k$ which in turn induces a graded Lie bracket on $\mathrm{gr}(T\mathfrak{X})$. When equipped with the anchor mapping $0:\mathrm{gr}(T\mathfrak{X})\to T\mathfrak{X}$, $\mathrm{gr}(T\mathfrak{X})$ forms a Lie algebroid that we denote by $\mathfrak{t}_H\mathfrak{X}$. Since the Lie bracket is nilpotent, $\mathfrak{t}_H\mathfrak{X}$ integrates to a Lie groupoid $T_H\mathfrak{X}$ with unit space $\mathfrak{X}$ with source and range map coinciding. The source/range mapping $T_H\mathfrak{X}\to \mathfrak{X}$ defines a fibre bundle; each fibre is a simply connected nilpotent Lie group. 

We call $\mathfrak{t}_H\mathfrak{X}$ the osculating Lie algebroid of $\mathfrak{X}$ and $T_H\mathfrak{X}$ the osculating Lie groupoid of $\mathfrak{X}$. For a point $x\in \mathfrak{X}$, we call $\mathfrak{t}_H\mathfrak{X}_x$ the osculating Lie algebroid in $x$ and $T_H\mathfrak{X}_x$ the osculating Lie groupoid in $x$. Let $\mathsf{N}$ be a graded, simply connected, nilpotent Lie group with Lie algebra $\mathfrak{n}$. We say that $\mathfrak{X}$ is a regular Carnot manifolds of type $\mathsf{N}$ (or $\mathfrak{n}$) if $T_H\mathfrak{X}\to \mathfrak{X}$ is a locally trivial bundle of graded, nilpotent Lie groups with fibre $\mathsf{N}$. 

\begin{thm}[Morimoto, {\cite[Chapter 3]{Morimoto}}]
Let $\mathfrak{X}$ be a Carnot manifold and $\mathfrak{n}$ a graded, nilpotent Lie algebra. Then $\mathfrak{X}$ is regular of type $\mathfrak{n}$ if and only if for any point $x\in \mathfrak{X}$, there is a graded Lie algebra isomorphism $\mathfrak{t}_H\mathfrak{X}_x\cong \mathfrak{n}$.
\end{thm}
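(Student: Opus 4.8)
\emph{Proof strategy.} The plan is to dispatch one implication directly from the definition and to reduce the other to a rigidity statement for orbits in the variety of graded nilpotent Lie brackets. If $\mathfrak{X}$ is regular of type $\mathfrak{n}$, then $T_H\mathfrak{X}\to\mathfrak{X}$ is by definition locally trivial as a bundle of graded nilpotent Lie groups with fibre $\mathsf{N}$, so each $T_H\mathfrak{X}_x$ is graded-isomorphic to $\mathsf{N}$ and passing to Lie algebras gives $\mathfrak{t}_H\mathfrak{X}_x\cong\mathfrak{n}$. For the converse it suffices to show $T_H\mathfrak{X}\to\mathfrak{X}$ is locally trivial. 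Since each $T^{-j}\mathfrak{X}$ is a sub-bundle, the graded bundle $\mathrm{gr}(T\mathfrak{X})=\oplus_j T^{-j}\mathfrak{X}/T^{-j+1}\mathfrak{X}$ has locally constant rank in every degree, and the bracket of vector fields equips it with a fibrewise graded nilpotent Lie bracket; by hypothesis each fibre is graded-isomorphic to $\mathfrak{n}$, so in particular the rank profile of $\mathrm{gr}(T\mathfrak{X})$ matches the dimension profile of $\mathfrak{n}$. It then suffices to show that $\mathfrak{t}_H\mathfrak{X}$ is, locally over $\mathfrak{X}$, isomorphic as a bundle of graded Lie algebras to the trivial bundle $\mathfrak{X}\times\mathfrak{n}$; integrating fibrewise via the (globally defined) exponential map of simply connected nilpotent groups then yields the local triviality of $T_H\mathfrak{X}$.

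To prove the local statement, I would fix $x_0\in\mathfrak{X}$ and a connected neighbourhood $U$ over which each $T^{-j}\mathfrak{X}/T^{-j+1}\mathfrak{X}$ is trivial. A choice of such trivializations gives a graded vector-bundle isomorphism $\mathfrak{t}_H\mathfrak{X}|_U\cong U\times\mathfrak{n}$ carrying the fibrewise bracket to a smooth map $b\colon U\to\mathcal{L}$, where $\mathcal{L}$ is the affine algebraic variety of grading-compatible alternating maps $\Lambda^2\mathfrak{n}\to\mathfrak{n}$ satisfying the Jacobi identity. The group $G=\prod_j GL(\mathfrak{n}_{-j})$ of grading-preserving linear automorphisms acts algebraically on $\mathcal{L}$ by $g\cdot\mu=g\circ\mu\circ(g^{-1}\wedge g^{-1})$, and the hypothesis says precisely that $b(x)$ lies in the orbit $\mathcal{O}$ of the standard bracket $b_0$ of $\mathfrak{n}$ for every $x\in U$. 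Invoking the standard fact that an orbit of an algebraic action of a linear algebraic group on a variety is locally closed, $\mathcal{O}$ is an embedded submanifold of $\mathcal{L}$, hence $b\colon U\to\mathcal{O}$ is smooth; moreover the stabilizer $\Aut_{\mathrm{gr}}(\mathfrak{n})$ of $b_0$ is Zariski-closed, so $G\to G/\Aut_{\mathrm{gr}}(\mathfrak{n})$ is a locally trivial principal bundle and the orbit map induces a diffeomorphism $G/\Aut_{\mathrm{gr}}(\mathfrak{n})\cong\mathcal{O}$. Pulling back a local section over a possibly smaller neighbourhood $U'\ni x_0$ lifts $b$ to a smooth map $g\colon U'\to G$ with $g(x)\cdot b_0=b(x)$. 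Composing the chosen graded frame fibrewise with $g(x)^{-1}$ gives a graded trivialization of $\mathfrak{t}_H\mathfrak{X}|_{U'}$ in which the bracket is identically $b_0$, i.e.\ an isomorphism of bundles of graded Lie algebras $\mathfrak{t}_H\mathfrak{X}|_{U'}\cong U'\times\mathfrak{n}$; integrating fibrewise produces $T_H\mathfrak{X}|_{U'}\cong U'\times\mathsf{N}$ as bundles of graded Lie groups, as required.

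The main obstacle is exactly this passage from the pointwise isomorphisms $\mathfrak{t}_H\mathfrak{X}_x\cong\mathfrak{n}$ to a smoothly varying one, and what makes it work is the local closedness of the orbit $\mathcal{O}$: this is what guarantees that the smooth map $b$ into $\mathcal{L}$ with image in $\mathcal{O}$ is smooth into $\mathcal{O}$, and that the orbit projection admits local smooth sections. A more hands-on alternative would be to build the adapted frame inductively along the filtration degree, but the graded-isomorphism type of a nilpotent Lie algebra is not determined by numerical invariants such as the ranks of iterated brackets (as the phenomenon of contractions illustrates), so such an induction does not obviously close up; the orbit argument circumvents this. The remaining points — local constancy of the ranks of the $T^{-j}\mathfrak{X}/T^{-j+1}\mathfrak{X}$ and the matching of dimension profiles with $\mathfrak{n}$ — are routine and were noted above.
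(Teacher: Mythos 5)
The paper does not prove this statement --- it is quoted from Morimoto --- so there is no internal argument to compare against; your proposal has to stand on its own, and it does. The forward implication is indeed definitional, and your treatment of the converse (pointwise isomorphism type $\Rightarrow$ local triviality, via the orbit of the model bracket $b_0$ in the variety of grading-compatible Jacobi brackets) is the standard and correct route; it is also exactly the content of the frame-bundle reformulation the paper records immediately afterwards, namely the equivalence between regular Carnot manifolds of type $\mathsf{N}$ and Carnot principal $\mathrm{Aut}_{\rm gr}(\mathsf{N})$-bundles. One point deserves a word of care: over $\R$ the assertion that ``orbits of algebraic actions are locally closed'' is not quite the off-the-shelf Zariski statement (that is the statement over $\C$); for the real points one either uses that the $G(\R)$-orbits on the real locus of the complex orbit are finite in number and each Euclidean-open there, or --- more economically --- the fact that orbits of smooth actions of second-countable Lie groups are initial (weakly embedded) submanifolds diffeomorphic to $G/\mathrm{Stab}$. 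Either version gives you both ingredients you actually use: smoothness of $b$ as a map into $\mathcal{O}\cong G/\mathrm{Aut}_{\rm gr}(\mathfrak{n})$, and the existence of local sections of the principal fibration $G\to G/\mathrm{Aut}_{\rm gr}(\mathfrak{n})$ needed to lift $b$ to $g$. With that reading the argument is complete, and your closing remark correctly identifies why a naive induction on filtration degree would not suffice.
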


\begin{definition}
If $\mathfrak{X}$ and $\mathfrak{Y}$ are Carnot manifolds, and $f:\mathfrak{X}\to \mathfrak{Y}$ is a smooth mapping we say that $f$ is a Carnot morphism if 
$$Df(T^{j}\mathfrak{X})\subseteq T^j\mathfrak{Y}, \quad\forall j$$
If $f$ additionally is a diffeomorphism, we say that $f$ is a Carnot isomorphism or if $\mathfrak{X}=\mathfrak{Y}$ in which case we say that $f$ is a Carnot automorphism. We write $\mathrm{Aut}_C(\mathfrak{X})$ for the group of Carnot automorphisms of $\mathfrak{X}$. 

A complex Carnot isomorphism (or automorphism) $f:\mathfrak{X}\to \mathfrak{Y}$ of complex Carnot manifolds is a Carnot isomorphism which is also holomorphic. We write $\mathrm{Aut}_{\mathcal{O}C}(\mathfrak{X})$ for the group of holomorphic Carnot automorphisms of a complex Carnot manifold $\mathfrak{X}$. 
\end{definition}

\begin{definition}
Let $G$ be a second countable, locally compact group. A (complex) $G$-Carnot manifold is a (complex) Carnot manifold $\mathfrak{X}$ equipped with a continuous action of $G$ as (complex) Carnot automorphisms.
\end{definition}

\subsection{Restricted frame bundles}

Let $\mathsf{N}$ be a graded, simply connected, nilpotent Lie group with graded Lie algebra $\mathfrak{n}=\oplus_{j=-r}^{-1}\mathfrak{n}_j$. Set $\mathfrak{n}^k:=\oplus_{j=-r}^{k}\mathfrak{n}_j$. We introduce the notion of a Carnot principal bundle of type $\mathsf{N}$ on a manifold $\mathfrak{X}$ of dimension $\dim(\mathfrak{n})$ to be a principal $\mathrm{Aut}_{\rm gr}(\mathsf{N})$-bundle $P\to \mathfrak{X}$ such that the frame bundle $P_\mathfrak{X}\to \mathfrak{X}$ reduces to $P$ under the natural map $\mathrm{Aut}_{\rm gr}(\mathsf{N})\to \mathrm{Aut}_\R(\mathfrak{n})$ and $T^{k}\mathfrak{X}:=P\times_{\mathrm{Aut}_{\rm gr}(\mathfrak{n})}\mathfrak{n}^k\subseteq T\mathfrak{X}$, $k=-r,\ldots, -1$, defines a Carnot structure. A gradedly $G$-equivariant Carnot principal bundle of type $\mathsf{N}$ is a Carnot principal bundle $P\to \mathfrak{X}$ of type $\mathsf{N}$ where $P$ and $\mathfrak{X}$ are equipped with $G$-actions making $P\to \mathfrak{X}$ equivariant.

\begin{prop}
Consider a graded, simply connected Lie group $\mathsf{N}$. There is an equivalence of categories between gradedly $G$-equivariant Carnot principal bundles of type $\mathsf{N}$ and regular $G$-Carnot manifolds $\mathfrak{X}$ of type $\mathsf{N}$. The equivalence is defined from
$$P\mapsto (T^{k}\mathfrak{X}=P\times_{\mathrm{Aut}_{\rm gr}(\mathfrak{n})}\mathfrak{n}^k)_{k=-r,\ldots, -1},$$ 
with inverse defined by letting $P$ denote the graded automorphism frame bundle of the Carnot structure
\end{prop}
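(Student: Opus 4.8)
The plan is to exhibit the two functors explicitly, check that each lands in the correct category, produce natural isomorphisms witnessing that they are mutually inverse, and observe that the translation of morphisms is then routine bookkeeping. Throughout, the only genuinely non-formal input is Morimoto's theorem (for existence and local triviality) and the compatibility of the intrinsic Levi bracket with the model bracket of $\mathfrak{n}$.

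\textbf{Forward functor.} Given a gradedly $G$-equivariant Carnot principal bundle $P\to\mathfrak{X}$ of type $\mathsf{N}$, equip $\mathfrak{X}$ with the filtration $T^k\mathfrak{X}:=P\times_{\mathrm{Aut}_{\rm gr}(\mathsf{N})}\mathfrak{n}^k$. The $G$-action on $P$ descends to $\mathfrak{X}$ and, being by bundle automorphisms that intertwine the $\mathrm{Aut}_{\rm gr}(\mathsf{N})$-action, preserves each $T^k\mathfrak{X}$, so $\mathfrak{X}$ becomes a $G$-Carnot manifold. The associated graded $\mathrm{gr}(T\mathfrak{X})$ is canonically the graded $G$-bundle $P\times_{\mathrm{Aut}_{\rm gr}(\mathsf{N})}\mathfrak{n}$. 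What needs checking is that this manifold is \emph{regular of type $\mathsf{N}$}, i.e.\ that the intrinsic Levi bracket on $\mathrm{gr}(T\mathfrak{X})$ induced by the Lie bracket of vector fields agrees fibrewise with the bracket of $\mathfrak{n}$ transported along the frames in $P$. This is exactly the content of $P$ being a reduction to the group $\mathrm{Aut}_{\rm gr}(\mathsf{N})$ of graded \emph{Lie algebra} automorphisms together with the compatibility that is part of being a Carnot principal bundle of type $\mathsf{N}$; once this is in hand, Morimoto's theorem gives $\mathfrak{t}_H\mathfrak{X}_x\cong\mathfrak{n}$ for every $x$, hence regularity.

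\textbf{Inverse functor.} Given a regular $G$-Carnot manifold $\mathfrak{X}$ of type $\mathsf{N}$, let $P$ be the graded automorphism frame bundle, with fibre over $x$ the set of graded Lie algebra isomorphisms $\mathfrak{n}\to\mathfrak{t}_H\mathfrak{X}_x$ and $\mathrm{Aut}_{\rm gr}(\mathsf{N})$ acting by precomposition. By Morimoto's theorem each fibre is a nonempty $\mathrm{Aut}_{\rm gr}(\mathsf{N})$-torsor; local triviality amounts to the existence of local frames of $T\mathfrak{X}$ adapted to the filtration whose structure functions reproduce the bracket of $\mathfrak{n}$ modulo lower filtration degree, i.e.\ local sections of $P$. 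The $G$-action on $\mathfrak{X}$ lifts to fibrewise graded Lie algebra automorphisms of $\mathfrak{t}_H\mathfrak{X}$, and postcomposition with these makes $P\to\mathfrak{X}$ a gradedly $G$-equivariant principal bundle. Finally, the frame bundle $P_\mathfrak{X}$ reduces to $P$ along $\mathrm{Aut}_{\rm gr}(\mathsf{N})\hookrightarrow\mathrm{Aut}_\R(\mathfrak{n})$, since a filtration-adapted linear frame together with the Lie algebra data of $\mathfrak{t}_H\mathfrak{X}$ is precisely a point of $P$, and under this reduction one has $T^k\mathfrak{X}=P\times_{\mathrm{Aut}_{\rm gr}(\mathsf{N})}\mathfrak{n}^k$ tautologically.

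\textbf{Mutual inverseness and morphisms.} Starting from $\mathfrak{X}$, forming $P$ and then $P\times_{\mathrm{Aut}_{\rm gr}(\mathsf{N})}\mathfrak{n}^k$ recovers the filtration $T^k\mathfrak{X}$ and the $G$-action by the last remark, naturally in $\mathfrak{X}$. Starting from $P$, a point $p$ over $x$ gives a graded vector space isomorphism $\mathfrak{n}\to\mathrm{gr}(T\mathfrak{X})_x=\mathfrak{t}_H\mathfrak{X}_x$ which, by the Levi bracket compatibility established for the forward functor, is a graded Lie algebra isomorphism, hence a point of the graded automorphism frame bundle of $\mathfrak{X}$; the assignment $p\mapsto$ this isomorphism is an $\mathrm{Aut}_{\rm gr}(\mathsf{N})$- and $G$-equivariant bundle isomorphism, natural in $P$. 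On morphisms: a morphism of Carnot principal bundles is a $G$- and $\mathrm{Aut}_{\rm gr}(\mathsf{N})$-equivariant bundle map, descending to a $G$-equivariant map of bases carrying each $T^k\mathfrak{X}$ into $T^k\mathfrak{Y}$, i.e.\ a Carnot morphism; conversely a $G$-equivariant Carnot morphism $f\colon\mathfrak{X}\to\mathfrak{Y}$ induces the fibrewise graded Lie algebra map $\mathrm{gr}(Df)$, hence a map of graded automorphism frame bundles. These assignments respect identities and composition and intertwine the natural isomorphisms above, which yields the claimed equivalence of categories.

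\textbf{Main obstacle.} The one non-routine ingredient is the matching of the intrinsic Levi bracket on $\mathrm{gr}(T\mathfrak{X})$ with the model bracket of $\mathfrak{n}$: in the forward direction it is what promotes ``reduction to $\mathrm{Aut}_{\rm gr}(\mathsf{N})$ plus a Carnot filtration'' to ``regular Carnot manifold of type $\mathsf{N}$'' (a reduction of structure group alone does not pin down the bracket, as the abelian bracket is also $\mathrm{Aut}_{\rm gr}(\mathsf{N})$-equivariant), and in the reverse composition it is what turns the tautological graded frame into a genuine Lie algebra frame. Everything else is the standard dictionary between $H$-structures and $H$-reductions of the (graded) frame bundle, together with Morimoto's theorem.
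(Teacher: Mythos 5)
The paper states this proposition without proof, so there is nothing to compare against line by line; your argument is the standard dictionary between $H$-reductions of the frame bundle and the corresponding first-order geometric structures, and it is essentially correct. Two remarks. First, you are right that the whole weight of the statement sits on matching the intrinsic Levi bracket on $\mathrm{gr}(T\mathfrak{X})$ with the model bracket of $\mathfrak{n}$, and right that a reduction of structure group to $\mathrm{Aut}_{\rm gr}(\mathsf{N})$ alone does not force this: read literally, the paper's definition of a Carnot principal bundle of type $\mathsf{N}$ only demands that the induced filtration ``defines a Carnot structure'', and for $\mathsf{N}=\mathsf{H}_3(\R)$ an integrable rank-two distribution on a $3$-manifold admits such a reduction while its osculating algebra is abelian rather than Heisenberg. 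So the forward functor lands in \emph{regular} Carnot manifolds of type $\mathsf{N}$ only if one builds into the definition --- as you do --- that the tautological graded isomorphisms $\mathfrak{n}\to\mathrm{gr}(T\mathfrak{X})_x$ determined by points of $P$ are Lie algebra isomorphisms; that is the reading under which the proposition is true, and you are right to isolate it as the one non-formal input.

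Second, a point you gloss over in the inverse direction: a point of the graded automorphism frame bundle is an isomorphism $\mathfrak{n}\to\mathfrak{t}_H\mathfrak{X}_x=\mathrm{gr}(T\mathfrak{X})_x$, whereas a point of the frame bundle $P_\mathfrak{X}$ is an isomorphism $\mathfrak{n}\to T_x\mathfrak{X}$. Exhibiting the former as a reduction of the latter, with $T^k\mathfrak{X}=P\times_{\mathrm{Aut}_{\rm gr}(\mathsf{N})}\mathfrak{n}^k$ holding inside $T\mathfrak{X}$ as the definition requires, needs a $G$-equivariant splitting $T\mathfrak{X}\cong\mathrm{gr}(T\mathfrak{X})$ of the filtration; such a splitting exists (and can be averaged for compact $G$) but is not canonical, and for a general locally compact $G$ its equivariant existence deserves a sentence. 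Alternatively one routes the reduction through the parabolic subgroup of filtration-preserving linear maps, for which the statement is cleaner. Neither point invalidates your argument, which supplies a proof the paper omits.
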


\subsection{Parabolic geometry}
\label{jnkajnakjdnapara}
A parabolic geometry is a Cartan geometry of type $(\mathsf{G},\mathsf{P})$ for a semi-simple Lie group $\mathsf{G}$ and a parabolic subgroup $\mathsf{P}$. This is to say that in a certain infinitessimal sense, a manifold $\mathfrak{X}$ is a parabolic geometry of type $(\mathsf{G},\mathsf{P})$ if it in a certain infinitesimal sense looks like $\mathsf{G}/\mathsf{P}$ at each point. A fuller picture of parabolic geometry is found in the textbook \cite{capslovak}. We here recall its salient feature and in the meanwhile set notations. \emph{We use the convention that Lie groups used to define parabolic geometries are denoted by $\mathsf{G}$, to distinguish from a group $G$ acting on a geometry.}

Consider a connected, semi-simple Lie group $\mathsf{G}$. Write $\mathfrak{g}$ for its Lie algebra. We often assume that $\mathfrak{g}$ is $|k|$-graded, i.e. we have fixed a grading 
$$\mathfrak{g}=\mathfrak{g}_{-k}\oplus \mathfrak{g}_{-k+1}\oplus \cdots \oplus \mathfrak{g}_{-1}\oplus \mathfrak{g}_0\oplus \mathfrak{g}_{1}\oplus \cdots \oplus\mathfrak{g}_{k-1}\oplus \mathfrak{g}_{k},$$
with $[\mathfrak{g}_{j},\mathfrak{g}_l]\subseteq \mathfrak{g}_{j+l}$ for any $j,l$. We let $\mathsf{P}\subseteq \mathsf{G}$ denote a parabolic subgroup that we tacitly assume to relate to the $|k|$-grading of $\mathfrak{g}$ via its Lie algebra $\mathfrak{p}$ and the identity 
$$\mathfrak{p}=\mathfrak{g}_0\oplus \mathfrak{g}_{1}\oplus \cdots \oplus\mathfrak{g}_{k-1}\oplus \mathfrak{g}_{k}.$$
Write $\mathfrak{p}_+=\mathfrak{g}_+:=\mathfrak{g}_{1}\oplus \cdots \oplus\mathfrak{g}_{k-1}\oplus \mathfrak{g}_{k}$ for the nilradical of $\mathfrak{p}$. Let $\mathsf{G}_0\subseteq \mathsf{G}$ be the reductive, closed sub-group integrating $\mathfrak{g}_0$. The Langlands decomposition $\mathsf{P}=\mathsf{M}\mathsf{A}\mathsf{P}_+=\mathsf{G}_0\mathsf{P}_+$, where $\mathsf{P}_+$ integrates $\mathfrak{p}_+$, factors $\mathsf{G}_0=\mathsf{M}\mathsf{A}$ into its semi-simple and abelian part. 

\begin{example}
\label{kljnkjandaretrot}
The prototypical example of a $|k|$-grading to keep in mind comes about the other way; starting from a standard parabolic subgroup $\mathsf{P}\subseteq \mathsf{G}$ there is an associated $|k|$-grading of $\mathfrak{g}$ defined from the total degree in the restricted root spaces of $\mathsf{P}$ (see \cite{capslovak}, Chapter 3.2.1 for the complex case and Chapter 3.2.9 for the real case), in this $|k|$-grading $\mathfrak{p}=\mathfrak{g}_0\oplus \mathfrak{g}_{1}\oplus \cdots \oplus\mathfrak{g}_{k-1}\oplus \mathfrak{g}_{k}$ coincides with the Lie algebra of $\mathsf{P}$. 

Let us describe the parabolic subgroups and the associated gradings in some more details, following \cite[Chapter VI and VII]{knappbeyond} for the former and \cite[Chapter 3.2]{capslovak} for the latter. The standard parabolic subgroups are standard relative to a choice of Cartan involution $\theta$ on $\mathsf{G}$, with corresponding maximally abelian subalgebra $\mathfrak{a}$ of the $\theta$-invariant subspace of $\mathfrak{g}$, restricted roots $\Sigma$, a choice of positive roots $\Sigma^+$ and simple roots $\Pi$. The minimal parabolic subgroup is defined from $\mathfrak{p}_{\rm min}:=\mathfrak{a}\oplus_{\lambda\in \Sigma^+}\mathfrak{g}_\lambda$. The standard parabolic subgroups stand in a one-to-one correspondence with subsets $\Pi'\subseteq \Pi$ via 
$$\mathfrak{p}=\mathfrak{p}_{\rm min}\oplus_{\lambda\in \mathrm{span}(\Pi')} \mathfrak{g}_\lambda.$$
If $\Pi'$ is the set of simple restricted roots corresponding to $\mathsf{P}$, we can define $\mathrm{ht}_\mathsf{P}:\mathfrak{a}^*\to \R$ following \cite[Chapter 3.2.1]{capslovak} by
\begin{equation}
\label{adlknjandadjkn}
\mathrm{ht}_\mathsf{P}(\sum_{\alpha\in \Pi}a_\alpha \alpha)=\sum_{\alpha\in \Pi\setminus \Pi'}a_\alpha.
\end{equation}
The $|k|$-grading is now defined by 
$$\mathfrak{g}_j:= \oplus_{\alpha: \mathrm{ht}_\mathsf{P}(\lambda)=j} \mathfrak{g}_\lambda, \quad j=-k,-k+1,\ldots, k-1,k,$$
for $k$ being the largest value of $\mathrm{ht}_\mathsf{P}$ on $\Sigma$. We note that for $\mathfrak{a}_{\mathrm{P}}:=\cap \ker_{\beta\in \mathrm{span}(\Pi')}\ker\beta\subseteq \mathfrak{a}$ having dimension $>1$, i.e. $\mathsf{P}$ has rank $>1$, the $|k|$-grading is overlooking a great deal of information pertained by the multifiltering \cite{yunckenhab} defined from the restricted roots.

\end{example}

We shall make heavy use of the graded nilpotent Lie algebra
$$\mathfrak{n}:=\mathfrak{g}_{-k}\oplus \mathfrak{g}_{-k+1}\oplus \cdots \oplus \mathfrak{g}_{-1}.$$
Write $\mathsf{N}$ for the simply connected Lie group integrating $\mathfrak{n}$. The $|k|$-grading ensures that $\mathsf{G}_0$ acts as graded Lie algebra automorphisms of $\mathfrak{n}$. Note that there is an abstract Lie algebra isomorphism $\mathfrak{n}\cong \mathfrak{p}_+$ since $\mathfrak{g}$ is semisimple. The Killing form implements a natural $\mathsf{G}_0$-equivariant isomorphism
\begin{equation}
\label{gzeroadaond}
\mathfrak{n}\cong \mathfrak{p}_+^*.
\end{equation}
By an abuse of notation, we shall use this isomorphism as an identity. We also write $\mathfrak{n}^{-j}:=\mathfrak{g}_{-j}\oplus \mathfrak{g}_{-j+1}\oplus \cdots \oplus \mathfrak{g}_{-1}.$

The flag manifold 
$$\mathfrak{X}:=\mathsf{G}/\mathsf{P},$$
carries a natural $\mathsf{G}$-action and a $\mathsf{G}$-equivariant filtration 
$$0\subseteq T^{-1}\mathfrak{X}\subseteq T^{-2}\mathfrak{X}\subseteq T^{-k+1}\mathfrak{X}\subseteq T^{-k}\mathfrak{X}=T\mathfrak{X},$$
defined from identifying $\mathfrak{g}/\mathfrak{p}=\mathfrak{n}$ and setting 
$$T^{-j}\mathfrak{X}:=\mathsf{G}\times_\mathsf{P}\mathfrak{n}^{-j}=\bigoplus_{l=1}^j (\mathsf{G}/\mathsf{P}_+)\times_{\mathsf{G}_0}\mathfrak{g}_{-l}.$$
Here the $P$-action is defined from factoring over the quotient map $\mathsf{P}\to \mathsf{G}_0$. As such, $\mathfrak{X}$ is a $\mathsf{G}$-Carnot manifold of type $\mathsf{N}$. The Carnot frame bundle of $\mathfrak{X}$ naturally identifies with $(\mathsf{G}/\mathsf{P}_+)\times_{\mathsf{G}_0}\mathrm{Aut}_{\rm gr}(\mathsf{N})$. More generally, we have the following definition.

\begin{definition}
Let $\mathsf{G}$ be a $|k|$-graded, semi-simple, connected Lie group with associated parabolic subgroup $\mathsf{P}\subseteq \mathsf{G}$. A parabolic manifold of type $(\mathsf{G},\mathsf{P})$ is a pair $(\mathfrak{X},\omega)$ of a smooth manifold $\mathfrak{X}$ and a Cartan connection $\omega:\mathcal{V}\to T^*\mathcal{V}\otimes \mathfrak{g}$ on a principal $\mathsf{P}$-bundle $\mathcal{V}\to \mathfrak{X}$ such that 
\begin{enumerate}
\item $\mathrm{Ad}(p)(p^*\omega)=\omega$, for all $p\in \mathsf{P}$, where in $T^*\mathcal{V}\otimes \mathfrak{g}$ the operation $\mathrm{Ad}(p)$ is on the second factor and the pullback is in the first factor;
\item $\omega(\mathfrak{X}_\xi)=\xi$ for $\xi\in \mathfrak{p}$, where $X_\xi$ is the vectorfield on $\mathcal{V}$ associated with $\xi\in \mathfrak{p}$ and the $\mathsf{P}$-action on $V$;
\item $\omega_p:T_p\mathcal{V}\to \mathfrak{g}$ is a linear isomorphism for all points $p\in \mathcal{V}$.
\end{enumerate}
If $\mathsf{G}$ and $\mathsf{P}$ are complex, we tacitly assume all structures to be holomorphic and speak of a complex parabolic manifold of type $(\mathsf{G},\mathsf{P})$

For two parabolic manifolds $(\mathfrak{X},\omega)$ and $(\mathfrak{X}',\omega')$ of type $(\mathsf{G},\mathsf{P})$ we say that a diffeomorphism $f:\mathfrak{X}\to \mathfrak{X}'$ is parabolic if $f$ lifts to a $\mathsf{P}$-equivariant diffeomorphism $f_{\mathcal{V}}:\mathcal{V}\to \mathcal{V}'$ such that $f_{\mathcal{V}}^*\omega'=\omega$. If $(\mathfrak{X},\omega)$ and $(\mathfrak{X}',\omega')$ are complex parabolic manifolds of type $(\mathsf{G},\mathsf{P})$, a parabolic diffeomorphism $f:\mathfrak{X}\to \mathfrak{X}'$ is a parabolic diffeomorphism of the underlying real structure which is holomorphic and lifts to a holomorphic $f_{\mathcal{V}}$. We write $\mathrm{Aut}(\mathfrak{X},\omega)$ for the group of parabolic self-diffeomorphisms of a parabolic geometry, and if $(\mathfrak{X},\omega)$ is a complex parabolic manifold we write $\mathrm{Aut}(\mathfrak{X},\omega)$  for the group of complex parabolic self-diffeomorphisms.
\end{definition}

\begin{remark}
We can topologize $\mathrm{Aut}(\mathfrak{X},\omega)$ in many different ways, we shall choose the $C^\infty$-topology. By work of Frances-Melnick \cite{framel} it coincides with the $C^0$-topology.
\end{remark}

If $(\mathfrak{X},\omega)$ is a parabolic geometry, the Cartan connection induces a restriction of the frame bundle of $\mathfrak{X}$ to the $\mathsf{G}_0$-principal bundle $\mathcal{V}/\mathsf{P}_+\to \mathfrak{X}$. By the discussion above, there is a group homomorphism $\mathsf{G}_0\to \mathrm{Aut}_{\rm gr}(\mathsf{N})$ so there is an associated filtration $T^{-k}\mathfrak{X}:=(\mathcal{V}/\mathsf{P}_+)\times_{\mathsf{G}_0}\mathfrak{n}^k$ of $T\mathfrak{X}$. This filtration together with the information of the $\mathsf{G}_0$-principal bundle $\mathcal{V}/\mathsf{P}_+\to \mathfrak{X}$ and the $\mathsf{G}_0$-invariant forms $\theta_i: T^i(\mathcal{V}/\mathsf{P}_+)\to \mathfrak{g}_i$ induced from the Cartan connection $\omega$, is called the infinitesimal flag structure of $(\mathfrak{X},\omega)$, see more in \cite[Definition 3.1.6]{capslovak}. The infinitesimal flag structure does not automatically define a Carnot structure, and we make the following definition.

\begin{definition}
We say that $(\mathfrak{X},\omega)$ is a {\bf regular} parabolic geometry of type $(\mathsf{G},\mathsf{P})$ if the $\mathrm{Aut}_{\rm gr}(\mathsf{N})$-principal bundle 
$$(\mathcal{V}/\mathsf{P}_+)\times_{\mathsf{G}_0}\mathrm{Aut}_{\rm gr}(\mathsf{N})\to \mathfrak{X},$$
is a Carnot principal bundle of type $\mathsf{N}$. Or in other words, $(\mathfrak{X},\omega)$ is a {\bf regular} parabolic geometry if the infinitesimal flag structure induces a Carnot structure. 
\end{definition}

For parabolic geometries, there is more control of geometric structure than for general Carnot manifolds. For instance, it is shown in \cite[Chapters 3.1.14, 3.1.16, and 3.1.16]{capslovak} that there is an equivalence of categories between regular infinitesimal flag structures of type $(\mathsf{G},\mathsf{P})$ on $\mathfrak{X}$ and normal, regular parabolic geometries of type $(\mathsf{G},\mathsf{P})$ on $\mathfrak{X}$. Here normal means that the curvature function 
$$\kappa:\mathcal{V}\to \mathrm{Hom}(\wedge^2(\mathfrak{g}/\mathfrak{p}),\mathfrak{g}),\quad [\kappa(p)](X,Y):=[X,Y]-\omega_p([\omega^{-1}(X),\omega^{-1}(Y)]_p),$$ 
is a cycle for the Kostant boundary map (cf. \cite[Chapter 3.1.12]{capslovak}).

\begin{remark}
If $(\mathfrak{X},\omega)$ is a regular parabolic manifold the discussion above shows that there is an induced Carnot structure. It is clear that we have an inclusion of groups $\mathrm{Aut}(\mathfrak{X},\omega)\hookrightarrow\mathrm{Aut}_C(\mathfrak{X})$. By the same token, if $(\mathfrak{X},\omega)$ is a regular complex parabolic manifold, then $\mathrm{Aut}(\mathfrak{X},\omega)\hookrightarrow\mathrm{Aut}_{\mathcal{O}C}(\mathfrak{X})$. 

The inclusion $\mathrm{Aut}(\mathfrak{X},\omega)\subseteq \mathrm{Aut}_C(\mathfrak{X})$ is in general strict. For instance, if $\mathfrak{X}=S^{2n-1}$ with $\omega$ defined to make $(\mathfrak{X},\omega)$ into the flat parabolic geometry $S^{2n-1}=SU(n,1)/\mathsf{P}$ for the parabolic subgroup $\mathsf{P}\subseteq SU(n,1)$. Then $\omega$ is a CR-structure and $\mathrm{Aut}(\mathfrak{X},\omega)=SU(n,1)$ consists of the CR-automorphisms of $\mathfrak{X}$. However, $\mathfrak{X}$ viewed as a Carnot manifold is a contact manifold and complex conjugation in $S^{2n-1}\subseteq \C^n$ is a filtered diffeomorphism so we have an inclusion $\mathrm{Aut}(\mathfrak{X},\omega)\rtimes (\Z/2)\subseteq \mathrm{Aut}_C(\mathfrak{X})$
\end{remark}

We can in general expect there to be few parabolic automorphisms as the following generalization of Ferrand-Obata's theorem and Schoen's theorem \cite{schoencr} shows. 

\begin{thm}[Bader-Frances-Melnick \cite{badframel}]
\label{thmbaderfranc}
Let $(\mathfrak{X},\omega)$ be a regular parabolic manifold of type $(\mathsf{G},\mathsf{P})$ but not a discrete quotient of the universal cover of $\mathsf{G}/\mathsf{P}$. Then $\mathrm{Aut}(\mathfrak{X},\omega)$ is smaller than $\mathsf{G}$ in the sense that the adjoint representation of any connected Lie subgroup $H\subseteq \mathrm{Aut}(\mathfrak{X},\omega)$ has rank strictly less than the rank of $\mathsf{G}$. In particular, if $\mathrm{rk}(\mathsf{G})=1$, then $\mathrm{Aut}(\mathfrak{X},\omega)$ is compact. 
\end{thm}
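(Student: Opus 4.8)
The plan is to derive the theorem from the embedding theorem for automorphism groups of Cartan geometries due to Bader--Frances--Melnick \cite{badframel}, combined with a dynamical analysis of the action of $\mathrm{Aut}(\mathfrak{X},\omega)$ on the Cartan bundle $\mathcal{V}$; the rank-one assertion then follows formally once one also invokes Frances' rigidity theorem \cite{francesjnkjnkjad}. First I would record the two structural inputs. Because the Cartan connection gives a global absolute parallelism, i.e.\ a linear isomorphism $\omega_{p}\colon T_{p}\mathcal{V}\to\mathfrak{g}$ at each $p$ that is preserved by every automorphism, $\mathrm{Aut}(\mathfrak{X},\omega)$ is a Lie group of dimension at most $\dim\mathfrak{g}$ acting freely on $\mathcal{V}$ (a parallelism-preserving map fixing a point fixes its $1$-jet there, hence a neighbourhood, hence all of $\mathcal{V}$ by connectedness). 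It therefore suffices to fix a connected Lie subgroup $H\subseteq\mathrm{Aut}(\mathfrak{X},\omega)$ with Lie algebra $\mathfrak{h}$ and to prove $\mathrm{rk}(\mathrm{ad}\,\mathfrak{h})<\mathrm{rk}(\mathfrak{g})$ (real ranks throughout) unless $\mathfrak{X}$ is a discrete quotient of $\widetilde{\mathsf{G}/\mathsf{P}}$. For $\hat x\in\mathcal{V}$ the orbit map $\iota_{\hat x}\colon H\to\mathcal{V}$, $h\mapsto h\cdot\hat x$, is an immersion, so $\iota_{\hat x}^{*}\omega$ is left-invariant and corresponds to an injective linear map $\nu_{\hat x}\colon\mathfrak{h}\hookrightarrow\mathfrak{g}$ satisfying $\nu_{\hat x\cdot p}=\mathrm{Ad}(p^{-1})\circ\nu_{\hat x}$ for $p\in\mathsf{P}$, whose failure to be a Lie algebra homomorphism is measured by the curvature function $\kappa\colon\mathcal{V}\to\mathrm{Hom}(\wedge^{2}(\mathfrak{g}/\mathfrak{p}),\mathfrak{g})$ evaluated at $\hat x$.

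The heart of the argument is the embedding theorem: exploiting recurrence of the $H$-action on $\mathcal{V}$ together with a Zimmer-type cocycle/amenability analysis of the $\mathrm{Ad}(\mathsf{P})$-action on the curvature module, one produces a point $\hat x_{0}$ at which $\nu_{\hat x_{0}}$ is a genuine embedding of Lie algebras carrying the isotropy $\mathfrak{h}_{x_{0}}$ into $\mathfrak{p}$ and intertwining the adjoint representations of $H$ and $\mathsf{G}$. Granting this, the rank inequality is immediate: a maximal $\R$-split torus $\mathfrak{a}_{\mathfrak{h}}\subseteq\mathrm{ad}\,\mathfrak{h}$ maps to commuting $\R$-semisimple elements of $\mathrm{ad}\,\mathfrak{g}$, hence lies in a maximal $\R$-split torus of $\mathfrak{g}$, so $\mathrm{rk}(\mathrm{ad}\,\mathfrak{h})\le\mathrm{rk}(\mathfrak{g})$. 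To rule out equality I would argue that if $\dim\mathfrak{a}_{\mathfrak{h}}=\mathrm{rk}(\mathfrak{g})$, then after conjugation $\nu_{\hat x_{0}}(\mathfrak{a}_{\mathfrak{h}})$ is a full maximal $\R$-split torus $\mathfrak{a}\subseteq\mathfrak{g}$; pushing the contraction dynamics of $\exp(t\mathfrak{a}_{\mathfrak{h}})$ through $\iota_{\hat x_{0}}$ and using the root-space decomposition of $\mathfrak{g}$ under $\mathfrak{a}$ forces $\kappa$ to vanish along the orbit, hence $\kappa\equiv 0$ by $\mathsf{G}$-equivariance and connectedness of $\mathfrak{X}$; a flat regular parabolic geometry carrying an automorphism group this large develops onto all of $\widetilde{\mathsf{G}/\mathsf{P}}$, the recurrence supplying completeness, and is thus a discrete quotient of it --- the excluded case. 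This proves the first assertion.

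For the rank-one statement, suppose $\mathrm{rk}(\mathsf{G})=1$ and $\mathfrak{X}$ is not a quotient of $\widetilde{\mathsf{G}/\mathsf{P}}$. The first part then gives $\mathrm{rk}(\mathrm{ad}\,\mathfrak{h})=0$ for every connected $H\subseteq\mathrm{Aut}(\mathfrak{X},\omega)$, so in particular the identity component has compact semisimple part and its radical acts with relatively compact adjoint image; combined with Frances' theorem \cite{francesjnkjnkjad}, which rules out any noncompact automorphism group for a rank-one parabolic geometry other than the flat model, this yields that $\mathrm{Aut}(\mathfrak{X},\omega)$ is compact.

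I expect the main obstacle to be the dynamical core of the embedding theorem: controlling the recurrence of $H$ on the (possibly non-proper) bundle $\mathcal{V}$ and extracting in the limit an $\mathrm{Ad}$-equivariant algebraic embedding from the a priori merely linear maps $\nu_{\hat x}$ --- this is the substantive content of \cite{badframel}. A secondary difficulty is the global step in the equality case: upgrading pointwise flatness together with a large automorphism group to completeness, so that $\mathfrak{X}$ is genuinely a Clifford--Klein form $\Gamma\backslash\widetilde{\mathsf{G}/\mathsf{P}}$ rather than merely an open piece of the model.
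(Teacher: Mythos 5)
The paper does not prove this theorem: it is imported verbatim from Bader--Frances--Melnick \cite{badframel} (with the rank-one compactness attributed in the surrounding text to Frances \cite{francesjnkjnkjad}), so there is no internal proof to compare against. Your outline is a faithful reconstruction of the strategy of those references --- the free action of $\mathrm{Aut}(\mathfrak{X},\omega)$ on the Cartan bundle via the absolute parallelism, the $\mathrm{Ad}(\mathsf{P})$-equivariant family of linear maps $\nu_{\hat x}\colon\mathfrak{h}\to\mathfrak{g}$, the embedding theorem as the dynamical core, the rank inequality with the equality case degenerating to the flat model --- and you are candid that the two genuinely hard steps (the recurrence/Zimmer-type argument producing the algebraic embedding, and the completeness upgrade in the equality case) are exactly the content of \cite{badframel}. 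As a self-contained proof the proposal therefore has the same status as the paper's citation: correct in architecture, with the substance delegated.

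Two points worth flagging. First, your step ``a maximal $\R$-split torus of $\mathrm{ad}\,\mathfrak{h}$ maps to commuting $\R$-semisimple elements of $\mathrm{ad}\,\mathfrak{g}$'' does not follow from having a bare Lie algebra embedding intertwining adjoint actions; an abstract embedding need not preserve $\R$-semisimplicity. This is precisely why Bader--Frances--Melnick phrase their theorem in terms of algebraic hulls of $\mathrm{Ad}(H)$, and it should be absorbed into what you are quoting rather than presented as ``immediate.'' Second, you are right that the ``in particular'' for $\mathrm{rk}(\mathsf{G})=1$ is not a formal consequence of the first assertion --- rank zero of the adjoint image does not by itself force compactness (nilpotent groups are a counterexample) --- and your appeal to \cite{francesjnkjnkjad} to close that gap matches what the paper itself does.
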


\section{Heisenberg calculus}
\label{subsec:heis}

An important technical tool throughout the paper is the Heisenberg calculus on Carnot manifolds. We shall only recall its overall structure and refer the details to the literature \cite{androerp,Dave_Haller1, ewertjncg, goffkuz,vanerpyuncken}. Recall the Lie algebroid $\mathfrak{t}_H\mathfrak{X}:=\oplus_j T^{-j}\mathfrak{X}/T^{-j+1}\mathfrak{X}$ defined from a Carnot structure. Each fibre of the vector bundle $\mathfrak{t}_H\mathfrak{X}$ is a nilpotent Lie algebra, and we can integrate to a Lie groupoid $T_H\mathfrak{X}\to \mathfrak{X}$. As a fibre bundle, $T_H\mathfrak{X}\to \mathfrak{X}$ is a vector bundle but the fibrewise Lie group structure is a polynomial operation determined from the Baker-Campbell-Hausdorff formula and may vary in general. There is an $\R_{>0}$-action on $\mathfrak{t}_H\mathfrak{X}$ defined by $\delta_\lambda(X):=\lambda^{-j}X$ for $X\in T^{-j}\mathfrak{X}/T^{-j+1}\mathfrak{X}$. By construction, $\delta_\lambda$ is a fibrewise Lie algebra automorphism integrating to a smooth $\R_{>0}$-action on the Lie groupoid $T_H\mathfrak{X}$. 

\subsection{The parabolic tangent groupoid} 
To build the Heisenberg calculus, \cite{vanerpyuncken} combined Connes' tangent groupoid with Debord-Skandalis' approach to homogeneity. The analogue of Connes' tangent groupoid is the parabolic tangent groupoid $\mathbb{T}_H\mathfrak{X}\rightrightarrows \mathfrak{X}\times [0,\infty)$. Set theoretically, the groupoid  $\mathbb{T}_H\mathfrak{X}$ is defined by
$$\mathbb{T}_H\mathfrak{X}:= T_H\mathfrak{X}\times \{0\} \, \dot{\bigcup}  \mathfrak{X}\times \mathfrak{X}\times (0,\infty).$$
The groupoid $\mathbb{T}_H\mathfrak{X}$ is endowed with a Lie groupoid structure by choosing a graded connection $\nabla$ on $\mathfrak{t}_H\mathfrak{X}$ and declaring the mapping
$$\psi:T_HX\times [0,\infty)\to \mathbb{T}_H\mathfrak{X}, \quad \psi(x,v,t):=
\begin{cases} 
(x,v,0), \; &t=0,\\
(\mathrm{exp}^\nabla(\delta_t(v)),x,t), \; &t>0,
\end{cases}$$
to be a local diffeomorphism. The scaling action of $\R_{>0}$ extends to the a smooth $\R_{>0}$-action on the parabolic tangent groupoid by extending it to $t>0$ via 
$$\delta_\lambda(x,y,t)=(x,y,\lambda^{-1}t).$$

\subsection{van Erp-Yuncken's calculus}
Following \cite{vanerpyuncken}, for two vector bundles $E_1,E_2\to \mathcal{X}$, we consider the space $\mathcal{E}'_r(\mathbb{T}_H\mathfrak{X};E_1,E_2)$ of properly supported $r$-fibred distributions on the Lie groupoid $\mathbb{T}_H\mathfrak{X}\rightrightarrows \mathfrak{X}\times [0,\infty)$ with coefficients in $r^*E_2\otimes s^*E_1^*$. Pushing forward along the $\R_{>0}$-action induces an isomorphism $(\delta_\lambda)_*$ of $\mathcal{E}'_r(\mathbb{T}_H\mathfrak{X};E_1,E_2)$. Let $|\Lambda|$ denote the $1$-densities on $\mathfrak{X}$ and $|\Lambda_r|:=r^*|\Lambda|$. We write $C^\infty_p( \mathbb{T}_H\mathfrak{X};r^*E_2\otimes s^*E_1^*\otimes |\Lambda_r|)$ for the space of smooth properly supported sections of $r^*E_2\otimes s^*E_1^*\otimes |\Lambda_r|$, this is precisely the space of smooth elements of $\mathcal{E}'_r(\mathbb{T}_H\mathfrak{X};E_1,E_2)$. Evaluation in $t=0$ produces a mapping 
$$\mathrm{ev}_{t=0}:\mathcal{E}'_r(\mathbb{T}_H\mathfrak{X};E_1,E_2)\to \mathcal{E}'_r(T_H\mathfrak{X};E_1,E_2),$$
onto the analogoues space of distributions on the Lie groupoid of osculating Lie groups. Evaluation in $t\in (0,\infty)$ produces a mapping 
$$\mathrm{ev}_{t}:\mathcal{E}'_r(\mathbb{T}_H\mathfrak{X};E_1,E_2)\to \mathcal{E}'_r(\mathfrak{X}\times \mathfrak{X};E_1,E_2).$$
We note that by the Schartz kernel theorem, 
$$\mathcal{E}'_r(\mathfrak{X}\times \mathfrak{X};E_1,E_2)=C^\infty(\mathfrak{X}; E_2\otimes \mathcal{D}'(\mathfrak{X}; E_1^*\otimes |\Lambda|)).$$

\begin{definition}
\label{liftotflknkjnad}
Let $\mathfrak{X}$ be a compact Carnot manifold, $m\in \C$ and $E_1,E_2\to \mathfrak{X}$ two vector bundles. We write $\pmb{\Psi}^m_H( \mathfrak{X},E_1, E_2)$ for the space of properly supported, $r$-fibred distribution $\pmb{A}\in \mathcal{E}_r'(\mathbb{T}_H\mathfrak{X};E_1,E_2)$ which is almost homogeneous of degree $m$ in the sense that 
$$(\delta_\lambda)_*\pmb{A}-\lambda^m\pmb{A}\in C^\infty_p( \mathbb{T}_H\mathfrak{X};r^*E_2\otimes s^*E_1^*\otimes |\Lambda_r|).$$

We say that an operator $A:C^\infty(\mathfrak{X},E_1)\to C^\infty(\mathfrak{X},E_2)$ is a Heisenberg pseudodifferential operator of order $m\in \C$ if there exists a properly supported, $r$-fibred distribution $\pmb{A}\in \pmb{\Psi}^m_H( \mathfrak{X},E_1, E_2)$ such that the Schwartz kernel $k_A\in C^\infty(\mathfrak{X}; E_2\otimes \mathcal{D}(\mathfrak{X}; E_1^*\otimes |\Lambda|))$ is given by 
$$k_A:=\mathrm{ev}_{t=1}(\pmb{A}).$$
\end{definition}

It can be shown \cite{Dave_Haller1,goffkuz,vanerpyuncken} that composition, adjoints, addition and so on is well defined and respect homogeneity so we arrive at a filtered $*$-algebra (under composition of operators) of Heisenberg pseudodifferential operators $\Psi^*_H( \mathfrak{X},E)$ for any vector bundle $E\to \mathfrak{X}$. We write $\Psi^*_H(\mathfrak{X};E_1,E_2)$ for the space of Heisenberg pseudodifferential operator $C^\infty(\mathfrak{X},E_1)\to C^\infty(\mathfrak{X},E_2)$ that viewed as a summand in $\Psi^*_H( \mathfrak{X},E_1\oplus E_2)$ inherits the relevant algebraic properties.

\subsection{Heisenberg-Sobolev spaces}
Turning to analytic properties, we can construct a positive, invertible, even order differential operator $\mathfrak{D}\in \Psi^{2m}_H( \mathfrak{X},E)$ admitting complex powers $\mathfrak{D}^z\in \Psi^{2mz}_H( \mathfrak{X};E)$, $z\in \C$, by \cite{Dave_Haller2}. From this family we define the scale of Heisenberg-Sobolev spaces
$$W^s_H(\mathfrak{X};E):=\mathfrak{D}^{-s/2m}L^2(\mathfrak{X};E).$$
By \cite{Dave_Haller1}, for $A\in \Psi^m_H(X;E_1,E_2)$ the operator $A:C^\infty(\mathfrak{X},E_1)\to C^\infty(\mathfrak{X},E_2)$ extends by density to a continuous operator
\begin{equation}
\label{ljnadjnakjnakdnadkjnadkjn}
A:W^s_H(X;E_1)\to W^t_H(X;E_2), \quad\mbox{for $s\geq t+\mathrm{Re}(m)$},
\end{equation}
which is compact if $s> t+\mathrm{Re}(m)$.

\subsection{Symbolic structure}
Let us now describe the microlocal structure of a Heisenberg pseudodifferential operator. Take $A\in \Psi^*_H( \mathfrak{X},E)$ with lift $\pmb{A}$ as in Definition \ref{liftotflknkjnad}. The $r$-fibered distribution $\pmb{A}$ admits a Taylor expansion at $t=0$, that translates into an asymptotic expansion 
$$
(\psi^{-1})_*\pmb{A}(x,v,t)\sim \sum_{j=0}^\infty t^jk_j(x,v),
$$
where $k_j\in \mathcal{E}'_r(T_H\mathfrak{X})\cap C^\infty(T_H\mathfrak{X}\setminus \mathfrak{X})$ are almost homogeneous in the sense that $(\delta_\lambda)_*k_j-t^{m-j}k_j\in C^\infty_c(T_H\mathfrak{X})$. In particular, we can deduce that for any $k\in \N$ there exists an $N\in \N$ such that 
$$k_A(x,\mathrm{exp}_x^\nabla(\delta_tv))- \sum_{j=0}^N k_j(x,v)\in C^k(T_H \mathfrak{X}; E_2\otimes E_1^*\otimes |\Lambda|).$$
By construction, we see that $\pmb{A}$ is determined modulo smooth, properly supported kernels by $A$. In particular, we can define the principal symbol of $A$ as the homogeneous element
$$\sigma_H^m(A):=\mathrm{ev}_{t=0}(\pmb{A})\in \mathcal{E}'_r(T_H\mathfrak{X})/C^\infty_c(T_H\mathfrak{X}).$$
Clearly, $\sigma_H^m(A)$ can be represented by $k_0+C^\infty_c(T_H\mathfrak{X})$. The main features of the principal symbol can be captured in the following theorem. We use the notation $\mathcal{P}^m(\mathfrak{X};E_1,E_2)\subseteq C^\infty(T_H \mathfrak{X}; E_2\otimes E_1^*\otimes |\Lambda|)$ for the space of fibrewise polynomial densities of total degree $m$. In particular, $\mathcal{P}^m(\mathfrak{X};E_1,E_2)=0$ unless $m\in-\dim_h(\mathfrak{X})-\N$.

\begin{thm}
\label{lkjandlkjandkjadn}
Let $\mathfrak{X}$ be a compact Carnot manifold and $E\to \mathfrak{X}$ a vector bundle and write $\Psi^*_H( \mathfrak{X},E)$ for the filtered algebra of Heisenberg pseudodifferential operators. For any $m\in \C$, $\Psi^m_H( \mathfrak{X},E)$ fits into a short exact sequence 
$$0\to \Psi^{m-1}_H( \mathfrak{X},E)\to \Psi^m_H( \mathfrak{X},E)\to \Sigma^m_H( \mathfrak{X},E)\to 0,$$
where the symbol algebra $\Sigma^m_H( \mathfrak{X},E)\subseteq  \mathcal{D}'_r(T_H\mathfrak{X})/\mathcal{P}^m(\mathfrak{X};E)$ consists of elements of the form 
\begin{equation}
\label{deceomoked}
k_m+p_m\log|\cdot|_H,
\end{equation}
for $k_m\in \mathcal{D}'_r(T_H\mathfrak{X};E)/\mathcal{P}^m(X;E)$ being homogeneous of degree $m$, $p_m\in \mathcal{P}(\mathfrak{X};E)$ and $|\cdot|_H$ a fibrewise gauge. The symbol algebra $\Sigma^m_H( \mathfrak{X},E)$ also fits into the short exact sequence 
$$0\to C^\infty_c(T_H \mathfrak{X},\Hom(E)\otimes |\Lambda|)\to \tilde{\Sigma}^m_H( \mathfrak{X},E)\to \Sigma^m_H( \mathfrak{X},E)\to 0,$$
where 
\begin{align*}
\tilde{\Sigma}^m_H( \mathfrak{X},E):=&\mathrm{im}(\mathrm{ev}_{t=0}:\pmb{\Psi}^m_H( \mathfrak{X},E)\to \mathcal{E}'_r(T_H\mathfrak{X};\Hom(E)\otimes |\Lambda|))=\\
=&\left\{ k\in \mathcal{E}'_r(T_H\mathfrak{X};\Hom(E)\otimes |\Lambda|): \lambda_* k-\lambda^m k\in  C^\infty_c, \; \lambda>0\right\}.
\end{align*}
\end{thm}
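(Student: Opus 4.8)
The plan is to derive both exact sequences from the defining description (Definition \ref{liftotflknkjnad}) of a Heisenberg operator as $\mathrm{ev}_{t=1}(\pmb{A})$ for an almost homogeneous $r$-fibred distribution $\pmb{A}$ on the parabolic tangent groupoid, reorganising the arguments of \cite{vanerpyuncken,Dave_Haller1,goffkuz}. First I would set up the symbol map: for $A\in\Psi^m_H(\mathfrak{X},E)$ with lift $\pmb{A}\in\pmb{\Psi}^m_H(\mathfrak{X},E)$, put $\sigma_H^m(A):=\mathrm{ev}_{t=0}(\pmb{A})$. This is well defined modulo $C^\infty_c(T_H\mathfrak{X};\Hom(E)\otimes|\Lambda|)$ because, as recorded before the theorem, $\pmb{A}$ is unique up to $C^\infty_p(\mathbb{T}_H\mathfrak{X};\ldots)$ and, $\mathfrak{X}$ being compact, $\mathrm{ev}_{t=0}$ sends that ambiguity into $C^\infty_c$; and almost homogeneity of degree $m$ of $\pmb{A}$ passes to the boundary, so $\mathrm{ev}_{t=0}(\pmb{A})$ satisfies $\lambda_*\mathrm{ev}_{t=0}(\pmb{A})-\lambda^m\mathrm{ev}_{t=0}(\pmb{A})\in C^\infty_c$. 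Surjectivity of $\mathrm{ev}_{t=0}:\pmb{\Psi}^m_H\to\{k:\lambda_*k-\lambda^mk\in C^\infty_c\}$ -- which shows $\mathrm{im}(\mathrm{ev}_{t=0})$ equals this set, so the two descriptions of $\tilde\Sigma^m_H$ agree, and, dividing by the smoothing kernels, yields the second displayed sequence -- comes from the standard quantization of the calculus: given such a $k$ one freezes it in $t$ near $t=0$ through the chart $\psi$, extends it smoothly and properly supported for $t>0$, and the resulting $\pmb{A}$ is almost homogeneous of degree $m$ up to a $C^\infty_p$-error, with $\mathrm{ev}_{t=0}(\pmb{A})=k$.

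For the first sequence, surjectivity of $\sigma_H^m$ onto $\Sigma^m_H$ is inherited from that of $\mathrm{ev}_{t=0}$, so the issue is exactness in the middle. If $\sigma_H^m(A)=0$, I would first adjust $\pmb{A}$ within its $C^\infty_p$-class so that $\mathrm{ev}_{t=0}(\pmb{A})=0$; then the Taylor expansion $(\psi^{-1})_*\pmb{A}\sim\sum_{j\geq 1}t^jk_j$ has vanishing zeroth term, each $k_j$ is almost homogeneous of degree $m-j$, and Borel summation of $\sum_{j\geq 0}t^jk_{j+1}$ produces $\pmb{B}\in\pmb{\Psi}^{m-1}_H(\mathfrak{X},E)$ with $\pmb{A}-t\pmb{B}$ flat at $t=0$. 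Since $\mathrm{ev}_{t=1}(t\pmb{B})=\mathrm{ev}_{t=1}(\pmb{B})$ is of order $m-1$ and $\mathrm{ev}_{t=1}$ of a kernel flat at $t=0$ is smoothing, it follows that $A\in\Psi^{m-1}_H(\mathfrak{X},E)$. The reverse inclusion is immediate: if $A=\mathrm{ev}_{t=1}(\pmb{A}')$ with $\pmb{A}'\in\pmb{\Psi}^{m-1}_H$, then $t\pmb{A}'\in\pmb{\Psi}^m_H$ also represents $A$ and has $\mathrm{ev}_{t=0}(t\pmb{A}')=0$, so $\sigma_H^m(A)=0$.

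It would then remain to identify $\Sigma^m_H(\mathfrak{X},E)$, a priori $\tilde\Sigma^m_H/C^\infty_c$, with the log-homogeneous classes $k_m+p_m\log|\cdot|_H$ inside $\mathcal{D}'_r(T_H\mathfrak{X})/\mathcal{P}^m$, where $k_m$ is homogeneous of degree $m$ and $p_m\in\mathcal{P}(\mathfrak{X};E)$. This is a fibrewise statement about distributions on the graded nilpotent osculating group of each $x\in\mathfrak{X}$. Concretely, for $k$ with $\lambda_*k-\lambda^mk\in C^\infty_c$ one studies the Mellin family $s\mapsto\int_0^1\lambda^{-s}(\lambda_*k)\,\tfrac{\mathrm{d}\lambda}{\lambda}$ -- equivalently the meromorphic continuation of the Riesz-type family $\lambda^{-s}\lambda_*k$ -- which is holomorphic near $s=m$ precisely when $m\notin-\dim_h(\mathfrak{X})-\N$, and then its value at $s=m$ is a genuinely homogeneous $k_m$ with $k-k_m\in C^\infty_c$; at a critical degree the family has at worst a simple pole whose residue is a homogeneous polynomial density $p_m\in\mathcal{P}^m$, and subtracting $p_m\log|\cdot|_H$ cancels the pole and restores the homogeneous part $k_m$. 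Because $\mathcal{P}^m=0$ off the critical set this gives an honest homogeneous distribution there with no quotient, while at a critical degree the remaining ambiguity in the pair $(k_m,p_m)$ is exactly $\mathcal{P}^m$; the induced map $\tilde\Sigma^m_H\to\Sigma^m_H$ has kernel the distributions that are smooth at the origin, i.e. $C^\infty_c$.

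I expect the main obstacle to be this last step: establishing that the failure of an almost homogeneous distribution on a graded nilpotent Lie group to be strictly homogeneous is captured by a single logarithmic term with polynomial coefficient, and controlling the residue of the Mellin/Riesz continuation at the critical degrees. For the osculating groups of a Carnot manifold this is the analytic content underlying the cited references; everything else above is formal bookkeeping with the parabolic tangent groupoid and its dilation action, and passing to $\Hom(E)$-coefficients with smooth dependence on $x\in\mathfrak{X}$ is uniform and adds nothing essential.
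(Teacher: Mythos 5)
The paper does not prove this theorem: it is stated as a recollection of the Heisenberg calculus, with all details deferred to the cited literature (van Erp--Yuncken, Dave--Haller, Goffeng--Kuzmin), so there is no in-paper argument to compare against. Your proposal is a faithful reconstruction of the standard proof from those references: defining $\sigma_H^m$ via $\mathrm{ev}_{t=0}$, quantizing boundary symbols by freezing in $t$ through the chart $\psi$ to get surjectivity, and obtaining exactness in the middle by killing the zeroth Taylor coefficient and Borel-summing $\sum_j t^j k_{j+1}$ into an element of $\pmb{\Psi}^{m-1}_H$ (note that the needed degree count works because $t$ itself has weight one under $\delta_\lambda$, so $t\pmb{B}\in\pmb{\Psi}^m_H$ forces $\pmb{B}$ of degree $m-1$, and the flat remainder evaluates to a smoothing kernel). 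You correctly isolate the genuine analytic content in the last step --- that an almost homogeneous fibred distribution on a graded nilpotent group differs from a strictly homogeneous one by $p_m\log|\cdot|_H$ with $p_m$ a polynomial density of the critical degree --- and your Mellin/Riesz regularization is a legitimate route to it (the references argue instead via the cocycle identity for $\lambda\mapsto(\delta_\lambda)_*k-\lambda^mk$ and the observation that the obstruction to removing the smooth error lives in the weight-$m$ part of $C^\infty_c$, spanned by the polynomial densities); either way the residue analysis is where the real work sits, exactly as you say.
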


\subsection{Represented symbols and the Rockland condition}
To better understand the symbol algebra $\Sigma^m_H( \mathfrak{X},E)$ we study its action as fibrewise convolution operators. We write $\mathcal{S}(T_H\mathfrak{X};E)$ for the subspace of elements in $C^\infty(T_H\mathfrak{X};E)$ that together with all of its derivatives decay faster than polynomially. We introduce the notation $\mathcal{S}_0(T_H\mathfrak{X};E)$ for the subspace of elements $f\in \mathcal{S}(T_H\mathfrak{X};E)$ such that for any $x\in \mathfrak{X}$ and any polynomial $p$ on $T_H\mathfrak{X}_x$ we have $\int_{T_H\mathfrak{X}_x}p(v)f(x,v)\mathrm{d}x=0$. Convolving elements of  $\mathcal{S}_0(T_H\mathfrak{X};E)$ with $\mathcal{P}^m(X;E)$ is trivial, so any $a\in \Sigma^m_H(\mathfrak{X};E)$ can be identified with a convolution operator
$$a:\mathcal{S}_0(T_H\mathfrak{X};E)\to \mathcal{S}_0(T_H\mathfrak{X};E).$$
In fact, if $m\notin-\dim_h(\mathfrak{X})-\N$ then this convolution operator extends to an operator $a:\mathcal{S}(T_H\mathfrak{X};E)\to \mathcal{S}(T_H\mathfrak{X};E)$. The finer analytic structure of this convolution operator is studied in \cite[Part 5]{goffkuz}. This construction can be localized in representations of the osculating Lie groupoid. For $x\in \mathfrak{X}$ and $\pi$ a unitary representation of $T_H\mathfrak{X}_x$ on a Hilbert space $\mathcal{H}_\pi$, we write $\mathcal{S}_0(\pi):=\pi(\mathcal{S}_0(T_H\mathfrak{X}_x))\mathcal{H}_\pi$. If $\pi$ is irreducible and not the trivial representation, $\mathcal{S}_0(\pi)\subseteq \mathcal{H}_\pi$ is dense in the norm topology and in fact 
$$\mathcal{S}_0(\pi)=\mathcal{S}(\pi):=\pi(\mathcal{S}(T_H\mathfrak{X}_x))\mathcal{H}_\pi.$$ 
And convolving with a symbol $a\in \Sigma^m_H(\mathfrak{X};E)$ localizes to an operator
$$\pi(a):\mathcal{S}(\pi)\otimes E_x\to \mathcal{S}(\pi)\otimes E_x.$$
The same structures hold also for $a\in \Sigma^m_H( \mathfrak{X},E_1,E_2)$. An operator $A\in \Psi^m_H( \mathfrak{X};E_1,E_2)$ is said to satisfy the {\bf Rockland condition} if for any $x\in \mathfrak{X}$ and $\pi$ any irreducible, unitary, non-trivial representation of $T_H\mathfrak{X}_x$ the represented symbol 
$$\pi(\sigma^m_H(A)):\mathcal{S}(\pi)\otimes (E_1)_x\to \mathcal{S}(\pi)\otimes (E_2)_x,$$
is injective.

\begin{thm}
\label{rockandhypo}
Let $\mathfrak{X}$ be a compact Carnot manifold and $A\in \Psi^m_H( \mathfrak{X};E_1,E_2)$. Then the following are equivalent:
\begin{enumerate}
\item $A$ satisfies the Rockland condition.
\item There is a $B\in \Psi^{-m}_H( \mathfrak{X};E_2,E_1)$ such that $BA-1\in \Psi^{-\infty}( \mathfrak{X};E_1)$.
\item There is a $b\in \Sigma^{-m}_H( \mathfrak{X};E_2,E_1)$ such that $b\sigma_H^m(A)=1$.
\item The operator 
$$A:W^s_H(X;E_1)\to W^{s-\mathrm{Re}(m)}_H(X;E_2),$$
is left-Fredholm for some $s$.
\end{enumerate}
\end{thm}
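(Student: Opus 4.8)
The plan is to prove the cyclic chain of implications $(1)\Rightarrow(3)\Rightarrow(2)\Rightarrow(4)\Rightarrow(1)$, since this is the standard route for Rockland-type theorems and each arrow uses a different layer of the calculus developed above. The heart of the matter is $(1)\Rightarrow(3)$: constructing a symbolic parametrix $b\in\Sigma^{-m}_H(\mathfrak{X};E_2,E_1)$ out of the pointwise injectivity of the represented symbols $\pi(\sigma^m_H(A))$. First I would observe that, after enlarging $E_1,E_2$ to $E_1\oplus E_2$ and replacing $A$ by $A^*A$ (using that the Heisenberg calculus is a filtered $*$-algebra), it suffices to invert a self-adjoint, positively-Rockland symbol; the Rockland condition then makes every $\pi(\sigma^{2m}_H(A^*A))$ not merely injective but have bounded inverse by the operator-valued Calderón–Vaillancourt/Hulanicki-type estimates in \cite[Part 5]{goffkuz}, uniformly in $x$ and $\pi$ by compactness of $\mathfrak{X}$. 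Gluing these fibrewise inverses into a single element of the symbol algebra $\Sigma^{-2m}_H(\mathfrak{X};E)$ is the main technical step, and the one I expect to be the genuine obstacle: one needs that a field of invertible operators $\{\pi(\sigma)^{-1}\}_\pi$ that is bounded and ``smooth'' in the appropriate sense is again the represented symbol of an element of $\Sigma^{-2m}_H$. This is where the precise description of $\Sigma^m_H(\mathfrak{X};E)$ in Theorem \ref{lkjandlkjandkjadn} — elements of the form $k_m+p_m\log|\cdot|_H$ on $T_H\mathfrak{X}$ — is essential, together with the extension $\tilde{\Sigma}^m_H$ and the ideal $C^\infty_c(T_H\mathfrak{X})$: one inverts modulo the ideal and then absorbs the error, using that convolution by $C^\infty_c$ symbols acts compactly (in fact smoothingly) on the relevant scales.

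For $(3)\Rightarrow(2)$ I would lift the symbolic left inverse $b$ to an operator $B_0\in\Psi^{-m}_H(\mathfrak{X};E_2,E_1)$ via surjectivity of the principal symbol map in Theorem \ref{lkjandlkjandkjadn}; then $B_0A-1\in\Psi^{-1}_H(\mathfrak{X};E_1)$, and a standard Neumann/geometric-series argument inside the filtered algebra — summing $\sum_{j\ge0}(1-B_0A)^j$ asymptotically, which converges modulo $\Psi^{-\infty}$ because the calculus is asymptotically complete — upgrades $B_0$ to a $B$ with $BA-1\in\Psi^{-\infty}(\mathfrak{X};E_1)$. The implication $(2)\Rightarrow(4)$ is then immediate from the Heisenberg-Sobolev mapping properties \eqref{ljnadjnakjnakdnadkjnadkjn}: $B:W^{s-\mathrm{Re}(m)}_H\to W^s_H$ is bounded, $\Psi^{-\infty}$ maps into smoothing (hence compact) operators on these scales, so $BA=1+(\text{compact})$ exhibits $A:W^s_H(X;E_1)\to W^{s-\mathrm{Re}(m)}_H(X;E_2)$ as left-Fredholm for every $s$ (in particular for some $s$).

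Finally $(4)\Rightarrow(1)$ is the ``necessity'' direction and I would argue it by contraposition: if the Rockland condition fails, there is a point $x_0\in\mathfrak{X}$ and an irreducible non-trivial unitary representation $\pi$ of $T_H\mathfrak{X}_{x_0}$ with $\pi(\sigma^m_H(A))$ not injective, i.e. some nonzero $\xi\in\mathcal{S}(\pi)\otimes(E_1)_{x_0}$ in its kernel. Using the parabolic tangent groupoid $\mathbb{T}_H\mathfrak{X}$ and the zoom/rescaling action $\delta_\lambda$, one transplants $\xi$ — via the exponential coordinates $\mathrm{exp}^\nabla_{x_0}\circ\delta_t$ and a Schwartz cutoff — to a family of approximate null sections $u_t\in C^\infty(\mathfrak{X};E_1)$ concentrating at $x_0$ with $\|Au_t\|_{W^{s-\mathrm{Re}(m)}_H}/\|u_t\|_{W^s_H}\to 0$; producing infinitely many asymptotically orthogonal such $u_t$ then contradicts the a priori estimate $\|u\|_{W^s_H}\lesssim\|Au\|_{W^{s-\mathrm{Re}(m)}_H}+\|u\|_{W^{s-1}_H}$ that left-Fredholmness entails. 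The delicate point here is the uniform control of the transplantation across scales, which is exactly what the almost-homogeneity in Definition \ref{liftotflknkjnad} and the Taylor expansion at $t=0$ of $\pmb{A}$ are designed to give; I would cite \cite{goffkuz,vanerpyuncken,Dave_Haller1} for the quantitative version rather than redo it.
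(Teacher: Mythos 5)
Your proposal is correct and follows essentially the same route as the paper, which simply notes that the equivalence of (2) and (3), the implication (2)$\Rightarrow$(4), and (3)$\Rightarrow$(1) are straightforward, and defers the two genuinely hard directions --- (1)$\Rightarrow$(3) to Dave--Haller \cite{Dave_Haller1} and (4)$\Rightarrow$(1) to Androulidakis--Mohsen--Yuncken \cite{androerp}. You correctly identify exactly those two steps (constructing the symbolic parametrix from the Rockland condition, and the transplantation/concentration argument for necessity) as the ones requiring the cited literature, so your sketch matches the paper's treatment.
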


The proof of this theorem is referred to the literature. It is clear that 2) and 3) are equivalent, that 2) implies 4) and that 3) implies 1). That 1) implies 3) is proven in \cite{Dave_Haller1} and that 4) implies 1) follows from \cite{androerp}.

\subsection{Graded calculus}
In many situations we consider in this paper, graded vector bundles and graded operators with graded analogues of the Rockland condition arise. The setup is close to the ungraded situation, but to see the similarity appropriate conventions are needed. We follow the setup of \cite{BGS}, modelled on the Douglis-Nirenberg calculus \cite{dougnir} as used in the study of boundary value problems \cite{grubb}, see also \cite{horIII}. We write $\pmb{E}\to \mathfrak{X}$ for a graded vector bundle, i.e. $\pmb{E}=\oplus_{\nu\in \R} \pmb{E}[\nu]$ where $\pmb{E}[\nu]\to \mathfrak{X}$ is a vector bundle (said to be of degree $\nu$) and $\pmb{E}[\nu]=0$ for all but finitely many $\nu$. For a graded vector bundle $\pmb{E}\to \mathfrak{X}$, we define 
$$W^s_{H,{\rm gr}}(\mathfrak{X};\pmb{E}):=\bigoplus_{\nu \in \R} W^{s-\nu}(\mathfrak{X};\pmb{E}[\nu]).$$

Given two graded bundles, $\pmb{E}_1,\pmb{E}_2\to \mathfrak{X}$ we define the graded calculus 
$$\Psi^m_{H,{\rm gr}}(\mathfrak{X}; \pmb{E}_1,\pmb{E}_2):=\left\{A=(A_{\mu\nu})_{\mu,\nu}: \; A_{\mu,\nu}\in \Psi^{m+\mu-\nu}_H(\mathfrak{X};\pmb{E}_1[\nu],\pmb{E}_2[\mu]) \right\}.$$
Composition defines a well defined operation $\Psi^m_{H,{\rm gr}}(\mathfrak{X}; \pmb{E}_2,\pmb{E}_3)\times \Psi^m_{H,{\rm gr}}(\mathfrak{X}; \pmb{E}_1,\pmb{E}_2)\to \Psi^m_{H,{\rm gr}}(\mathfrak{X}; \pmb{E}_1,\pmb{E}_3)$ and the principal symbol map extends as 
$$\sigma_{H,{\rm gr}}^m(A)=(\sigma_H^{m+\mu-\nu}(A_{\mu\nu}))_{\mu,\nu}.$$
We also write 
$$\Sigma^m_{H,{\rm gr}}(\mathfrak{X}; \pmb{E}_1,\pmb{E}_2):=\left\{a=(a_{\mu\nu})_{\mu,\nu}: \; a_{\mu,\nu}\in \Sigma^{m+\mu-\nu}_H(\mathfrak{X};\pmb{E}_1[\nu],\pmb{E}_2[\mu]) \right\},$$
for the range (as well as codomain) of the graded symbol mapping. We can again view the principal symbol as a convolution operator
$$\sigma_{H,{\rm gr}}^m(A):\mathcal{S}_0(T_H\mathfrak{X};\pmb{E}_1)\to \mathcal{S}_0(T_H\mathfrak{X};\pmb{E}_2),$$
that for $x\in \mathfrak{X}$ and an irreducible, nontrivial, unitary representation $\pi$ of $T_H\mathfrak{X}_x$ localizes to an operator
$$\pi(\sigma_{H,{\rm gr}}^m(A)):\mathcal{S}(\pi)\otimes (\pmb{E}_1)_x\to \mathcal{S}(\pi)\otimes (\pmb{E}_2)_x.$$
If $\pi(\sigma_{H,{\rm gr}}^m(A))$ is injective for all irreducible, nontrivial, unitary representation $\pi$ we say that $A$ satisfies the Rockland condition. In the vein of Theorem \ref{rockandhypo}, $A\in \Psi^m_{H,{\rm gr}}(\mathfrak{X}; \pmb{E}_1,\pmb{E}_2)$ satisfies the Rockland condition if and only if $\sigma_{H,{\rm gr}}^m(A)$ admits a left inverse $b\in \Sigma^{-m}_{H,{\rm gr}}(\mathfrak{X}; \pmb{E}_2,\pmb{E}_1)$ if and only if 
$$A:W^s_{H,{\rm gr}}(\mathfrak{X};\pmb{E}_1)\to W^{s-m}_{H,{\rm gr}}(\mathfrak{X};\pmb{E}_2),$$
is left Fredholm.

\subsection{The symbol algebra on flat parabolic geometries} 
We end this section with a description of the symbol algebra of flat parabolic geometries $\mathfrak{X}=\mathsf{G}/\mathsf{P}$ where we later will be interested in the structure of equivariant symbols. We consider a connected, semi-simple Lie group $\mathsf{G}$ and a standard parabolic subgroup $\mathsf{P}\subseteq \mathsf{G}$  with respect to a Cartan involution $\theta$ and a maximal compact subgroup $\mathsf{K}$. We have a Langlands decomposition 
$$\mathsf{P}=\mathsf{M}_\mathsf{P}\mathsf{A}_\mathsf{P}\mathsf{N}_\mathsf{P},$$
of $\mathsf{P}$. Here $\mathsf{M}_\mathsf{P}$ is a semi-simple Lie group, $\mathsf{A}_\mathsf{P}$ a split tori over $\R$ and $\mathsf{N}_\mathsf{P}$ a nilpotent Lie group. We have that $\mathsf{G}/\mathsf{P}=\mathsf{K}/(\mathsf{K}\cap \mathsf{M}_\mathsf{P})$ as smooth manifolds, and the conjugate nilpotent group $\overline{\mathsf{N}}_\mathsf{P}:=\theta(\mathsf{N}_\mathsf{P})$ defines a dense open chart $\overline{\mathsf{N}}_\mathsf{P}\hookrightarrow \mathsf{G}/\mathsf{P}$ using Bruhat decompositions. Write $\mathcal{P}^m(\overline{\mathsf{N}}_\mathsf{P})$ for the polynomial densities of degree $m$ on $\overline{\mathsf{N}}_\mathsf{P}$. We introduce the notation $\Sigma^m_H\overline{\mathsf{N}}_\mathsf{P}\subseteq \mathcal{D}'_r(\overline{\mathsf{N}}_\mathsf{P})/\mathcal{P}^m(\overline{\mathsf{N}}_\mathsf{P})$ for the set of elements of the form in \eqref{deceomoked} but independent of the base space. Convolution defines a product 
$$\Sigma^m_H\overline{\mathsf{N}}_\mathsf{P}\times \Sigma^{m'}_H\overline{\mathsf{N}}_\mathsf{P}\to \Sigma^{m+m'}_H\overline{\mathsf{N}}_\mathsf{P}.$$
Since $\mathsf{M}_\mathsf{P}\mathsf{A}_\mathsf{P}$ normalizes $\mathsf{N}_\mathsf{P}$ and $\overline{\mathsf{N}}_\mathsf{P}$, it acts as automorphisms of $\Sigma^m_H\overline{\mathsf{N}}_\mathsf{P}$ respecting the product. The following is clear from Theorem \ref{lkjandlkjandkjadn}.

\begin{prop}
Consider the flat parabolic geometry $\mathfrak{X}=\mathsf{G}/\mathsf{P}$. For finite-dimensional $\mathsf{A}_\mathsf{P}\mathsf{M}_\mathsf{P}$-representations $V_1$ and $V_2$ we define the homogeneous vector bundles $E_j=\mathsf{G}\times_\mathsf{P} V_j\to \mathfrak{X}$. Then $\Sigma_H^m(X; E_1,E_2)$ consists of the smooth sections of a $\mathsf{G}$-equivariant bundle of algebras whose fibre is 
$$\Sigma^m_H\overline{\mathsf{N}}_\mathsf{P}\otimes \Hom(V_1,V_2).$$
More precisely, we have a $\mathsf{G}$-equivariant isomorphism 
$$\Sigma_H^m(X; E_1,E_2)\cong C^\infty(\mathsf{G}/\mathsf{N}_\mathsf{P},\Sigma^m_H\overline{\mathsf{N}}_\mathsf{P}\otimes \Hom(V_1,V_2))^{\mathsf{M}_\mathsf{P}\mathsf{A}_\mathsf{P}},$$
that respects products. Here the $\mathsf{M}_\mathsf{P}\mathsf{A}_\mathsf{P}$-action is the diagonal one and from the right on $\mathsf{G}/\mathsf{N}_\mathsf{P}$. In particular, the $\mathsf{G}$-equivariant symbols are described by
$$\Sigma_H^m(X; E_1,E_2)^\mathsf{G}\cong (\Sigma^m_H\overline{\mathsf{N}}_\mathsf{P}\otimes \Hom(V_1,V_2))^{\mathsf{M}_\mathsf{P}\mathsf{A}_\mathsf{P}}$$
\end{prop}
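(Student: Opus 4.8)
The plan is to reduce the statement to the purely fibrewise description of the symbol algebra furnished by Theorem~\ref{lkjandlkjandkjadn}, combined with the fact that on the flat model $\mathfrak{X}=\mathsf{G}/\mathsf{P}$ the osculating groupoid is a homogeneous bundle, and then to read off the $\mathsf{G}$-invariants using transitivity.

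\textbf{Step 1: the osculating groupoid of $\mathsf{G}/\mathsf{P}$.} First I would record the geometric input. The Killing form identification $\mathfrak{g}/\mathfrak{p}\cong\mathfrak{g}_-=\mathfrak{n}$, equivalently the Bruhat chart $\overline{\mathsf{N}}_\mathsf{P}\hookrightarrow\mathsf{G}/\mathsf{P}$, shows that the osculating group at $e\mathsf{P}$ is $\overline{\mathsf{N}}_\mathsf{P}$, and more globally that $T_H\mathfrak{X}$ is the $\mathsf{G}$-equivariant bundle of graded nilpotent Lie groups $(\mathsf{G}/\mathsf{N}_\mathsf{P})\times_{\mathsf{M}_\mathsf{P}\mathsf{A}_\mathsf{P}}\overline{\mathsf{N}}_\mathsf{P}$, where $\mathsf{M}_\mathsf{P}\mathsf{A}_\mathsf{P}=\mathsf{G}_0$ acts on $\overline{\mathsf{N}}_\mathsf{P}$ by conjugation inside $\mathsf{G}$ — this is by graded automorphisms, since the $|k|$-grading of $\mathfrak{n}$ is $\mathsf{G}_0$-stable, and it is well defined because $\mathsf{M}_\mathsf{P}\mathsf{A}_\mathsf{P}$ is $\theta$-stable and normalizes $\mathsf{N}_\mathsf{P}$. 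The bundle $\mathsf{G}/\mathsf{N}_\mathsf{P}=\mathsf{G}/\mathsf{P}_+\to\mathsf{G}/\mathsf{P}$ is exactly the reduction of the graded frame bundle of $T_H\mathfrak{X}$ to structure group $\mathsf{M}_\mathsf{P}\mathsf{A}_\mathsf{P}$, and by construction $E_j=\mathsf{G}\times_\mathsf{P}V_j=(\mathsf{G}/\mathsf{N}_\mathsf{P})\times_{\mathsf{M}_\mathsf{P}\mathsf{A}_\mathsf{P}}V_j$ with $\mathsf{N}_\mathsf{P}$ acting trivially on $V_j$.

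\textbf{Step 2: unwinding the symbol algebra.} Next I would apply Theorem~\ref{lkjandlkjandkjadn}, which presents $\Sigma^m_H(\mathfrak{X};E_1,E_2)$ as the smooth sections of a bundle over $\mathfrak{X}$ whose fibre over $x$ is the space of elements $k_m+p_m\log|\cdot|_H$ in $\mathcal{D}'_r(T_H\mathfrak{X}_x;\Hom((E_1)_x,(E_2)_x)\otimes|\Lambda|)/\mathcal{P}^m$. Trivializing the $1$-density factor along each fibre by the bi-invariant Haar density of the group $T_H\mathfrak{X}_x$ and transporting along a graded group isomorphism $T_H\mathfrak{X}_x\cong\overline{\mathsf{N}}_\mathsf{P}$ identifies this fibre with $W:=\Sigma^m_H\overline{\mathsf{N}}_\mathsf{P}\otimes\Hom(V_1,V_2)$; the residual ambiguity is precisely a graded automorphism of $\overline{\mathsf{N}}_\mathsf{P}$ coming from $\mathsf{M}_\mathsf{P}\mathsf{A}_\mathsf{P}$, so by Step~1 this symbol bundle is the associated bundle with fibre $W$ and its sections are $C^\infty(\mathsf{G}/\mathsf{N}_\mathsf{P},W)^{\mathsf{M}_\mathsf{P}\mathsf{A}_\mathsf{P}}$, with $\mathsf{M}_\mathsf{P}\mathsf{A}_\mathsf{P}$ acting diagonally (right translation on $\mathsf{G}/\mathsf{N}_\mathsf{P}$, conjugation on $\overline{\mathsf{N}}_\mathsf{P}$, the given actions on $V_1,V_2$) and $\mathsf{G}$ acting by left translation on $\mathsf{G}/\mathsf{N}_\mathsf{P}$. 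Since the product on $\Sigma^m_H(\mathfrak{X};-)$ is the purely fibrewise convolution product composed with composition in $\Hom$, and convolution is equivariant for graded automorphisms, this isomorphism respects products, and it is manifestly $\mathsf{G}$-equivariant. For the last assertion I would use that $\mathsf{G}$ acts transitively on $\mathsf{G}/\mathsf{N}_\mathsf{P}$, so a $\mathsf{G}$-invariant element of $C^\infty(\mathsf{G}/\mathsf{N}_\mathsf{P},W)$ is constant with some value $w\in W$; the remaining $\mathsf{M}_\mathsf{P}\mathsf{A}_\mathsf{P}$-invariance acts trivially on constants through the $\mathsf{G}/\mathsf{N}_\mathsf{P}$-variable and hence reduces to the condition $w\in W^{\mathsf{M}_\mathsf{P}\mathsf{A}_\mathsf{P}}$, giving $\Sigma^m_H(\mathfrak{X};E_1,E_2)^\mathsf{G}\cong(\Sigma^m_H\overline{\mathsf{N}}_\mathsf{P}\otimes\Hom(V_1,V_2))^{\mathsf{M}_\mathsf{P}\mathsf{A}_\mathsf{P}}$.

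I expect the only genuinely delicate points to be bookkeeping rather than conceptual: one must check that the Haar-density trivialization and the splitting into homogeneous part plus $p_m\log|\cdot|_H$ in Theorem~\ref{lkjandlkjandkjadn} can be arranged $\mathsf{G}$-equivariantly (choosing a $\mathsf{K}$-invariant fibrewise gauge, and noting that the polynomial part $p_m$ is intrinsic modulo $\mathcal{P}^m$), and that the modulus by which a graded automorphism scales Haar measure is correctly absorbed by the density coefficient, so that the $\mathsf{M}_\mathsf{P}\mathsf{A}_\mathsf{P}$-action on $W$ is the naive diagonal one and not a twist thereof. Everything else is a routine translation through the equivalence between $\mathsf{G}$-homogeneous bundles on $\mathsf{G}/\mathsf{P}$ and $\mathsf{M}_\mathsf{P}\mathsf{A}_\mathsf{P}$-representations.
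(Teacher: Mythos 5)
Your argument is correct and is precisely the natural elaboration of what the paper leaves implicit: the paper offers no proof beyond the remark that the proposition "is clear from Theorem \ref{lkjandlkjandkjadn}", and your two steps (identifying $T_H\mathfrak{X}$ as the bundle associated to the principal $\mathsf{M}_\mathsf{P}\mathsf{A}_\mathsf{P}$-bundle $\mathsf{G}/\mathsf{N}_\mathsf{P}\to\mathsf{G}/\mathsf{P}$, then reading off sections and $\mathsf{G}$-invariants by transitivity) are exactly the intended route. The bookkeeping points you flag — the density twist being absorbed into the model fibre $\Sigma^m_H\overline{\mathsf{N}}_\mathsf{P}$ and the $\mathsf{M}_\mathsf{P}\mathsf{A}_\mathsf{P}$-action on it being the natural pushforward (which the paper itself notes is a well-defined product-preserving action) — resolve as you expect.
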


Recall the definition of $\mathrm{ht}_\mathsf{P}$ from Equation \eqref{adlknjandadjkn} and write $\mathfrak{a}^*_\C:=\mathfrak{a}^*\otimes_\R \C$. We can extend this map to a complex linear map $\mathrm{ht}_\mathsf{P}:\mathfrak{a}^*_\C\to \C$ and restrict it to a map $\mathrm{ht}_P:\mathfrak{a}^*_{\mathsf{P},\C}\to \C$ which is an isomorphism if $\mathsf{A}_\mathsf{P}$ has rank one. A quasicharacter on $\mathsf{A}_\mathsf{P}$ is a homomorphism $\mathsf{A}_\mathsf{P}\to \C^\times$, and the space of quasicharacters coincides with $\mathfrak{a}^*_{P,\C}$ under the logarithm. For a quasicharacter $\chi$ we define $\mathrm{ht}_\mathsf{P}(\chi)$ in this way. We let $\C_\chi$ denote the $\mathsf{A}_\mathsf{P}$-representation $\C$ with the group acting via $\chi$.

\begin{lemma}
Let $\mathsf{G}$ be a semi-simple Lie group, $\mathsf{P}=\mathsf{M}_\mathsf{P}\mathsf{A}_\mathsf{P}\mathsf{N}_\mathsf{P}$ a standard parabolic subgroup and $\chi_1$ and $\chi_2$ two quasi-characters of $\mathsf{A}_\mathsf{P}$. Then 
$$(\Sigma^m_H\overline{\mathsf{N}}_\mathsf{P}\otimes \Hom(\C_{\chi_1},\C_{\chi_2}))^{\mathsf{A}_\mathsf{P}}=
\begin{cases}
(\mathcal{U}_m(\overline{\mathfrak{n}}_\mathsf{P})\otimes\Hom(\C_{\chi_1},\C_{\chi_2}))^{\mathsf{A}_\mathsf{P}}, \quad&\mbox{if $\mathrm{rk}(\mathsf{A}_\mathsf{P})>1$ or}\\
& m\neq \mathrm{ht}_\mathsf{P}(\chi_1)-\mathrm{ht}_\mathsf{P}(\chi_2),\\
\Sigma^m_H\overline{\mathsf{N}}_\mathsf{P}, \quad&\mbox{if $\mathrm{rk}(\mathsf{A}_\mathsf{P})=1$ and}\\
&m= \mathrm{ht}_\mathsf{P}(\chi_1)-\mathrm{ht}_\mathsf{P}(\chi_2).
\end{cases}$$

\end{lemma}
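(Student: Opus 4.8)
The plan is to unwind the two exact sequences for $\Sigma^m_H\overline{\mathsf{N}}_\mathsf{P}$ from Theorem~\ref{lkjandlkjandkjadn} together with the $\mathsf{A}_\mathsf{P}$-action and match up homogeneity degrees against the quasi-character twist. First I would recall that by Theorem~\ref{lkjandlkjandkjadn} an element of $\Sigma^m_H\overline{\mathsf{N}}_\mathsf{P}$ has the shape $k_m+p_m\log|\cdot|_H$ with $k_m$ homogeneous of degree $m$ on $\overline{\mathsf{N}}_\mathsf{P}$ modulo polynomial densities and $p_m$ a polynomial density; the "non-pseudodifferential" part $k_m$, when it is genuinely a distribution and not a polynomial (i.e.\ exactly when $m\notin -\dim_h\overline{\mathsf{N}}_\mathsf{P}-\N$), carries a nontrivial homogeneity and the log-term is forced to vanish. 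The scaling $\delta_\lambda$ on $\overline{\mathsf{N}}_\mathsf{P}$ is implemented by the element of $\mathsf{A}_\mathsf{P}$ corresponding via $\log$ to a multiple of $\mathrm{ht}_\mathsf{P}$ on $\mathfrak{a}^*_{\mathsf{P},\C}$: a symbol $a$ homogeneous of degree $m$ satisfies $(\delta_\lambda)_* a=\lambda^m a$, so the $\mathsf{A}_\mathsf{P}$-action on $\Sigma^m_H\overline{\mathsf{N}}_\mathsf{P}\otimes\Hom(\C_{\chi_1},\C_{\chi_2})$ combines this $\lambda^m$ with the twist $\chi_2\chi_1^{-1}$ on the $\Hom$-factor, and $\mathsf{A}_\mathsf{P}$-invariance of $a\otimes(\text{scalar})$ reads exactly as $m=\mathrm{ht}_\mathsf{P}(\chi_1)-\mathrm{ht}_\mathsf{P}(\chi_2)$ on the one-parameter subgroup generating the dilations.

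The case analysis then runs as follows. If $\mathrm{rk}(\mathsf{A}_\mathsf{P})>1$, the torus $\mathsf{A}_\mathsf{P}$ is strictly larger than the rank-one dilation subgroup coming from $\mathrm{ht}_\mathsf{P}$; the extra directions act on $\overline{\mathsf{N}}_\mathsf{P}$ by the restricted-root weights, and I would argue that a distribution on $\overline{\mathsf{N}}_\mathsf{P}$ that is simultaneously an eigenvector for all of $\mathsf{A}_\mathsf{P}$ and singular (supported near $0$ in the relevant sense) cannot exist — only the polynomial/universal-enveloping-algebra part $\mathcal{U}_m(\overline{\mathfrak n}_\mathsf{P})$, i.e.\ the kernels of differential operators, survives the full torus, because left-invariant differential operators are exactly the convolution kernels supported at the identity and the multi-weight grading pins down which monomials appear. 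Here I would invoke the identification, implicit in the Heisenberg calculus, of the polynomial densities $\mathcal{P}^m$ with (the symbols of) differential operators, so that $\mathcal{U}_m(\overline{\mathfrak n}_\mathsf{P})$ is precisely the differential-operator part of $\Sigma^m_H\overline{\mathsf{N}}_\mathsf{P}$. If instead $\mathrm{rk}(\mathsf{A}_\mathsf{P})=1$ but $m\neq \mathrm{ht}_\mathsf{P}(\chi_1)-\mathrm{ht}_\mathsf{P}(\chi_2)$, then $\mathsf{A}_\mathsf{P}$ is the dilation group itself and invariance forces the homogeneous degree to equal the twist; since it does not, the genuinely singular part $k_m$ must vanish and only the polynomial part remains, again giving $\mathcal{U}_m(\overline{\mathfrak n}_\mathsf{P})$. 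Finally, if $\mathrm{rk}(\mathsf{A}_\mathsf{P})=1$ and $m=\mathrm{ht}_\mathsf{P}(\chi_1)-\mathrm{ht}_\mathsf{P}(\chi_2)$, the twist is absorbed exactly by the homogeneity, the $\Hom$-factor becomes the trivial $\mathsf{A}_\mathsf{P}$-module, and $\mathsf{A}_\mathsf{P}$-invariance is automatic for every homogeneous symbol, so one recovers all of $\Sigma^m_H\overline{\mathsf{N}}_\mathsf{P}$.

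The main obstacle I anticipate is the first case: showing that in rank $>1$ no \emph{singular} $\mathsf{A}_\mathsf{P}$-eigendistribution on $\overline{\mathsf{N}}_\mathsf{P}$ exists, so that the invariants collapse onto $\mathcal{U}_m(\overline{\mathfrak n}_\mathsf{P})$. One has to be careful that "homogeneous of degree $m$ for the single dilation" together with "eigenvector for the complementary torus directions" is genuinely over-determined; the cleanest way is probably to decompose $\overline{\mathfrak n}_\mathsf{P}$ into root spaces, note that a joint $\mathsf{A}_\mathsf{P}$-eigendistribution must be, up to a fixed power, invariant under the subgroup of $\mathsf{A}_\mathsf{P}$ acting trivially on $\mathrm{ht}_\mathsf{P}$, and then use that an $\overline{\mathsf{N}}_\mathsf{P}$-convolution kernel invariant under a nontrivial torus of automorphisms of $\overline{\mathsf{N}}_\mathsf{P}$ that scales different root directions independently must be a differential operator (its Fourier-theoretic symbol being forced to be polynomial). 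The log-term $p_m\log|\cdot|_H$ needs separate bookkeeping since $\log|\cdot|_H$ is only \emph{almost} homogeneous, picking up an additive polynomial under scaling; I would handle it by noting that its scaling anomaly is a genuine polynomial density and hence the log-term can survive invariance only in the boundary degrees $m\in -\dim_h-\N$, which are already subsumed into the $\mathcal{U}_m$-description.
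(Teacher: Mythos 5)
Your proposal follows essentially the same route as the paper: first use the dilation one-parameter subgroup of $\mathsf{A}_\mathsf{P}$ (identified via $\mathrm{ht}_\mathsf{P}$) to force $m=\mathrm{ht}_\mathsf{P}(\chi_1)-\mathrm{ht}_\mathsf{P}(\chi_2)$, then in rank $>1$ use the complementary torus directions to show that any invariant symbol must restrict to zero on $\overline{\mathsf{N}}_\mathsf{P}\setminus\{0\}$ — the paper phrases this as the extra homogeneities forcing non-trivial singular support, which is the same over-determinacy your root-space/Fourier argument exploits — leaving only the part supported at the origin, i.e.\ $\mathcal{U}_m(\overline{\mathfrak{n}}_\mathsf{P})$. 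The argument is correct as proposed.
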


The content of this lemma is that either the $\mathsf{A}_\mathsf{P}$-invariant part consists of invariant elements of the universal enveloping algebra or it is the whole symbol algebra.

\begin{proof}
We first note that the dilation on $\overline{\mathsf{N}}_\mathsf{P}$ factors over the action of $\mathsf{A}_\mathsf{P}$ on $\overline{\mathsf{N}}_\mathsf{P}$ defined from conjugation. Let $\mathsf{A}_\mathsf{P}^0\subseteq \mathsf{A}_\mathsf{P}$ be a complement to the corresponding embedding $\R_{>0}\hookrightarrow \mathsf{A}_\mathsf{P}$. And in fact, the basis vector $1\in \Hom(\C_{\chi_1},\C_{\chi_2})$ implements an isomorphism
$$(\Sigma^m_H\overline{\mathsf{N}}_\mathsf{P}\otimes \Hom(\C_{\chi_1},\C_{\chi_2}))^{\mathsf{A}_\mathsf{P}}\cong 
\begin{cases}
(\Sigma^{m}_H\overline{\mathsf{N}}_\mathsf{P}\otimes \C_\chi)^{\mathsf{A}_\mathsf{P}^0}, \quad &m= \mathrm{ht}_\mathsf{P}(\chi_1)-\mathrm{ht}_\mathsf{P}(\chi_2)\\
0,\quad&\mbox{otherwise.}\end{cases}$$
where $\chi$ is the restriction of $\chi_2\chi_1^{-1}$ to $\mathsf{A}_\mathsf{P}^0$.  We see from this identity that the case $\mathrm{rk}(\mathsf{A}_\mathsf{P})=1$ holds true. To simplify the remainder of the argument, we can without loss of generality assume that $\chi_1$ is trivial and that $m=-\mathrm{ht}_\mathsf{P}(\chi_2)$.

It remains to show that $(\Sigma^m_H\overline{\mathsf{N}}_\mathsf{P}\otimes \C_\chi)^{\mathsf{A}_\mathsf{P}^0}=(\mathcal{U}_m(\overline{\mathfrak{n}}_\mathsf{P})\otimes \C_\chi)^{\mathsf{A}_\mathsf{P}^0}$ when $\mathrm{rk}(\mathsf{A}_\mathsf{P})>1$. In fact, it is sufficient to show that $(\Sigma^m_H\overline{\mathsf{N}}_\mathsf{P}\otimes \C_\chi)^{\mathsf{A}_\mathsf{P}^0}$ is in the kernel of the restriction mapping $\Sigma^m_H\overline{\mathsf{N}}_\mathsf{P}\otimes \C_\chi\to C^\infty(\overline{\mathsf{N}}_\mathsf{P}\setminus \{0\})/\mathcal{P}^m(\overline{\mathsf{N}}_\mathsf{P})$. On both sides we use the $\mathsf{A}_\mathsf{P}$-action $\delta^\mathsf{P}_a$ defined by $\delta^\mathsf{P}_ak(n):=k(ana^{-1})$ defined for $a\in \mathsf{A}_\mathsf{P}$ and $k$ in any of the two spaces. The range of $(\Sigma^m_H\overline{\mathsf{N}}_\mathsf{P}\otimes \C_\chi)^{\mathsf{A}_\mathsf{P}^0}$ under the restriction mapping $\Sigma^m_H\overline{\mathsf{N}}_\mathsf{P}\otimes \C_\chi\to C^\infty(\overline{\mathsf{N}}_\mathsf{P}\setminus \{0\})/\mathcal{P}^m(\overline{\mathsf{N}}_\mathsf{P})$ is is contained in the set of elements $k\in C^\infty(\overline{\mathsf{N}}_\mathsf{P}\setminus \{0\})/\mathcal{P}^m(\overline{\mathsf{N}}_\mathsf{P})$ such that $\delta^\mathsf{P}_ak=\chi_2(a)k$. However, since $\mathrm{rk}(\mathsf{A}_\mathsf{P})>1$ the only element $k\in C^\infty(\overline{\mathsf{N}}_\mathsf{P}\setminus \{0\})/\mathcal{P}^m(\overline{\mathsf{N}}_\mathsf{P})$ such that $\delta^\mathsf{P}_ak=\chi_2(a)k$ is the zero function as the homogeneity otherwise forces there to be non-trivial singular support. And the lemma follows.
\end{proof}

For later purposes, we work with graded homogeneous bundles. We say that a finite-dimensional $\mathsf{A}_\mathsf{P}\mathsf{M}_\mathsf{P}$-representations $V$ is graded if $V=\oplus_{\nu\in \R} V[\nu]$ for finite-dimensional $\mathsf{A}_\mathsf{P}\mathsf{M}_\mathsf{P}$-representation $V[\nu]$. If so, the associated  homogeneous bundle $\pmb{E}=\mathsf{G}\times_\mathsf{P} V\to \mathfrak{X}$ is graded by $\pmb{E}[\nu]=\mathsf{G}\times_\mathsf{P} V[\nu]$.

\begin{cor}
\label{jnljnjknkjnad}
Consider the flat parabolic geometry $\mathfrak{X}=\mathsf{G}/\mathsf{P}$. For finite-dimensional graded $\mathsf{A}_\mathsf{P}\mathsf{M}_\mathsf{P}$-representations $V_1$ and $V_2$ we define the homogeneous vector bundles $\pmb{E}_j=\mathsf{G}\times_\mathsf{P} V_j\to \mathfrak{X}$. Then the invariant algebra $\Sigma_H^m(\mathfrak{X}; \pmb{E}_1,\pmb{E}_2)^\mathsf{G}$ consists of symbols of differential operators if $\mathrm{rk}(\mathsf{A}_\mathsf{P})>1$ and if $\mathrm{rk}(\mathsf{A}_\mathsf{P})=1$ we have that 
$$\Sigma_H^m(\mathfrak{X}; \pmb{E}_1,\pmb{E}_2)^\mathsf{G}\cong (\Sigma^m_H\overline{N}_P\otimes \Hom_{\mathsf{A}_\mathsf{P}}(V_1^{(m)},V_2))^{\mathsf{M}_\mathsf{P}},$$
where $V_1^{(m)}=V_1\otimes \C_\chi$ for the quasicharacter $\chi:=\mathrm{ht}_P^{-1}(m)$. 
\end{cor}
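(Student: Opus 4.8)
The plan is to deduce the Corollary directly from the preceding Proposition and Lemma applied to each graded summand. First I would unwind the definition of the graded symbol algebra: by the Proposition, for the homogeneous bundles $E_j[\nu_j]=\mathsf{G}\times_\mathsf{P}V_j[\nu_j]$ we have a $\mathsf{G}$-equivariant identification of $\Sigma_H^{m+\mu-\nu}(\mathfrak{X};\pmb{E}_1[\nu],\pmb{E}_2[\mu])$ with sections of a bundle of algebras with fibre $\Sigma^{m+\mu-\nu}_H\overline{\mathsf{N}}_\mathsf{P}\otimes\Hom(V_1[\nu],V_2[\mu])$, and taking $\mathsf{G}$-invariants picks out the fibre $(\Sigma^{m+\mu-\nu}_H\overline{\mathsf{N}}_\mathsf{P}\otimes\Hom(V_1[\nu],V_2[\mu]))^{\mathsf{M}_\mathsf{P}\mathsf{A}_\mathsf{P}}$. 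Summing over $\mu,\nu$ using the definition of $\Sigma^m_{H,\mathrm{gr}}$ then expresses $\Sigma_H^m(\mathfrak{X};\pmb{E}_1,\pmb{E}_2)^\mathsf{G}$ as a direct sum of such invariant pieces, which I would further reduce by decomposing each $V_j[\nu]$ into $\mathsf{A}_\mathsf{P}$-quasicharacter isotypical components so that the Lemma becomes applicable blockwise; here one uses that $\mathsf{M}_\mathsf{P}$ and $\mathsf{A}_\mathsf{P}$ commute so the $\mathsf{A}_\mathsf{P}$-quasicharacter decomposition is $\mathsf{M}_\mathsf{P}$-equivariant.

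Next I would invoke the Lemma on each block. For the case $\mathrm{rk}(\mathsf{A}_\mathsf{P})>1$, the Lemma says every block of $(\Sigma^{m+\mu-\nu}_H\overline{\mathsf{N}}_\mathsf{P}\otimes\Hom(\C_{\chi_1},\C_{\chi_2}))^{\mathsf{A}_\mathsf{P}}$ equals the corresponding block of $(\mathcal{U}_{m+\mu-\nu}(\overline{\mathfrak{n}}_\mathsf{P})\otimes\Hom(\C_{\chi_1},\C_{\chi_2}))^{\mathsf{A}_\mathsf{P}}$, i.e. consists only of symbols of differential operators; reassembling over all blocks and over $\mu,\nu$, and noting that a graded operator is a differential operator iff each of its components is, gives that $\Sigma_H^m(\mathfrak{X};\pmb{E}_1,\pmb{E}_2)^\mathsf{G}$ consists of symbols of differential operators. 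For the case $\mathrm{rk}(\mathsf{A}_\mathsf{P})=1$, $\mathrm{ht}_\mathsf{P}:\mathfrak{a}^*_{\mathsf{P},\C}\to\C$ is an isomorphism, so there is a unique quasicharacter $\chi=\mathrm{ht}_\mathsf{P}^{-1}(m)$ with $\mathrm{ht}_\mathsf{P}(\chi)=m$; the Lemma shows that the only block contributing the "full symbol algebra" $\Sigma^m_H\overline{\mathsf{N}}_\mathsf{P}$ rather than just enveloping-algebra elements is the one where the difference of $\mathsf{A}_\mathsf{P}$-weights of $V_2$ and $V_1$ equals the relevant homogeneity degree. I would package this by tensoring $V_1$ with $\C_\chi$: writing $V_1^{(m)}=V_1\otimes\C_\chi$, the matching condition becomes that the $\mathsf{A}_\mathsf{P}$-weights of $V_2$ and $V_1^{(m)}$ agree, and the surviving $\mathsf{A}_\mathsf{P}$-invariant part of $\Sigma^m_H\overline{\mathsf{N}}_\mathsf{P}\otimes\Hom(V_1^{(m)},V_2)$ is $\Sigma^m_H\overline{\mathsf{N}}_\mathsf{P}\otimes\Hom_{\mathsf{A}_\mathsf{P}}(V_1^{(m)},V_2)$, and then one still takes $\mathsf{M}_\mathsf{P}$-invariants, yielding the stated formula.

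The main bookkeeping obstacle — and the step I expect to need the most care — is tracking the grading shifts consistently. In the graded calculus the component $A_{\mu\nu}$ lives in $\Psi^{m+\mu-\nu}_H$, so when I apply the Lemma to a block indexed by $\mathsf{A}_\mathsf{P}$-quasicharacters $(\chi_1,\chi_2)$ inside $\Hom(V_1[\nu],V_2[\mu])$, the relevant scalar homogeneity degree is $m+\mu-\nu$ and the matching condition from the Lemma reads $m+\mu-\nu=\mathrm{ht}_\mathsf{P}(\chi_1)-\mathrm{ht}_\mathsf{P}(\chi_2)$. I would need to verify that, after the substitution $V_1\rightsquigarrow V_1^{(m)}=V_1\otimes\C_\chi$ with $\mathrm{ht}_\mathsf{P}(\chi)=m$, these conditions across all $\mu,\nu$ and all quasicharacter blocks collapse exactly to the single condition defining $\Hom_{\mathsf{A}_\mathsf{P}}(V_1^{(m)},V_2)$ — that is, that the degrees $\nu,\mu$ on the bundles and the internal $\mathsf{A}_\mathsf{P}$-weights combine into one honest $\mathsf{A}_\mathsf{P}\times\R$-grading whose "balanced" part is precisely what the Lemma keeps. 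Once that identification is made carefully, nothing else is substantive: the $\mathrm{rk}>1$ statement is immediate, and the $\mathrm{rk}=1$ isomorphism is just the assembled form of the Lemma's second case together with the Proposition's description of the $\mathsf{M}_\mathsf{P}$-invariants. I would also remark, for completeness, that "consists of symbols of differential operators" should be read as: lies in the image of the symbol map restricted to the graded Heisenberg differential operators, which is the natural meaning given Theorem \ref{lkjandlkjandkjadn}.
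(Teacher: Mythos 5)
Your proposal is correct and follows exactly the route the paper intends: the Corollary is stated without an explicit proof as an immediate consequence of the preceding Proposition (reduction of $\mathsf{G}$-invariant symbols to the fibre $(\Sigma^m_H\overline{\mathsf{N}}_\mathsf{P}\otimes \Hom(V_1,V_2))^{\mathsf{M}_\mathsf{P}\mathsf{A}_\mathsf{P}}$ over $e\mathsf{P}$) combined with the Lemma applied blockwise in $\mathsf{A}_\mathsf{P}$-quasicharacters, with the graded shift $m+\mu-\nu$ on each component, which is precisely your argument. The bookkeeping point you flag — that the $\R$-grading of the $\pmb{E}_j$ and the internal $\mathsf{A}_\mathsf{P}$-weights must combine so that the Lemma's height-matching condition across all blocks collapses to the single condition defining $\Hom_{\mathsf{A}_\mathsf{P}}(V_1^{(m)},V_2)$ — is indeed the only substantive verification, and it is exactly what the paper leaves implicit.
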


\section{BGG-sequences and graded Rockland sequences} 
\label{subsec:bggrock}

Throughout this section, we fix a $G$-Carnot manifold $\mathfrak{X}$. For simplicity, we assume that $\mathfrak{X}$ is compact and remark that local statements will extend to non-compact $\mathfrak{X}$. 

\begin{definition}
\label{gradedrock}
Consider a collection $\pmb{E}_\bullet=(\pmb{E}_0,\pmb{E}_1,\ldots, \pmb{E}_N)$ of graded vector bundles $\pmb{E}_j\to \mathfrak{X}$ and numbers $\pmb{m}=(m_1,\ldots, m_N)\in \R^N$. We let
\begin{equation}
\label{lkankland}
D_\bullet: \quad 0\to C^\infty(\mathfrak{X};\pmb{E}_0)\xrightarrow{D_1}C^\infty(\mathfrak{X};\pmb{E}_1)\xrightarrow{D_2}\cdots \xrightarrow{D_N}C^\infty(\mathfrak{X};\pmb{E}_N)\to 0,
\end{equation}
be a sequence of maps $D_j\in \Psi^{m_j}_{H,\rm{gr}}(\mathfrak{X}; \pmb{E}_{j-1},\pmb{E}_j)$. We say that the sequence $D_\bullet$ in Equation \eqref{lkankland} is a \emph{graded Rockland sequence} if the symbol sequence $\sigma_H(D_\bullet)$ defined by
$$0\to \mathcal{S}_0(T_H\mathfrak{X};\pmb{E}_0)\xrightarrow{\sigma_H(D_1)}\mathcal{S}_0(T_H\mathfrak{X};\pmb{E}_1)\xrightarrow{\sigma_H(D_2)}\cdots \xrightarrow{\sigma_H(D_N)}\mathcal{S}_0(T_H\mathfrak{X};\pmb{E}_N)\to 0,$$
localizing in any non-trivial unitary representation of the osculating Lie groupoid $T_H\mathfrak{X}$ to an exact sequence. We say that $\pmb{m}$ is the order of $D_\bullet$.
\begin{itemize}
\item In the special case that a Rockland sequence $D_\bullet$ as in Equation \eqref{lkankland} is a complex, we call $D_\bullet$ a \emph{graded Rockland complex}.
\item In the special case that $\pmb{E}_0,\pmb{E}_1,\ldots, \pmb{E}_N\to \mathfrak{X}$ are all trivially graded, a Rockland sequence $D_\bullet$ as in Equation \eqref{lkankland} is called a \emph{Rockland sequence}.
\end{itemize}
\end{definition}

\begin{remark}
Upon shifting the grading of each $\pmb{E}_j$ in a graded Rockland sequence, we can achieve that $D_\bullet$ has order $\pmb{0}=(0,\ldots, 0)\in \R^N$.
\end{remark}

\subsection{BGG-sequences}
\label{susubbgg}

In the paper \cite{bggoriginal}, Bernstein-Gelfand-Gelfand studied the resolution of a highest weight module of a complex, semi-simple Lie algebra by means of Verma modules. This led Bernstein-Gelfand-Gelfand to the category $\mathcal{O}$, for an introduction thereto see \cite{humphreys}. The construction of Bernstein-Gelfand-Gelfand was later extended to generalized Verma modules by Lepowsky \cite{lepog}. The construction is by now understood in larger generality. We restrict to the setting of finite-dimensional representations of complex, semi-simple Lie groups using algebraic tools in this section and return to the  general case below in Subsection \ref{lknaldkna}.

We start by discussing Bernstein-Gelfand-Gelfand's algebraic setup whereafter we bring it back to the geometric interpretation of relevance to us. Let $\mathsf{G}$ be a connected, complex, semi-simple Lie group with Lie algebra $\mathfrak{g}$. We fix a maximal compact subgroup $\mathsf{K}\subseteq \mathsf{G}$ and a Cartan subalgebra $\mathfrak{h}\subseteq \mathfrak{g}$ integrating to $\mathsf{H}\subseteq \mathsf{G}$. Set $\mathsf{M}:=\mathsf{K}\cap \mathsf{H}$. We can factorize $\mathsf{G}=\mathsf{K}\mathsf{A}\mathsf{N}_+$ where $\mathsf{A}$ comes from the factorization $\mathsf{H}=\mathsf{M}\mathsf{A}$ and $\mathsf{N}_+$ is a (complex) nilpotent Lie group. Here we use notation mimicking that of parabolic geometry from Subsection \ref{jnkajnakjdnapara}. The Borel subgroup is defined by 
$$\mathsf{B}:=\mathsf{M}\mathsf{A}\mathsf{N}_+=\mathsf{H}\mathsf{N}_+,$$
and is solvable with nilpotent radical $[\mathsf{B},\mathsf{B}]=\mathsf{N}_+$. Let $\mathfrak{b}$ denote the Lie algebra of $\mathsf{B}$ and $\mathfrak{n}_+$ the Lie algebra of $\mathsf{N}_+$. In terms of the roots $\Delta\subseteq \mathfrak{h}^*$, and a suitable choice of positive root system $\Delta_+$, the description above stems from the root decompositions
$$\mathfrak{g}=\mathfrak{h}+\sum_{\alpha\in \Delta} \mathfrak{g}_\alpha, \quad\mathfrak{b}=\mathfrak{h}+\sum_{\alpha\in \Delta_+} \mathfrak{g}_{\alpha}\quad \mbox{and}\quad \mathfrak{n}_+=\sum_{\alpha\in \Delta_+} \mathfrak{g}_{\alpha},$$
where $\mathfrak{g}_\alpha:=\{Y\in \mathfrak{g}: [h,Y]=\alpha(h)Y\, \forall h\in \mathfrak{h}\}$. We shall also make use of the nilpotent Lie algebra 
$$ \mathfrak{n}_-=\sum_{\alpha\in \Delta_+} \mathfrak{g}_{-\alpha}.$$
In terms of a $|k|$-grading as in Subsection \ref{jnkajnakjdnapara}, the above description is related via a height of roots as in Example \ref{kljnkjandaretrot}. For complex Lie groups, the set of restricted roots $\Sigma$ coincides with the set of all roots $\Delta$. The height with respect to the Borel group is $\mathrm{ht}_\mathsf{B}(\sum_{\alpha\in \Pi} a_\alpha \alpha):=\sum_{\alpha\in \Pi} a_\alpha$ for a choice of simple roots $\Pi$. The Iwasawa decomposition described above can as in Subsection \ref{jnkajnakjdnapara} be constructed from the $|k|$-grading 
$$\mathfrak{g}_j:=\bigoplus_{\alpha\in \Delta: \mathrm{ht}_B(\alpha)=j} \mathfrak{g}_\alpha.$$

The Verma module associated with a weight $\lambda\in \mathfrak{h}^*$ is the $\mathfrak{g}$-module defined by 
$$\mathcal{M}(\lambda):=\mathcal{U}(\mathfrak{g})\otimes_{\mathcal{U}(\mathfrak{b})} \C_\lambda.$$
Here $\mathcal{U}(\mathfrak{g})$ denotes the universal enveloping algebra and $\C_\lambda$ is the $\mathfrak{b}$-module defined from the character $\lambda$ extended to $\mathfrak{b}$ via the quotient map $\mathfrak{b}\to \mathfrak{h}$. By \cite[Proposition 7.1.8]{dixmier}, the Verma module is cofinal among all $\mathfrak{g}$-modules with highest weight $\lambda$. The Poincaré-Birkhoff-Witt theorem induces a vector space isomorphism
\begin{equation}
\label{pbwiso}
\mathcal{M}(\lambda)\cong  \mathcal{U}(\mathfrak{n}_-).
\end{equation}
The idea underlying the construction of the BGG-resolution is the fact that there are very few morphisms between Verma modules. Indeed, to quantify what is mean by ``very few'' we recall the following result due to Bernstein-Gelfand-Gelfand \cite[page 41]{bggoriginal}, see also \cite{dixmier}. First, we recall the standard notation 
$$\rho:=\frac{1}{2}\sum_{\alpha\in \Delta_+}\alpha.$$
We also let $W$ denote the Weyl group and equip it with its Bruhat order (cf. \cite[Chapter 3.2.14]{capslovak}).

\begin{thm}
Let $\lambda$ and $\mu$ be weights. Then there is a non-zero morphism of $\mathfrak{g}$-modules
$$\mathcal{M}(\mu)\leftarrow \mathcal{M}(\lambda),$$
if and only if $\lambda$ is an integral weight such that $\lambda+\rho$ and $\mu+\rho$ are in the same orbit of the Weyl group, and more precisely for $w_1\geq w_2\in W$ the weight $w_1^{-1}(\lambda+\rho)=w_2^{-1}(\mu+\rho)$ is the unique dominant weight in the orbit of $\lambda+\rho$. If so, the morphism is uniquely determined up to a complex scalar and injective.
\end{thm}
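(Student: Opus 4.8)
The plan is to break the statement into three parts, handled in increasing order of difficulty: (a) any nonzero $\mathfrak{g}$-module morphism $\mathcal{M}(\lambda)\to\mathcal{M}(\mu)$ is injective; (b) the space of such morphisms is at most one-dimensional; (c) a nonzero morphism exists precisely under the stated integrality/Weyl-orbit/Bruhat conditions. For (a) I would use the Poincaré--Birkhoff--Witt isomorphism \eqref{pbwiso}: via $u\otimes 1\mapsto u$ the module $\mathcal{M}(\mu)$ is free of rank one over $\mathcal{U}(\mathfrak{n}_-)$ acting by left multiplication, generated by the highest weight vector $v_\mu$. A nonzero morphism $\phi$ is determined by $\phi(v_\lambda)=u\,v_\mu$ with $u\in\mathcal{U}(\mathfrak{n}_-)\setminus\{0\}$ (nonzero since $v_\lambda$ generates $\mathcal{M}(\lambda)$), so $\phi(x v_\lambda)=xu\,v_\mu$ for $x\in\mathcal{U}(\mathfrak{n}_-)$. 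Since the associated graded of $\mathcal{U}(\mathfrak{n}_-)$ for the PBW filtration is a polynomial ring, $\mathcal{U}(\mathfrak{n}_-)$ is a Noetherian domain, hence an Ore domain with a skew field of fractions $Q$; thus $xu=0$ forces $x=0$ and $\ker\phi=0$.

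For (b) the integrality hypothesis on $\lambda$ first enters: when $\lambda+\rho$ is integral both $\mathcal{M}(\lambda)$ and $\mathcal{M}(\mu)$ have finite length, lying in a single linkage block with composition factors of finite multiplicity. Using $Q$ and the flatness of Ore localization, any two nonzero submodules of $\mathcal{M}(\mu)$ intersect nontrivially: such a submodule is in particular a nonzero left ideal $I\subseteq\mathcal{U}(\mathfrak{n}_-)$ with $Q\otimes_{\mathcal{U}(\mathfrak{n}_-)}I=Q$, and tensoring $0\to I_1\cap I_2\to I_1\oplus I_2\to\mathcal{U}(\mathfrak{n}_-)$ with $Q$ shows $Q\otimes(I_1\cap I_2)\cong Q\neq 0$. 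Finite length together with this property forces $\mathcal{M}(\mu)$, and likewise $\mathcal{M}(\lambda)$, to have a simple socle. Given two nonzero morphisms $\phi_1,\phi_2$, both injective by (a), each carries the simple socle of $\mathcal{M}(\lambda)$ isomorphically onto the unique simple submodule of $\mathcal{M}(\mu)$; by Schur's lemma they agree on the socle up to a scalar $c$, so $\phi_1-c\phi_2$ annihilates a nonzero submodule and hence vanishes by (a).

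For (c), necessity of "$\lambda+\rho$ and $\mu+\rho$ lie in one $W$-orbit" is the central character obstruction: a nonzero $\mathfrak{g}$-morphism intertwines the $Z(\mathfrak{g})$-actions, so the Harish--Chandra characters coincide, $\chi_\lambda=\chi_\mu$, which is exactly this $W$-conjugacy; integrality is then forced because crossing a wall produces singular vectors whose existence in the corresponding $\mathfrak{sl}_2$-reduction requires $\langle\lambda+\rho,\alpha^\vee\rangle\in\Z_{>0}$ for the relevant roots $\alpha$. Writing $\lambda+\rho=w_1\nu$ and $\mu+\rho=w_2\nu$ with $\nu$ the dominant representative, it remains to prove that a nonzero morphism exists if and only if $w_1\ge w_2$ in the Bruhat order. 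For the implication from $w_1\ge w_2$ to existence, I would first treat a single simple reflection $s_\alpha$: in $\mathcal{M}(\lambda)$ the vector $Y_\alpha^{\,n}v_\lambda$ with $n=\langle\lambda+\rho,\alpha^\vee\rangle\in\Z_{>0}$ is annihilated by $\mathfrak{n}_+$ --- by the one-variable $\mathfrak{sl}_2$-computation for $\alpha$ together with the vanishing $[\mathfrak{g}_\beta,\mathfrak{g}_{-\alpha}]=0$ for simple $\beta\neq\alpha$ --- so it generates a submodule isomorphic to the Verma module of highest weight $s_\alpha(\lambda+\rho)-\rho$. One then composes such embeddings along a chain of covering relations realizing $w_1\ge w_2$, using the Bernstein--Gelfand--Gelfand ``diamond'' lemma to reduce non-simple covering reflections to the simple case; this combinatorial input is the heart of the BGG resolution. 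For the converse, an embedding $\mathcal{M}(\lambda)\hookrightarrow\mathcal{M}(\mu)$ makes $L(\lambda)$ a composition factor of $\mathcal{M}(\mu)$, and the strong linkage principle --- provable via the Jantzen filtration or translation functors --- forces $w_1\ge w_2$. The remaining clauses of the theorem, uniqueness up to a scalar and injectivity, are exactly (b) and (a).

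I expect part (c) to be the main obstacle: the passage from simple-reflection embeddings to the full Bruhat order (equivalently, the existence and exactness of the BGG resolution) and the ``only if'' direction via the strong linkage principle are substantial, whereas (a) and (b) are essentially formal once the Ore-domain and finite-length facts are in place.
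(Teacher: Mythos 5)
The paper does not prove this statement at all: it is recalled as a classical theorem of Bernstein--Gelfand--Gelfand with pointers to \cite{bggoriginal} and \cite{dixmier}, so there is no in-text argument to compare against. Your outline faithfully reproduces the standard proof architecture from exactly those sources. Parts (a) and (b) are correct and essentially complete: injectivity needs only that $\mathcal{U}(\mathfrak{n}_-)$ is a domain (the Ore condition is not yet needed there), and your uniqueness argument --- uniformity of $\mathcal{M}(\mu)$ via Ore localization, plus finite length, giving a simple essential socle, then Schur and part (a) --- is the argument behind \cite[Theorem 7.6.6]{dixmier}. One small simplification: every Verma module has finite length, so the integrality hypothesis is not needed for (b).

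For (c) you have correctly identified where the real work lies, but as written those two steps are citations rather than proofs: the passage from the simple-reflection embedding $Y_\alpha^{n}v_\lambda$ to embeddings along arbitrary Bruhat-order relations requires the BGG square/diamond lemma in full, and the converse requires the strong linkage principle (Jantzen filtration or the original BGG argument). Since these constitute most of the content of the theorem, a self-contained write-up would have to either supply them or cite them precisely --- which is what the paper itself does for the whole statement. Two further caveats. First, your claim that the central-character argument ``forces integrality'' is too quick: what strong linkage actually forces is integrality of $\langle\lambda+\rho,\beta^\vee\rangle$ along the chain of reflections involved, not integrality of $\lambda$ against all roots (and for $\lambda=\mu$ nothing is forced); this looseness is already present in the paper's formulation, but your proof should not pretend to derive more than is true. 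Second, the equivalence between strong linkage and the Bruhat-order condition $w_1\geq w_2$ as stated requires $\lambda+\rho$ regular; for singular orbits the correspondence between elements of $W$ and weights in the orbit is not bijective and the statement needs minor reinterpretation. Neither point undermines the overall strategy, which is the correct and standard one.
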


Bernstein-Gelfand-Gelfand \cite{bggoriginal} proved that when assembling these morphisms together according to lengths in the Weyl group, they fit into a resolution. Note that  if $V$ is a $\mathfrak{g}$-module with highest weight $\lambda$, there is a natural map $\mathcal{M}(\lambda)\to V$.

\begin{thm}[The Bernstein-Gelfand-Gelfand resolution]
\label{inadonad9}
Let $V$ be a finite-dimensional $\mathfrak{g}$-module with highest weight $\lambda$. Write $d+1$ for the longest length in the Weyl group. Define the $\mathfrak{g}$-modules
$$C_k:=\bigoplus_{w\in W, \ |w|=k}\mathcal{M}(w(\lambda+\rho)-\rho), \quad k=0, \ldots, d.$$
Then there are elements $\delta_k\in \Hom_{\mathcal{U}(\mathfrak{g})}(C_k, C_{k-1})$ making the following sequence exact
$$0\leftarrow V\leftarrow C_0\xleftarrow{\delta_1}C_1\xleftarrow{\delta_2}\cdots \xleftarrow{\delta_{d-1}}C_{d-1}\xleftarrow{\delta_d}C_d\leftarrow 0.$$

\end{thm}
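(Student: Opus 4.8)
The plan is to follow the classical strategy of Bernstein-Gelfand-Gelfand, realizing the complex $(C_\bullet,\delta_\bullet)$ as a subcomplex of the standard Koszul-type (relative Lie algebra homology / BGG) resolution of $V$ by generalized Verma modules, and then deducing exactness of the ``strong'' BGG-complex from the exactness of the big one together with a weight-counting argument that forces the cohomology to vanish. Concretely, first I would recall (or construct) the weak BGG-resolution: the Chevalley-Eilenberg complex $\mathcal{U}(\mathfrak{g})\otimes_{\mathcal{U}(\mathfrak{b})}\bigl(\wedge^\bullet(\mathfrak{g}/\mathfrak{b})\otimes V\bigr)$, whose homology in degree $0$ is $V$ and is zero elsewhere; decomposing $\wedge^k(\mathfrak{g}/\mathfrak{b})\otimes V$ under the action of $\mathfrak{h}$ (equivalently, computing $H^k(\mathfrak{n}_-,V)$ via Kostant's theorem) shows that the $\mathfrak{g}$-module $\mathcal{U}(\mathfrak{g})\otimes_{\mathcal{U}(\mathfrak{b})}\bigl(\wedge^k(\mathfrak{g}/\mathfrak{b})\otimes V\bigr)$ has a filtration whose subquotients are exactly the Verma modules $\mathcal{M}(w(\lambda+\rho)-\rho)$ for $|w|\le k$, each with multiplicity one. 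This identifies the ``diagonal'' pieces $C_k=\bigoplus_{|w|=k}\mathcal{M}(w(\lambda+\rho)-\rho)$ inside the resolution.

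The second step is to produce the maps $\delta_k\in\Hom_{\mathcal{U}(\mathfrak{g})}(C_k,C_{k-1})$. For each pair $w'\le w$ with $|w|=k$, $|w'|=k-1$ there is, by the preceding theorem on morphisms of Verma modules, a map $\mathcal{M}(w(\lambda+\rho)-\rho)\to\mathcal{M}(w'(\lambda+\rho)-\rho)$, unique up to scalar; I would assemble these into $\delta_k$ after fixing a coherent choice of signs (scalars) so that $\delta_{k-1}\delta_k=0$. The existence of a consistent sign choice is a combinatorial fact about the Bruhat order — every length-two interval $[w',w'']$ in $W$ contains exactly two elements of intermediate length — so one can orient the Hasse diagram exactly as in the proof that the Koszul differential squares to zero; alternatively, one can argue that $\delta_{k-1}\delta_k$ is a $\mathfrak{g}$-morphism between Verma modules of non-comparable (or equal) weights, hence forced to vanish for degree reasons except possibly along identical summands, and rule out that remaining case.

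The third and decisive step is exactness. Having built the complex $C_\bullet\to V$, I would show $H_k(C_\bullet)=0$ for $k>0$ and $H_0(C_\bullet)=V$. The cleanest route is to compare $C_\bullet$ with the weak resolution above: because $C_k$ captures precisely the top $\mathfrak{h}$-weight line (the $\mathfrak{n}_+$-invariants, up to $\rho$-shift) in each term $\mathcal{U}(\mathfrak{g})\otimes_{\mathcal{U}(\mathfrak{b})}(\wedge^k(\mathfrak{g}/\mathfrak{b})\otimes V)$, the inclusion $C_\bullet\hookrightarrow$ (weak resolution) is a quasi-isomorphism; this in turn follows from a Hochschild-Serre / spectral sequence argument in which the $E_1$-page computes $\mathfrak{n}_+$-homology of the weak resolution and collapses onto $C_\bullet$. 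Then exactness of $C_\bullet$ is inherited from exactness of the weak resolution. The main obstacle I anticipate is precisely making this comparison rigorous — showing that the ``strong'' diagonal subcomplex $C_\bullet$ is actually a \emph{direct summand} (as a complex, not just termwise) of the weak resolution, or equivalently that the relevant $\Ext$-groups between the off-diagonal Verma subquotients vanish, so that the homology of the weak resolution is carried entirely by $C_\bullet$. This is where the full strength of the classification of Verma-module morphisms and Kostant's weight multiplicity-one statement is used, and it is the step that requires genuine care rather than bookkeeping. I would close by noting $d+1$ equals the longest Weyl-group element's length, so the complex terminates at $C_d$ as stated.
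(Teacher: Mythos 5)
The paper offers no proof of Theorem \ref{inadonad9}: it is quoted as a classical theorem of Bernstein--Gelfand--Gelfand with a citation to \cite{bggoriginal} (see also \cite{humphreys}), so there is nothing in the text to compare your argument against line by line. Your plan is the standard classical proof: the relative Koszul (weak) resolution $\mathcal{U}(\mathfrak{g})\otimes_{\mathcal{U}(\mathfrak{b})}(\wedge^\bullet(\mathfrak{g}/\mathfrak{b})\otimes V)$, identification of the relevant Verma subquotients via Kostant's theorem and the BGG weight lemma, assembly of the $\delta_k$ from the Verma morphisms of the preceding theorem with signs chosen using the fact that every length-two Bruhat interval contains exactly two elements of intermediate length, and an $\Ext$-vanishing argument to split the diagonal subcomplex off the weak resolution. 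You correctly single out that last splitting step as the one requiring genuine care.

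One piece of the bookkeeping in your first step is wrong as stated and needs repair. The standard filtration of $\mathcal{U}(\mathfrak{g})\otimes_{\mathcal{U}(\mathfrak{b})}(\wedge^k(\mathfrak{g}/\mathfrak{b})\otimes V)$ has subquotients $\mathcal{M}(\mu+\nu)$ for \emph{all} weights $\mu$ of $\wedge^k(\mathfrak{g}/\mathfrak{b})$ and all weights $\nu$ of $V$ (with multiplicities given by the weight-space dimensions); most of these are not of the form $w(\lambda+\rho)-\rho$, so the subquotients are certainly not ``exactly the $\mathcal{M}(w(\lambda+\rho)-\rho)$ for $|w|\le k$, each with multiplicity one.'' One must first apply the projection functor onto the block of the central character $\chi_\lambda$ — harmless for exactness, since $V$ lies in that block and the complementary direct summand of the weak resolution is therefore acyclic — and even then the BGG weight lemma only yields directly that the linked subquotients $\mathcal{M}(w(\lambda+\rho)-\rho)$ satisfy $|w|\le k$, with multiplicity one guaranteed only in the extremal case $|w|=k$. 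Ruling out the subquotients with $|w|<k$ is part of the content of the weak BGG resolution and requires an additional induction or character/Euler-characteristic comparison. With that inserted, and with the $\Ext$-vanishing between Verma modules of equal length in the same block to split the filtration, your plan goes through and reproduces the argument in \cite{humphreys}.
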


To compute with the BGG-complex, it is useful to better understand the Lie algebra homology bundles $\mathpzc{H}_j(V)\to \mathfrak{X}=\mathsf{G}/\mathsf{P}$. In other words, we wish to describe the $\mathsf{G}_0$-module structure on $H_j(\mathfrak{p}_+,V)$. For complex $\mathsf{G}$ and $\mathsf{P}$ its Borel subgroup, the $\mathsf{G}_0$-module structure on $H_j(\mathfrak{p}_+,V)$ was described by Kostant \cite{kostant61} and the general case can be found in Cap-Slovak's monograph \cite[Theorem 3.3.5 and Proposition 3.3.6]{capslovak}. We recall it in a form relevant to this work. 

\begin{thm}
\label{knlkanlknad}
Let $\mathfrak{g}$ be a $|k|$-graded semisimple Lie algebra and $V$ a complex, finite-dimensional highest weight representation of $\mathfrak{g}$. Write $\lambda$ for the highest weight of $V$, $\delta$ for the lowest form on $\mathfrak{g}$ and $W^{\mathfrak{p}}\subseteq W$ for the Hasse diagram of $\mathfrak{p}$ in the Weyl group $W$ of $\mathfrak{g}$ (see \cite[Chapter 3.2.15]{capslovak}).
\begin{itemize}
\item Assume that $\mathfrak{g}$ is complex and the representation $V$ is complex linear. Then for a $\mathfrak{g}_0$-dominant weight $\nu$, then the $G_0$-isotypical component $H^k(\mathfrak{p}_+,V)^\nu$ vanishes unless $\nu=w(\lambda+\delta)-\delta$ for a $w\in W^{\mathfrak{p}}$ with $\ell(w)=k$, in which case $H^{\ell(w)}(\mathfrak{p}_+,V)^{w(\lambda+\delta)-\delta}$ is irreducible. In particular, as $\mathsf{G}_0$-modules
$$H^k(\mathfrak{p}_+,V)=\bigoplus_{w\in W^{\mathfrak{p}}, \ \ell(w)=k} \mathpzc{V}_{w(\lambda+\delta)-\delta},$$
where $\mathpzc{V}_{w(\lambda+\delta)-\delta}$ are irreducible finite-dimensional $\mathsf{G}_0$-representations with highest weight $w(\lambda+\delta)-\delta$. 
\item Assume that $\mathfrak{g}$ is real. Then 
$$H^k(\mathfrak{p}_+,V)=H^k(\mathfrak{p}_{+,\C},V),$$
as $\mathsf{G}_0$-modules where the right hand side is computed from the complex $|k|$-graded semisimple Lie algebra $\mathfrak{g}_\C=\mathfrak{g}\otimes_\R\C$ as in the previous item.
\end{itemize}
\end{thm}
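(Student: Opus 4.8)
To prove this one runs Hodge theory on the Chevalley--Eilenberg complex; since the arguments in \cite{kostant61} and \cite[Chapter 3.3]{capslovak} are lengthy, I only sketch the architecture. Work first with $\mathfrak{g}$ complex and $V$ complex linear. Using the Killing form to identify $\mathfrak{p}_+^*\cong\mathfrak{g}/\mathfrak{p}$ as $\mathfrak{g}_0$-modules, the cohomology $H^k(\mathfrak{p}_+,V)$ is that of the complex $(\wedge^k\mathfrak{p}_+^*\otimes V,\partial)$, with $\partial$ the Lie algebra cohomology differential, which is $\mathfrak{g}_0$-equivariant because $\mathfrak{g}_0$ normalises $\mathfrak{p}_+$. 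As all modules in sight are finite dimensional and hence completely reducible, the task is to identify the $\mathfrak{g}_0$-isotypical decomposition. Choosing a compact real form of $\mathfrak{g}$ restricting to one of $\mathfrak{g}_0$ supplies a $\mathfrak{g}_0$-invariant positive definite Hermitian form on $\wedge^*\mathfrak{p}_+^*\otimes V$, relative to which $\partial$ has a formal adjoint $\partial^*$, the Kostant codifferential. The Laplacian $\Box:=\partial\partial^*+\partial^*\partial$ is self-adjoint, non-negative and $\mathfrak{g}_0$-equivariant, and the usual Hodge argument gives a $\mathfrak{g}_0$-module isomorphism $H^k(\mathfrak{p}_+,V)\cong\ker\bigl(\Box|_{\wedge^k\mathfrak{p}_+^*\otimes V}\bigr)$.

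The crux, and the step I expect to be the main obstacle, is Kostant's identity expressing $\Box$ through Casimir elements. Expanding $\partial$ and $\partial^*$ in a basis of $\mathfrak{g}$ adapted to the $|k|$-grading, the mixed terms recombine into the Casimir of $\mathfrak{g}$ and one obtains $2\Box=c_V\cdot\mathrm{id}-\mathrm{Cas}_{\mathfrak{g}_0}$, where $c_V$ is the scalar by which the $\mathfrak{g}$-Casimir acts on $V$ and $\mathrm{Cas}_{\mathfrak{g}_0}$ acts diagonally on $\wedge^*\mathfrak{p}_+^*\otimes V$. Evaluating this on a $\mathfrak{g}_0$-irreducible summand of highest weight $\nu$, and absorbing the discrepancy between $\delta$ and half the sum of the $\mathfrak{g}_0$-positive roots (the $\mathfrak{p}_+$-part of $\delta$ is exactly accounted for by the $\wedge^*\mathfrak{p}_+^*$ factor), one finds that $\Box$ acts there by the scalar $\tfrac12\bigl(\langle\lambda+\delta,\lambda+\delta\rangle-\langle\nu+\delta,\nu+\delta\rangle\bigr)$. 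The least painful route to this formula, which I would adopt, is through Kostant's cubic Dirac operator for the pair $(\mathfrak{g},\mathfrak{g}_0)$: up to normalisation $\partial+\partial^*$ coincides with it, and the square of the cubic Dirac operator is manifestly $\mathrm{Cas}_{\mathfrak{g}}-\mathrm{Cas}_{\mathfrak{g}_0}$ plus a constant, which makes the identity transparent and avoids the structure-constant bookkeeping.

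Granting the formula, $\ker\Box$ in degree $k$ is the sum of the $\mathfrak{g}_0$-irreducible summands of $\wedge^k\mathfrak{p}_+^*\otimes V$ with highest weight $\nu$ satisfying $\langle\nu+\delta,\nu+\delta\rangle=\langle\lambda+\delta,\lambda+\delta\rangle$. Since $\nu$ occurs as a weight of $\wedge^*\mathfrak{p}_+^*\otimes V$ one has $\lambda+\delta-(\nu+\delta)=\lambda-\nu\in\mathbb{Z}_{\ge0}\Delta_+$, and $\lambda+\delta$ is regular dominant; the elementary fact that a weight dominated by a regular dominant weight and of the same length must lie in its Weyl orbit then forces $\nu+\delta=w(\lambda+\delta)$ for a unique $w\in W$, and demanding $\nu$ itself be $\mathfrak{g}_0$-dominant restricts $w$ to $W^{\mathfrak{p}}$. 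It remains to see that for $w\in W^{\mathfrak{p}}$ the summand $\mathpzc{V}_{w(\lambda+\delta)-\delta}$ occurs in $H^k$ only for $k=\ell(w)$ and then exactly once. The bound $k\ge\ell(w)$ is a weight count: the set of positive roots sent to negative ones by $w^{-1}$ has $\ell(w)$ elements, all lying in $\Delta(\mathfrak{p}_+)$ when $w\in W^{\mathfrak{p}}$, so $w(\lambda+\delta)-\delta$ cannot be produced in $\wedge^k\mathfrak{p}_+^*\otimes V$ with fewer than $\ell(w)$ exterior factors; the matching upper bound and multiplicity one come from Kostant's explicit description of the harmonic cochains, or equivalently from Poincaré duality for $H^*(\mathfrak{p}_+,\cdot)$ combined with the $\mathfrak{g}_0$-equivariant Euler characteristic identity $\sum_k(-1)^k[\wedge^k\mathfrak{p}_+^*\otimes V]=\sum_{w\in W^{\mathfrak{p}}}(-1)^{\ell(w)}[\mathpzc{V}_{w(\lambda+\delta)-\delta}]$ in $R(\mathsf{G}_0)$, a Weyl-denominator manipulation, using that distinct $w\in W^{\mathfrak{p}}$ yield distinct highest weights by regularity of $\lambda+\delta$. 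Summing over $k$ gives the displayed decomposition.

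For the real case nothing new is needed: if $\mathfrak{g}$ is a real $|k|$-graded semisimple Lie algebra and $V$ a complex $\mathfrak{g}$-module, then $\mathfrak{p}_+$ is a real Lie subalgebra, $\mathfrak{p}_{+,\C}=\mathfrak{p}_+\otimes_\R\C$, and there is a canonical identification $\wedge^k_\C\mathfrak{p}_{+,\C}^*\otimes_\C V\cong\wedge^k_\R\mathfrak{p}_+^*\otimes_\R V$ under which the two Chevalley--Eilenberg differentials agree, because the bracket on $\mathfrak{p}_{+,\C}$ and the $V$-action are the complex-bilinear, resp. complex-linear, extensions of those on $\mathfrak{p}_+$. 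Hence $H^k(\mathfrak{p}_+,V)=H^k(\mathfrak{p}_{+,\C},V)$ as $\mathsf{G}_0$-modules, and applying the complex case to the complexified data finishes the argument.
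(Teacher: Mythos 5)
The paper does not actually prove this statement: it is Kostant's theorem, quoted from \cite{kostant61} and \cite[Theorem 3.3.5 and Proposition 3.3.6]{capslovak}, so there is no internal proof to compare against. Your sketch reproduces the standard Hodge-theoretic argument behind those references, and its architecture is sound: the compact-real-form inner product making $\partial$ and $\partial^*$ adjoint, the Kostant/Casimir identity giving the scalar $\tfrac12\bigl(\langle\lambda+\delta,\lambda+\delta\rangle-\langle\nu+\delta,\nu+\delta\rangle\bigr)$ on the $\nu$-isotypical piece (the cubic Dirac route is a legitimate shortcut here), and the lemma that a weight dominated by the regular dominant weight $\lambda+\delta$ and of equal length lies in its Weyl orbit, with $\mathfrak{g}_0$-dominance forcing $w\in W^{\mathfrak{p}}$. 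The one place the sketch is genuinely thin is the degree-and-multiplicity step: the Euler-characteristic identity by itself does not rule out cancelling occurrences of $\mathpzc{V}_{w(\lambda+\delta)-\delta}$ in degrees above $\ell(w)$, so you really do need the matching upper bound $k\le\ell(w)$ (via Poincar\'e duality applied to the dual coefficients, or via Kostant's count showing the weight $w(\lambda+\delta)-\delta$ occurs exactly once in all of $\wedge^{*}\mathfrak{p}_+^*\otimes V$, namely as $w\lambda$ tensored with the wedge of the $\ell(w)$ roots in $\Delta^+\cap w(-\Delta^+)\subseteq\Delta(\mathfrak{p}_+)$); once both bounds pin $k=\ell(w)$, the Euler characteristic does give multiplicity one, as you say. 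The reduction of the real case to the complex one via $\mathrm{Hom}_{\R}(\wedge^k\mathfrak{p}_+,V)\cong\mathrm{Hom}_{\C}(\wedge^k\mathfrak{p}_{+,\C},V)$ is correct and is exactly how \cite{capslovak} handles it.
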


The algebraic construction in Theorem \ref{inadonad9} has a geometric counterpart on the full flag manifold $\mathfrak{X}:=\mathsf{G}/\mathsf{B}$. Making the transition to geometry, we use the following correspondence. 

\begin{prop}
\label{lknjknjng}
Let $W$ be a finite-dimensional representation of the Borel group $\mathsf{B}$ and let $E(W):=\mathsf{G}\times_\mathsf{B} W$ denote the corresponding homogeneous vector bundle on $\mathfrak{X}:=\mathsf{G}/\mathsf{B}$. Then we have a $\mathsf{G}$-equivariant isomorphism 
\begin{align}
\label{lknalknad}
\Hom_{\mathcal{U}(\mathfrak{g})}(\mathcal{U}(\mathfrak{g})&\otimes_{\mathcal{U}(\mathfrak{b})} W,C^\infty(\mathsf{G}))\xrightarrow{\sim} C^\infty(\mathfrak{X},E(W)^*), \\
\nonumber
 &\varphi\mapsto [(g,v)\mapsto (g^{-1})^*(\varphi(1\otimes v))],
\end{align}
where we equip $C^\infty(\mathsf{G})$ with the $\mathcal{U}(\mathfrak{g})$-module structure from the right action and we equip $\Hom_{\mathcal{U}(\mathfrak{g})}(\mathcal{U}(\mathfrak{g})\otimes_{\mathcal{U}(\mathfrak{b})} W,C^\infty(\mathsf{G}))$ with the $\mathsf{G}$-action defined from left translation on $\mathsf{G}$. 

Moreover, the isomorphism \eqref{lknalknad} is contravariantly functorial for $\mathcal{U}(\mathfrak{g})$-module maps in the sense that if $W_1$ and $W_2$ are two $\mathsf{B}$-representations and 
$$\delta\in \Hom_{\mathcal{U}(\mathfrak{g})}(\mathcal{U}(\mathfrak{g})\otimes_{\mathcal{U}(\mathfrak{b})} W_1,\mathcal{U}(\mathfrak{g})\otimes_{\mathcal{U}(\mathfrak{b})} W_1),$$
there is a $\mathsf{G}$-equivariant differential operator $D\in \mathcal{DO}(\mathfrak{X};E(W_2)^*,E(W_1)^*)$ making the following diagram commute:
$$\begin{CD}
\Hom_{\mathcal{U}(\mathfrak{g})}(\mathcal{U}(\mathfrak{g})\otimes_{\mathcal{U}(\mathfrak{b})} W_2,C^\infty(\mathsf{G}))@>\delta^* >>\Hom_{\mathcal{U}(\mathfrak{g})}(\mathcal{U}(\mathfrak{g})\otimes_{\mathcal{U}(\mathfrak{b})} W_1,C^\infty(\mathsf{G})) \\
@V\cong VV  @VV\cong V \\
C^\infty(\mathfrak{X},E(W_2)^*)@>D>>C^\infty(\mathfrak{X},E(W_1)^*)
\end{CD},$$
where the vertical isomorphisms is that of \eqref{lknalknad}. The differential operator is uniquely determined from the pairing with jets at $eB\in \mathfrak{X}$.
\end{prop}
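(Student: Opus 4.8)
The idea is to identify the displayed map with the tensor-hom adjunction followed by the standard dictionary between equivariant functions on $\mathsf{G}$ and sections of homogeneous bundles, and then to read off functoriality from naturality of that adjunction; the only point requiring genuine input is that the resulting equivariant map is a \emph{differential} operator.

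I would begin by recording that $C^\infty(\mathsf{G})$ is a left $\mathcal{U}(\mathfrak{g})$-module via the derived right regular representation, so that $X\in\mathfrak{g}$ acts by the left-invariant vector field attached to $X$; this action commutes with left translation by $\mathsf{G}$, so $\Hom_{\mathcal{U}(\mathfrak{g})}(-,C^\infty(\mathsf{G}))$ is a contravariant functor valued in $\mathsf{G}$-modules. Tensor-hom adjunction for $\mathcal{U}(\mathfrak{b})\subseteq\mathcal{U}(\mathfrak{g})$ then furnishes a $\mathsf{G}$-equivariant isomorphism
\[
\Hom_{\mathcal{U}(\mathfrak{g})}\big(\mathcal{U}(\mathfrak{g})\otimes_{\mathcal{U}(\mathfrak{b})}W,\,C^\infty(\mathsf{G})\big)\;\cong\;\Hom_{\mathcal{U}(\mathfrak{b})}\big(W,\,C^\infty(\mathsf{G})\big),
\]
carrying $\varphi$ to $w\mapsto\varphi(1\otimes w)$, the $\mathsf{G}$-action entering only through the target $C^\infty(\mathsf{G})$. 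Since $\mathsf{B}$ is connected and the left-invariant fields attached to $\mathfrak{b}$ integrate to right translations by $\mathsf{B}$, a linear map $W\to C^\infty(\mathsf{G})$ intertwines the $\mathfrak{b}$-actions if and only if it is $\mathsf{B}$-equivariant for the restricted right regular representation; hence the right-hand side equals $\Hom_{\mathsf{B}}(W,C^\infty(\mathsf{G}))\cong(C^\infty(\mathsf{G})\otimes W^*)^{\mathsf{B}}$, which by the usual associated-bundle description is $C^\infty(\mathfrak{X},E(W)^*)$. Composing the two isomorphisms gives precisely the assignment $\varphi\mapsto[(g,v)\mapsto(g^{-1})^*(\varphi(1\otimes v))]$, and a short check matches the inherited $\mathsf{G}$-action with the natural one on sections. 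This gives the first claim.

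For functoriality, note that $\delta^*=(-)\circ\delta$ alters only the source module, hence is automatically $\mathsf{G}$-equivariant, so transporting it through the isomorphism yields a $\mathsf{G}$-equivariant $D\colon C^\infty(\mathfrak{X},E(W_2)^*)\to C^\infty(\mathfrak{X},E(W_1)^*)$ making the diagram commute; it is unique since the vertical maps are bijective. To see $D$ is a differential operator, I would use that $\mathcal{U}(\mathfrak{g})\otimes_{\mathcal{U}(\mathfrak{b})}W_1$ is generated over $\mathcal{U}(\mathfrak{g})$ by $1\otimes W_1$, so $\delta$ is determined by $\delta(1\otimes\,\cdot\,)\colon W_1\to\mathcal{U}(\mathfrak{g})\otimes_{\mathcal{U}(\mathfrak{b})}W_2$; composing with the Poincar\'e-Birkhoff-Witt isomorphism $\mathcal{U}(\mathfrak{g})\otimes_{\mathcal{U}(\mathfrak{b})}W_2\cong\mathcal{U}(\mathfrak{n}_-)\otimes W_2$ and using $\dim W_1<\infty$, one writes $\delta(1\otimes w)=\sum_\alpha X_\alpha\otimes\Psi_\alpha(w)$ with finitely many $X_\alpha\in\mathcal{U}(\mathfrak{n}_-)$ and $\Psi_\alpha\in\Hom(W_1,W_2)$. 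Chasing this through the adjunction, the section attached to $\delta^*\varphi$ is $w\mapsto\sum_\alpha X_\alpha\cdot(\varphi(1\otimes\Psi_\alpha(w)))$, each $X_\alpha$ acting as a left-invariant differential operator on $C^\infty(\mathsf{G})$; in a local trivialisation near $eB$ (hence, by equivariance, everywhere) this is a polynomial-coefficient differential expression of order at most $\max_{\alpha}\deg(X_\alpha)$, so $D$ is a differential operator, and being $\mathsf{G}$-equivariant it is completely determined by the $\mathsf{B}$-equivariant map sending the jet at $eB$ of a section of $E(W_2)^*$ to the value at $eB$ of its image --- namely pairing with $\delta(1\otimes\,\cdot\,)$.

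The crux is this last step: $\mathsf{G}$-equivariance alone does not force an operator between homogeneous bundles on $\mathsf{G}/\mathsf{B}$ to be differential (equivariant integral intertwiners exist), and it is essential that the source of $\delta$ is a generalised Verma module, generated over $\mathcal{U}(\mathfrak{g})$ by $1\otimes W_1$, so that $\delta$ and hence $D$ is encoded by one finite jet through PBW. The remaining steps --- checking that $\mathsf{B}$-equivariance survives the PBW rewriting, and tracking the precise $\mathsf{G}$-action and the density conventions through the two identifications --- are routine.
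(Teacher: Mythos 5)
Your proposal is correct and follows essentially the same route as the paper: the first isomorphism is Frobenius reciprocity (tensor--hom adjunction plus the associated-bundle dictionary, using connectedness of $\mathsf{B}$ to pass from $\mathcal{U}(\mathfrak{b})$-equivariance to $\mathsf{B}$-equivariance), and the second part is the PBW/jet analysis at $e\mathsf{B}$ that the paper defers to Yuncken's thesis. You have simply written out the details the paper cites, including the key point that $\delta$ is determined by $\delta(1\otimes\,\cdot\,)$ and hence acts through left-invariant differential operators of bounded order.
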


\begin{proof}
Equation \eqref{lknalknad} follows from Frobenius reciprocity. The second part of the statement follows from an analysis of how $\mathfrak{g}$-module maps $\mathcal{U}(\mathfrak{g})\otimes_{\mathcal{U}(\mathfrak{b})} W_1\to \mathcal{U}(\mathfrak{g})\otimes_{\mathcal{U}(\mathfrak{b})} W_1$ pair with jets at $e\mathsf{B}\in \mathfrak{X}$, for more details, see \cite{yunckenthesis}.
\end{proof}

In light of Proposition \ref{lknjknjng}, one has the following geometric reformulation of Theorem \ref{inadonad9}. Note that the longest element of the Weyl group has length $d+1$, where $d=\dim_\C(\mathsf{G}/\mathsf{B})$ (see \cite[Chapter 3]{capslovak}). 

\begin{thm}[The geometric BGG-sequence]
\label{inadonad9geo}
Let $V$ be a finite-dimensional representation of the complex, semisimple Lie group $\mathsf{G}$. Let $\lambda$ denote the highest weight of $V$. Define the homogeneous vector bundles on $\mathfrak{X}:=\mathsf{G}/\mathsf{B}$ from
$$E_k^V:=\bigoplus_{w\in W, \ |w|=k}\mathsf{G}\times_\mathsf{B} \C_{-w(\lambda+\rho)+\rho}, \quad k=0, \ldots, d.$$
Then there are differential operators $D_k\in  \mathcal{DO}(\mathfrak{X};E_{k-1}^V,E_k^V)$ making the following sequence into a complex of differential operators
$$0\to C^\infty(\mathfrak{X};E_0^V)\xrightarrow{D_1}C^\infty(\mathfrak{X};E_1^V)\xrightarrow{D_2}\cdots \xrightarrow{D_{d-1}}C^\infty(\mathfrak{X};E_{d-1}^V)\xrightarrow{D_d}C^\infty(\mathfrak{X};E_d^V)\to 0,$$
which is exact except in degree $0$ where we have $\ker(D_1)=V$.
\end{thm}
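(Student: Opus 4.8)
\emph{Strategy.} The plan is to obtain the geometric sequence by applying the contravariant functor $\Hom_{\mathcal{U}(\mathfrak{g})}(-,C^\infty(\mathsf{G}))$ to the algebraic Bernstein--Gelfand--Gelfand resolution of Theorem~\ref{inadonad9} and translating it into bundle language via Proposition~\ref{lknjknjng}. Concretely, I would start from $0\leftarrow V\leftarrow C_0\xleftarrow{\delta_1}C_1\leftarrow\cdots\xleftarrow{\delta_d}C_d\leftarrow0$ with $C_k=\bigoplus_{|w|=k}\mathcal{U}(\mathfrak{g})\otimes_{\mathcal{U}(\mathfrak{b})}\C_{w(\lambda+\rho)-\rho}$, and apply $\Hom_{\mathcal{U}(\mathfrak{g})}(-,C^\infty(\mathsf{G}))$ with $C^\infty(\mathsf{G})$ the right-regular $\mathcal{U}(\mathfrak{g})$-module. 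Proposition~\ref{lknjknjng} identifies $\Hom_{\mathcal{U}(\mathfrak{g})}(C_k,C^\infty(\mathsf{G}))$ $\mathsf{G}$-equivariantly with $C^\infty(\mathfrak{X},E(W_k)^{*})$ for $W_k=\bigoplus_{|w|=k}\C_{w(\lambda+\rho)-\rho}$, and since the dual of the homogeneous line bundle of a character $\C_\mu$ is that of $\C_{-\mu}$, this is precisely $C^\infty(\mathfrak{X},E_k^V)$.

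\emph{Operators, complex property, kernel in degree zero.} The functoriality half of Proposition~\ref{lknjknjng} sends each $\delta_k$ to a $\mathsf{G}$-equivariant differential operator $D_k\in\mathcal{DO}(\mathfrak{X};E_{k-1}^V,E_k^V)$ representing $\delta_k^{*}$, uniquely pinned down by its pairing with jets at $e\mathsf{B}$. Contravariance together with $\delta_k\circ\delta_{k+1}=0$ forces $D_{k+1}\circ D_k=0$, so the displayed sequence is a complex of differential operators. Applying the functor to the surjection $C_0\twoheadrightarrow V$ and using left-exactness of $\Hom$ identifies $\ker D_1$ with the finite-dimensional $\mathsf{G}$-representation $\Hom_{\mathcal{U}(\mathfrak{g})}(V,C^\infty(\mathsf{G}))$, the span of the matrix coefficients of $V$, i.e.\ a copy of $V$.

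\emph{Exactness.} The remaining, genuinely nontrivial, point is exactness in positive degrees; since $\Hom_{\mathcal{U}(\mathfrak{g})}(-,C^\infty(\mathsf{G}))$ is not an exact functor, this does not propagate formally. I would argue in two steps. Locally over the open Bruhat cell $\overline{\mathsf{N}}\hookrightarrow\mathsf{G}/\mathsf{B}$ (with $\overline{\mathsf{N}}$ integrating $\mathfrak{n}_-$) the bundles $E_k^V$ trivialize canonically, and by the Poincar\'e--Birkhoff--Witt identification \eqref{pbwiso} the operators $D_k$ become the left-invariant operators on $\overline{\mathsf{N}}$ transposing the $\delta_k$; the algebraic exactness of Theorem~\ref{inadonad9}, read on this chart, together with the regularity supplied by the Rockland property (Theorem~\ref{rockandhypo} and its graded form), gives exactness of the associated complex of sheaves of smooth sections, i.e.\ a fine, hence $\Gamma$-acyclic, resolution $0\to\mathcal{K}\to\mathcal{E}_0^V\to\cdots\to\mathcal{E}_d^V\to0$ of $\mathcal{K}:=\ker D_1$. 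Hence $H^{k}$ of the complex of global sections is $H^{k}(\mathfrak{X},\mathcal{K})$. To finish, I would note that $H^{\bullet}(\mathfrak{X},\mathcal{K})\cong H^{\bullet}(\mathfrak{X},\mathcal{O})\otimes V$, for instance by realizing the complex as the Čap--Slovák--Souček subquotient of the twisted Dolbeault complex $(\mathcal{A}^{0,\bullet}(\mathfrak{X})\otimes(\mathsf{G}\times_\mathsf{B}V),\bar\partial)$ — which has the same cohomology, matching terms via Kostant's theorem~\ref{knlkanlknad} and operators via uniqueness in Proposition~\ref{lknjknjng} — using that $\mathsf{G}\times_\mathsf{B}V$ is $\mathsf{G}$-homogeneous, hence holomorphically trivial; by Bott--Borel--Weil (equivalently Kodaira vanishing on the Fano manifold $\mathsf{G}/\mathsf{B}$) this is $V$ in degree $0$ and $0$ above.

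\emph{Main obstacle.} The hard part is exactly this last input: algebraic exactness only transfers after localizing to the big Bruhat cell and feeding in the cohomological triviality of the structure sheaf of the flag variety. An alternative to the sheaf-cohomology argument would be to prove directly that $\Hom_{\mathcal{U}(\mathfrak{g})}(-,C^\infty(\mathsf{G}))$ is exact on the $\mathcal{U}(\mathfrak{n}_-)$-free modules occurring in the BGG resolution and on their syzygies, using relative injectivity of $C^\infty(\mathsf{G})$ along $\mathfrak{n}_+$; but this merely repackages the same vanishing, and the geometric route makes the essential point transparent.
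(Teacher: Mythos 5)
Your construction of the operators and the complex property follows exactly the route the paper (implicitly) takes: Theorem \ref{inadonad9geo} is presented there with no proof at all, merely as a ``geometric reformulation'' of Theorem \ref{inadonad9} obtained by applying Proposition \ref{lknjknjng} to the algebraic resolution. Where you go beyond the paper is in recognizing that exactness in positive degrees does \emph{not} transfer formally, since $\Hom_{\mathcal{U}(\mathfrak{g})}(-,C^\infty(\mathsf{G}))$ is only left exact; this is a real point that the paper simply elides. Your remedy --- identify the cohomology of the dualized BGG complex with that of the twisted Dolbeault complex of the holomorphically trivial tractor bundle $\mathsf{G}\times_\mathsf{B}V\cong \mathfrak{X}\times V$, then invoke $H^{>0}(\mathsf{G}/\mathsf{B},\mathcal{O})=0$ --- is the standard and correct one, and it is in fact the mechanism the paper itself uses later (Definition \ref{defbggcomp} and the factorization $\nabla_V=L_\bullet(D^{\rm BGG}_\bullet\oplus\mathfrak{B}_\bullet)L_{\bullet-1}^{-1}$ with $\mathfrak{B}_\bullet$ acyclic, which gives an isomorphism of cohomologies, not merely of index classes). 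So your proof is more complete than the paper's treatment, at the cost of importing the Čap--Slovák--Souček/Dave--Haller compression and Bott--Borel--Weil as external inputs.

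Two cautions. First, the step ``algebraic exactness on the Bruhat cell plus the Rockland property gives exactness of the complex of sheaves of smooth sections'' is not an argument: local exactness of an overdetermined hypoelliptic complex is genuinely delicate and does not follow from hypoellipticity plus exactness of the formal (Verma-module) model. Fortunately this step is redundant in your write-up --- once you have the global chain-level comparison with the twisted Dolbeault complex, its cohomology is computed by Dolbeault--Grothendieck and sheaf theory on that side, and you never need sheaf exactness of the BGG complex itself. I would drop the sheaf paragraph and run the comparison directly at the level of global complexes. Second, be careful with the identification $\ker D_1\cong\Hom_{\mathcal{U}(\mathfrak{g})}(V,C^\infty(\mathsf{G}))$: that this space consists precisely of the matrix coefficients (and hence is a single copy of $V$, rather than $V^*$ or something larger involving antiholomorphic factors) depends on the equivariance conventions hidden in Proposition \ref{lknjknjng}; since you use that proposition as a black box this is acceptable, but the duality bookkeeping deserves a sentence.
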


\begin{definition}
The BGG-complex of a finite-dimensional representation $V$ of a complex, semisimple Lie group $\mathsf{G}$ is the differential complex on $\mathsf{G}/\mathsf{B}$ from Theorem \ref{inadonad9geo}. We denote it by $\mathsf{BGG}_\bullet(V)$.
\end{definition}

The BGG-complex of a finite-dimensional representation is a prototypical example of a graded Rockland sequence, as we recall below in Subsection \ref{lknaldkna} from the work of Dave-Haller \cite{Dave_Haller1}. The Carnot structure on $\mathfrak{X}=\mathsf{G}/\mathsf{B}$ in which the BGG-complex is graded Rockland, is defined in terms of the simple root system used to defined the order in the Weyl group and the height of roots. Let $\Pi\subseteq \Delta_+$ denote this simple root system. 

We note that $T\mathfrak{X}$ is the homogeneous vector bundle 
$$T\mathfrak{X}=\mathsf{G}\times_\mathsf{B} (\mathfrak{g}/\mathfrak{b})\cong \mathsf{G}\times_\mathsf{B} \mathfrak{n}_-,$$
where $\mathsf{B}$ acts on $\mathfrak{n}_-=\sum_{\alpha\in\Delta_+}\mathfrak{g}_{-\alpha}$ via the isomorphism $\mathfrak{n}_-\cong \mathfrak{g}/\mathfrak{b}$. The nilpotent Lie algebra $\mathfrak{n}_-$ is graded by the height of roots. In particular, $\mathfrak{n}_-^{-1}=\sum_{\alpha\in\Pi}\mathfrak{g}_{-\alpha}$ and we inductively have that 
$$\mathfrak{n}_{-}^{-k-1}:=\mathfrak{n}_{-}^{-k}+[\mathfrak{n}_{-}^{-1},\mathfrak{n}_{-}^{-k}].$$
The action of $\mathsf{B}$ on $\mathfrak{n}_-$ respects the filtering (as it increases the order), and 
$$\mathsf{G}\times_\mathsf{B} \mathfrak{n}_-^{-j}\to \mathfrak{X}=\mathsf{G}/\mathsf{B},$$ 
is a well defined bundle that we identify with a sub-bundle $T^{-j}\mathfrak{X}\subseteq T\mathfrak{X}$. By equivariance, the subbundles $T^{-j}\mathfrak{X}\subseteq T\mathfrak{X}$ form a complex Carnot structure on $\mathfrak{X}$. We note that we can identify 
$$\mathfrak{t}_H\mathfrak{X}=G\times_B \mathfrak{n}_-,$$
as $\mathsf{G}$-equivariant bundles of graded, nilpotent Lie algebras. In particular, $\mathfrak{X}=\mathsf{G}/\mathsf{B}$ is regular of type $\mathfrak{n}_-$. We note that we can reduce the structure group to $\mathsf{H}$ via the quotient mapping $\mathsf{B}\to \mathsf{B}/\mathsf{N}_+=\mathsf{H}$ and the graded action 
$$\mathsf{H}\to \Aut_{\rm gr}(\mathfrak{n}_-).$$
Therefore, we have $\mathsf{G}$-equivariant isomorphisms of bundles of graded, nilpotent Lie algebras
$$\mathfrak{t}_H\mathfrak{X}=\mathsf{G}\times_\mathsf{B} \mathfrak{n}_-\cong (\mathsf{G}/\mathsf{N}_+)\times_\mathsf{H} \mathfrak{n}_-.$$
If we disregard $\mathsf{G}$-actions and use that $\mathsf{G}/\mathsf{A}\mathsf{N}_+=\mathsf{K}$ so $\mathsf{G}/\mathsf{B}=\mathsf{K}/\mathsf{M}$, we can also write $\mathfrak{t}_H\mathfrak{X}\cong \mathsf{K}\times_\mathsf{M} \mathfrak{n}_-$ as bundles of graded, nilpotent Lie algebras.

\begin{example}
\label{sl3c}
Let us compute a longer example of the BGG-complex on $\mathsf{G}=SL(3,\C)$. For the group $SL(3,\C)$, many more details can be found in the work of Yuncken \cite{yunckenthesis}, who also computed further examples in \cite{yunckenhab}. 

We take the maximal compact subgroup $\mathsf{K}=SU(3)\subseteq SL(3,\C)$ and $\mathsf{H}\cong (\C^\times)^2$ to consist of the diagonal matrices with determinant $1$. We choose the isomorphism $(z_1,z_3)\mapsto \mathrm{diag}(z_1,(z_1z_3)^{-1},z_3)$. Similarly, $\mathsf{A}\cong \R_+^2$ consists of the positive diagonal matrices with determinant $1$.  Now $\mathsf{M}=\mathsf{K}\cap \mathsf{H}\cong U(1)^2$ consists of the unitary diagonal matrices with determinant $1$. The Weyl group of $SL(3,\C)$ is $W=S_3$.

The nilpotent Lie groups $\mathsf{N}_\pm$ consists of unipotent, lower (+)/upper (-) triangular matrices. There are isomorphisms $\mathsf{N}_\pm\cong \mathsf{H}_{3,\C}$ to the three-dimensional complex Heisenberg group. That is, $\mathfrak{n}_\pm$ is the complex span of elements $e_X^\pm,e_Y^\pm, e_Z^\pm$ subject to the commutation relation 
$$[e_X^\pm,e_Y^\pm]=e^Z_\pm.$$
In terms of the matrix units $(e_{jk})_{j,k=1}^3$ spanning $\mathfrak{gl}(3,\C)$, $e_X^+=e_{21}$,$e_Y^+=e_{32}$, and $e_Z^+=e_{31}$ and $e_X^-,e_Y^-, e_Z^-$ are given by their transposes. 

The corresponding roots $\alpha_{\mathsf{x}}$, $\alpha_{\mathsf{y}}$ and $\alpha_{\mathsf{z}}=\alpha_{\mathsf{x}}+\alpha_{\mathsf{y}}$ are under our chosen isomorphism $\mathfrak{h}\cong \C^2$ computed as 
$$\begin{cases}
\alpha_{\mathsf{x}}(z_1,z_3)=-2z_1-z_3\\
\alpha_{\mathsf{y}}(z_1,z_3)=z_1+2z_3.
\end{cases}$$
The more standard notation for the Chevalley-Serre basis defined from $e_X^\pm,e_Y^\pm, e_Z^\pm$ is $e_j=e_j^+$ and $f_j=e_j^-$ (for $j\in \{X,Y,Z\}$). In particular, as an $\mathsf{H}$-module, 
$$\mathfrak{n}_-\cong \C_{\alpha_{\mathsf{x}}}\oplus \C_{\alpha_{\mathsf{y}}}\oplus \C_{\alpha_{\mathsf{z}}}.$$
The Borel group $\mathsf{B}\cong (\C^\times)^2\rtimes \mathsf{N}_+$ consists of the lower triangular matrices with determinant $1$.
The full flag manifold takes the form 
$$\mathfrak{X}=SL(3,\C)/\mathsf{B}=SU(3)/\mathsf{M}.$$
The complex $G$-equivariant Carnot structure is depth two and comes from the decomposition into homogeneous line bundles
$$T\mathfrak{X}=SL(3,\C)\times_\mathsf{B}\C_{\alpha_{\mathsf{x}}}\oplus SL(3,\C)\times_\mathsf{B}\C_{\alpha_{\mathsf{y}}}\oplus SL(3,\C)\times_\mathsf{B}\C_{\alpha_{\mathsf{z}}},$$
as 
$$T^{-1}\mathfrak{X}=SL(3,\C)\times_\mathsf{B}(\C_{\alpha_{\mathsf{x}}}\oplus \C_{\alpha_{\mathsf{y}}}).$$

Let us construct the BGG-complex of the $G$-module $V=\C$ algebraically. We follow the ideas of \cite{yunckenthesis}. The highest weight of $\C$ is $\lambda=0$ and 
$$\mathcal{M}(0)=\mathcal{U}(\mathfrak{sl}_3)\otimes_{\mathcal{U}(\mathfrak{b})}\C_0\cong \mathcal{U}(\mathfrak{h}_{3,\C}),$$
where $\mathfrak{h}_{3,\C}=\mathfrak{n}_-$ denotes the complex Heisenberg Lie algebra. For notational simplicity, we write $X,Y,Z$ for the standard generators of $\mathfrak{h}_{3,\C}$. The natural map 
$$\epsilon:\mathcal{M}(0)\to \C,$$ 
is in terms of $\mathcal{U}(\mathfrak{h}_{3,\C})$ given by the trivial character $\mathcal{U}(\mathfrak{h}_{3,\C})\to \C$. The kernel of the natural map $\epsilon:\mathcal{M}(0)\to \C$ is therefore
$$\ker(\epsilon)\cong\mathcal{U}(\mathfrak{h}_{3,\C})X+\mathcal{U}(\mathfrak{h}_{3,\C})Y,$$
because $[X,Y]=Z\in \ker(\epsilon)$. We therefore define the $\mathfrak{g}$-linear map 
$$\delta_1:\mathcal{M}(-\alpha_{\mathsf{x}})\oplus \mathcal{M}(-\alpha_{\mathsf{y}})\to \mathcal{M}(0),$$
from the PBW-isomorphism \eqref{pbwiso} $\mathcal{M}(-\alpha_{\mathsf{x}})\oplus \mathcal{M}(-\alpha_{\mathsf{y}})\cong \mathcal{U}(\mathfrak{h}_{3,\C})\oplus \mathcal{U}(\mathfrak{h}_{3,\C})$ and the map 
$$\mathcal{U}(\mathfrak{h}_{3,\C})\oplus \mathcal{U}(\mathfrak{h}_{3,\C})\to \mathcal{U}(\mathfrak{h}_{3,\C}), \quad (w_1,w_2)\mapsto w_1X+w_2Y.$$ 
By construction, $\delta_1$ surjects onto $\ker(\epsilon)$. After some algebra, we see that $\ker(\delta_1)$ under the PBW-isomorphism is generated by the vectors
$$v_1=\begin{pmatrix} -XY-Z\\X^2\end{pmatrix}, \quad\mbox{and}\quad v_2=\begin{pmatrix} -Y^2\\ YX-Z\end{pmatrix}.$$
We therefore define the $\mathfrak{g}$-linear map 
$$\delta_2:\mathcal{M}(-2\alpha_{\mathsf{x}}-\alpha_{\mathsf{y}})\oplus \mathcal{M}(-\alpha_{\mathsf{x}}-2\alpha_{\mathsf{y}})\to \mathcal{M}(-\alpha_{\mathsf{x}})\oplus \mathcal{M}(-\alpha_{\mathsf{y}}),$$
from the PBW-isomorphism \eqref{pbwiso} and the map 
$$\mathcal{U}(\mathfrak{h}_{3,\C})\oplus \mathcal{U}(\mathfrak{h}_{3,\C})\to \mathcal{U}(\mathfrak{h}_{3,\C})\oplus \mathcal{U}(\mathfrak{h}_{3,\C}), \quad (w_1,w_2)\mapsto w_1v_1+w_2v_2.$$ 
By construction $\delta_2$ surjects onto $\ker(\delta_1)$. Continuing in this fashion we arrive at the algebraic BGG-resolution for $V=\C$, that we write as
\footnotesize
\[
\begin{tikzcd}
& &&\mathcal{M}(-\alpha_X)\ar{dl}[swap]{X} &&\mathcal{M}(-2\alpha_X-\alpha_Y)\arrow{ddll}{\!\!\!\!\!\!\!\!\!\!\!\!\!\!\!\!\!\!\!\!\!\!\!\!\!\!\!\!\!\!\!\!\!\!\!\!\!\!\!\!\!\!\!\! X^2}\ar{ll}[swap]{-XY-Z}&& \\
0&\C\arrow[l]&\mathcal{M}(0)\arrow[l,"\epsilon"]&&&&\mathcal{M}(-2\alpha_X-2\alpha_Y)\arrow{ul}[swap]{-Y} \ar[dl,"X"]&0\arrow{l}\\
& &&\mathcal{M}(-\alpha_Y)\ar[ul,"Y"]&&\mathcal{M}(-\alpha_X-2\alpha_Y)\arrow{uull}[swap]{\qquad-Y^2} \ar[ll,"YX-Z"]&&
\end{tikzcd}
\]
\normalsize
For a weight $\lambda$, write $L_\lambda:=\mathsf{G}\times_\mathsf{B}\C_\lambda$. Dualizing this as in Proposition \ref{lknjknjng}, we arrive at the geometric BGG-complex:

\small
\[
\begin{tikzcd}
&&C^\infty(\mathfrak{X};L_{\alpha_{\mathsf{x}}})\ar{rr}{-D_{XY+Z}}\ar{ddrr}{\qquad-D_{Y^2}}&&C^\infty(\mathfrak{X};L_{2\alpha_{\mathsf{x}}\!+\!\alpha_{\mathsf{y}}})\arrow{dr}{-D_Y}&& \\
0\ar[r]&C^\infty(\mathfrak{X})\ar{ur}{D_X}\ar{dr}[swap]{D_Y} &&&&C^\infty(\mathfrak{X};L_{2\alpha_{\mathsf{x}}+2\alpha_{\mathsf{y}}}) \arrow{r}&0\\
&&C^\infty(\mathfrak{X};L_{\alpha_{\mathsf{y}}}) \ar[rr,"D_{YX-Z}"]\ar{uurr}[swap]{\!\!\!\!\!\!\!\!\!\!\!\!\!\!\!\!\!\!\!\!\!\!\!\!\!\!\!\!\!\!\!\!\!\!\!\!\!\!\!\!\!\!\!\!D_{X^2}}&&C^\infty(\mathfrak{X};L_{\alpha_{\mathsf{x}}\!+\!2\alpha_{\mathsf{y}}})\ar{ur}[swap]{D_X}&&
\end{tikzcd}
\]
\normalsize
This complex $\mathsf{BGG}_\bullet(\C)$ is a graded $SL(3,\C)$-equivariant complex of differential operators by Theorem \ref{inadonad9geo}. Its principal symbol complex in the point $e\mathsf{B}\in \mathsf{G}/\mathsf{B}$ is given by 
\[
\begin{tikzcd}
&&\mathcal{S}_0(\mathsf{H}_{3,\C})\ar{rr}{-XY-Z}\ar{ddrr}{\qquad-Y^2} &&\mathcal{S}_0(\mathsf{H}_{3,\C})\arrow{dr}{-Y} && \\
0\ar[r]&\mathcal{S}_0(\mathsf{H}_{3,\C})\ar{ur}{X}\ar{dr}[swap]{Y}&&&&\mathcal{S}_0(\mathsf{H}_{3,\C})\arrow{r}&0\\
&&\mathcal{S}_0(\mathsf{H}_{3,\C})\ar{rr}[swap]{YX-Z}\ar{uurr}[swap]{\!\!\!\!\!\!\!\!\!\!\!\!\!\!\!\!\!\!\!\!\!\!\!\!\!\!\!\!\!\!\!\!\!\!\!\!\!\!\!\!\!\!\!\!X^2}&&\mathcal{S}_0(\mathsf{H}_{3,\C})\ar[ur,"X"] &&
\end{tikzcd}
\]
Here $X,Y,Z$ denote the generators of $\mathfrak{n}_-$ acting on $\mathcal{S}_0(\mathsf{H}_{3,\C})$ by right invariant vector fields. For the special case of $\mathsf{G}=SL(3,\C)$, there is an ingenious construction involving Kasparov's technical theorem due to Yuncken \cite{yunckenthesis}, that produces a class in $KK_0^{SL(3,\C)}(C(SL(3,\C)/\mathsf{B}),\C)$ from the BGG-complex. Our methods above does not quite even produce classes with exponential bounds in $KK_0^{SL(3,\C),\ell}(C(SL(3,\C)/\mathsf{B}),\C)$

\end{example}

\subsection{Curved Bernstein-Gelfand-Gelfand complexes}
\label{lknaldkna}

The BGG-complex for complex Lie groups was extended much further to parabolic geometries by Cap-Slovak-Soucek \cite{morecap}. The more recent work of Dave-Haller \cite{Dave_Haller1} provides an explicit construction of the BGG-complex from Kostant codifferentials that we briefly recall. We restrict the global aspects of our discussion to the flag manifolds $\mathfrak{X}=\mathsf{G}/\mathsf{P}$ for $\mathsf{G}$ a connected, semisimple Lie group and $\mathsf{P}$ a parabolic subgroup. Our discussion provides the local foundation for describing curved BGG-complexes for general parabolic geometries and pinpointing their symbol complex. 

Fix a complex finite-dimensional representation $V$ of $\mathsf{G}$, for instance the trivial representation. Let $C_*(\mathfrak{n},V):=\wedge^*\mathfrak{g}\otimes V$ and $C^*(\mathfrak{n},V)=\wedge^*\mathfrak{g}^*\otimes V$ denote the Lie algebra homology and cohomology complex, respectively, with coefficients in $V$. We filter these spaces using the grading of $\mathfrak{n}$. The $\mathsf{G}_0$-equivariant identification \eqref{gzeroadaond} allow us to identify the $\mathsf{G}_0$-spaces
$$C^*(\mathfrak{n},V)=C_*(\mathfrak{p}_+,V).$$
The right hand side is equipped with the differential $\delta:C_*(\mathfrak{p}_+,V)\to C_{*-1}(\mathfrak{p}_+,V)$ defining Lie algebra homology
\begin{align*}
\delta(X_1\wedge \cdots \wedge X_k\otimes v)=&\sum_{j=1}^n (-1)^{j+1}X_1\wedge \cdots\wedge \widehat{X_j}\wedge \cdots \wedge X_k\otimes X_j(v)+\\
&+\sum_{i<j} (-1)^{i+j}[X_i,X_j] \wedge X_1\wedge \cdots\wedge \widehat{X_i}\wedge\cdots \wedge \widehat{X_j}\wedge \cdots \wedge X_k\otimes v
\end{align*}
We can identify $\delta$ with a differential $\partial:C^*(\mathfrak{n},V)\to C^{*-1}(\mathfrak{n},V)$ called Kostant's codifferential. By construction, $\partial$ is of filtered degree $0$ and $\mathsf{G}_0$-equivariant. 

Write $E_V:=\mathsf{G}\times_\mathsf{P}V\to \mathfrak{X}$. Using 
$$\wedge^*T^*\mathfrak{X}\otimes E_V=\mathsf{G}\times_\mathsf{P}(C^j(\mathfrak{n},V))=(\mathsf{G}/\mathsf{P}_+)\times_{\mathsf{G}_0}(C^j(\mathfrak{n},V)),$$ 
Kostant's codifferential $\partial$ induces a vector bundle morphism 
$$\partial_{\mathfrak{X}}:\wedge^*T^*\mathfrak{X}\otimes E_V\to \wedge^{*-1}T^*\mathfrak{X}\otimes E_V,$$
which is $\mathsf{G}$-equivariant and of filtered degree $0$. Here we use the filtering of $T^*\mathfrak{X}$ dual to the filtering of $T\mathfrak{X}$.

Let $\nabla_V$ denote the tractor connection on $E_V$ and extend it to a differential operator 
$$\nabla_V:C^\infty(\mathfrak{X};\wedge^*T^*\mathfrak{X}\otimes E_V)\to C^\infty(\mathfrak{X};\wedge^{*+1}T^*\mathfrak{X}\otimes E_V),$$
using the Leibniz rule. The differential operator $\nabla_V$ is $\mathsf{G}$-equivariant, of graded Heisenberg degree $0$ and its curvature $\nabla_V^2$ is of filtered degree $-1$.

We now proceed as in \cite{Dave_Haller1} to construct BGG-sequences. The method produces the same result as in \cite{bggoriginal, morecap,lepog} due to the arguments of \cite{morecap,Dave_Haller1}. We note that there is a $\mathsf{G}$-equivariant identity $T^{-j}\mathfrak{X}=\bigoplus_{l=1}^j T_{-l}\mathfrak{X}$ where $T_{-l}\mathfrak{X}:=(\mathsf{G}/\mathsf{P}_+)\times_{\mathsf{G}_0}\mathfrak{g}_{-l}$. Therefore, there is a canonical $\mathsf{G}$-equivariant identification $T\mathfrak{X}\cong \mathrm{gr}(T\mathfrak{X})$. Note that under this identification and \eqref{gzeroadaond}, 
$T^*\mathfrak{X}$ is graded by $T^*\mathfrak{X}=\bigoplus_{l=1}^k T^*_l\mathfrak{X}$ for $T_{l}^*\mathfrak{X}:=(\mathsf{G}/\mathsf{P}_+)\times_{\mathsf{G}_0}\mathfrak{g}_{l}$. Write $[\wedge^l T^*\mathfrak{X}\otimes E_V]_j$ for the space of elements of homogeneous degree $j$. Note the following consequence of \cite[Lemma 4.4]{Dave_Haller1}. 

\begin{lemma}
\label{decomkdh}
Let $V$ be a complex $\mathsf{G}$-representation, and $\partial_{\mathfrak{X}}$ the Kostant codifferential and $\nabla_V$ the tractor connection as above. For $j=0,\ldots, d=\dim(\mathfrak{X})$, consider the graded Heisenberg degree $0$ differential operator 
$$\Box_j:=\nabla_V\partial_\mathfrak{X}+\partial_\mathfrak{X}\nabla_V:C^\infty(\mathfrak{X};\wedge^jT^*\mathfrak{X}\otimes E_V)\to C^\infty(\mathfrak{X};\wedge^jT^*\mathfrak{X}\otimes E_V),$$
and its diagonal piece 
$$\tilde{\Box}_j:=\bigoplus_l \Box_j|_{C^\infty(\mathfrak{X};[\wedge^jT^*\mathfrak{X}\otimes E_V]_l)}:C^\infty(\mathfrak{X};\wedge^jT^*\mathfrak{X}\otimes E_V)\to C^\infty(\mathfrak{X};\wedge^jT^*\mathfrak{X}\otimes E_V).$$
Then $\tilde{\Box}_j$ is a bundle endomorphism whose generalized eigenprojection $\tilde{P}_j$ for the eigenvalue $0$ is also a bundle endomorphism. 

Moreover, there exists a unique graded Heisenberg order $0$, differential projector 
$$P_j:C^\infty(\mathfrak{X};\wedge^jT^*\mathfrak{X}\otimes E_V)\to C^\infty(\mathfrak{X};\wedge^jT^*\mathfrak{X}\otimes E_V),$$
such that 
\begin{enumerate}
\item $P_j\Box_j=\Box_jP_j$
\item $\bigoplus_l P_j|_{C^\infty(\mathfrak{X};[\wedge^jT^*\mathfrak{X}\otimes E_V]_l)}=\tilde{P}_j$
\item The decomposition
$$C^\infty(\mathfrak{X};\wedge^jT^*\mathfrak{X}\otimes E_V)=\ker(P_j)\oplus \mathrm{im}(P_j),$$
is preserved by $\Box_j$ which acts nilpotently on the first summand and invertibly on the second. 
\end{enumerate}
\end{lemma}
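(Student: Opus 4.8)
The plan is to prove Lemma \ref{decomkdh} by first carrying out a pointwise ``algebraic Hodge theory'' for the grading-preserving part $\tilde{\Box}_j$ of $\Box_j$, and then lifting it to the operator $P_j$ by a holomorphic functional calculus, along the lines of \cite[Lemma 4.4]{Dave_Haller1}. The point that makes $\tilde{\Box}_j$ algebraic is that every derivative occurring in $\nabla_V$ strictly raises the homogeneous degree: a covariant exterior derivative adds a covector from some $T^*_i\mathfrak{X}$, raising homogeneous degree by $i\ge 1$, while the term carrying the actual differentiation leaves the homogeneous degree of the $E_V$-factor unchanged. Hence, decomposing $\nabla_V=\tilde{\nabla}_V+\nabla_V'$ into its homogeneous degree $0$ part and the part of homogeneous degree $\ge 1$, the degree $0$ part $\tilde{\nabla}_V$ involves only the algebraic (connection-coefficient) terms and is $C^\infty(\mathfrak{X})$-linear, i.e.\ a bundle endomorphism; and $\partial_{\mathfrak{X}}$, being $\mathsf{G}_0$-equivariant and algebraic, preserves the homogeneous grading. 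Thus $\tilde{\Box}_j=\tilde{\nabla}_V\partial_{\mathfrak{X}}+\partial_{\mathfrak{X}}\tilde{\nabla}_V$ is a bundle endomorphism and $N:=\Box_j-\tilde{\Box}_j$ strictly raises the homogeneous degree. On each fibre, $\tilde{\Box}_j$ acts on $C^j(\mathfrak{n},V)=C_j(\mathfrak{p}_+,V)$ and, as in \cite{morecap,Dave_Haller1}, $\tilde{\nabla}_V$ is identified with an algebraic differential complementary to $\partial_{\mathfrak{X}}$, so that $\tilde{\Box}_j$ preserves Kostant's Hodge decomposition of $C^j(\mathfrak{n},V)$, vanishes on the harmonic part $\mathcal{H}^j\cong H^j(\mathfrak{p}_+,V)$, and is invertible on its complement. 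Its generalized $0$-eigenspace is therefore $\mathcal{H}^j$, of dimension given by Kostant's theorem (Theorem \ref{knlkanlknad}), hence locally constant --- globally constant on $\mathfrak{X}=\mathsf{G}/\mathsf{P}$ by $\mathsf{G}$-homogeneity --- and the generalized $0$-eigenprojection $\tilde{P}_j$ is a bundle endomorphism.

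For the remaining assertions I would lift $\tilde{P}_j$ to $P_j$ by a Riesz contour integral, the essential observation being that, since $N$ strictly raises the homogeneous degree, it is nilpotent of index at most the number of homogeneous degrees occurring. Consequently, for $\zeta$ outside the (fibrewise discrete, locally constant) spectrum of $\tilde{\Box}_j$,
$$(\zeta-\Box_j)^{-1}=\sum_{i\ge 0}\big((\zeta-\tilde{\Box}_j)^{-1}N\big)^{i}(\zeta-\tilde{\Box}_j)^{-1}$$
is a finite sum of compositions of the bundle endomorphisms $(\zeta-\tilde{\Box}_j)^{-1}$ with the differential operator $N$, hence a graded Heisenberg order $0$ differential operator depending rationally on $\zeta$. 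Taking $P_j:=\frac{1}{2\pi i}\oint_\gamma(\zeta-\Box_j)^{-1}\,d\zeta$ over a small positively oriented contour $\gamma$ enclosing $0$ and no other point of $\mathrm{spec}\,\tilde{\Box}_j$, the resolvent identity gives $P_j^2=P_j$ and $P_j\Box_j=\Box_jP_j$, which is (1); taking diagonal parts and using that every Neumann term with $i\ge 1$ is grading-changing gives $\bigoplus_l P_j|_{[\wedge^jT^*\mathfrak{X}\otimes E_V]_l}=\frac{1}{2\pi i}\oint_\gamma(\zeta-\tilde{\Box}_j)^{-1}\,d\zeta=\tilde{P}_j$, which is (2); and, as for the Riesz decomposition of any operator with nilpotent grading-changing part, $\ker P_j\oplus\mathrm{im}(P_j)$ is $\Box_j$-invariant with $\Box_j$ nilpotent on one summand and invertible on the other, matching the behaviour of $\tilde{\Box}_j$, which is (3). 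Uniqueness then follows, since (3) identifies $\mathrm{im}(P_j)$ and $\ker P_j$ with the intrinsic generalized $0$- and nonzero-eigenspaces of $\Box_j$.

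The main obstacle is the fibrewise/algebraic step: verifying that the grading-preserving piece $\tilde{\Box}_j$, a ``deformed Kostant Laplacian'' in which $\tilde{\nabla}_V$ acts as a codifferential that need not be the metric adjoint of $\partial_{\mathfrak{X}}$, still has kernel exactly $H^j(\mathfrak{p}_+,V)$ and is invertible off it, uniformly over $\mathfrak{X}$. This is the substance of \cite[Lemma 4.4]{Dave_Haller1} (going back to \cite{morecap}), where Kostant's description in Theorem \ref{knlkanlknad} enters essentially; once this is in place, the functional calculus above is routine.
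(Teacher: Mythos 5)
Your proof is correct and follows essentially the same route as the source: the paper gives no argument for this lemma but cites \cite[Lemma 4.4]{Dave_Haller1}, and what you have written is a reconstruction of that argument (the diagonal part of $\nabla_V$ is algebraic because every genuine derivative strictly raises the homogeneous degree, Kostant's theorem controls the generalized $0$-eigenspace of $\tilde{\Box}_j$, and the Riesz projection is computable because the Neumann series for the resolvent terminates, the off-diagonal part being nilpotent). You correctly isolate the only non-routine input --- the fibrewise identification of the generalized $0$-eigenspace of the deformed Kostant Laplacian with $H^j(\mathfrak{p}_+,V)$, uniformly over $\mathfrak{X}$ --- and defer it to Kostant and Dave--Haller, which is consistent with what the paper does.

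One discrepancy is worth recording. Your construction yields $\Box_j$ nilpotent on $\mathrm{im}(P_j)$ and invertible on $\ker(P_j)$, whereas item (3) as printed asserts the opposite assignment. The printed version appears to be a typo: it is incompatible with item (2), which forces the diagonal part of $P_j$ to be the generalized $0$-eigenprojection of $\tilde{\Box}_j$ (on whose image $\tilde{\Box}_j$ is by definition nilpotent), and with the subsequent use of item (3), where acyclicity of the complementary complex $\mathfrak{B}_\bullet$ living on $\ker(P_\bullet)$ requires $\Box_j$ to be invertible there. So the version your argument produces is the one the paper actually needs.
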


Consider the operators 
\begin{equation}
\label{lknaldnalkdjnad}
L_j:=P_j\tilde{P}_j+(1-P_j)(1-\tilde{P}_j).
\end{equation}
A degree argument shows that $L_j$ is invertible with inverse being a graded Heisenberg order $0$, differential operator. These graded Heisenberg order $0$, differential operators restrict to isomorphisms
\begin{align*}
L_j|:&C^\infty(\mathfrak{X};\tilde{P}_j(\wedge^jT^*\mathfrak{X}\otimes E_V))\xrightarrow{\sim} \mathrm{im}(P_j),\\
L_j|:&C^\infty(\mathfrak{X};(1-\tilde{P}_j)(\wedge^jT^*\mathfrak{X}\otimes E_V))\xrightarrow{\sim} \ker(P_j).
\end{align*}

It follows from Hodge theory that $\tilde{P}_j$ projects onto the bundle 
$$\mathpzc{H}_j(V):=(\mathsf{G}/\mathsf{P}_+)\times_{\mathsf{G}_0} H^j(\mathfrak{p}_+,V)\to \mathfrak{X}$$
viewed as a sub-bundle of $\wedge^jT^*\mathfrak{X}\otimes E_V$ in terms of harmonic forms. Note that the $\mathsf{G}_0$-representation $H^j(\mathfrak{p}_+,V)$ can be computed from Theorem \ref{knlkanlknad} and the highest weight $\lambda$ of $V$, the lowest form $\delta$ on $\mathfrak{g}$ and the Hasse diagram $W^{\mathfrak{p}}\subseteq W$  of $\mathfrak{p}$, as 
$$\bigoplus_{w\in W^{\mathfrak{p}}, \ \ell(w)=j} \mathpzc{V}_{w(\lambda+\delta)-\delta},$$
where $\mathpzc{V}_{w(\lambda+\delta)-\delta}$ are irreducible finite-dimensional $\mathsf{G}_0$-representations with highest weight $w(\lambda+\delta)-\delta$.

\begin{definition}
\label{defbggcomp}
Let $V$ be a complex $\mathsf{G}$-representation. Form the graded Heisenberg order $0$, differential operator 
$$D^{\rm BGG}_j:=\tilde{P}_{j}L_{j}^{-1}\nabla_V L_{j-1}:C^\infty(\mathfrak{X};\mathpzc{H}_{j-1}(V))\to C^\infty(\mathfrak{X};\mathpzc{H}_{j}(V)).$$
The BGG-complex of $V$ is the $\mathsf{G}$-equivariant sequence of differential operators
\begin{align*}
\mathsf{BGG}_\bullet(V):\quad 0\to C^\infty(\mathfrak{X};\mathpzc{H}_0(V))&\xrightarrow{D^{\rm BGG}_1} C^\infty(\mathfrak{X};\mathpzc{H}_1(V))\xrightarrow{D^{\rm BGG}_2}  \cdots \\
&\xrightarrow{D^{\rm BGG}_{d-1}} C^\infty(\mathfrak{X};\mathpzc{H}_{d-1}(V))\xrightarrow{D^{\rm BGG}_d} C^\infty(\mathfrak{X};\mathpzc{H}_d(V))\to 0,
\end{align*}
where $d=\dim(\mathfrak{X})$.
\end{definition}

From \cite[Proposition 4.5]{Dave_Haller1} we conclude the following.

\begin{prop}
The sequence $\mathsf{BGG}_\bullet(V)$ is a graded Rockland complex (cf. Definition \ref{gradedrock}) and its $\mathsf{K}$-equivariant index class $\mathrm{ind}_\mathsf{K}[\mathsf{BGG}_\bullet(V)]\in R(\mathsf{K})$ coincides with the index class of the de Rham complex
\begin{align*}
0\to C^\infty(\mathfrak{X};E_V)&\xrightarrow{\nabla_V} C^\infty(\mathfrak{X};T^*\mathfrak{X}\otimes E_V)\xrightarrow{\nabla_V}  \cdots \\
&\xrightarrow{\nabla_V} C^\infty(\mathfrak{X};\wedge^{d-1}T^*\mathfrak{X}\otimes E_V)\xrightarrow{\nabla_V} C^\infty(\mathfrak{X};\wedge^dT^*\mathfrak{X}\otimes E_V)\to 0.
\end{align*}
\end{prop}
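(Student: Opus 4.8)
The plan is to prove the two claims of the proposition separately but with a common mechanism: the operator $L_j$ built in \eqref{lknaldnalkdjnad} provides a compatible system of bundle isomorphisms that conjugates the BGG-sequence into a subquotient of the twisted de Rham complex without changing Fredholm data. First I would verify that $\mathsf{BGG}_\bullet(V)$ is a graded Rockland complex. By construction $D^{\mathrm{BGG}}_j = \tilde P_j L_j^{-1}\nabla_V L_{j-1}$ is a graded Heisenberg order $0$ differential operator, so the principal symbol is well defined in the graded calculus. Since $L_j$ is invertible with Heisenberg-differential inverse, $\sigma_{H,\mathrm{gr}}^0(L_j)$ is an invertible symbol, and conjugating the symbol of the (algebraic) complex $\nabla_V$ — which is exact in every nontrivial irreducible representation $\pi$ of $T_H\mathfrak{X}_x$ because it reduces, after applying $L_j$, to the harmonic part of a Koszul-type complex whose cohomology is carried by $\mathpzc{H}_\bullet(V)$ — shows $\sigma_{H,\mathrm{gr}}^0(D^{\mathrm{BGG}}_\bullet)$ localizes to an exact sequence in every such $\pi$. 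Concretely this is the content of \cite[Proposition 4.5]{Dave_Haller1}: the splitting $\wedge^jT^*\mathfrak{X}\otimes E_V=\ker(P_j)\oplus\mathrm{im}(P_j)$ is $\Box_j$-invariant with $\Box_j$ invertible on $\mathrm{im}(P_j)$, and in any nontrivial $\pi$ the represented symbol of $\Box_j$ is injective on the "non-harmonic" summand; a standard Hodge-theoretic argument on the osculating group then gives exactness of the symbol sequence and hence, after conjugating back by $L_j$, that $\mathsf{BGG}_\bullet(V)$ is graded Rockland (and it is literally a complex since the algebraic BGG-sequence of Theorem \ref{inadonad9geo} is, and $L_j$ is an honest isomorphism of sections).

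Next I would address the index claim. The point is that the full twisted de Rham sequence $(C^\infty(\mathfrak{X};\wedge^*T^*\mathfrak{X}\otimes E_V),\nabla_V)$ is also a $\mathsf{K}$-equivariant graded Rockland complex (its symbol is the Koszul differential on the osculating group, which is exact off the trivial representation), hence has a well-defined $\mathsf{K}$-equivariant index class in $R(\mathsf{K})$. Using the $\Box_j$-invariant decomposition $\wedge^jT^*\mathfrak{X}\otimes E_V=\mathrm{im}(P_j)\oplus\ker(P_j)$ one splits the de Rham complex, up to $\mathsf{K}$-equivariant homotopy, into two pieces: the "harmonic" piece, which via the isomorphisms $L_j|$ is $\mathsf{K}$-equivariantly isomorphic (as a graded Rockland complex) to $\mathsf{BGG}_\bullet(V)$, and a complementary acyclic piece on which $\Box_j$ acts invertibly. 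The complementary piece is contractible: invertibility of $\Box_j$ on that summand yields a $\mathsf{K}$-equivariant chain homotopy (the homotopy operator is $\partial_\mathfrak{X}\Box^{-1}$ restricted to $\ker(P_j)$, which lies in the graded Heisenberg calculus and is $\mathsf{K}$-equivariant since $\partial_\mathfrak{X}$, $\nabla_V$ and the projectors all are), so it contributes $0$ to the index in $R(\mathsf{K})$. Therefore $\mathrm{ind}_\mathsf{K}[\mathsf{BGG}_\bullet(V)]=\mathrm{ind}_\mathsf{K}$ of the twisted de Rham complex. This is exactly the mechanism of \cite[Proposition 4.5]{Dave_Haller1}, which I would cite for the analytic details.

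The main obstacle I expect is the second step done \emph{equivariantly and with the correct functional-analytic setup}: one must know that a graded Rockland complex has a well-defined index in $R(\mathsf{K})$ in the first place (Fredholmness on graded Heisenberg-Sobolev spaces from the graded analogue of Theorem \ref{rockandhypo}, plus $\mathsf{K}$-equivariance of the parametrix so the kernels/cokernels are finite-dimensional $\mathsf{K}$-representations), and that the chain-homotopy "killing" of the acyclic summand can be performed by operators in the calculus that genuinely commute with the $\mathsf{K}$-action. Since $\mathsf{K}$ is compact these equivariance issues are manageable by averaging, but making the homotopy an honest morphism of Fredholm complexes — rather than merely an isomorphism on cohomology — requires the invertibility of $\Box_j$ on $\ker(P_j)$ \emph{together with} the fact that $\Box_j^{-1}$ there is a Heisenberg pseudodifferential operator, which is precisely where Lemma \ref{decomkdh} (and the degree argument making $L_j^{-1}$ a differential operator) is essential. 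Modulo these inputs from \cite{Dave_Haller1}, both assertions follow.
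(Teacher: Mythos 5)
Your proposal is correct and follows essentially the same route as the paper: the identity $\nabla_V=L_\bullet(D^{\rm BGG}_\bullet\oplus\mathfrak{B}_\bullet)L_{\bullet-1}^{-1}$ coming from Lemma \ref{decomkdh}, acyclicity of the complementary summand via invertibility of $\Box_j$ on $\mathrm{im}(P_j)$ (item (3) of that lemma), the citation of \cite[Proposition 4.5]{Dave_Haller1} for the graded Rockland property, and averaging over the compact group $\mathsf{K}$ for equivariance. Your write-up is in fact more explicit than the paper's about the homotopy operator and the functional-analytic prerequisites, but the mechanism is identical.
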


\begin{proof}
By construction, $\nabla_V=L_j(D^{\rm BGG}_j\oplus \mathfrak{B}_j)L_{j-1}^{-1}$ and $\mathfrak{B}_j$ fits into an acyclic complex by item (3) of Lemma \ref{decomkdh}. Since $\mathsf{K}$ is compact, all structures can be assumed to be $\mathsf{K}$-equivariant so $\mathrm{ind}_\mathsf{K}[\nabla_V]=\mathrm{ind}_\mathsf{K}[\mathsf{BGG}_\bullet(V)]+\mathrm{ind}_\mathsf{K}[\mathfrak{B}_\bullet]=\mathrm{ind}_\mathsf{K}[\mathsf{BGG}_\bullet(V)]$.
\end{proof}

\begin{remark}
Since the Euler characteristic of $S^{k}$ vanishes for odd $k$, it is clear that for $\mathsf{G}=Sp(n,1), SU(n,1), SO(2n,1)$ the associated BGG-complexes will have a $\mathsf{K}$-equivariant index of vanishing formal dimension. In particular, since $\mathrm{Res}^\mathsf{G}_\mathsf{K}\gamma=1$ the $\gamma$-element can in these cases not be represented by a BGG-complex. See \cite{julgsun1,julgsp} for more creative approaches. 
\end{remark}

We can now turn to the case of the general parabolic geometries. The ideas of Lemma \ref{decomkdh} and the discussion after it holds for general parabolic geometries, so for any $\mathsf{G}$-representation $W$ and regular parabolic geometry $(\mathfrak{X},\omega)$ of type $(\mathsf{G},\mathsf{P})$ we can define a tractor bundle $E(W):=V\times_\mathsf{P}W\to \mathfrak{X}$ with induced connection $\nabla_W$. Proceeding as above, we arrive at a BGG-sequence $\mathsf{BGG}_\bullet(\mathfrak{X},W)$ which is the $\Aut(\mathfrak{X},\omega)$-equivariant sequence of differential operators of the form 
 \begin{align*}
\mathsf{BGG}_\bullet(\mathfrak{X},W):\quad 0\to C^\infty(\mathfrak{X};\mathpzc{H}_0(W))&\xrightarrow{D^{\rm BGG}_1} C^\infty(\mathfrak{X};\mathpzc{H}_1(W))\xrightarrow{D^{\rm BGG}_2}  \cdots \\
&\xrightarrow{D^{\rm BGG}_{d-1}} C^\infty(\mathfrak{X};\mathpzc{H}_{d-1}(W))\xrightarrow{D^{\rm BGG}_d} C^\infty(\mathfrak{X};\mathpzc{H}_d(W))\to 0,
\end{align*}
where $d=\dim(\mathfrak{X})$ and $\mathpzc{H}_j(W)=V\times_\mathsf{P} H^j(\mathfrak{p}_+,V)$. We again have that the complex $\mathsf{BGG}_\bullet(\mathfrak{X},V)$ is a graded Rockland sequence (cf. Definition \ref{gradedrock}). Moreover, for any compact subgroup $K\subseteq \Aut(\mathfrak{X},\omega)$ its $K$-equivariant index class $\mathrm{ind}_K[\mathsf{BGG}_\bullet(\mathfrak{X},V)]\in R(K)$ coincides with the index class of the de Rham sequence
\begin{align*}
0\to C^\infty(\mathfrak{X};E(W))&\xrightarrow{\nabla_V} C^\infty(\mathfrak{X};T^*\mathfrak{X}\otimes E(W))\xrightarrow{\nabla_V}  \cdots \\
&\xrightarrow{\nabla_V} C^\infty(\mathfrak{X};\wedge^{d-1}T^*\mathfrak{X}\otimes E(W))\xrightarrow{\nabla_V} C^\infty(\mathfrak{X};\wedge^dT^*\mathfrak{X}\otimes E(W))\to 0.
\end{align*}
We sometimes write $\mathsf{BGG}_\bullet(\mathfrak{X},W)=\mathsf{BGG}_\bullet(\nabla_E)$ when we want to emphasize the dependence on the tractor connection.

\section{$K$-homology classes and graded Rockland sequences}
\label{seconrock}

\subsection{Constructing $K$-cycles from graded Rockland sequences}
\label{subsdsseconrock}

\begin{prop}
\label{splittinprop}
Let $\pmb{m}=(m_1,\ldots, m_N)\in \R^N$. Consider a sequence 
$$D_\bullet: \quad 0\to C^\infty(\mathfrak{X};\pmb{E}_0)\xrightarrow{D_1}C^\infty(\mathfrak{X};\pmb{E}_1)\xrightarrow{D_2}\cdots \xrightarrow{D_N}C^\infty(\mathfrak{X};\pmb{E}_N)\to 0,$$
where $D_j\in \Psi^{m_j}_{H,\rm{gr}}(\mathfrak{X}; \pmb{E}_{j-1},\pmb{E}_j)$. Then the following are equivalent
\begin{enumerate}
\item $D_\bullet$ is a graded Rockland sequence.
\item There is a graded Rockland sequence of order $-\pmb{m}$
$$
B_\bullet: \quad 0\leftarrow C^\infty(\mathfrak{X};\pmb{E}_0)\xleftarrow{B_1}C^\infty(\mathfrak{X};\pmb{E}_1)\xleftarrow{B_2}\cdots \xleftarrow{B_N}C^\infty(\mathfrak{X};\pmb{E}_N)\leftarrow 0,
$$
such that 
$$B_jD_j+D_{j-1}B_{j-1}-1\in \Psi^{-1}_{H,{\rm gr}}(\mathfrak{X};\pmb{E}_j).$$
\item There is a graded Rockland sequence of order $-\pmb{m}$
$$
B_\bullet: \quad 0\leftarrow C^\infty(\mathfrak{X};\pmb{E}_0)\xleftarrow{B_1}C^\infty(\mathfrak{X};\pmb{E}_1)\xleftarrow{B_2}\cdots \xleftarrow{B_N}C^\infty(\mathfrak{X};\pmb{E}_N)\leftarrow 0,
$$
such that 
$$B_jD_j+D_{j-1}B_{j-1}-1\in \Psi^{-\infty}(\mathfrak{X};\pmb{E}_j).$$
\end{enumerate}
\end{prop}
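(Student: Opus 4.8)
The plan is to prove $(3)\Rightarrow(2)\Rightarrow(1)\Rightarrow(3)$. The first implication is immediate since $\Psi^{-\infty}(\mathfrak{X};\pmb{E}_j)\subseteq\Psi^{-1}_{H,{\rm gr}}(\mathfrak{X};\pmb{E}_j)$, so the same $B_\bullet$ works. For $(2)\Rightarrow(1)$ I would apply the graded principal symbol map of Theorem \ref{lkjandlkjandkjadn} to the relation $B_jD_j+D_{j-1}B_{j-1}-1\in\Psi^{-1}_{H,{\rm gr}}$, obtaining $\sigma_{H,{\rm gr}}(B_j)\sigma_{H,{\rm gr}}(D_j)+\sigma_{H,{\rm gr}}(D_{j-1})\sigma_{H,{\rm gr}}(B_{j-1})=1$ in the symbol algebra. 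Localizing in an arbitrary non-trivial irreducible unitary representation $\pi$ of an osculating group $T_H\mathfrak{X}_x$, this turns $\sigma_H(B_\bullet)_\pi$ into a contracting homotopy for $\sigma_H(D_\bullet)_\pi$; since $B_\bullet$ is itself a graded Rockland sequence the complex $\sigma_H(B_\bullet)_\pi$ is exact, and a standard homological argument (using that the sequences here are complexes to leading order) then shows $\sigma_H(D_\bullet)_\pi$ is an exact complex, i.e.\ $D_\bullet$ is a graded Rockland sequence.

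The substance is $(1)\Rightarrow(3)$. After shifting the gradings of the $\pmb{E}_j$ I would assume $\pmb{m}=\pmb{0}$. Fix Hermitian metrics and a smooth density, giving formal adjoints $D_j^{*}$, and form the graded symbol Laplacians
\[
\Delta^\sigma_j:=\sigma_{H,{\rm gr}}(D_{j+1})^{*}\sigma_{H,{\rm gr}}(D_{j+1})+\sigma_{H,{\rm gr}}(D_j)\sigma_{H,{\rm gr}}(D_j)^{*}\in\Sigma^{0}_{H,{\rm gr}}(\mathfrak{X};\pmb{E}_j).
\]
In each non-trivial irreducible $\pi$ the operator $\pi(\Delta^\sigma_j)$ is non-negative, and because $\sigma_H(D_\bullet)_\pi$ is an exact complex its kernel (the $\pi$-harmonics) vanishes; hence $\Delta^\sigma_j$ satisfies the Rockland condition and, by Theorem \ref{rockandhypo} and its graded version, admits a two-sided inverse $G_j\in\Sigma^{0}_{H,{\rm gr}}(\mathfrak{X};\pmb{E}_j)$ in the symbol algebra, which I may take self-adjoint and commuting with $\sigma_H(D_\bullet)$. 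Then $b_j:=\sigma_{H,{\rm gr}}(D_j)^{*}G_j$ is a Hodge-type contraction: $\sigma_H(D_{j+1})\sigma_H(D_j)=0$ gives $b_jb_{j+1}=0$ and $b_{j+1}\sigma_H(D_{j+1})+\sigma_H(D_j)b_j=1$. Lifting the $b_j$ through the surjection $\Psi^{0}_{H,{\rm gr}}\to\Sigma^{0}_{H,{\rm gr}}$ yields operators $B^{(0)}_j$ with $B^{(0)}_{j+1}D_{j+1}+D_jB^{(0)}_j-1\in\Psi^{-1}_{H,{\rm gr}}$ and $B^{(0)}_jB^{(0)}_{j+1}\in\Psi^{-1}_{H,{\rm gr}}$.

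The hard part is upgrading the $\Psi^{-1}$-remainder to a $\Psi^{-\infty}$-remainder. Writing $D:=\bigoplus_j D_j$ and $\mathcal{B}^{(0)}:=\bigoplus_j B^{(0)}_j$ we get $\mathcal{B}^{(0)}D+D\mathcal{B}^{(0)}=1+R_0$ with $R_0\in\Psi^{-1}_{H,{\rm gr}}$. When $D_\bullet$ is an honest complex (as on flat parabolic geometries, where the curvature vanishes) one has $D^2=0$, so $D$ commutes with $R_0$ and $\mathcal{B}:=\mathcal{B}^{(0)}\big(\textstyle\sum_{k\ge0}(-R_0)^k\big)$ (an asymptotic sum, still of degree $-1$) satisfies $\mathcal{B}D+D\mathcal{B}\equiv1$ and $\mathcal{B}^2\equiv0$ modulo $\Psi^{-\infty}$. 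In general $D^2=\bigoplus_jD_{j+1}D_j$ only lies in $\Psi^{-1}_{H,{\rm gr}}$, so $[D,R_0]=-[D^2,\mathcal{B}^{(0)}]$ need not vanish and the naive Neumann correction stalls; one must instead correct $\mathcal{B}^{(0)}$ order by order, at each stage solving a symbol equation $\sigma(C)\sigma(D)+\sigma(D)\sigma(C)=-\sigma(R_k)$ by means of the contraction $b_\bullet$ and checking that the successive obstructions are governed by $D^2$ and hence of strictly decreasing order. Finally $B_\bullet:=(B_j)$ is a graded Rockland sequence of order $-\pmb{m}$ because, in each non-trivial irreducible $\pi$, its localized symbol complex is the Hodge contraction of the exact complex $\sigma_H(D_\bullet)_\pi$, which is again exact. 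The main obstacle is precisely this bootstrapping step in the non-complex case: controlling, order by order, the failure of $D$ to commute with the parametrix remainders.
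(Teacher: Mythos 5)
Your reductions $(3)\Rightarrow(2)\Rightarrow(1)$ are fine, and your symbol-level construction for $(1)\Rightarrow(2)$ via the symbol Laplacians $\Delta^\sigma_j$ is a legitimate alternative to the paper's argument: the paper instead runs an adjoint-free induction, stacking $\hat d_j=\bigl(\begin{smallmatrix}d_j\\ b_{j-1}\end{smallmatrix}\bigr)$, checking it is Rockland using exactness of the localized complex and the previously built $b_{j-1}$, and extracting $b_j$ from a left inverse of $\hat d_j$ supplied by Theorem \ref{rockandhypo}. Your Hodge route is correct (injectivity of $\pi(\Delta^\sigma_j)$ does follow from exactness of $\sigma_H(D_\bullet)_\pi$ by the standard pairing argument, and self-adjointness upgrades the left inverse $G_j$ to a two-sided one commuting with $\sigma_H(D_\bullet)$ because $\sigma_H(D_{j+1})\sigma_H(D_j)=0$ holds at the symbol level), at the cost of choosing metrics; the paper's trick avoids adjoints, which is what makes it compatible with averaging in the equivariant setting.

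The genuine gap is in your passage from the $\Psi^{-1}_{H,{\rm gr}}$-remainder to the $\Psi^{-\infty}$-remainder, i.e. in establishing item (3). You correctly observe that the Neumann series stalls because $D^2\in\Psi^{-1}_{H,{\rm gr}}$ rather than $D^2=0$, but your proposed fix --- solving $\sigma(C)\sigma(D)+\sigma(D)\sigma(C)=-\sigma(R_k)$ at each order and claiming the obstructions have strictly decreasing order --- does not work. First, the equation is generically unsolvable: since $\sigma(D)^2=0$ in the symbol algebra, the graded commutator $[\sigma(D),\sigma(C)]$ is automatically a chain map for the symbol complex, whereas $\sigma(R_k)$ need not be one, precisely because $[D,R_k]=[D^2,\mathcal B_k]$ has a nonvanishing principal symbol of the same order as $R_k$ when $D_\bullet$ is not a complex. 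Second, even the approximate solution $c=-b\,r$ leaves the error $-b[r,d]$, which is again of order $-1$ relative to nothing --- it does not drop an order --- so the claimed decrease of the successive obstructions is unsubstantiated and, as written, false. The implication $(2)\Rightarrow(3)$ is exactly the point the paper disposes of by appeal to asymptotic completeness and the argument of H\"ormander, Chapter XVIII; your write-up makes the attempt explicit and thereby exposes that the naive version of that argument does not close for sequences that are complexes only in the leading term. You need either a genuinely different mechanism for this step or to restrict item (3) to the case where $D_\bullet$ is a complex modulo $\Psi^{-\infty}$.
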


We call $B_\bullet$ a Heisenberg splitting. We note that the principal symbol of a Heisenberg splitting is the same as a chain homotopy from the identity map to the zero map on the principal symbol complex.

\begin{proof}
It is clear that 3) implies 2), and by asymptotic completeness (and an argument as in \cite[Chapter XVIII]{horIII}) 2) implies 3). It is also clear that 2) implies 1). It remains to prove that a Rockland sequence admits a Heisenberg splitting.

To construct splitting operators $B_j\in \Psi_{H,{\rm gr}}^{-m_j}(\mathfrak{X};\pmb{E}_{j+1},\pmb{E}_j)$ of a Rockland sequence $D_\bullet$, it suffices to construct its principal symbols $b_j:=\sigma_H^{-m_j}(B_j)\in \Sigma^{-m_j}_{H,\rm{gr}}(\mathfrak{X}; \pmb{E}_{j},\pmb{E}_{j-1})$ such that
$$b_jd_j+d_{j-1}b_{j-1}=1.$$
Here we write $d_j:=\sigma_H^{m_j}(D_j)\in \Sigma^{m_j}_{H,\rm{gr}}(\mathfrak{X}; \pmb{E}_{j},\pmb{E}_{j+1})$. 

Since $D_\bullet$ is a Rockland sequence, $d_1$ satisfies the Rockland condition and admits a left inverse $b_1\in \Sigma^{-m_1}_{H,\rm{gr}}(\mathfrak{X}; \pmb{E}_{1},\pmb{E}_{0})$ by Theorem \ref{rockandhypo}. We form $e_1:=d_1b_1$ which is a range projection for $d_1$. To construct $b_j$, for $j>1$, we proceed by induction. Assume that we constructed $b_k\in \Sigma^{-m_k}_{H,\rm{gr}}(\mathfrak{X}; \pmb{E}_{k},\pmb{E}_{k-1})$, $k=1,\ldots, j-1$, such that $b_kd_k+d_{k-1}b_{k-1}=1$, $b_{k-1}b_k=0$ and that $d_kb_k$ is a range projection of $d_k$, for $k=1,\ldots, j-1$. Consider the symbol 
$$\hat{d}_j:=\begin{pmatrix} d_j\\ b_{j-1}\end{pmatrix} \in \Sigma^{m_j}_{H}(\mathfrak{X}; \pmb{E}_{j-1},\pmb{E}_{j}\oplus \pmb{E}_{j-2}\langle m_j-m_{j-1}\rangle),$$ 
where we write $\pmb{E}_{j-2}\langle m_j-m_{j-1}\rangle$ for $\pmb{E}_{j-2}$ with degree shifted by $m_j-m_{j-1}$. By the induction assumption, $\hat{d}_j$ satisfies the Rockland condition and admits a left inverse $\hat{b}_j\in \Sigma^{-m_j}_{H,\rm{gr}}(\mathfrak{X}; \pmb{E}_{j}\oplus \pmb{E}_{j-2}\langle m_j-m_{j-1}\rangle,\pmb{E}_{j-1})$ by Theorem \ref{rockandhypo}. We write 
$$\hat{b}_j=(b_j,\tilde{b}_j).$$
Here  $b_j\in \Sigma^{-m_j}_{H,\rm{gr}}(\mathfrak{X}; \pmb{E}_{j},\pmb{E}_{j-1})$ and unravelling the degree shifts we see that 
$$\tilde{b}_j\in \Sigma^{m_{j-1}}_{H,\rm{gr}}(\mathfrak{X}; \pmb{E}_{j-2},\pmb{E}_{j-1}).$$
Our defining property of $\hat{b}_j$ is that 
\begin{equation}
\label{lkjnkjandakjnakdjad}
b_jd_j+\tilde{b}_jb_{j-1}=1.
\end{equation}
However, the property \eqref{lkjnkjandakjnakdjad} is unaltered if we replace $\tilde{b}_j$ with $\tilde{b}_j(1-d_{j-2}b_{j-2})$. Since $1-d_{j-2}b_{j-2}$ is idempotent by assumption, we therefore have imposed that 
$$\tilde{b}_j=\tilde{b}_j(1-d_{j-2}b_{j-2})=\tilde{b}_jb_{j-1}d_{j-1},$$ 
where we use the induction assumption in the last equality. Multiplying the identity \eqref{lkjnkjandakjnakdjad} with $d_{j-1}$ from the right we see that $\tilde{b}_j=d_{j-1}$ and the induction step is complete.
\end{proof}

\begin{definition}
\label{splittindefedf}
Consider a graded Rockland sequence $D_\bullet$ of order $\pmb{m}=(m_1,\ldots, m_N)\in \R^N$ on a collection of graded vector bundles $\pmb{E}_j\to \mathfrak{X}$ with graded $G$-actions making the symbol complex $\sigma_H(D_\bullet)$ $G$-equivariant. 
\begin{itemize}
\item If the splitting operators $B_\bullet\in \Psi^{m_\bullet}_{H,{\rm gr}}(\mathfrak{X};\pmb{E}_\bullet,\pmb{E}_{\bullet-1})$ from item 3) in Proposition \ref{splittinprop} can be chosen so that $\sigma_H(B_\bullet)$ is $G$-equivariant, we say that $B_\bullet$ is a $G$-equivariant Heisenberg splitting and we say that $D_\bullet$ is $G$-equivariantly Heisenberg split.
\item We say that $D_\bullet$ has a \emph{$G$-equivariant Rockland splitting} of order $\pmb{s}\in \R^{N+1}$, solving $s_j-s_{j-1}=m_j$ for $j=1,\ldots, N$, if there are bounded operators 
$$B_j:W^{s_j}_H(\mathfrak{X};\pmb{E}_j)\to W^{s_{j-1}}_H(\mathfrak{X};\pmb{E}_{j-1}),$$
commuting with the $G$-actions and $C^\infty(\mathfrak{X})$ up to compact operators, and 
$$B_jD_j+D_{j-1}B_{j-1}-1\in \mathbb{K}(W^{s_j}(\mathfrak{X};\pmb{E}_j)).$$
\end{itemize}
\end{definition}

A $G$-equivariant Heisenberg splitting is clearly a $G$-equivariant Rockland splitting. We note the following fact that follows from averaging over the group action. 

\begin{prop}
\label{lknalkdna}
Let $G$ be a compact group. Consider a graded Rockland sequence $D_\bullet$ on a collection of graded vector bundles $\pmb{E}_j\to \mathfrak{X}$ with graded $G$-actions. If the symbol complex $\sigma_H(D_\bullet)$ is $G$-equivariant, then $D_\bullet$ is $G$-equivariantly Heisenberg split. In fact, we can choose the splitting operators $B_\bullet\in \Psi^{m_\bullet}_{H,{\rm gr}}(\mathfrak{X};\pmb{E}_\bullet,\pmb{E}_{\bullet-1})$ from Proposition \ref{splittinprop} to be $G$-equivariant.
\end{prop}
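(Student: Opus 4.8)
The plan is to average a Heisenberg splitting over $G$ using normalized Haar measure, exploiting that the construction in the proof of Proposition \ref{splittinprop} is purely symbolic and that the whole Heisenberg calculus is $G$-equivariant. Since $G$ acts on $\mathfrak{X}$ by filtered diffeomorphisms, it acts on each $\Psi^m_{H,{\rm gr}}(\mathfrak{X};\pmb{E}_i,\pmb{E}_j)$, on each symbol space $\Sigma^m_{H,{\rm gr}}(\mathfrak{X};\pmb{E}_i,\pmb{E}_j)$, and on $\Psi^{-\infty}(\mathfrak{X};\pmb{E}_j)$ by continuous automorphisms preserving the graded order, and these are complete locally convex spaces; hence for any continuous $G$-family the average $\int_G g\cdot(-)\,dg$ is well defined, lands in the same space, and is $G$-fixed. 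The one algebraic fact used repeatedly is: \emph{if $a$ is a $G$-fixed symbol and $b$ is a left inverse of $a$, then $\int_G g\cdot b\,dg$ is again a left inverse of $a$}, because $(\int_G g\cdot b\,dg)\,a=\int_G (g\cdot b)(g\cdot a)\,dg=\int_G g\cdot(ba)\,dg=\int_G g\cdot 1\,dg=1$.

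I would then re-run the inductive construction from the proof of Proposition \ref{splittinprop} inside the category of $G$-equivariant symbols. The hypothesis that $\sigma_H(D_\bullet)$ is $G$-equivariant means precisely that every symbol $d_j=\sigma_H^{m_j}(D_j)$ is $G$-fixed. In the base step one picks a left inverse $b_1$ of $d_1$ (which exists by Theorem \ref{rockandhypo}); replace it by its $G$-average, so that $b_1$ is $G$-fixed and $d_1b_1$ is a $G$-fixed range projection. In the inductive step, the symbol $\hat d_j$ that must be left-inverted is built out of the $G$-fixed data $d_j$ and $b_{j-1}$, hence is $G$-fixed; replace the chosen left inverse $\hat b_j$ by its $G$-average. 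Every subsequent manipulation in that proof — the modification $\tilde b_j\mapsto \tilde b_j(1-d_{j-2}b_{j-2})$, the resulting identity $\tilde b_j=d_{j-1}$, and the formation of range projections — involves only compositions and sums of $G$-fixed symbols, so all the relations $b_jd_j+d_{j-1}b_{j-1}=1$, $b_{j-1}b_j=0$, and ``$d_jb_j$ is a range projection'' now hold with $G$-fixed $b_\bullet$. Lifting these symbols to operators $B_j\in\Psi^{-m_j}_{H,{\rm gr}}(\mathfrak{X};\pmb{E}_j,\pmb{E}_{j-1})$ with $\sigma_H(B_j)=b_j$ produces a sequence with $\sigma_H(B_\bullet)$ $G$-equivariant and $B_jD_j+D_{j-1}B_{j-1}-1\in\Psi^{-1}_{H,{\rm gr}}(\mathfrak{X};\pmb{E}_j)$.

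To upgrade the remainder from $\Psi^{-1}$ to $\Psi^{-\infty}$ I would run the standard asymptotic-completeness argument already used in the proof of Proposition \ref{splittinprop} (cf. \cite[Chapter XVIII]{horIII}): the successive corrections are of strictly lower Heisenberg order and therefore do not alter the principal symbols $b_\bullet$, so $\sigma_H(B_\bullet)$ stays $G$-equivariant. By Definition \ref{splittindefedf} this is a $G$-equivariant Heisenberg splitting. For the final assertion that the $B_\bullet$ themselves can be taken $G$-equivariant, note first that after replacing each $D_j$ by $\int_G g D_j g^{-1}\,dg$ — a modification by terms in $\Psi^{m_j-1}_{H,{\rm gr}}$, hence preserving the graded Rockland property and the symbol complex — one may assume $D_\bullet$ itself is $G$-equivariant; then, taking any Heisenberg splitting $B_\bullet$ from Proposition \ref{splittinprop}, the operators $\tilde B_j:=\int_G g B_j g^{-1}\,dg\in\Psi^{-m_j}_{H,{\rm gr}}(\mathfrak{X};\pmb{E}_j,\pmb{E}_{j-1})$ are genuinely $G$-equivariant and satisfy $\tilde B_jD_j+D_{j-1}\tilde B_{j-1}-1=\int_G g(B_jD_j+D_{j-1}B_{j-1}-1)g^{-1}\,dg\in\Psi^{-\infty}(\mathfrak{X};\pmb{E}_j)$, since $\Psi^{-\infty}$ is $G$-invariant and complete.

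The main obstacle — and the reason I would average inside the inductive construction rather than simply averaging a single splitting at the end — is that the properties making $B_\bullet$ a \emph{Rockland} sequence (the auxiliary relations $b_{j-1}b_j=0$ and the idempotency of the range projections) are quadratic in the $b_\bullet$ and are \emph{not} preserved by naive averaging of a fixed splitting; building them in step by step through the averaged left inverses avoids this. The remaining technical points — continuity of $g\mapsto g\cdot A$ in the Fréchet topology of the calculus, so that the Bochner integrals make sense and land in the correct order, and $G$-invariance of $\Psi^{-\infty}$ — are routine given that $G$ acts by filtered diffeomorphisms.
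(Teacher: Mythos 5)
Your proposal is correct and is essentially the argument the paper intends: the paper offers no written proof beyond the remark that the result ``follows from averaging over the group action'' combined with ``the algebraic trick'' of Proposition \ref{splittinprop}, and your re-running of that inductive construction with $G$-averaged left inverses (using that averaging a left inverse of a $G$-fixed symbol yields a $G$-fixed left inverse) is exactly that. Your additional observation --- that one should average inside the induction because the auxiliary quadratic relations $b_{j-1}b_j=0$ and the idempotency of the range projections are not preserved by averaging a completed splitting --- is a genuine refinement that the paper leaves implicit.
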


We are interested in making more precise the way a group acts on the Heisenberg-Sobolev spaces, so we shall make the topology of the latter more precise. Fix a volume density $\rd \mu$ on $\mathfrak{X}$. For a graded hermitean vector bundle $\pmb{E}\to \mathfrak{X}$ with a graded hermitean connection $\nabla_{\pmb{E}}=\oplus_k \nabla_{\pmb{E}[k]}$, write $W^s_{H,{\rm gr}} (\mathfrak{X};\pmb{E}):=\oplus_k W^{s-k}_{H} (\mathfrak{X};\pmb{E}[k])$ for $s\in \R$ where $W^{s-k}_{H} (\mathfrak{X};\pmb{E}[k])$ is equipped with a Hilbert space structure from the connection $\nabla_{\pmb{E}[k]}$, and duality and interpolation.

\begin{prop}
\label{lengthsknon}
Assume that $\ell_0,\ell_1,\ell_2$ are length functions on $G$ such that 
\begin{enumerate}
\item $\frac{\rd g_*\mu}{\rd \mu}\leq \mathrm{e}^{2\ell_0(g)}$ for any $g\in G$;
\item For any $x\in \mathfrak{X}$ and $j=0,\ldots, N$, and $g\in G$
$$\|g:\pmb{E}_{j,x}\to \pmb{E}_{j,gx}\|\leq \mathrm{e}^{\ell_1(g)}.$$
\item For any $j=0,\ldots, N$, $f\in C^\infty(\mathfrak{X};\pmb{E}_j)$ and $g\in G$
$$\int_\mathfrak{X}\|\nabla_{\pmb{E}_j}(g^*f)\|_{T\mathfrak{X}\otimes \pmb{E}_j}^2\mathrm{d}\mu\leq \mathrm{e}^{2\ell_2(g)}\left(\int_\mathfrak{X}\|g^*(\nabla_{\pmb{E}_j}f)\|_{T\mathfrak{X}\otimes \pmb{E}_j}^2\mathrm{d}\mu+\int_\mathfrak{X}\|f\|_{\pmb{E}_j}^2\mathrm{d}\mu\right).$$
\end{enumerate}
Then there is a $C>0$ such that the length function defined by  
$$\ell= \ell_0+\ell_1+|\pmb{s}|_{\ell^\infty}\ell_2+C.$$
satisfies that for $g\in G$ acting on $W^{\pmb{s}}_{H,{\rm gr}} (\mathfrak{X};\pmb{E}_\bullet)$, we have that 
$$\|g\|_{W^{\pmb{s}}_{H,{\rm gr}} (\mathfrak{X};\pmb{E}_\bullet)}\leq \mathrm{e}^{\ell(g)}.$$

\end{prop}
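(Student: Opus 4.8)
The plan is to reduce the estimate to a single Heisenberg--Sobolev space and there to combine a change of variables with interpolation. Since the $G$-action on $\pmb{E}_\bullet$ is graded, $g$ respects the decomposition
\[
W^{\pmb{s}}_{H,{\rm gr}}(\mathfrak{X};\pmb{E}_\bullet)=\bigoplus_{j=0}^{N}\ \bigoplus_{\nu}\ W^{s_j-\nu}_{H}(\mathfrak{X};\pmb{E}_j[\nu]),
\]
so that $\|g\|_{W^{\pmb{s}}_{H,{\rm gr}}}$ is the maximum of the operator norms of $g$ on the (finitely many) summands $W^{s_j-\nu}_{H}(\mathfrak{X};\pmb{E}_j[\nu])$. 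Hence it suffices to fix a hermitean $G$-bundle $E\to\mathfrak{X}$ and bound $\|g\|_{W^{t}_H(\mathfrak{X};E)}$ for $t$ ranging over the bounded set $\{s_j-\nu\}$; concretely I would prove $\|g\|_{W^{t}_H(\mathfrak{X};E)}\le\mathrm{e}^{\ell_0(g)+\ell_1(g)+|t|\,\ell_2(g)+C'}$ with $C'$ independent of $g$ and $t$, and then absorb the (fixed) discrepancy between $\max_{j,\nu}|s_j-\nu|$ and $|\pmb{s}|_{\ell^\infty}$ into the additive constant of the final length function.

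Next I would establish two anchor estimates. For $t=0$: writing the action as $(g\cdot f)(x)=g\cdot f(g^{-1}x)$ with $g\cdot(-)$ the fibrewise isomorphism $E_{g^{-1}x}\to E_{x}$, the substitution $y=g^{-1}x$ and hypotheses (2) then (1) give
\[
\|g\cdot f\|_{L^2(\mathfrak{X};E)}^2\le\mathrm{e}^{2\ell_1(g)}\int_{\mathfrak{X}}\|f(y)\|^2\,\frac{\rd g_*\mu}{\rd\mu}(y)\,\rd\mu(y)\le\mathrm{e}^{2\ell_0(g)+2\ell_1(g)}\,\|f\|_{L^2(\mathfrak{X};E)}^2 .
\]
For the first Sobolev level, hypothesis (3) --- combined with the $L^2$ estimate just proved applied to $T^*\mathfrak{X}\otimes E$ (whose $T^*\mathfrak{X}$-factor contributes the norm of $Dg$ on $\mathfrak{t}_H\mathfrak{X}$, itself the first-order datum dominated by $\mathrm{e}^{\ell_2(g)}$) to control $\|g^{*}(\nabla_{E}f)\|_{L^2}$, and with the equivalence on compact $\mathfrak{X}$ of the $\nabla$-graph norm with the ordinary $W^1$-norm --- yields $\|g\|_{W^1(\mathfrak{X};E)}\le\mathrm{e}^{\ell_0(g)+\ell_1(g)+\ell_2(g)+C_1}$, and by $L^2$-duality the same on $W^{-1}(\mathfrak{X};E)$.

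To pass to the Heisenberg scale and to fill in the remaining orders I would use that $\{W^{t}_H(\mathfrak{X};E)\}_{t\in\R}$ is the complex interpolation scale of the positive operator $\mathfrak{D}$, so $[W^{t_0}_H,W^{t_1}_H]_{\theta}=W^{(1-\theta)t_0+\theta t_1}_H$, together with the standard continuous inclusions $W^{rt}(\mathfrak{X};E)\hookrightarrow W^{t}_H(\mathfrak{X};E)\hookrightarrow W^{t/r}(\mathfrak{X};E)$ for $t>0$ (and their reverses for $t<0$), $r$ the depth. Since the filtered diffeomorphism $g$ is a priori bounded on each $W^{t}_H(\mathfrak{X};E)$, bilinear interpolation of operator norms shows that $t\mapsto\log\|g\|_{W^{t}_H(\mathfrak{X};E)}$ is convex; anchoring it at $t=0$ by the $L^2$ bound and at $t=\pm1$ by the $W^{\pm1}$ bound via the inclusions above already settles $|t|\le1$ with the asserted shape. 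For $|t|>1$ one must also control the anchors $t=\pm n$ with $n=\lceil\max_{j,\nu}|s_j-\nu|\rceil$: here I would use $\|g\|_{W^{t}_H}\le\|g\|_{L^2}\cdot\big\|(g^{*}\mathfrak{D})^{t/2m}\mathfrak{D}^{-t/2m}\big\|_{L^2}$, exploiting that the filtered diffeomorphism conjugates $\mathfrak{D}$ to another positive order-$2m$ Heisenberg operator and that conjugation commutes with functional calculus, and then bound the $\Psi^0_H$-operator $R_t:=(g^{*}\mathfrak{D})^{t/2m}\mathfrak{D}^{-t/2m}$ recursively along its order filtration: its principal-symbol term transforms covariantly on $T_H\mathfrak{X}$ and is controlled by $\|Dg\|^{O(|t|)}\le\mathrm{e}^{O(|t|)\,\ell_2(g)}$, while each lower-order term maps a Sobolev space of positive (respectively negative) index into $L^2$ with a norm fed by the previous step. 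Convexity then dominates $\log\|g\|_{W^{t}_H}$ on $[-n,n]$ by the value at $\pm n$, and assembling over the finitely many $(j,\nu)$ and choosing $C$ to absorb the constants $C_n$ gives the statement.

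The hard part is exactly this recursive control of $R_t$ for $|t|>1$: the hypotheses pin down only the first jet of $g$ (through $\ell_1$ and $\ell_2$), whereas the lower-order terms of $R_t$, obtained by conjugating a differential operator of Heisenberg order $2m$ by $g$, involve higher derivatives of $g$; one must argue these do not spoil the exponential bound, either by the recursion sketched above (trading each extra derivative of $g$ for a gain of Sobolev regularity absorbed into the constant) or, in the cases of principal interest where the orders $s_j-\nu$ lie in $[-1,1]$, by noting that the anchors $t=0,\pm1$ already suffice. A minor but necessary preliminary is to identify the $T^*\mathfrak{X}$-factor implicit in hypothesis (3) with the first-order datum $\|Dg\|$ on $\mathfrak{t}_H\mathfrak{X}$, so that it too is governed by $\ell_2$.
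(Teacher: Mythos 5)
Your proposal follows the same overall strategy as the paper's (very terse) proof: anchor the estimate at integer orders by a change of variables together with commuting the group action through the connection, then pass to general $\pmb{s}$ by interpolation; your $t=0$ and $t=\pm1$ computations are exactly the paper's "long computation" in the lowest cases. Where you diverge is in the treatment of $|t|>1$: the paper's intended route is simply to iterate hypothesis (3) on the tensor bundles $T^*\mathfrak{X}^{\otimes k}\otimes\pmb{E}_j$ (each covariant derivative commuted past $g^*$ costs one factor of $\mathrm{e}^{\ell_2(g)}$, which is precisely why the coefficient $|\pmb{s}|_{\ell^\infty}$ appears in $\ell$), whereas you route the higher anchors through the operator $R_t=(g^*\mathfrak{D})^{t/2m}\mathfrak{D}^{-t/2m}$ and a recursion along its order filtration. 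Your device is workable in principle but strictly harder than necessary, and it is also where your write-up is least complete: you correctly observe that the hypotheses only pin down the first jet of $g$, but the resolution is not the $R_t$ recursion — it is that the Hilbert structures on $W^{\pmb{s}}_{H,{\rm gr}}$ are by the paper's convention the ones built from the connection, duality and interpolation, so the higher-order anchors are governed directly by iterates of (3) and no control of higher derivatives of $g$ beyond what (3) encodes is needed. Relatedly, your attempt to transfer the anchors between the ordinary and Heisenberg--Sobolev scales via the non-isometric inclusions $W^{rt}\hookrightarrow W^{t}_H\hookrightarrow W^{t/r}$ does not by itself bound $\|g\|_{W^{t}_H\to W^{t}_H}$ (the inclusions go one way each); this step is unnecessary once one works with the connection-defined norms throughout.
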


There exists length functions $\ell_0,\ell_1,\ell_2$ on $G$ satisfying the conditions of Proposition \ref{lengthsknon}, as one can reverse engineer the conditions into definitions. For instance, we can define $\ell_0(g):=\frac{1}{2}\left\|\log\left(\frac{\rd g_*\mu}{\rd \mu}\right)\right\|_{L^\infty}$.

\begin{proof}
The proof is by a long computation for $\pmb{s}$ integer; commuting the group action through connections and changes of variables. The case of a general $\pmb{s}$ follows from interpolation.
\end{proof}

Write $\pmb{E}_j^{\rm tr}$ for $\pmb{E}_j$ equipped with the trivial grading. We fix order reducing operators $\Lambda_{j,s}\in \Psi^s_{H,{\rm gr}}(\mathfrak{X};\pmb{E}_j,\pmb{E}_j^{\rm tr})$, i.e. $\Lambda_{j,s}$ is invertible and $H$-elliptic. From the order reducing operators we define an invertible operator 
\begin{equation}
\label{lknaldkjnadln}
U:L^2(\mathfrak{X};\pmb{E}_\bullet^{\rm tr})\to W^{\pmb{s}}_{H,{\rm gr}} (\mathfrak{X};\pmb{E}_\bullet),
\end{equation}
that we can assume to be unitary.

\begin{prop}
Consider the $C^*$-algebra 
$$A:=C(\mathfrak{X})+\mathbb{K}(L^2(\mathfrak{X};\pmb{E}_\bullet^{\rm tr}))\subseteq \mathbb{B}(L^2(\mathfrak{X};\pmb{E}_\bullet^{\rm tr})).$$
The unitary isomorphism \eqref{lknaldkjnadln} induces a faithful representation 
$$\pi:A\to \mathbb{B}(W^{\pmb{s}}_{H,{\rm gr}} (\mathfrak{X};\pmb{E}_\bullet)),\quad a\mapsto UaU^*,$$
satisfies that 
$$\pi(a)-a\in \mathbb{K}(W^{\pmb{s}}_{H,{\rm gr}} (\mathfrak{X};\pmb{E}_\bullet)), \quad a\in A.$$
The action of $G$ on $W^{\pmb{s}}_{H,{\rm gr}} (\mathfrak{X};\pmb{E}_\bullet)$ satisfies that 
$$g^{-1} \pi(a)g-\pi(g^*a) \in \mathbb{K}(W^{\pmb{s}}_{H,{\rm gr}} (\mathfrak{X};\pmb{E}_\bullet)), \quad a\in C(\mathfrak{X}),$$
and the inclusion of the compact operators defines a $G$-equivariant split short exact sequence
$$0\to  \mathbb{K}(L^2(\mathfrak{X};\pmb{E}_\bullet))\to A\to C(\mathfrak{X})\to 0.$$

\end{prop}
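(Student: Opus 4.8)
The plan is to verify the four assertions in the proposition in turn, using the unitary $U$ from \eqref{lknaldkjnadln} together with the elementary properties of the Heisenberg calculus (compactness of operators of negative order on Heisenberg--Sobolev spaces, \eqref{ljnadjnakjnakdnadkjnadkjn}) and the length-function estimate from Proposition \ref{lengthsknon}. Throughout, I write $\HH:=L^2(\mathfrak{X};\pmb{E}_\bullet^{\rm tr})$ and $\HH_{\pmb{s}}:=W^{\pmb{s}}_{H,{\rm gr}}(\mathfrak{X};\pmb{E}_\bullet)$, and recall that $U=\Lambda^{-1}$ where $\Lambda=\bigoplus_j\Lambda_{j,s_j}$ is $H$-elliptic of graded order $\pmb{s}$, so that for $a\in C(\mathfrak{X})$ the commutator $[\Lambda,a]$ is of graded order $\pmb{s}-1$, hence maps $\HH_{\pmb{s}}\to\HH_1\hookrightarrow\HH$ compactly.

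First, faithfulness of $\pi$ is immediate since $\pi=\mathrm{Ad}(U)$ is conjugation by an invertible operator. For $\pi(a)-a\in\mathbb{K}(\HH_{\pmb{s}})$ with $a\in A$: if $a\in\mathbb{K}(\HH)$ this is clear because $U$ restricts to a bounded map between the relevant spaces and conjugation of a compact operator by bounded operators is compact; the nontrivial case is $a\in C(\mathfrak{X})$, where I would write $\pi(a)-a=U a U^*-a=\Lambda^{-1}aU^*-a=\Lambda^{-1}(a\Lambda-\Lambda a)U^*=\Lambda^{-1}[a,\Lambda]U^*$, and since $[a,\Lambda]\in\Psi^{\pmb{s}-1}_{H,{\rm gr}}$ while $\Lambda^{-1}\in\Psi^{-\pmb{s}}_{H,{\rm gr}}$, the composite is of graded order $-1$, hence compact by \eqref{ljnadjnakjnakdnadkjnadkjn}; composing with the bounded $U^*$ keeps it compact. (One must take a little care that $\Lambda$ does not literally commute with scalar-valued $a$ because of the grading shifts between summands, but this is exactly why the commutator drops order by one rather than vanishing.)

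Second, for the $G$-covariance relation $g^{-1}\pi(a)g-\pi(g^*a)\in\mathbb{K}(\HH_{\pmb{s}})$ with $a\in C(\mathfrak{X})$: here $g$ acts on $\HH_{\pmb{s}}$ as a bounded operator (with norm $\le \mathrm{e}^{\ell(g)}$ by Proposition \ref{lengthsknon}), so the expression makes sense, and I would rewrite it as $\pi\big(U^*(g^{-1}\pi(a)g)U\big)-\pi(g^*a)$ after transporting through $U$ --- equivalently it suffices to check $U^{*}g^{-1}U a U^* g U-g^*a$ is compact on $\HH$, i.e. that the operator $g$ conjugated by the order-reducer $U$ differs from the same $g$ acting on $\HH$ by something that commutes with $C(\mathfrak{X})$ modulo compacts. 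The cleanest way is: on $\HH_{\pmb{s}}$ one has $g^{-1}a g=g^*a$ exactly (the $G$-action on functions on $\mathfrak{X}$ intertwines with multiplication operators honestly, as $g$ acts geometrically), so $g^{-1}\pi(a)g-\pi(g^*a)=g^{-1}(\pi(a)-a)g-(\pi(g^*a)-g^*a)$, and each of the two terms in parentheses is compact on $\HH_{\pmb{s}}$ by the second assertion already proved, while conjugation by the bounded operators $g^{\pm1}$ preserves compactness. This reduces the whole covariance statement to the previous bullet point.

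Third and last, the split short exact sequence $0\to\mathbb{K}(\HH)\to A\to C(\mathfrak{X})\to 0$: that $\mathbb{K}(\HH)$ is an ideal in $A=C(\mathfrak{X})+\mathbb{K}(\HH)$ and that the quotient is $C(\mathfrak{X})$ follows because $C(\mathfrak{X})\cap\mathbb{K}(\HH)=0$ (a nonzero multiplication operator on an infinite-dimensional $L^2$-space is never compact), so $A/\mathbb{K}(\HH)\cong C(\mathfrak{X})$ canonically and the inclusion $C(\mathfrak{X})\hookrightarrow A$ is a splitting; this is $G$-equivariant because the $G$-action by $\mathrm{Ad}(g)$ on $A$ preserves $\mathbb{K}(\HH)$ and restricts on $C(\mathfrak{X})$ to the geometric action $a\mapsto g^*a$. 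The main obstacle, such as it is, is the bookkeeping in the second bullet: making sure the order-reduction argument respects the grading shifts (so that the relevant commutators genuinely land in $\Psi^{-1}_{H,{\rm gr}}$ and not merely $\Psi^0_{H,{\rm gr}}$), and that all the "conjugate by bounded $U$, $U^*$, $g$, $g^{-1}$" manipulations are legitimate --- which they are, once Proposition \ref{lengthsknon} guarantees $g^{\pm1}$ is bounded on every $\HH_{\pmb{s}}$. Everything else is formal.
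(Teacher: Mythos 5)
Your proposal is correct and follows essentially the same route as the paper, whose own proof is just the two-sentence observation that the first claim follows because the Heisenberg calculus commutes with functions up to lower-order (hence compact) terms, and that the covariance claim reduces to the first via the exact intertwining $g^{-1}ag=g^*a$. Your commutator computation $\pi(a)-a=\Lambda^{-1}[a,\Lambda]$ (up to the harmless bookkeeping of unitarizing $U$) and the reduction $g^{-1}\pi(a)g-\pi(g^*a)=g^{-1}(\pi(a)-a)g-(\pi(g^*a)-g^*a)$ are precisely the details the paper leaves implicit.
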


\begin{proof}
The first statement is clear from construction and the fact that the calculus commute up to lower order terms with smooth functions. The second statement follows from the first. 
\end{proof}

\begin{thm}
\label{lknlknjkjaudn}
Let $D_\bullet$ be a $G$-equivariant graded Rockland sequence of order $\pmb{m}=(m_1,\ldots, m_N)$ on $\pmb{E}_\bullet=(\pmb{E}_0,\ldots, \pmb{E}_N)$. Assume that the $G$-action on $\pmb{E}$ is by conformal unitaries. Take $\pmb{s}=(s_0,\ldots, s_N)\in \R^{N+1}$ such that 
$$s_{j-1}-s_{j}=m_j, \quad j=1,\ldots, N.$$
Assume that we have a $G$-equivariant Rockland splitting $B_\bullet$ of order $\pmb{s}$.
If we set 
$$F_0:=D_\bullet+B_\bullet:W^{\pmb{s}}_{H,{\rm gr}} (\mathfrak{X};\pmb{E}_\bullet)\to W^{\pmb{s}}_{H,{\rm gr}} (\mathfrak{X};\pmb{E}_\bullet),$$
and $F:=F_0|F_0|^{-1}$, then the collection 
$$(\pi,W^{\pmb{s}}_{H,{\rm gr}} (\mathfrak{X};\pmb{E}_\bullet),F),$$
forms a Fredholm module for $C(\mathfrak{X})$ with $g(F)-F\in \mathbb{K}(W^{\pmb{s}}_{H,{\rm gr}} (\mathfrak{X};\pmb{E}_\bullet))$ for any $g\in G$.
\end{thm}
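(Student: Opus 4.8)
The plan is to verify, in turn, the three defining properties of a Fredholm module for $C(\mathfrak{X})$ together with the $G$-equivariance estimate. First I would record the algebraic backbone: by assumption $B_\bullet$ is a $G$-equivariant Rockland splitting of order $\pmb{s}$, so $F_0 = D_\bullet + B_\bullet$ acts boundedly on $W^{\pmb{s}}_{H,{\rm gr}}(\mathfrak{X};\pmb{E}_\bullet)$, commutes with $C(\mathfrak{X})$ and with the $G$-action up to compacts, and satisfies
$$F_0^2 = (D_\bullet + B_\bullet)^2 = D_\bullet B_\bullet + B_\bullet D_\bullet + D_\bullet^2 + B_\bullet^2.$$
Here $D_\bullet^2 = 0$ in leading order (it is a graded Rockland sequence, a complex modulo lower order, and after the shift of gradings we may take order $\pmb 0$) and $B_\bullet^2$ is of negative order hence compact on the Heisenberg–Sobolev scale by \eqref{ljnadjnakjnakdnadkjnadkjn}; the cross terms give $B_jD_j + D_{j-1}B_{j-1} = 1 + \mathbb{K}$ by the Rockland splitting property. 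Hence $F_0^2 - 1 \in \mathbb{K}(W^{\pmb{s}}_{H,{\rm gr}}(\mathfrak{X};\pmb{E}_\bullet))$; in particular $F_0$ is self-adjoint modulo compacts and essentially invertible, so $|F_0| = (F_0^* F_0)^{1/2}$ is invertible modulo compacts and $F := F_0 |F_0|^{-1}$ is a well-defined bounded self-adjoint operator with $F^2 - 1 \in \mathbb{K}$.

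Next I would pass the three Fredholm-module conditions from $F_0$ to $F$. Using the representation $\pi$ from the previous proposition, for $a \in C(\mathfrak{X})$ we have $[F_0,\pi(a)] \in \mathbb{K}$ (since $D_\bullet$ is a differential/Heisenberg operator and $B_\bullet$ is in the Heisenberg calculus, both commute with multiplication by smooth functions up to lower order, and by density the same holds for continuous functions). The standard functional-calculus argument — writing $F = F_0 g(F_0^2)$ for $g(t) = t^{-1/2}$ approximated by polynomials on the spectrum of $F_0^2$ away from $0$, and using that $F_0^2 - 1$ and $[F_0, \pi(a)]$ are compact — then gives $[F, \pi(a)] \in \mathbb{K}$. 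Combined with $\pi(a) - a \in \mathbb{K}$ and $F^2 - 1 \in \mathbb{K}$, the triple $(\pi, W^{\pmb{s}}_{H,{\rm gr}}(\mathfrak{X};\pmb{E}_\bullet), F)$ is a Fredholm module for $C(\mathfrak{X})$ (the grading being the one splitting $W^{\pmb s}_{H,{\rm gr}}$ into even/odd exterior-degree parts, with respect to which $F_0$, hence $F$, is odd).

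For the equivariance statement, I would first note that the $G$-action on $W^{\pmb{s}}_{H,{\rm gr}}(\mathfrak{X};\pmb{E}_\bullet)$ is by bounded invertibles — this is exactly the content of Proposition \ref{lengthsknon}, with the bound $\mathrm{e}^{\ell(g)}$ depending continuously on $g$. Since $B_\bullet$ is a $G$-equivariant Rockland splitting, $g^{-1} F_0 g - F_0 \in \mathbb{K}$ for every $g \in G$, and I would check that $g \mapsto g^{-1} F_0 g - F_0$ is continuous in operator norm (using the continuity built into the length-function estimate and the fact that $D_\bullet + B_\bullet$ is a fixed bounded operator). Then $g \mapsto g^{-1}|F_0|^{-2} g - |F_0|^{-2} = g^{-1}(F_0^*F_0)^{-1}g - (F_0^*F_0)^{-1}$ is likewise norm-continuous and compact-valued (one writes $|F_0|^{-2}$ via the resolvent or via polynomial approximation of $t \mapsto t^{-1}$ on a neighbourhood of $\mathrm{spec}(F_0^*F_0) \setminus \{0\}$, exploiting $F_0^*F_0 - 1 \in \mathbb{K}$), and the same for $|F_0|^{-1}$. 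Writing $g^{-1}Fg - F = (g^{-1}F_0 g)(g^{-1}|F_0|^{-1}g) - F_0|F_0|^{-1}$ and expanding, every term lands in $\mathbb{K}$ and depends continuously on $g$, giving $g(F) - F \in \mathbb{K}(W^{\pmb{s}}_{H,{\rm gr}}(\mathfrak{X};\pmb{E}_\bullet))$ continuously in $g$.

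The main obstacle I expect is the passage from compactness and equivariance of $F_0$ to the corresponding statements for the phase $F = F_0|F_0|^{-1}$: one must justify that $|F_0|^{-1}$, defined through functional calculus applied to an operator that is only invertible modulo compacts, still inherits (a) $[|F_0|^{-1}, \pi(a)] \in \mathbb{K}$ and (b) $g^{-1}|F_0|^{-1}g - |F_0|^{-1} \in \mathbb{K}$ with continuous dependence on $g$. The clean way is to work modulo $\mathbb{K}$ from the start: in the Calkin algebra $F_0^2$ maps to a positive invertible element, so $|F_0|$ maps to an invertible positive element and $F$ to a self-adjoint unitary; lifting back, the commutators and the differences $g^{-1}(\cdot)g - (\cdot)$ are compact because they are so in the image of $F_0$ and the functional calculus is continuous for the Calkin norm. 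The continuity in $g$ then follows from continuity of $g \mapsto g^{-1}F_0 g$ in operator norm together with continuity of the (Lipschitz, on the relevant spectral region) function $t \mapsto t^{-1/2}$ under the holomorphic/continuous functional calculus, which I would phrase carefully but not compute in detail.
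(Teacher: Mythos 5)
There is a genuine gap, and it sits exactly where the paper's own (short) proof does its only real work: the hypothesis that $G$ acts on $\pmb{E}$ by \emph{conformal unitaries} is never used in your argument, and as a consequence you implicitly treat the $G$-action on $W^{\pmb{s}}_{H,{\rm gr}}(\mathfrak{X};\pmb{E}_\bullet)$ as if it were unitary and $F_0$ as if it were self-adjoint modulo compacts. Neither is true here. First, $F_0^2-1\in\mathbb{K}$ does \emph{not} imply that $F_0$ is ``self-adjoint modulo compacts'': $F_0=D_\bullet+B_\bullet$ with $D_\bullet$ raising and $B_\bullet$ lowering the cochain degree is self-adjoint mod $\mathbb{K}$ only if $B_\bullet=D_\bullet^*$ mod $\mathbb{K}$, which is not assumed; accordingly $F=F_0|F_0|^{-1}=F_0(F_0^*F_0)^{-1/2}$ is not $F_0\,g(F_0^2)$ with $g(t)=t^{-1/2}$. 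Second, and decisively, to control $g^{-1}(F_0^*F_0)g-F_0^*F_0$ you need $g^{-1}F_0^*g-F_0^*\in\mathbb{K}$, and this does \emph{not} follow from $g^{-1}F_0g-F_0\in\mathbb{K}$ when $\pi(g)$ is not unitary: taking adjoints only yields that $F_0^*$ almost commutes with the contragredient representation $\pi^*(g)=\pi(g^{-1})^*$, not with $\pi(g)$. The paper bridges exactly this point: conformality of the $G$-action on $\pmb{E}$ gives $\pi^*(g)=a_g\pi(g)$ for an $A$-valued cocycle $g\mapsto a_g$, and since $F_0$ almost commutes with $A$ one deduces that $F_0^*$ also almost commutes with $\pi(g)$. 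Only then do the functional-calculus manipulations with $|F_0|^{-1}$ (whether by polynomial approximation or by passing to the Calkin algebra) go through. Your ``work in the Calkin algebra'' fallback does not repair this, because in the Calkin algebra the class of $F_0^*$ is still not controlled by the class of $F_0$ under conjugation by a non-unitary invertible.

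A secondary, smaller issue: your derivation of $F_0^2-1\in\mathbb{K}$ invokes the mapping property \eqref{ljnadjnakjnakdnadkjnadkjn} to dispose of $B_\bullet^2$, but a $G$-equivariant Rockland splitting in the sense of Definition \ref{splittindefedf} consists merely of bounded operators on the Heisenberg--Sobolev scale, not of Heisenberg pseudodifferential operators, so that citation does not apply; likewise $D_\bullet^2$ need not vanish even in leading order for a general graded Rockland sequence. The paper simply takes $F_0^2-1\in\mathbb{K}$ as part of the assumptions, so this is more a matter of honesty about what is being assumed than a fatal error, but as written your justification of that step is not valid in the stated generality.
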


\begin{proof}
By assumption, $F_0^2-1$ is compact and $F_0$ almost commutes with $A$ and the $G$-action. Since the $G$-action on $\pmb{E}$ is by conformal unitaries, the contragradient representation $\pi^*(g):=\pi(g^{-1})^*$ takes the form $\pi^*(g)=a_g\pi(g)$ for an $A$-valued $G$-cocycle $g\mapsto a_g$. Therefore, $F_0$ also almost commutes with the contragradient representation. We conclude that both $F_0$ and $F_0^*$ almost commutes with $A$ and the $G$-action, so $F$ almost commutes with $A$ and the $G$-action.
\end{proof}

\begin{prop}
Let $D_\bullet$ be a $G$-equivariant graded Rockland sequence. The Fredholm module $(\pi,W^{\pmb{s}}_{H,{\rm gr}} (\mathfrak{X};\pmb{E}_\bullet),F)$ from Theorem \ref{lknlknjkjaudn} is up to operator homotopy independent of the choice of $B_\bullet$ and $\pmb{s}$. In fact, as soon as $D'_\bullet$ is another $G$-equivariant graded Rockland sequence with $\sigma_H(D_\bullet)=\sigma_H(D'_\bullet)$ then the associated Fredholm modules are operator homotopic. 
\end{prop}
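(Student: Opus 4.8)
The strategy is to prove the stronger second statement — that Fredholm modules built from graded Rockland sequences with the same principal symbol are operator homotopic — and then observe that independence of $B_\bullet$ and $\pmb{s}$ is a special case. The key point is that all the relevant operators live in a filtered calculus, so differences of leading terms are lower order, hence compact on the appropriate Heisenberg--Sobolev spaces; this gives us enough room to run a straight-line homotopy.

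First I would fix the data: let $D_\bullet$ and $D'_\bullet$ be $G$-equivariant graded Rockland sequences with $\sigma_H(D_\bullet)=\sigma_H(D'_\bullet)$, pick $G$-equivariant Rockland splittings $B_\bullet$ of order $\pmb{s}$ and $B'_\bullet$ of order $\pmb{s}'$ as in Theorem \ref{lknlknjkjaudn}, and form $F_0=D_\bullet+B_\bullet$, $F_0'=D'_\bullet+B'_\bullet$. The first reduction is to handle the choice of $\pmb{s}$: the unitary order-reducing operator $U$ of \eqref{lknaldkjnadln} intertwines the representations and the $G$-action up to compacts, and conjugating $F_0$ by such a $U$ moves between different choices of $\pmb{s}$ through a norm-continuous path of Fredholm modules (interpolating the order-reducing exponents), so without loss of generality $\pmb{s}=\pmb{s}'$ and everything acts on the fixed space $W^{\pmb{s}}_{H,{\rm gr}}(\mathfrak{X};\pmb{E}_\bullet)$. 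Next, since $\sigma_H(D_\bullet)=\sigma_H(D'_\bullet)$ we have $D_\bullet-D'_\bullet\in\Psi^{\pmb{m}-1}_{H,{\rm gr}}$, which by the mapping property \eqref{ljnadjnakjnakdnadkjnadkjn} is compact from $W^{s_{j-1}}_H$ to $W^{s_j}_H$; and two Rockland splittings of the same symbol complex differ by an operator of negative order (their principal symbols are both chain homotopies of the identity, and any two such differ by a boundary which can be realized in order $-1$, as in the proof of Proposition \ref{splittinprop}), hence $B_\bullet-B'_\bullet$ is likewise compact. Therefore $F_0-F_0'$ is compact.

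With $F_0-F_0'$ compact, I would consider the linear path $F_0^{(t)}:=(1-t)F_0+tF_0'$ for $t\in[0,1]$. Each $F_0^{(t)}$ differs from $F_0$ by a compact operator, so $(F_0^{(t)})^2-1$ is compact and $F_0^{(t)}$ almost commutes with $\pi(A)$ and with the $G$-action (since these properties are stable under compact perturbations, using the conformal-unitarity hypothesis exactly as in the proof of Theorem \ref{lknlknjkjaudn} to control the contragredient representation). The only remaining issue is that $F^{(t)}:=F_0^{(t)}|F_0^{(t)}|^{-1}$ requires $F_0^{(t)}$ to be invertible; this fails on a closed subset of $[0,1]$, but one circumvents it in the standard way — either by first deforming within each $F_0^{(t)}$ by a compact perturbation to make it invertible (possible because its essential spectrum is a compact perturbation of that of the invertible $F_0$), or by working in the picture of Fredholm modules where $F$ is only required to satisfy $F^2-1\in\mathbb{K}$ rather than $F^2=1$, in which case $t\mapsto(\pi,W^{\pmb{s}}_{H,{\rm gr}}(\mathfrak{X};\pmb{E}_\bullet),F_0^{(t)})$ is itself a norm-continuous operator homotopy of Fredholm modules. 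Applying the normalization functor at the endpoints, which is itself continuous on invertibles and compatible with homotopies, yields an operator homotopy between $(\pi,\,\cdot\,,F)$ and $(\pi,\,\cdot\,,F')$. Finally, taking $D'_\bullet=D_\bullet$ but $B'_\bullet\neq B_\bullet$ and $\pmb{s}'\neq\pmb{s}$ recovers the first assertion.

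The main obstacle is the invertibility bookkeeping in the last paragraph: one must be careful that the compact perturbations used to restore invertibility of $F_0^{(t)}$ can be chosen continuously in $t$ and $G$-equivariantly (up to compacts), so that the homotopy genuinely stays within $G$-equivariant Fredholm modules for $C(\mathfrak{X})$. This is routine given that the essential spectrum is locally constant along the path, but it is the step where the argument is least formal and most in need of care; everything before it is a direct consequence of the filtered structure of the Heisenberg calculus and the compactness statement \eqref{ljnadjnakjnakdnadkjnadkjn}.
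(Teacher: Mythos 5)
Your overall strategy --- linear interpolation between the two operators --- is exactly the paper's, which proves both assertions via the operator homotopies $B_{\bullet,t}=tB_\bullet+(1-t)B'_\bullet$ and $D_{\bullet,t}=tD_\bullet+(1-t)D'_\bullet$. However, one step in your justification is wrong: two $G$-equivariant Rockland splittings of the same sequence need \emph{not} differ by compact operators, so $F_0-F_0'$ need not be compact. A Rockland splitting in the sense of Definition \ref{splittindefedf} is merely a family of bounded operators satisfying $B_jD_j+D_{j-1}B_{j-1}-1\in\mathbb{K}$; if $B_\bullet$ is one such, then so is $B_\bullet+C_\bullet$ for any bounded family with $C_jD_j+D_{j-1}C_{j-1}\in\mathbb{K}$, for instance $C_j:=D_{j-1}K_jD_{j+1}$ with $K_j$ bounded of the appropriate orders (the analogue of perturbing a de Rham homotopy $b$ by $d\,k\,d$): every term in $C_jD_j+D_{j-1}C_{j-1}$ then contains a composite $D_{l+1}D_l$, which is of lower order and hence compact, while $C_j$ itself is in general neither compact nor zero. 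The symbol-level version of your parenthetical claim fails for the same reason: two chain homotopies from $1$ to $0$ differ by a degree $-1$ cycle for $[d,\cdot]$, not by zero, so even two Heisenberg splittings can have genuinely different principal symbols.

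The gap is easily repaired, and the repair is what the paper's one-line proof implicitly does: instead of arguing that each $F_0^{(t)}$ is a compact perturbation of $F_0$, verify the Fredholm-module axioms directly at each $t$ using bilinearity. Writing $F_0^{(t)}=D^{(t)}_\bullet+B^{(t)}_\bullet$ with $D^{(t)}=(1-t)D+tD'$ and $B^{(t)}=(1-t)B+tB'$, the hypothesis $\sigma_H(D_\bullet)=\sigma_H(D'_\bullet)$ together with \eqref{ljnadjnakjnakdnadkjnadkjn} gives $D^{(t)}_j\equiv D_j\equiv D_j'\pmod{\mathbb{K}}$, whence $B^{(t)}_jD^{(t)}_j+D^{(t)}_{j-1}B^{(t)}_{j-1}-1\equiv(1-t)\bigl(B_jD_j+D_{j-1}B_{j-1}-1\bigr)+t\bigl(B_j'D_j'+D_{j-1}'B_{j-1}'-1\bigr)\equiv 0\pmod{\mathbb{K}}$; the almost-commutation with $\pi(A)$ and with the $G$-action is likewise preserved under convex combination. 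With that substitution your argument goes through. Your remaining additions --- reducing different choices of $\pmb{s}$ via the order-reducing unitaries of \eqref{lknaldkjnadln}, and the care needed with the normalization $F_0\mapsto F_0|F_0|^{-1}$ along the path --- address points the paper's proof leaves implicit and are welcome.
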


\begin{proof}
For fixed $D_\bullet$, the class for two different $B_\bullet$ and $B_\bullet'$ coincides as seen from the operator homotopy defined from $B_{\bullet,t}=tB_\bullet+(1-t)B'_\bullet$. The statement that the class only depends on the symbol $\sigma_H(D_\bullet)$ follows by considering the operator homotopy defined from $D_{\bullet,t}=tD_\bullet+(1-t)D'_\bullet$.
\end{proof}

\begin{definition}
When $G$ is compact, we arrive at a class in $KK_0^{G}(C(\mathfrak{X}),\C)$ of the cycle $(\pi,W^{\pmb{s}}_{H,{\rm gr}} (\mathfrak{X};\pmb{E}_\bullet),F)$ constructed in Theorem \ref{lknlknjkjaudn}. We denote it by $[D_\bullet]\in K_0^{G}(\mathfrak{X})$. 
\end{definition}

The class $[D_\bullet]$ was previously defined, using somewhat different yet equivalent methods, in \cite[Section 27]{goffkuz}.

\subsection{On existence of equivariant splittings for $\mathsf{G}/\mathsf{P}$}
\label{nogosubsec}

In the previous subsection we saw that if there are equivariant Rockland splittings we can define Fredholm modules with good equivariance properties. We also saw that for a compact group there is a constructive method of finding equivariant Heisenberg splittings, or in other words equivariant Rockland splittings in the Heisenberg calculus. We now study the existence question of equivariant Heisenberg splittings for BGG-sequences on the flat parabolic geometry $\mathfrak{X}=\mathsf{G}/\mathsf{P}$.

\begin{thm}
Let $\mathsf{G}$ be a connected semisimple Lie group, $\mathsf{P}$ a parabolic subgroup of $\mathsf{G}$ and $V$ a finite-dimensional $\mathsf{G}$-representation. If $\mathsf{G}/\mathsf{P}$ has rank $>1$ then the $\mathsf{G}$-equivariant Rockland sequence $\mathsf{BGG}_\bullet(V)$ on $\mathsf{G}/\mathsf{P}$ does not admit a $\mathsf{G}$-equivariant Heisenberg splitting. 
\end{thm}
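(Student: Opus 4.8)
The plan is to localise the problem at the base point $e\mathsf{P}\in\mathsf{G}/\mathsf{P}$ and in the \emph{trivial} one-dimensional representation of the osculating group $\overline{\mathsf{N}}_\mathsf{P}$ --- the representation the Rockland condition never constrains --- where the symbol complex of $\mathsf{BGG}_\bullet(V)$ degenerates to one with vanishing differentials and hence cannot admit a contracting homotopy. So suppose for contradiction that $\mathsf{BGG}_\bullet(V)$ on $\mathfrak{X}=\mathsf{G}/\mathsf{P}$ admits a $\mathsf{G}$-equivariant Heisenberg splitting $B_\bullet$ in the sense of Definition~\ref{splittindefedf}, and write $d_j:=\sigma_H(D^{\rm BGG}_j)$, $b_j:=\sigma_H(B_j)$ for the graded Heisenberg principal symbols, both $\mathsf{G}$-equivariant. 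Since $\mathpzc{H}_0(V)$ sits at the very start of the sequence, the first of the splitting identities of Proposition~\ref{splittinprop} reads $B_1 D^{\rm BGG}_1\equiv 1$ modulo $\Psi^{-1}_{H,{\rm gr}}(\mathfrak{X};\mathpzc{H}_0(V))$, so at the level of symbols $b_1 d_1=1$ in $\Sigma^0_{H,{\rm gr}}(\mathfrak{X};\mathpzc{H}_0(V))^{\mathsf{G}}$; it suffices to rule out such a $\mathsf{G}$-equivariant left symbol inverse $b_1$ of $d_1$.

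Here the rank hypothesis enters. Since $\mathrm{rk}(\mathsf{G}/\mathsf{P})=\mathrm{rk}(\mathsf{A}_\mathsf{P})>1$, Corollary~\ref{jnljnjknkjnad} forces the $\mathsf{G}$-equivariant symbol $b_1$ to be the symbol of a $\mathsf{G}$-equivariant \emph{differential} operator $\tilde B_1\colon C^\infty(\mathfrak{X};\mathpzc{H}_1(V))\to C^\infty(\mathfrak{X};\mathpzc{H}_0(V))$, while $D^{\rm BGG}_1$ is already differential. Hence $\tilde B_1 D^{\rm BGG}_1$ is a graded differential operator with principal symbol $1$, i.e.\ $\tilde B_1 D^{\rm BGG}_1-1\in\Psi^{-1}_{H,{\rm gr}}(\mathfrak{X};\mathpzc{H}_0(V))$. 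Now evaluate at the trivial representation $\pi_0$ of $\overline{\mathsf{N}}_\mathsf{P}$: on symbols of differential operators $\pi_0$ is exactly the passage to the zeroth-order (bundle-map) part, equivalently the counit of $\mathcal{U}(\overline{\mathfrak{n}}_\mathsf{P})$, and it is multiplicative. Applying it to $b_1 d_1=1$ yields $\pi_0(b_1)\,\pi_0(d_1)=1_{\mathpzc{H}_0(V)}$.

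It remains to show $\pi_0(d_1)=0$, i.e.\ that the first BGG operator has no zeroth-order part. Its zeroth-order part is a $\mathsf{G}$-equivariant bundle map $\mathpzc{H}_0(V)\to\mathpzc{H}_1(V)$; since $\mathpzc{H}_j(V)=\mathsf{G}\times_\mathsf{P}H^j(\mathfrak{p}_+,V)$ with $\mathfrak{p}_+$ acting trivially on these cohomologies, such a map is the same as a $\mathsf{G}_0$-homomorphism $H^0(\mathfrak{p}_+,V)\to H^1(\mathfrak{p}_+,V)$. By Kostant's description (Theorem~\ref{knlkanlknad}) the $\mathsf{G}_0$-constituents of $H^j(\mathfrak{p}_+,V)$ have highest weights $w(\lambda+\delta)-\delta$ with $w\in W^{\mathfrak{p}}$, $\ell(w)=j$, and as $\lambda+\delta$ is regular the map $w\mapsto w(\lambda+\delta)$ is injective on $W^{\mathfrak{p}}$; hence the weights occurring for $j=0$ and for $j=1$ are disjoint and the homomorphism vanishes. (Alternatively one reads $\pi_0(d_1)=0$ off the Dave--Haller formula $D^{\rm BGG}_1=\tilde P_1 L_1^{-1}\nabla_V L_0$ of Lemma~\ref{decomkdh} and Definition~\ref{defbggcomp}, as $\tilde P_1,L_1,L_0$ are bundle maps and the tractor connection $\nabla_V$ is strictly first order.) Since $V\neq0$ we have $\mathpzc{H}_0(V)=\mathsf{G}\times_\mathsf{P}\mathpzc{V}_\lambda\neq0$, so $\pi_0(b_1)\cdot 0=1_{\mathpzc{H}_0(V)}\neq0$ is a contradiction.

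All the genuine content is Corollary~\ref{jnljnjknkjnad} (rank $>1$ makes $\mathsf{G}$-equivariant symbols differential, so that the trivial-representation evaluation $\pi_0$ becomes both available and informative) together with Kostant's theorem (making $H^0(\mathfrak{p}_+,V)$ and $H^1(\mathfrak{p}_+,V)$ share no $\mathsf{G}_0$-irreducible); I therefore expect the write-up to be short once those are invoked. The one technical point to attend to is bookkeeping: identifying $\pi_0$ on differential symbols with the counit of $\mathcal{U}(\overline{\mathfrak{n}}_\mathsf{P})$ and checking multiplicativity once the grading shifts of the graded Heisenberg calculus are carried along --- the same elementary computation underlying Corollary~\ref{jnljnjknkjnad}. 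The argument also makes transparent why rank one or less is different: there $b_1$ may be a genuine Heisenberg pseudodifferential operator, for which $\pi_0$ is not even defined, and no obstruction arises.
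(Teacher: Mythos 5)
Your proposal is correct and follows essentially the same route as the paper: Corollary \ref{jnljnjknkjnad} forces the equivariant splitting symbols to be differential when $\mathrm{rk}(\mathsf{A}_\mathsf{P})>1$, and the identity $b_1d_1=1$ is then impossible because $D^{\rm BGG}_1$ is a genuinely positive-order differential operator; your evaluation at the trivial representation (the counit of $\mathcal{U}(\overline{\mathfrak{n}}_\mathsf{P})$), with Kostant's theorem killing the zeroth-order part of $d_1$, is just a careful spelling-out of the paper's terse ``clearly a contradiction'' step. (Only minor quibble: in your parenthetical alternative, $L_0$ and $L_1$ are graded order-zero differential operators rather than bundle maps, but this does not affect the main argument.)
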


\begin{proof}
If $B_\bullet$ is a $\mathsf{G}$-equivariant Heisenberg splitting then Corollary \ref{jnljnjknkjnad} shows that $B_\bullet$ in the leading term is a differential operator. This is clearly a contradiction, as for instance $D_1$ is a differential operator of positive order. 
\end{proof}

\begin{thm}
Let $\mathsf{G}$ be a connected semisimple Lie group, $\mathsf{P}$ a parabolic subgroup of $\mathsf{G}$ and $V$ a finite-dimensional $\mathsf{G}$-representation. If $G$ has rank $1$ then the $\mathsf{G}$-equivariant Rockland sequence $\mathsf{BGG}_\bullet(V)$ on $\mathsf{G}/\mathsf{P}$ admits a $\mathsf{G}$-equivariant Heisenberg splitting. 
\end{thm}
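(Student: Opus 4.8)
The plan is to produce a $\mathsf{G}$-equivariant Heisenberg splitting first at the level of principal symbols and then lift it, using crucially that \emph{real rank one forces the Levi factor $\mathsf{M}_\mathsf{P}$ to be compact}. Indeed, if $\mathrm{rk}(\mathsf{G})=1$ the only proper parabolic is the minimal one, so $\mathsf{M}_\mathsf{P}=\mathsf{Z}_\mathsf{K}(\mathfrak{a})$ is a closed subgroup of the maximal compact $\mathsf{K}$, hence compact, while $\mathsf{A}_\mathsf{P}\cong\R_{>0}$. By Corollary \ref{jnljnjknkjnad}, at the resonant degrees occurring in $\mathsf{BGG}_\bullet(V)$ the algebra $\Sigma^m_{H,\rm gr}(\mathfrak{X};\pmb{E}_1,\pmb{E}_2)^{\mathsf{G}}$ of $\mathsf{G}$-equivariant symbols is the $\mathsf{M}_\mathsf{P}$-invariant part of the \emph{full} symbol algebra $\Sigma^m_H\overline{\mathsf{N}}_\mathsf{P}\otimes\Hom_{\mathsf{A}_\mathsf{P}}(V_1^{(m)},V_2)$; there is no further $\mathsf{A}_\mathsf{P}$-constraint because in rank one $\mathsf{A}_\mathsf{P}$ acts through the dilations, which are absorbed at the resonant degree. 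This is precisely the feature whose failure produces the no-go theorem in higher rank, and it is what makes the following averaging argument possible.

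For the symbol-level construction, recall that since $\mathfrak{X}=\mathsf{G}/\mathsf{P}$ is flat, $\mathsf{BGG}_\bullet(V)$ is an honest complex ($D^{\rm BGG}_jD^{\rm BGG}_{j-1}=0$) of $\mathsf{G}$-equivariant graded Heisenberg order $0$ differential operators and a graded Rockland complex (Theorem \ref{inadonad9geo}, Definition \ref{defbggcomp}). By $\mathsf{G}$-homogeneity its symbol complex is determined by its value at the basepoint $e\mathsf{P}$, namely the $\mathsf{M}_\mathsf{P}\mathsf{A}_\mathsf{P}$-equivariant complex of convolution operators $d_j:=\sigma_H(D^{\rm BGG}_j)|_{e\mathsf{P}}$ on the osculating group $\overline{\mathsf{N}}_\mathsf{P}=\mathsf{N}$. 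I would run the inductive argument from the proof of Proposition \ref{splittinprop} verbatim over this single group: Theorem \ref{rockandhypo} supplies the left inverses and the exactness of the Rockland complex enforces the auxiliary vanishing conditions $b_{k-1}b_k=0$, yielding a contracting homotopy $b_\bullet=(b_j)_j$ with $b_jd_j+d_{j-1}b_{j-1}=1$ but a priori not $\mathsf{M}_\mathsf{P}$-invariant. Now average over the compact group $\mathsf{M}_\mathsf{P}$: set $\bar b_j:=\int_{\mathsf{M}_\mathsf{P}} m\cdot b_j\,\rd m$. Since $d_\bullet$ is $\mathsf{M}_\mathsf{P}$-equivariant and the identity is $\mathsf{M}_\mathsf{P}$-fixed, $\bar b_jd_j+d_{j-1}\bar b_{j-1}=1$ still holds, and $\bar b_\bullet$ is $\mathsf{M}_\mathsf{P}$-invariant; by Corollary \ref{jnljnjknkjnad} (the $\mathrm{rk}(\mathsf{A}_\mathsf{P})=1$ case) this basepoint datum is exactly a $\mathsf{G}$-equivariant symbol $\bar b_\bullet\in\Sigma^0_{H,\rm gr}(\mathfrak{X};\pmb{E}_\bullet,\pmb{E}_{\bullet-1})^{\mathsf{G}}$, i.e.\ a $\mathsf{G}$-equivariant contracting homotopy of $\sigma_H(\mathsf{BGG}_\bullet(V))$.

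It remains to lift $\bar b_\bullet$ to operators, and this is where I expect the main work to lie. One chooses $\mathsf{G}$-equivariant Heisenberg operators $\bar B^{(0)}_j$ with principal symbol $\bar b_j$ (for instance by transporting a kernel representative from the Bruhat chart $\overline{\mathsf{N}}_\mathsf{P}\hookrightarrow\mathfrak{X}$ and $\mathsf{G}$-averaging, which reproduces $\bar b_j$ modulo an $O(-1)$, automatically $\mathsf{G}$-equivariant, error), so that $R^{(0)}_j:=\bar B^{(0)}_jD_j+D_{j-1}\bar B^{(0)}_{j-1}-1\in\Psi^{-1}_{H,\rm gr}(\mathfrak{X};\pmb{E}_j)^{\mathsf{G}}$. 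Using $D_\bullet^2=0$ and the asymptotic-completeness / homological-perturbation argument of Proposition \ref{splittinprop}, one inductively corrects by $\mathsf{G}$-equivariant operators of order $-k$ — averaging every correction over the compact $\mathsf{M}_\mathsf{P}$, with no $\mathsf{A}_\mathsf{P}$-correction ever needed by rank one — and sums asymptotically to obtain $B_\bullet\in\Psi^0_{H,\rm gr}(\mathfrak{X};\pmb{E}_\bullet,\pmb{E}_{\bullet-1})^{\mathsf{G}}$ with $B_jD_j+D_{j-1}B_{j-1}-1\in\Psi^{-\infty}(\mathfrak{X};\pmb{E}_j)$, i.e.\ a $\mathsf{G}$-equivariant Heisenberg splitting in the sense of Definition \ref{splittindefedf} (cf.\ Proposition \ref{lknalkdna}, whose proof this mirrors but with $\mathsf{M}_\mathsf{P}$ in place of a compact $G$). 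The obstacle, and the only genuinely delicate point, is keeping this operator-level correction $\mathsf{G}$-equivariant: the argument goes through because each error is automatically $\mathsf{G}$-equivariant and, by rank one, can be killed by an $\mathsf{M}_\mathsf{P}$-averaged lower-order operator living in the full resonant equivariant symbol algebra of Corollary \ref{jnljnjknkjnad}, with convergence guaranteed by asymptotic completeness and compactness of $\mathsf{M}_\mathsf{P}$ — there is no cohomological obstruction to equivariance, in sharp contrast with the higher-rank situation where Corollary \ref{jnljnjknkjnad} confines equivariant symbols to differential operators and the splitting cannot exist at all.
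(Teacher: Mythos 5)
Your proposal is correct and follows essentially the same route as the paper: reduce to the basepoint $e\mathsf{P}$ via Corollary \ref{jnljnjknkjnad}, build a contracting homotopy of the symbol complex by the induction of Proposition \ref{splittinprop}, and average over the compact group $\mathsf{M}_\mathsf{P}$, using that in rank one the equivariant symbol algebra at the resonant degrees is the full $\mathsf{M}_\mathsf{P}$-invariant symbol algebra rather than just differential symbols. The only caveat is that your final paragraph overshoots what is needed and what the rank-one computation supports: Definition \ref{splittindefedf} requires $\mathsf{G}$-equivariance of $\sigma_H(B_\bullet)$ only, so one simply lifts the equivariant symbol splitting by Proposition \ref{splittinprop} without insisting that the lower-order corrections be $\mathsf{G}$-equivariant (indeed at orders $-k$ with $k\geq 1$ the degrees are no longer resonant and Corollary \ref{jnljnjknkjnad} would force equivariant symbols there to be differential, hence zero).
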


\begin{proof}
It suffices to find $\mathsf{G}$-equivariant $b_\bullet \in \Sigma^{-m_\bullet}_{H,\rm{gr}}(\mathsf{G}/\mathsf{P}; \mathpzc{H}_{\bullet}(V),\mathpzc{H}_{\bullet-1}(V))$ such that for any $j$
$$b_jd_j+d_{j-1}b_{j-1}=1.$$
However, Corollary \ref{jnljnjknkjnad} shows that 
$$\Sigma_H^m(\mathsf{G}/\mathsf{P};\mathpzc{H}_{\bullet}(V),\mathpzc{H}_{\bullet-1}(V))^\mathsf{G}\cong (\Sigma^m_H\overline{\mathsf{N}}_\mathsf{P}\otimes \Hom_{\mathsf{A}_\mathsf{P}}(H^\bullet(\mathfrak{p}_+,V)^{(m)},H^{\bullet-1}(\mathfrak{p}_+,V)))^{\mathsf{M}_\mathsf{P}},$$
this isomorphism is implemented by evaluating in $e\mathsf{P}\in \mathsf{G}/\mathsf{P}$. Since $\mathsf{G}$ is rank one, $\mathsf{M}_\mathsf{P}=\mathsf{P}\cap \mathsf{K}$ is compact. For degree reasons, we have a Heisenberg splitting whose value in $e\mathsf{P}$ belongs to $\Sigma^m_H\overline{\mathsf{N}}_\mathsf{P}\otimes \Hom_{\mathsf{A}_\mathsf{P}}(H^\bullet(\mathfrak{p}_+,V)^{(m)},H^{\bullet-1}(\mathfrak{p}_+,V))$ that we by averaging can assume is $\mathsf{M}_\mathsf{P}$-invariant. Therefore there is a $\mathsf{G}$-equivariant Heisenberg splitting.
\end{proof}

\begin{remark}
The reader can note that non-existence of equivariant Heisenberg splittings is proved using that $\mathsf{A}_\mathsf{P}$ has rank $>1$, which is a property of $\mathsf{G}/\mathsf{P}$. However, existence of equivariant Heisenberg splittings is proved requiring not only that $\mathsf{A}_\mathsf{P}$ has rank $=1$ but that $\mathsf{M}_\mathsf{P}$ is compact so that $\mathsf{P}$ must be minimal, and as such we are imposing a rank condition on $\mathsf{G}$.
\end{remark}

\section{The index problem for curved BGG-sequences}
\label{sec:aojdnajdnadojn}

We are now going to study the $K$-theoretical invariants associated with a graded Rockland sequence. To do so, we set up the relevant equivariant Hilbert $C^*$-modules, after which we relate the $K$-theoretical invariants back to the $K$-homological invariants and prove Theorem \ref{mainthm3}. \emph{Throughout this section, we consider a compact group $G$ and a compact $G$–Carnot manifold $\mathfrak{X}$.}

\subsection{An intermezzo on Hilbert $C^*$-modules and $K$-theory}

First, we need some equivariant Hilbert $C^*$-modules fitting together with the symbol complex. Consider a $G$-equivariant graded Rockland sequence $D_\bullet$ acting on the collection of graded hermitean vector bundles $\pmb{E}_0,\pmb{E}_1,\ldots, \pmb{E}_N$ with unitary $G$-actions. We define the spaces
$$\mathpzc{E}^\infty_{j,c}:=C^\infty_c(\mathbb{T}_H\mathfrak{X};r^*\pmb{E}_j).$$
The space $\mathpzc{E}^\infty_{j,c}$ has a right $C^\infty_c(\mathbb{T}_H\mathfrak{X})$-action with a compatible $G$-action that we denote by $\pi_{\pmb{E}_j}$. The inner product on each bundle, induces an inner product 
$$\mathpzc{E}^\infty_{j,c}\times \mathpzc{E}^\infty_{j,c}\to C^\infty_c(\mathbb{T}_H\mathfrak{X}).$$

\begin{prop}
The completion $\mathpzc{E}_{j}$ of $\mathpzc{E}^\infty_{j,c}$ in the inner product to a $C^*(\mathbb{T}_H\mathfrak{X})$-Hilbert $C^*$-module is well defined. Moreover, the action of $G$ extends to a unitary continuous action of $G$ that we also denote by $\pi_{\pmb{E}_j}$. 
\end{prop}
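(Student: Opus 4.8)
The plan is to establish the proposition by recognizing that the right-hand side is essentially a standard construction: the $C^*$-completion of the sections of a vector bundle pulled back along $r$, viewed as a Hilbert module over the groupoid $C^*$-algebra. First I would recall that $C^*(\mathbb{T}_H\mathfrak{X})$ is the $C^*$-algebra of the Lie groupoid $\mathbb{T}_H\mathfrak{X}\rightrightarrows\mathfrak{X}\times[0,\infty)$, obtained by completing $C^\infty_c(\mathbb{T}_H\mathfrak{X})$ (with convolution product and the involution coming from inversion) in the full (or reduced, but here the groupoid is amenable so they agree) $C^*$-norm. The space $\mathpzc{E}^\infty_{j,c}=C^\infty_c(\mathbb{T}_H\mathfrak{X};r^*\pmb{E}_j)$ carries the obvious right convolution-type action of $C^\infty_c(\mathbb{T}_H\mathfrak{X})$: for $\xi\in\mathpzc{E}^\infty_{j,c}$ and $f\in C^\infty_c(\mathbb{T}_H\mathfrak{X})$, one sets $(\xi\cdot f)(\gamma)=\int \xi(\gamma_1)f(\gamma_1^{-1}\gamma)$, which makes sense fibrewise since $r(\gamma_1)=r(\gamma)$ identifies the bundle fibres. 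The hermitean metrics on $\pmb{E}_j$ combined with integration along the $r$-fibres give the $C^\infty_c(\mathbb{T}_H\mathfrak{X})$-valued inner product $\langle\xi,\eta\rangle(\gamma)=\int \langle\xi(\gamma_1),\eta(\gamma_1\gamma)\rangle_{\pmb{E}_j}$; positivity of this inner product (i.e. $\langle\xi,\xi\rangle\geq 0$ in $C^*(\mathbb{T}_H\mathfrak{X})$) is the one point requiring the groupoid-$C^*$-algebra machinery, and it follows from the fact that $\mathpzc{E}^\infty_{j,c}$ is a pre-Hilbert module obtained by tensoring the standard $C^\infty_c(\mathbb{T}_H\mathfrak{X})$-module (for the trivial line bundle) with the finitely generated projective module $C^\infty(\mathfrak{X};\pmb{E}_j)$ over $C(\mathfrak{X})$, using the homomorphism $C(\mathfrak{X})\to M(C^*(\mathbb{T}_H\mathfrak{X}))$ given by pullback along $r$. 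Then completing in the norm $\|\xi\|=\|\langle\xi,\xi\rangle\|^{1/2}$ yields the Hilbert $C^*(\mathbb{T}_H\mathfrak{X})$-module $\mathpzc{E}_j$; well-definedness (independence of the choice of hermitean metric up to isomorphism) is immediate since any two metrics are related by a positive invertible bundle endomorphism.

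For the equivariance statement, I would note that $G$ acts on $\mathbb{T}_H\mathfrak{X}$ by groupoid automorphisms (it acts on $\mathfrak{X}$ by filtered diffeomorphisms, hence on $T_H\mathfrak{X}$, and on the pair groupoid factor $\mathfrak{X}\times\mathfrak{X}\times(0,\infty)$ diagonally, compatibly with the gluing — here one should check the $G$-action is compatible with the chosen graded connection $\nabla$ used to define the smooth structure on $\mathbb{T}_H\mathfrak{X}$, or alternatively observe that the smooth structure is independent of $\nabla$ up to isomorphism so $G$ acts smoothly in any case). This induces an action of $G$ on $C^\infty_c(\mathbb{T}_H\mathfrak{X})$ by $*$-automorphisms, hence on $C^*(\mathbb{T}_H\mathfrak{X})$, and a compatible action $\pi_{\pmb{E}_j}$ on $\mathpzc{E}^\infty_{j,c}$ using the lift of the $G$-action to $\pmb{E}_j$ (unitary by hypothesis). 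One checks directly that $\pi_{\pmb{E}_j}(g)$ preserves the inner product up to the automorphism of $C^*(\mathbb{T}_H\mathfrak{X})$, i.e. $\langle\pi_{\pmb{E}_j}(g)\xi,\pi_{\pmb{E}_j}(g)\eta\rangle=g\cdot\langle\xi,\eta\rangle$, and intertwines the two module structures; hence $\pi_{\pmb{E}_j}(g)$ extends to an isometric (in the appropriate twisted sense) invertible operator on the completion $\mathpzc{E}_j$. Continuity of $g\mapsto\pi_{\pmb{E}_j}(g)\xi$ in the Hilbert-module norm for fixed $\xi\in\mathpzc{E}_j$ follows by a standard $\varepsilon/3$ argument from continuity on the dense subspace $\mathpzc{E}^\infty_{j,c}$, where it is clear because $G$ acts smoothly on everything in sight with uniform bounds over compact subsets of $G$ (using compactness of $G$ and $\mathfrak{X}$).

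The main obstacle I expect is verifying positivity of the $C^*(\mathbb{T}_H\mathfrak{X})$-valued inner product — this is the one genuinely non-elementary input, and the cleanest route is to invoke the general theory: for any locally compact groupoid $\mathcal{G}$ with a continuous Haar system and any $\mathcal{G}$-equivariant (or even just $\mathcal{G}^{(0)}$-) vector bundle $E$, the sections $C_c(\mathcal{G};r^*E)$ form a pre-Hilbert $C^*(\mathcal{G})$-module, so I would simply cite this (e.g. via the description of such modules as $X\otimes_{C(\mathcal{G}^{(0)})}C^*(\mathcal{G})$ for $X$ the $C(\mathcal{G}^{(0)})$-module of sections of $E$, with $C(\mathcal{G}^{(0)})$ mapping into the multiplier algebra of $C^*(\mathcal{G})$). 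Everything else is bookkeeping: checking the algebraic identities for the module and inner-product structures, and transporting the unitary $G$-action through the completion. I would therefore keep the written proof short, emphasizing the identification $\mathpzc{E}_j\cong C^\infty(\mathfrak{X};\pmb{E}_j)\otimes_{C(\mathfrak{X})}C^*(\mathbb{T}_H\mathfrak{X})$ and the equivariance of this identification, and defer positivity to the groupoid literature.
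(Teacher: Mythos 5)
Your proposal is correct, and in fact the paper offers no proof of this proposition at all --- it is stated as a standard fact, with the pointer to \cite[Section 22]{goffkuz} a few lines later serving as the implicit justification. The route you take, namely identifying $\mathpzc{E}^\infty_{j,c}$ with the interior tensor product $C^\infty(\mathfrak{X};\pmb{E}_j)\otimes_{C(\mathfrak{X})}C^\infty_c(\mathbb{T}_H\mathfrak{X})$ so that positivity of the inner product comes for free from the finitely generated projective module structure (the bundle being a summand of a trivial one) and the general theory of Hilbert modules over groupoid $C^*$-algebras, is exactly the standard argument the author is relying on, so there is nothing to object to in substance. Two small points of precision: the unit space of $\mathbb{T}_H\mathfrak{X}$ is $\mathfrak{X}\times[0,\infty)$ rather than $\mathfrak{X}$, so the multiplier embedding of $C(\mathfrak{X})$ factors through $r$ followed by the projection onto $\mathfrak{X}$ (harmless, since $r^*\pmb{E}_j$ is pulled back from $\mathfrak{X}$); and the convolution and involution on $C^\infty_c(\mathbb{T}_H\mathfrak{X})$ require a choice of Haar system (or the density convention $|\Lambda_r|$ the paper uses elsewhere), whose compatibility with the $G$-action is what makes $\pi_{\pmb{E}_j}(g)$ intertwine the module structures --- since $G$ is assumed compact throughout this section, this is arranged by averaging, as you implicitly note. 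With those caveats your argument is complete.
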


Write $\pmb{E}_j^{\rm tr}$ for $\pmb{E}_j$ equipped with the trivial grading. We fix $G$-invariant order reducing operators $\pmb{\Lambda}_{j,s}\in \pmb{\Psi}^s_{H,{\rm gr}}(\mathfrak{X};\pmb{E}_j,\pmb{E}_j^{\rm tr})$, i.e. $\pmb{\Lambda}_{j,s}$ is invertible and $\Lambda_{j,s}=\mathrm{ev}_{t=1}\pmb{\Lambda}_{j,s}$ is $H$-elliptic. We define the right $C^*(\mathbb{T}_H\mathfrak{X})$-Hilbert $C^*$-module 
$$\mathpzc{E}_{j}^s:=\pmb{\Lambda}_{j,s}^{-1}\mathpzc{E}_{j}.$$
We equip $\mathpzc{E}_{j}^s$ with the $G$-action defined from the $G$-action on $\mathpzc{E}^\infty_{j,c}$ and density of $\mathpzc{E}^\infty_{j,c}\subseteq \mathpzc{E}_{j}^s$. For $t>0$, using the isomorphism $C^*(\mathbb{T}_H\mathfrak{X}|_{\{t\}\times \mathfrak{X}})\cong \mathbb{K}(L^2(\mathfrak{X}))$, we have that 
$$\mathpzc{E}_{j}^s\otimes_{\mathrm{ev}_{t}} \mathbb{K}(L^2(\mathfrak{X}))\cong \mathbb{K}(W^{-s}_{H,{\rm gr}}(\mathfrak{X};\pmb{E}_j),L^2(\mathfrak{X})),$$
and $\mathpzc{E}_{j}^s\otimes_{\mathrm{ev}_{t=0}} C^*(T_H\mathfrak{X})$ is a completion of $C^\infty_c(T_H\mathfrak{X};r^*\pmb{E}_j)$ for a suitable Sobolev inner product. For more details, see \cite[Section 22]{goffkuz}.

We also need some further notions in $K$-theory. These ideas can be found in the non-equivariant setting in \cite[Section 27]{goffkuz}, see also \cite{HRSurI}. 

\begin{definition}
Consider a $G-C^*$-algebra $B$ and two sequences of $G$-equivariant $B$-Hilbert $C^*$-modules 
\begin{align*}
0\to\mathpzc{E}_1\xrightarrow{d_1}\mathpzc{E}_2\xrightarrow{d_2}\cdots \to \mathpzc{E}_N\xrightarrow{d_N}\mathpzc{E}_{N+1}\to 0,\\
0\leftarrow\mathpzc{E}_1\xleftarrow{b_1}\mathpzc{E}_2\xleftarrow{b_2}\cdots \leftarrow \mathpzc{E}_N\xleftarrow{b_N}\mathpzc{E}_{N+1}\leftarrow 0,
\end{align*}
where all maps are adjointable. The data $(\mathpzc{E}_\bullet,d_\bullet,b_\bullet)$ is said to form a $G$-equivariant $B$-Fredholm sequence of length $N+1$ if  
$$\begin{cases}
b_jd_j+d_{j-1}b_{j-1}-1\in \mathbb{K}_B(\mathpzc{E}_j),\\
\pi(g)d_j\pi(g)^{-1}-d_j\in \mathbb{K}_B(\mathpzc{E}_{j-1},\mathpzc{E}_j), \\
\pi(g)b_j\pi(g)^{-1}-b_j\in \mathbb{K}_B(\mathpzc{E}_{j},\mathpzc{E}_{j-1}),
\end{cases}$$
for all $j$ and $g\in G$. We use the convention that  $b_j=0$ and $d_j=0$ for $j\notin\{1,\ldots, N+1\}$. To simplify notations, we sometimes write 
$$0\to\mathpzc{E}_1{}_{d_1}\leftrightarrows^{b_1}\mathpzc{E}_2{}_{d_2}\leftrightarrows^{b_2}\cdots \leftrightarrows \mathpzc{E}_N{}_{d_N}\leftrightarrows^{b_N}\mathpzc{E}_{N+1}\to 0,$$ 
instead of $(\mathpzc{E}_\bullet,d_\bullet,b_\bullet)$.

An acyclic $B$-Fredholm sequence is one where 
$$\begin{cases}
b_jd_j+d_{j-1}b_{j-1}=1,\\
\pi(g)d_j\pi(g)^{-1}=d_j, \\
\pi(g)b_j\pi(g)^{-1}=b_j,
\end{cases}$$
for all $j$ and $g\in G$.

\end{definition}

As in \cite[Section 27]{goffkuz}, we can define isomorphism and homotopies of $G$-equivariant $B$-Fredholm sequences. We make the following definition.

\begin{definition}
For a $G-C^*$-algebra $B$, let $\tilde{K}_0^G(B)$ denote the set of equivalence classes of $G$-equivariant $B$-Fredholm sequence under the equivalence relation generated by homotopy and declaring acyclic $G$-equivariant $B$-Fredholm sequences equivalent to the zero complex. 
\end{definition}

With the same ideas as in \cite[Section 27]{goffkuz}, we have the following result.

\begin{thm}
For a $G-C^*$-algebra $B$, there is a natural isomorphism
$$
KK_0^{G}(\C,B)\xrightarrow{\sim} \tilde{K}_0^{G}(B),$$
defined on cycles by 
$$ \left[\left(\begin{matrix}\mathpzc{E}_+\\\oplus\\ \mathpzc{E}_-\end{matrix}, \begin{pmatrix} 0& F_+\\ F_- &0\end{pmatrix}\right)\right]\mapsto \left[\left(0\to\mathpzc{E}_-{}_{F_+}\leftrightarrows^{F_-}\mathpzc{E}_+\to 0\right)\right].$$
\end{thm}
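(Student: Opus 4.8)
The plan is to construct mutually inverse maps between $KK_0^G(\C,B)$ and $\tilde K_0^G(B)$ and check they are well defined on equivalence classes. First I would recall the picture of $KK_0^G(\C,B)$ in terms of $\Z/2$-graded $B$-Hilbert $C^*$-modules $\mathpzc{E}=\mathpzc{E}_+\oplus\mathpzc{E}_-$ with a $G$-equivariant (up to compacts) odd adjointable operator $F=\left(\begin{smallmatrix}0&F_+\\F_-&0\end{smallmatrix}\right)$ such that $F^2-1$, $F-F^*$ lie in $\mathbb{K}_B(\mathpzc{E})$; here $\C$ acts by scalars so there is no commutator condition beyond the $G$-equivariance condition. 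The map to $\tilde K_0^G(B)$ sends such a cycle to the length-$1$ Fredholm sequence $0\to\mathpzc{E}_-{}_{F_+}\leftrightarrows^{F_-}\mathpzc{E}_+\to 0$: the Fredholm conditions $F_-F_+-1\in\mathbb{K}_B$, $F_+F_--1\in\mathbb{K}_B$ are exactly $F^2-1\in\mathbb{K}_B$ (after adjusting $F_-$ to $F_+^*$ using $F=F^*$ up to compacts, which is a homotopy), and the $G$-almost-equivariance of $F_\pm$ is that of $F$. One checks that a homotopy of Kasparov cycles produces a homotopy of Fredholm sequences and that a degenerate cycle ($F^2=1$, $F$ exactly $G$-equivariant) maps to an acyclic sequence, so the map descends to $\tilde K_0^G(B)$; additivity under direct sums is immediate.

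For the inverse I would associate to a general $G$-equivariant $B$-Fredholm sequence $(\mathpzc{E}_\bullet,d_\bullet,b_\bullet)$ of length $N+1$ the "rolled-up" cycle on $\mathpzc{E}_{\mathrm{ev}}:=\bigoplus_{j\ \mathrm{odd}}\mathpzc{E}_j$ and $\mathpzc{E}_{\mathrm{odd}}:=\bigoplus_{j\ \mathrm{even}}\mathpzc{E}_j$ with $F_+:=\sum_j(d_j+b_j)$ acting appropriately between the even and odd sums, i.e. the standard folding of a complex-with-contraction into a two-term object. The Fredholm relations $b_jd_j+d_{j-1}b_{j-1}-1\in\mathbb{K}_B$ translate, after squaring $F:=\left(\begin{smallmatrix}0&F_+\\F_-&0\end{smallmatrix}\right)$, into $F^2-1\in\mathbb{K}_B$, and the almost-equivariance of each $d_j,b_j$ gives that of $F$; after replacing $b_\bullet$ by an adjoint-compatible choice (a homotopy, using asymptotic completeness as in Proposition \ref{splittinprop}) one gets $F=F^*$ up to compacts, so this is a genuine Kasparov cycle for $(\C,B)$. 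I would then verify this is inverse to the first map: composing folding with unfolding on a length-$1$ sequence returns the original cycle, and composing the other way sends a Fredholm sequence to a homotopic (in fact isomorphic after adding acyclic pieces) one, which is the standard argument that any Fredholm sequence is equivalent to a two-term one by successively splitting off acyclic chunks $0\to\mathpzc{E}_j{}_{\mathrm{id}}\leftrightarrows^{\mathrm{id}}\mathpzc{E}_j\to 0$.

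Naturality in $B$ is checked on cycles: both constructions are given by explicit formulas in the modules and operators, which are preserved under the functoriality maps $\mathpzc{E}\mapsto\mathpzc{E}\otimes_\phi B'$ for a $G$-equivariant $*$-homomorphism $\phi:B\to B'$, so the square commutes. The main obstacle I expect is bookkeeping rather than conceptual: making the folding/unfolding compatible with the relation that identifies acyclic sequences with zero and with homotopies, i.e. checking that the equivalence relation defining $\tilde K_0^G(B)$ matches Kasparov's relation (homotopy plus addition of degenerates) under the bijection. This is essentially the content of \cite[Section 27]{goffkuz} in the non-equivariant case, and the equivariant refinement only requires carrying the (exact) $G$-action through each step; since $G$ acts by adjointable operators commuting with everything up to $\mathbb{K}_B$, no new analytic input beyond what is already used in the non-equivariant argument is needed. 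I would therefore present the proof as: (i) describe the two maps on cycles; (ii) check each is compatible with homotopy and degenerate/acyclic cycles, hence descends; (iii) check they are mutually inverse on classes via the splitting-off-acyclic-pieces lemma; (iv) note naturality is formula-level, citing \cite{goffkuz} for the details that are identical to the non-equivariant case.
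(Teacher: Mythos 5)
Your outline follows the same route the paper takes: the paper gives no argument beyond ``with the same ideas as in \cite[Section 27]{goffkuz}'', and your folding/unfolding construction together with the reduction of a general Fredholm sequence to a two-term one by splitting off acyclic pieces is precisely that argument, with the $G$-action carried through. One step deserves more care than you give it: when you roll up a length-$(N+1)$ sequence into $F=\left(\begin{smallmatrix}0&F_+\\ F_-&0\end{smallmatrix}\right)$ with $F_\pm$ built from $d_\bullet+b_\bullet$, squaring gives $F^2-1\equiv d^2+b^2$ modulo $\mathbb{K}_B$, and the definition of a $G$-equivariant $B$-Fredholm sequence only controls the cross terms $b_jd_j+d_{j-1}b_{j-1}$, not $d_{j+1}d_j$ or $b_jb_{j+1}$; so before folding you must first homotope the sequence to one in which these diagonal terms are compact (or argue by iteratively splitting off two-term pieces, which sidesteps the issue), rather than asserting that the Fredholm relations ``translate, after squaring, into $F^2-1\in\mathbb{K}_B$''. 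For the length-one sequences in the image of the forward map there are no such terms, so that direction is unaffected.
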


\subsection{The index class in $K$-theory}

The index class of a  $G$-equivariant graded Rockland sequence will be defined in terms of $G$-equivariant $B$-Fredholm sequences. Write 
$$\mathpzc{E}_{j,t=0}^s:=\mathpzc{E}^s_j\otimes_{\mathrm{ev}_{t=0}} C^*(T_H\mathfrak{X}).$$
Recall the notation $\pmb{\Psi}^{m}_{H}(\mathfrak{X};E_1,E_2)$ from Definition \ref{liftotflknkjnad}, with its obvious graded generalization denoted by $\pmb{\Psi}^{m_j}_{H,{\rm gr}}(\mathfrak{X};\pmb{E}_1,\pmb{E}_2)$.

\begin{lemma}
\label{liftsintind}
Consider a $G$-equivariant graded Rockland sequence
$$
D_\bullet: \quad 0\to C^\infty(\mathfrak{X};\pmb{E}_0)\xrightarrow{D_1}C^\infty(\mathfrak{X};\pmb{E}_1)\xrightarrow{D_2}\cdots \xrightarrow{D_N}C^\infty(\mathfrak{X};\pmb{E}_N)\to 0,
$$
of order $\pmb{m}=(m_1,\ldots, m_N)$ and take $\pmb{s}=(s_0,\ldots, s_N)\in \R^{N+1}$ such that 
$$s_{j-1}-s_{j}=m_j, \quad j=1,\ldots, N.$$

Choose a $G$-equivariant Heisenberg splitting $B_\bullet \in \Psi^{-m_j}_{H,{\rm gr}}(\mathfrak{X};\pmb{E}_{j},\pmb{E}_{j-1})$ as well as lifts $\pmb{D}_\bullet\in \pmb{\Psi}^{m_j}_{H,{\rm gr}}(\mathfrak{X};\pmb{E}_{j-1},\pmb{E}_j)$ of $D_\bullet$ and $\pmb{B}_\bullet\in \pmb{\Psi}^{-m_j}_{H,{\rm gr}}(\mathfrak{X};\pmb{E}_{j-1},\pmb{E}_j)$ of $B_\bullet$. Then the sequence
$$
\pmb{D}_\bullet: \quad 0\to \mathpzc{E}^{s_0}_{0}{}_{\pmb{D}_N}\leftrightarrows^{\pmb{B}_1}\mathpzc{E}^{s_1}_{j}{}_{\pmb{D}_2}\leftrightarrows^{\pmb{B}_2}\cdots {}_{\pmb{D}_N}\leftrightarrows^{\pmb{B}_N}\mathpzc{E}^{s_N}_{j}\to 0,
$$
is a $G$-equivariant $C^*(\mathbb{T}_H\mathfrak{X}|_{[0,1]\times \mathfrak{X}})$-Fredholm complex that we denote by $\pmb{D}_\bullet$. Up to homotopy, the Fredholm complex $\pmb{D}_\bullet$ depends only on the graded principal symbol $\sigma_{H,{\rm gr}}(D_\bullet)$ and is independent of the choice of $\pmb{s}$, $\pmb{D}_\bullet$ and $\pmb{B}_\bullet$.
\end{lemma}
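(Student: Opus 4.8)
The plan is to verify the three defining conditions of a $G$-equivariant $C^*(\mathbb{T}_H\mathfrak{X}|_{[0,1]\times\mathfrak{X}})$-Fredholm complex for $\pmb{D}_\bullet$ and then to establish the asserted independence up to homotopy. First I would record the elementary compatibility of order-reducing operators with the Hilbert $C^*$-module structure: since $\pmb{\Lambda}_{j,s}$ is $G$-invariant, invertible, and $H$-elliptic, conjugation by it identifies adjointable operators on $\mathpzc{E}^{s_j}_j$ with order-zero Heisenberg operators on $\mathpzc{E}_j$ modulo compacts, where ``compact'' means $\mathbb{K}_{C^*(\mathbb{T}_H\mathfrak{X}|_{[0,1]\times\mathfrak{X}})}$. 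Concretely, the lift $\pmb{D}_j\in\pmb{\Psi}^{m_j}_{H,{\rm gr}}(\mathfrak{X};\pmb{E}_{j-1},\pmb{E}_j)$ induces an adjointable map $\mathpzc{E}^{s_{j-1}}_{j-1}\to\mathpzc{E}^{s_j}_j$ because $\pmb{\Lambda}_{j,s_j}\pmb{D}_j\pmb{\Lambda}_{j-1,s_{j-1}}^{-1}$ has Heisenberg order $s_j+m_j-s_{j-1}=0$ by the choice of $\pmb{s}$; the same argument applies to $\pmb{B}_j$. This is just a bookkeeping exercise with the graded calculus from Subsection on the graded calculus.

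Next I would check the Fredholm relation. The key input is the defining property of a $G$-equivariant Heisenberg splitting from item (3) of Proposition \ref{splittinprop}, namely $B_jD_j+D_{j-1}B_{j-1}-1\in\Psi^{-\infty}(\mathfrak{X};\pmb{E}_j)$ together with $G$-equivariance of the principal symbols. On the level of lifts, $\pmb{B}_j\pmb{D}_j+\pmb{D}_{j-1}\pmb{B}_{j-1}-1$ is an $r$-fibred distribution whose evaluation $\mathrm{ev}_{t=1}$ is smoothing and whose evaluation $\mathrm{ev}_{t=0}$ gives the symbol identity on $T_H\mathfrak{X}$. By the mapping property of the Heisenberg calculus on the tangent groupoid — an order $-\infty$ lifted operator acts as a compact operator on the module $\mathpzc{E}^s_\bullet$ (this is exactly the content of the isomorphisms $\mathpzc{E}^s_j\otimes_{\mathrm{ev}_t}\mathbb{K}(L^2(\mathfrak{X}))\cong\mathbb{K}(W^{-s}_{H,{\rm gr}},L^2)$ and the corresponding statement at $t=0$) — one concludes $\pmb{B}_j\pmb{D}_j+\pmb{D}_{j-1}\pmb{B}_{j-1}-1\in\mathbb{K}_B(\mathpzc{E}^{s_j}_j)$. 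For the two commutator conditions, the $G$-action on $\mathpzc{E}^s_j$ is the one induced from $\pi_{\pmb{E}_j}$, and $\pi_{\pmb{E}_j}(g)\pmb{D}_j\pi_{\pmb{E}_j}(g)^{-1}-\pmb{D}_j$ has $G$-equivariant principal symbol equal to zero by hypothesis (the symbol complex $\sigma_H(D_\bullet)$ is $G$-equivariant and $B_\bullet$ is a $G$-equivariant Heisenberg splitting), hence drops Heisenberg order by one and is therefore compact on the module by the same mapping property; likewise for $\pmb{B}_j$.

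For the independence statement I would run three linear homotopies in sequence, in each case checking that the interpolating family stays a $G$-equivariant Fredholm complex. Independence of the choice of lifts $\pmb{D}_\bullet,\pmb{B}_\bullet$: two lifts of the same operator differ by a smoothing $r$-fibred distribution, so $\pmb{D}_{\bullet,t}=t\pmb{D}_\bullet+(1-t)\pmb{D}'_\bullet$ differs from $\pmb{D}_\bullet$ by a $t$-dependent smoothing family, which acts by compacts and does not disturb the Fredholm relations. Independence of $\pmb{s}$: the unitaries $U$ built from order-reducing operators as in \eqref{lknaldkjnadln} (their module-theoretic analogues via the $\pmb{\Lambda}_{j,s}$) implement a $G$-equivariant unitary equivalence between the complexes for two choices $\pmb{s},\pmb{s}'$, because the difference in the degree shifts is absorbed by invertible $H$-elliptic operators; unitary equivalence is a special case of homotopy. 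Independence of $B_\bullet$: for two $G$-equivariant Heisenberg splittings $B_\bullet,B'_\bullet$ of the same $D_\bullet$ the family $\pmb{B}_{\bullet,t}=t\pmb{B}_\bullet+(1-t)\pmb{B}'_\bullet$ is again a $G$-equivariant Heisenberg splitting (the relevant relations are affine in $\pmb{B}_\bullet$), giving the homotopy; and finally, if $D_\bullet$ and $D'_\bullet$ have the same principal symbol the family $D_{\bullet,t}=tD_\bullet+(1-t)D'_\bullet$ is a $G$-equivariant graded Rockland sequence with that common symbol, to which one attaches a $t$-continuous family of $G$-equivariant Heisenberg splittings (using Proposition \ref{lknalkdna} fibrewise over $[0,1]$, or by the explicit inductive construction in the proof of Proposition \ref{splittinprop} carried out with parameter $t$). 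I expect the main obstacle to be the last point: producing a splitting family that depends continuously (indeed adjointably) on $t$ rather than just splittings for each fixed $t$ — this requires either running the inductive symbol construction with $[0,1]$ as an extra parameter space and checking the left inverses from Theorem \ref{rockandhypo} can be chosen continuously, or invoking a parametrised version of the compact-group averaging argument; the other homotopies are essentially formal once the mapping properties of the calculus on $\mathbb{T}_H\mathfrak{X}$ are in hand.
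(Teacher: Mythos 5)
Your proposal is correct and follows essentially the same route as the paper: the Fredholm relation is obtained exactly as in the paper's proof, by observing that the smoothing error $B_jD_j+D_{j-1}B_{j-1}-1$ forces the lifted error $\pmb{B}_j\pmb{D}_j+\pmb{D}_{j-1}\pmb{B}_{j-1}-1$ (almost homogeneous of degree $0$ with smoothing evaluation at $t=1$ and vanishing symbol at $t=0$) to lie in $C^\infty_c(\mathbb{T}_H\mathfrak{X}|_{[0,1]\times\mathfrak{X}};r^*\pmb{E}_j\otimes s^*\pmb{E}_j^*\otimes|\Lambda_r|)$, which acts compactly on $\mathpzc{E}_j^{s_j}$. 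The paper disposes of the independence claims with only the words ``the lemma follows,'' whereas you supply the linear homotopies (the same ones the paper uses for the analogous operator-homotopy statement in Section \ref{subsdsseconrock}) and correctly flag the one genuinely delicate point, the $t$-continuity of the family of splittings; your write-up is, if anything, more complete than the paper's.
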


\begin{proof}
By assumption $D_{j-1}B_{j-1}+B_jD_j-1$ is smoothing. Therefore the homogeneity modulo $C^\infty_c$ implies that $\pmb{D}_{j-1}\pmb{B}_{j-1}+\pmb{B}_j\pmb{D}_j-1$ acts on $\mathpzc{E}_j$ as an element from $C^\infty_c(\mathbb{T}_H\mathfrak{X}|_{[0,1]\times \mathfrak{X}}; r^*\pmb{E}_j\otimes s^*\pmb{E}_j\otimes |\Lambda_r|)$. The latter space acts by compact operators on $\mathpzc{E}_j$ -- it is even dense in the space of compact operators. The lemma follows. 
\end{proof}

\begin{remark}
\label{lfiininfiafnain}
The reader can recall the notation $\tilde{\Sigma}^{m}_{H}(\mathfrak{X};{E}_1,{E}_2)$ from Theorem \ref{lkjandlkjandkjadn}. We also write $\tilde{\Sigma}^{m}_{H,{\rm gr}}(\mathfrak{X};\pmb{E}_1,\pmb{E}_2)$ for its analogously defined graded version. In the setting of Lemma \ref{liftsintind}, the choice of $\pmb{D}_\bullet$ fixes lifts
$$\tilde{d}_j:=\mathrm{ev}_{t=0}\pmb{D}_j\in \tilde{\Sigma}^{m_j}_{H,{\rm gr}}(\mathfrak{X};\pmb{E}_{j-1},\pmb{E}_j),$$ 
of the symbols $d_\bullet=\sigma_{H,{\rm gr}}(D_\bullet)$, and the choice of $\pmb{B}_\bullet$ fixes lifts 
$$\tilde{b}_j:=\mathrm{ev}_{t=0}\pmb{B}_j\in \tilde{\Sigma}^{-m_j}_{H,{\rm gr}}(\mathfrak{X};\pmb{E}_{j},\pmb{E}_{j-1}),$$
of the symbols $b_\bullet=\sigma_{H,{\rm gr}}(B_\bullet)$. Lemma \ref{liftsintind} implies that the sequence
$$0\to \mathpzc{E}^{s_0}_{j,t=0}{}_{\tilde{d}_1}\leftrightarrows^{\tilde{b}_1}\mathpzc{E}^{s_1}_{j,t=0}{}_{\tilde{d}_2}\leftrightarrows^{\tilde{b}_2}\cdots {}_{\tilde{d}_N}\leftrightarrows^{\tilde{b}_N}\mathpzc{E}^{s_N}_{j,t=0}\to 0,$$
is a $G$-equivariant $C^*(T_H\mathfrak{X})$-Fredholm complex that we denote by $\sigma_{H,{\rm gr}}(D_\bullet)$. 
\end{remark}

\begin{definition}
We write $[\sigma_{H}(D_\bullet)]\in K_0^G(C^*(T_H\mathfrak{X}))$ for the class of the $C^*(T_H\mathfrak{X})$-Fredholm complex $\sigma_{H,{\rm gr}}(D_\bullet)$ for some $\pmb{s}$.
\end{definition}

From Remark \ref{lfiininfiafnain}, we conclude that 
$$\sigma_{H,{\rm gr}}(D_\bullet)=\pmb{D}_\bullet\otimes {\mathrm{ev}_{t=0}} C^*(T_H\mathfrak{X}).$$
The last identity leads us to the next lemma.

\begin{prop}
\label{proplfiininfiafnain}
The class $[\pmb{D}_\bullet]\in \tilde{K}_0^{G}(C^*(\mathbb{T}_H\mathfrak{X}|_{[0,1]\times \mathfrak{X}}))$ satisfies 
$$\tilde{K}_0(\mathrm{ev}_{t=0})[\pmb{D}_\bullet]=[\sigma_{H,{\rm gr}}(D_\bullet)],$$
and therefore $[\pmb{D}_\bullet]$ lifts the class $[\sigma_{H,{\rm gr}}(D_\bullet)]\in \tilde{K}_0^{G}(C^*(T_H\mathfrak{X}))$ under the natural map
$$\tilde{K}_0(\mathrm{ev}_{t=0}):\tilde{K}_0^{G}(C^*(\mathbb{T}_H\mathfrak{X}|_{[0,1]\times \mathfrak{X}}))\to \tilde{K}_0^{G}(C^*(T_H\mathfrak{X})).$$
\end{prop}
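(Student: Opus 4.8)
The plan is to trace the class $[\pmb{D}_\bullet]$ through the functoriality of $\tilde K_0^G$ established by the isomorphism $KK_0^G(\C,B)\cong \tilde K_0^G(B)$. The map $\mathrm{ev}_{t=0}:C^*(\mathbb{T}_H\mathfrak{X}|_{[0,1]\times\mathfrak{X}})\to C^*(T_H\mathfrak{X})$ is a $G$-equivariant $*$-homomorphism, hence induces $\tilde K_0(\mathrm{ev}_{t=0})$ on the level of Fredholm sequences simply by applying the balanced tensor product $-\otimes_{\mathrm{ev}_{t=0}}C^*(T_H\mathfrak{X})$ to every module and every adjointable operator in the sequence. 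So the first step is to record that this operation is well defined on $G$-equivariant Fredholm sequences (it preserves the compactness of $b_jd_j+d_{j-1}b_{j-1}-1$ and of the commutators with $\pi(g)$ because tensoring sends compact operators on $\mathpzc{E}^{s_j}_j$ to compact operators on $\mathpzc{E}^{s_j}_j\otimes_{\mathrm{ev}_{t=0}}C^*(T_H\mathfrak{X})$, cf.\ the argument in \cite[Section 27]{goffkuz}) and that it is compatible with the homotopy relation defining $\tilde K_0^G$.

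Next I would compute the effect of this tensor product on the cycle $\pmb{D}_\bullet$ from Lemma \ref{liftsintind}. Term by term, $\mathpzc{E}^{s_j}_j\otimes_{\mathrm{ev}_{t=0}}C^*(T_H\mathfrak{X})=\mathpzc{E}^{s_j}_{j,t=0}$ by the very definition of $\mathpzc{E}^{s}_{j,t=0}$ given just before Lemma \ref{liftsintind}. On operators, the adjointable map induced by $\pmb{D}_j$ (acting by convolution on the groupoid side) restricts under $\mathrm{ev}_{t=0}$ to convolution by $\mathrm{ev}_{t=0}\pmb{D}_j=\tilde d_j\in\tilde\Sigma^{m_j}_{H,\mathrm{gr}}(\mathfrak{X};\pmb{E}_{j-1},\pmb{E}_j)$, and similarly $\pmb{B}_j$ goes to $\tilde b_j$; this is exactly the content of Remark \ref{lfiininfiafnain}. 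Hence the image sequence is precisely
$$0\to \mathpzc{E}^{s_0}_{j,t=0}{}_{\tilde{d}_1}\leftrightarrows^{\tilde{b}_1}\mathpzc{E}^{s_1}_{j,t=0}{}_{\tilde{d}_2}\leftrightarrows^{\tilde{b}_2}\cdots {}_{\tilde{d}_N}\leftrightarrows^{\tilde{b}_N}\mathpzc{E}^{s_N}_{j,t=0}\to 0,$$
which is by definition the cycle $\sigma_{H,\mathrm{gr}}(D_\bullet)$ representing $[\sigma_H(D_\bullet)]$. Therefore $\tilde K_0(\mathrm{ev}_{t=0})[\pmb{D}_\bullet]=[\sigma_{H,\mathrm{gr}}(D_\bullet)]$, which is the claimed identity; the ``lifting'' statement is then just a rephrasing, since $[\pmb{D}_\bullet]$ is a preimage of $[\sigma_{H,\mathrm{gr}}(D_\bullet)]$ under $\tilde K_0(\mathrm{ev}_{t=0})$.

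The main obstacle I anticipate is purely bookkeeping rather than conceptual: one must check carefully that the balanced tensor product $-\otimes_{\mathrm{ev}_{t=0}}C^*(T_H\mathfrak{X})$ really sends the adjointable operator on $\mathpzc{E}^{s_j}_j$ defined by $\pmb{D}_j$ to the adjointable operator on $\mathpzc{E}^{s_j}_{j,t=0}$ defined by $\tilde d_j$, i.e.\ that the identification of modules $\mathpzc{E}^{s_j}_j\otimes_{\mathrm{ev}_{t=0}}C^*(T_H\mathfrak{X})\cong\mathpzc{E}^{s_j}_{j,t=0}$ intertwines the two actions. This is where the order-reducing operators $\pmb{\Lambda}_{j,s}$ and the definition of the Sobolev module $\mathpzc{E}^s_j=\pmb{\Lambda}^{-1}_{j,s}\mathpzc{E}_j$ must be handled with care, so that $\mathrm{ev}_{t=0}$ of $\pmb{\Lambda}_{j,s}$ is the correct order-reducing operator on $T_H\mathfrak{X}$; all of this is already essentially done in \cite[Sections 22 and 27]{goffkuz}, so I would cite those and only indicate the equivariant refinements. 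With that in hand the proof is a one-line diagram chase, and I would present it as such.
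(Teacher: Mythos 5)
Your proposal is correct and follows essentially the same route as the paper, which treats the proposition as an immediate consequence of the identity $\sigma_{H,{\rm gr}}(D_\bullet)=\pmb{D}_\bullet\otimes_{\mathrm{ev}_{t=0}} C^*(T_H\mathfrak{X})$ recorded just before the statement (via Remark \ref{lfiininfiafnain}), with $\tilde{K}_0(\mathrm{ev}_{t=0})$ induced by the balanced tensor product. Your added care about identifying the modules $\mathpzc{E}^{s_j}_j\otimes_{\mathrm{ev}_{t=0}}C^*(T_H\mathfrak{X})\cong\mathpzc{E}^{s_j}_{j,t=0}$ and the operators $\tilde d_j,\tilde b_j$ is exactly the bookkeeping the paper delegates to \cite[Sections 22 and 27]{goffkuz}.
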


\subsection{Poincaré duality map}

To relate the symbol classes back to operators, we make use of an analytic Poincaré duality. This duality was first introduced for contact manifolds in \cite{baumvanerp} and utilized on Carnot manifolds in \cite{goffkuz,mohsenind}. Let $\partial_H\in KK^{G}(C^*(T_H\mathfrak{X}),\C)$ denote the class of the boundary map associated with 
$$0\to S\mathbb{K}\to C^*(\mathbb{T}_H\mathfrak{X}|_{[0,1)})\xrightarrow{\mathrm{ev}_{t=0}}C^*(T_H\mathfrak{X})\to 0,$$
that uses the identification $S\mathbb{K}\to C^*(\mathbb{T}_H\mathfrak{X}|_{(0,1)})=C^*(\mathfrak{X}\times \mathfrak{X}\times (0,1)\rightrightarrows \mathfrak{X}\times (0,1))$.

\begin{definition}
The analytic Poncaré duality map 
$$\mathsf{PD}^{\rm an}_H:K_0^{G}(C^*(T_H\mathfrak{X}))\to K_0^{G}(\mathfrak{X}),$$
is defined as the composition
\begin{align*}
K_0^{G}(C^*(T_H\mathfrak{X})=KK_0^{G}(\C,C^*(T_H\mathfrak{X}))&\xrightarrow{\alpha_\mathfrak{X}} KK_0^{G}(C(\mathfrak{X}),C^*(T_H\mathfrak{X}))\\
&\xrightarrow{\partial_H}KK_0^{G}(C(\mathfrak{X}),\C)=K_0^{G}(\mathfrak{X}).
\end{align*}
Here $\alpha_\mathfrak{X}$ denotes inflation by $C(\mathfrak{X})$ using that $C(\mathfrak{X})$ acts as central multipliers of $C^*(T_H\mathfrak{X})$. For the trivial Carnot structure, we use the notation 
$$\mathsf{PD}^{\rm an}:K_0^{G}(C^*(T\mathfrak{X}))=K^0_G(T^*\mathfrak{X})\to K_0^{G}(\mathfrak{X}),$$
\end{definition}

\begin{prop}
\label{bnkbhb}
Let $\mathfrak{X}$ be a compact $G$-Carnot manifold and $D_\bullet$ a $G$-equivariant graded Rockland sequence on $\mathfrak{X}$. Then it holds that 
$$\mathsf{PD}^{\rm an}_H[\sigma_H(D_\bullet)]=[D_\bullet]\in K_0^{G}(\mathfrak{X}).$$
\end{prop}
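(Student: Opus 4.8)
The plan is to verify the identity $\mathsf{PD}^{\mathrm{an}}_H[\sigma_H(D_\bullet)]=[D_\bullet]$ by tracking both sides through the parabolic tangent groupoid $\mathbb{T}_H\mathfrak{X}$ and using the deformation at $t=0$ together with the evaluation at $t=1$. First I would invoke Lemma \ref{liftsintind} to fix a $G$-equivariant Heisenberg splitting $B_\bullet$ and lifts $\pmb{D}_\bullet$, $\pmb{B}_\bullet$, giving the $G$-equivariant $C^*(\mathbb{T}_H\mathfrak{X}|_{[0,1]\times\mathfrak{X}})$-Fredholm complex $\pmb{D}_\bullet$ and hence a class $[\pmb{D}_\bullet]\in\tilde{K}_0^G(C^*(\mathbb{T}_H\mathfrak{X}|_{[0,1]\times\mathfrak{X}}))$. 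By Proposition \ref{proplfiininfiafnain}, $\tilde{K}_0(\mathrm{ev}_{t=0})[\pmb{D}_\bullet]=[\sigma_{H,\mathrm{gr}}(D_\bullet)]$. On the other hand, evaluation at $t=1$ sends $\pmb{D}_\bullet$ to the $C^*(\mathfrak{X}\times\mathfrak{X})\cong\mathbb{K}(L^2(\mathfrak{X}))$-Fredholm complex built from $D_\bullet$ and $B_\bullet$ acting on $\mathpzc{E}^{s_j}_j\otimes_{\mathrm{ev}_{t=1}}\mathbb{K}(L^2(\mathfrak{X}))\cong\mathbb{K}(W^{-s_j}_{H,\mathrm{gr}}(\mathfrak{X};\pmb{E}_j),L^2(\mathfrak{X}))$; under the isomorphism $\tilde{K}_0^G(\mathbb{K}(L^2(\mathfrak{X})))\cong \tilde{K}_0^G(\C)$ via a Morita equivalence, and after folding the sequence into an odd/even $2\times 2$ operator via the identification $\tilde{K}_0^G\cong KK_0^G(\C,-)$ from the theorem preceding this proposition, this is precisely the class of the Fredholm module $(\pi,W^{\pmb{s}}_{H,\mathrm{gr}}(\mathfrak{X};\pmb{E}_\bullet),F)$ of Theorem \ref{lknlknjkjaudn}, i.e. $[D_\bullet]\in K_0^G(\mathfrak{X})=KK_0^G(C(\mathfrak{X}),\C)$. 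Here one uses that $C(\mathfrak{X})$ acts as central multipliers of $C^*(\mathbb{T}_H\mathfrak{X}|_{[0,1]\times\mathfrak{X}})$, so inflation by $C(\mathfrak{X})$ commutes with the evaluation maps.

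The heart of the argument is then the statement that the boundary map $\partial_H\in KK^G(C^*(T_H\mathfrak{X}),\C)$ — coming from the semi-split extension $0\to S\mathbb{K}\to C^*(\mathbb{T}_H\mathfrak{X}|_{[0,1)})\to C^*(T_H\mathfrak{X})\to 0$ — together with the $C(\mathfrak{X})$-inflation $\alpha_\mathfrak{X}$ realizes $\mathsf{PD}^{\mathrm{an}}_H$ in such a way that Kasparov product with $\partial_H\circ\alpha_\mathfrak{X}$ sends $[\sigma_{H,\mathrm{gr}}(D_\bullet)]=\tilde{K}_0(\mathrm{ev}_{t=0})[\pmb{D}_\bullet]$ to $\tilde{K}_0(\mathrm{ev}_{t=1})[\pmb{D}_\bullet]$ composed with the Morita equivalence $\mathbb{K}(L^2(\mathfrak{X}))\sim \C$ — in other words, $[\pmb{D}_\bullet]$ is the geometric witness of the equality, with $\mathrm{ev}_{t=0}$ producing the symbol side and $\mathrm{ev}_{t=1}$ producing the operator side. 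Concretely, I would factor the composition $\partial_H\circ\alpha_\mathfrak{X}$ through the evaluation exact sequence: the restriction $C^*(\mathbb{T}_H\mathfrak{X}|_{[0,1]\times\mathfrak{X}})\to C^*(\mathbb{T}_H\mathfrak{X}|_{\{0\}})$ has ideal $C^*(\mathbb{T}_H\mathfrak{X}|_{(0,1]\times\mathfrak{X}})$, which is $C(\mathfrak{X})$-equivariantly contractible onto $C^*(\mathfrak{X}\times\mathfrak{X})\cong\mathbb{K}(L^2(\mathfrak{X}))$ via $\mathrm{ev}_{t=1}$; the mapping-cone / six-term argument then identifies $\partial_H$ applied to $[\sigma_{H,\mathrm{gr}}(D_\bullet)]$ with the image under $\mathrm{ev}_{t=1}$ of any lift, and $[\pmb{D}_\bullet]$ is such a lift by Proposition \ref{proplfiininfiafnain}. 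This is the standard tangent-groupoid mechanism for analytic Poincaré duality, and I would cite \cite{goffkuz,mohsenind,baumvanerp} for the compatibility of $\partial_H$ with these evaluations, carrying through the $G$-equivariance (valid since $G$ is compact, so all extensions are equivariantly semi-split).

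The main obstacle I anticipate is bookkeeping rather than conceptual: one must check that the operator $F=F_0|F_0|^{-1}$ with $F_0=D_\bullet+B_\bullet$ appearing in Theorem \ref{lknlknjkjaudn} produces, under $\mathrm{ev}_{t=1}$ and the $\tilde{K}_0\cong KK_0$ dictionary, exactly the folded Fredholm-sequence class of $\pmb{D}_\bullet\otimes_{\mathrm{ev}_{t=1}}\mathbb{K}(L^2(\mathfrak{X}))$ — i.e. that the passage from the "zig-zag" splitting data $(D_\bullet,B_\bullet)$ to the single self-adjoint operator $F$ is compatible with the isomorphism $\tilde K_0^G(B)\cong KK_0^G(\C,B)$ of the theorem preceding Proposition \ref{bnkbhb}. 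This is essentially the assembly of a chain complex with chain homotopy into a $\Z/2$-graded Fredholm operator, which is standard but requires care with the degree shifts $\pmb{s}$ and the order-reducing operators $\pmb{\Lambda}_{j,s}$; I would handle it by noting both constructions are natural in the Fredholm data and agree on the level of principal symbols, so homotopy invariance (already established in the propositions above) closes the gap. A secondary, purely formal point is to confirm that $\mathsf{PD}^{\mathrm{an}}_H$ as defined via $\alpha_\mathfrak{X}$ followed by $\partial_H$ agrees with "apply $\mathrm{ev}_{t=0}^{-1}$ then $\mathrm{ev}_{t=1}$" at the level of the deformation class $[\pmb{D}_\bullet]$ — this is exactly the content one extracts from the commuting diagram of the two evaluation extensions of $C^*(\mathbb{T}_H\mathfrak{X}|_{[0,1]\times\mathfrak{X}})$, and is where the proof genuinely uses the groupoid structure rather than abstract nonsense.
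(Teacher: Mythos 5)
Your argument is essentially the paper's own proof: Proposition \ref{proplfiininfiafnain} supplies the lift $[\pmb{D}_\bullet]$ over $C^*(\mathbb{T}_H\mathfrak{X}|_{[0,1]\times\mathfrak{X}})$ with $\tilde{K}_0(\mathrm{ev}_{t=0})[\pmb{D}_\bullet]=[\sigma_{H,{\rm gr}}(D_\bullet)]$, and the defining property $\tilde{K}_0(\mathrm{ev}_{t=0})\otimes_{C^*(T_H\mathfrak{X})}\partial_H=\tilde{K}_0(\mathrm{ev}_{t=1})$ of the boundary map then yields the claim. The bookkeeping points you flag (folding the zig-zag splitting into the operator $F$ of Theorem \ref{lknlknjkjaudn}, and the compatibility of the $C(\mathfrak{X})$-inflation with the evaluation maps) are genuine but are also left implicit in the paper, so your write-up is if anything more complete.
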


The proposition follows from Proposition \ref{proplfiininfiafnain} and the observation that the boundary map $\partial_H$ has as its defining property that $\tilde{K}_0(\mathrm{ev}_{t=0})\otimes_{C^*(T_H\mathfrak{X})}\partial_H=\tilde{K}_0(\mathrm{ev}_{t=1})$. 

To relate the index theory in the Heisenberg calculus to ordinary $K$-theory, we use Nistor's Connes-Thom isomorphism \cite{nistorsolvabel}. We define the adiabatic groupoid 
$$A_H\mathfrak{X}:=TX\times \{0\}\, \dot{\bigcup}\,  T_H\mathfrak{X}\times (0,\infty)\rightrightarrows  \mathfrak{X}\times [0,\infty),$$
with smooth structure defined from declaring the map 
$$T\mathfrak{X}\times [0,\infty)\to A_H\mathfrak{X}, \quad (x,v,t)\mapsto 
\begin{cases}
(x,v,0), \; &t=0,\\
(x,\delta_t(\mathrm{exp}(v)),t), \; &t>0,
\end{cases} $$
to be a diffeomorphism. Here we are implicitly using a choice of isomorphism $T\mathfrak{X}\cong \mathfrak{t}_H\mathfrak{X}$.
We can define $\psi\in KK_0^G(C_0(T^*\mathfrak{X}),C^*(T_H\mathfrak{X}))$ from the isomorphisms $C_0(T^*\mathfrak{X})\cong C^*(T\mathfrak{X})$, $SC^*(T_H\mathfrak{X}))\cong C^*(A_H\mathfrak{X}|_{(0,\infty)})$ and the short exact sequence 
$$0\to SC^*(T_H\mathfrak{X}))\to C^*(A_H\mathfrak{X})\xrightarrow{\mathrm{ev}_{t=0}} C_0(T^*\mathfrak{X})\to 0.$$
The latter defines an extension class in $KK_1^G(C_0(T^*\mathfrak{X}),SC^*(T_H\mathfrak{X}))$. The next theorem can be found in \cite{baumvanerp,goffkuz,mohsenind}. In these references, the result is stated for the trivial group but extends readily to compact group actions.

\begin{thm}
\label{aldknadjn}
Let $\mathfrak{X}$ be a compact $G$-Carnot manifold. Then the following diagram commutes
\[
\begin{tikzcd}
& K_*^{G}(C^*(T_H \mathfrak{X}))\arrow[ddr, "\mathsf{PD}_H^{\rm an}"] & \\
&&\\
 K^*_{G}(T^* \mathfrak{X}) \arrow[rr, "\mathsf{PD}^{\rm an}"] \arrow[uur, "\psi"] &&K_*^{G}(\mathfrak{X}) 
\end{tikzcd}
\]
\end{thm}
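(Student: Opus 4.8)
The plan is to establish the commutativity of the triangle by reducing it to a naturality statement for the boundary map in $KK$-theory, following the strategy already used in \cite{baumvanerp,goffkuz,mohsenind}. First I would unwind the two composites. The map $\mathsf{PD}^{\mathrm{an}}$ is by definition $\partial \circ \alpha_{\mathfrak{X}}$ where $\partial \in KK_1^G(C_0(T^*\mathfrak{X}),\C)$ is the boundary class of the adiabatic extension $0\to S\mathbb{K}\to C^*(A\mathfrak{X}|_{[0,1)})\to C_0(T^*\mathfrak{X})\to 0$ (for the trivial Carnot structure), and $\mathsf{PD}^{\mathrm{an}}_H$ is $\partial_H\circ \alpha_{\mathfrak{X}}$ with $\partial_H$ the boundary class of the analogous parabolic adiabatic extension. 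Since $\psi$ is itself defined from the short exact sequence $0\to SC^*(T_H\mathfrak{X})\to C^*(A_H\mathfrak{X})\xrightarrow{\mathrm{ev}_{t=0}} C_0(T^*\mathfrak{X})\to 0$, the statement $\mathsf{PD}_H^{\mathrm{an}}\circ \psi = \mathsf{PD}^{\mathrm{an}}$ becomes, after cancelling the common inflation $\alpha_{\mathfrak{X}}$ (which commutes with everything because $C(\mathfrak{X})$ sits centrally in all the groupoid $C^*$-algebras involved), the identity $\partial_H \otimes_{C^*(T_H\mathfrak{X})} [\psi] = \partial$ in $KK_1^G(C_0(T^*\mathfrak{X}),\C)$.

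The key step is then to identify the Kasparov product $[\psi]\otimes_{C^*(T_H\mathfrak{X})}\partial_H$ with $\partial$ by exhibiting a single groupoid whose restriction to a "corner" realizes all the relevant extensions simultaneously. Concretely I would introduce the two-parameter adiabatic/tangent groupoid
$$\mathbb{A}\mathfrak{X} \rightrightarrows \mathfrak{X}\times [0,1]\times[0,1]$$
obtained by performing the parabolic tangent-groupoid deformation in one parameter and the classical adiabatic ($\delta_t$-to-identity) deformation in the other, so that it restricts on the three faces of the square to $C^*(A_H\mathfrak{X})$, to $C^*(\mathbb{T}_H\mathfrak{X})$, and to $C^*(\mathbb{T}\mathfrak{X})$ (Connes' tangent groupoid of $\mathfrak{X}$), with the common "corner" algebra being $C_0(T^*\mathfrak{X})$. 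The composition of boundary maps around two sides of this square agrees because both compute the connecting map of the same iterated extension; this is exactly the kind of diagram chase carried out in \cite{goffkuz,mohsenind}. Equivalently, and perhaps more cleanly, one invokes the functoriality/multiplicativity of the connecting map under composition of the two short exact sequences $0\to SC^*(T_H\mathfrak{X})\to C^*(A_H\mathfrak{X})\to C_0(T^*\mathfrak{X})\to 0$ and $0\to S\mathbb{K}\to C^*(\mathbb{T}_H\mathfrak{X}|_{[0,1)})\to C^*(T_H\mathfrak{X})\to 0$: splicing these along $C^*(T_H\mathfrak{X})$ yields precisely the extension defining $\partial$, and the standard fact that the boundary of a spliced extension is the product of the boundaries gives $[\psi]\otimes\partial_H=\partial$.

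I would then transport this identity through the canonical isomorphisms $K_*^G(C^*(T\mathfrak{X}))\cong K^*_G(T^*\mathfrak{X})$ and $KK_0^G(C(\mathfrak{X}),\C)=K_0^G(\mathfrak{X})$ to recover the stated commuting triangle, noting that since $G$ is compact (as assumed throughout this section) all the groupoid $C^*$-algebras carry continuous $G$-actions and the whole argument runs verbatim equivariantly — this is the only place the compactness of $G$ enters, and it is the reason the cited references, stated for the trivial group, apply. The main obstacle I anticipate is bookkeeping: setting up the two-parameter deformation groupoid $\mathbb{A}\mathfrak{X}$ with a smooth structure that simultaneously restricts correctly to all faces (this requires a compatible choice of graded connection $\nabla$ on $\mathfrak{t}_H\mathfrak{X}$ and an exponential map, as in the definition of $\mathbb{T}_H\mathfrak{X}$), and checking that the $G$-equivariant $KK$-classes of the faces are the ones named above rather than Morita-equivalent variants. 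Once the groupoid picture is in place the $K$-theoretic conclusion is the purely formal statement that the connecting map is natural and multiplicative under splicing, which needs no computation.
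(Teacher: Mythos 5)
Your proposal is correct and is essentially the argument the paper relies on: the paper itself offers no proof but defers to \cite{baumvanerp,goffkuz,mohsenind}, and the proof in those references is exactly your reduction to the identity $\psi\otimes_{C^*(T_H\mathfrak{X})}\partial_H=\partial$ via a two-parameter deformation groupoid over $\mathfrak{X}\times[0,1]^2$ (equivalently, naturality of the connecting map under splicing the adiabatic and tangent-groupoid extensions along $C^*(T_H\mathfrak{X})$), with the $C(\mathfrak{X})$-linearity of all the groupoid $C^*$-algebras handling the inflation $\alpha_{\mathfrak{X}}$ and compactness of $G$ making the argument run equivariantly.
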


\subsection{The index class and BGG-compression}

Above in Subsection \ref{seconrock} we saw how equivariant Rockland sequences gave rise to elements of $K$-homology and in Subsection \ref{lknaldkna} how filtered connections can be compressed into a curved BGG-sequence. We shall see that the $K$-homology class is not altered by BGG-compression. As explained in \cite{Dave_Haller1}, the procedure of compressing to a BGG-sequence requires only the existence of a Kostant differential on the Lie algebroid $\mathfrak{t}_H\mathfrak{X}$, but for simplicity we restrict to parabolic geometries with an action of a compact group $G$. We summarize the context of this subsection into an assumption that will stand for the remainder of the subsection.

\begin{ass}
\label{ljknalknad}
We let $\mathfrak{X}$ denote a compact parabolic geometry $\mathfrak{X}$ with an action of a compact group $G\to \Aut(\mathfrak{X},\omega)$, and $\pmb{E}\to\mathfrak{X}$ a graded $G$-equivariant tractor bundle  equipped with a $G$-invariant filtered connection  $\nabla_{\pmb{E}}$ of degree $0$ with curvature in degree $1$. If $\mathfrak{X}$ is a complex parabolic geometry, we also assume that $\nabla_{\pmb{E}}$ is a filtered complex connection.
\end{ass}

An example satisfying Assumption \ref{ljknalknad} is the flat parabolic geometry $\mathfrak{X}=\mathsf{G}/\mathsf{P}$ and $G\subseteq \mathsf{G}$ a compact subgroup, and $\pmb{E}\to\mathfrak{X}$ a homogeneous bundle graded by the weights of its generic fibres. 

It follows from Assumption \ref{ljknalknad} that the classical symbol complex 
\begin{align*}
\sigma(\nabla_{\pmb{E},\bullet}): \quad 0\to \mathcal{S}(T^*\mathfrak{X};\pmb{E})\xrightarrow{\sigma(\nabla_{\pmb{E}})}\mathcal{S}(T^*\mathfrak{X};&T^*\mathfrak{X}\otimes\pmb{E})\xrightarrow{\sigma(\nabla_{\pmb{E}})}\cdots \\
&\cdots\xrightarrow{\sigma(\nabla_{\pmb{E}})}\mathcal{S}(T^*\mathfrak{X};\wedge^nT^*\mathfrak{X}\otimes \pmb{E})\to 0,
\end{align*}
defines a $G$-equivariant elliptic complex on $T^*\mathfrak{X}$ and defines a class 
$$[\sigma(\nabla_{\pmb{E}})]\in K^0_G(T^*\mathfrak{X}).$$ 
Moreover, if $\mathfrak{X}$ is a complex $G$-Carnot manifold and $\nabla_{\pmb{E}}$ is a filtered complex connection of degree $0$, the classical symbol complex of the associated $\bar{\partial}$-sequence
\begin{align*}
\sigma(\bar{\partial}_{\pmb{E},\bullet}): \quad 0\to \mathcal{S}(T^*\mathfrak{X};\pmb{E})\xrightarrow{\sigma(\bar{\partial}_{\pmb{E}})}\mathcal{S}(T^*\mathfrak{X};&\wedge^{0,1}T^*\mathfrak{X}\otimes\pmb{E})\xrightarrow{\sigma(\bar{\partial}_{\pmb{E}})}\cdots \\
&\cdots\xrightarrow{\sigma(\bar{\partial}_{\pmb{E}})}\mathcal{S}(T^*\mathfrak{X};\wedge^{0,n}T^*\mathfrak{X}\otimes\pmb{E})\to 0,
\end{align*}
defines a $G$-equivariant elliptic complex on $T^*\mathfrak{X}$ and defines a class that we denote by
$$[\sigma(\bar{\partial}_{\pmb{E},\bullet})]\in K^0_G(T^*\mathfrak{X}).$$ 
In both the real and complex case, the symbol does not depend on the choice of connection $\nabla_{\pmb{E}}$ so neither does the cycles $\sigma(\nabla_{\pmb{E},\bullet})$ or $\sigma(\bar{\partial}_{\pmb{E},\bullet})$ nor the associated classes in $K^0_G(T^*\mathfrak{X})$. 

We can also form the graded Heisenberg symbol complex 
\begin{align*}
\sigma_{H,{\rm gr}}(\nabla_{\pmb{E},\bullet}): \quad 0\to \mathcal{S}(T_H\mathfrak{X};\pmb{E})\xrightarrow{\sigma_{H,{\rm gr}}(\nabla_{\pmb{E}})}\mathcal{S}(T_H\mathfrak{X};&T^*\mathfrak{X}\otimes\pmb{E})\xrightarrow{\sigma_{H,{\rm gr}}(\nabla_{\pmb{E}})}\cdots \\
&\cdots\xrightarrow{\sigma_{H,{\rm gr}}(\nabla_{\pmb{E}})}\mathcal{S}(T_H\mathfrak{X};\wedge^nT^*\mathfrak{X}\otimes \pmb{E})\to 0,
\end{align*}
completes to a $G$-equivariant Fredholm complex over $C^*(T_H\mathfrak{X})$ and defines a class 
$$[\sigma_{H,{\rm gr}}(\nabla_{\pmb{E}})]\in K_0^G(C^*(T_H\mathfrak{X})).$$ 
Moreover, if $\mathfrak{X}$ is a complex $G$-Carnot manifold and $\nabla_{\pmb{E}}$ is a filtered complex connection of degree $0$, the classical symbol complex of the associated $\bar{\partial}$-sequence
\begin{align*}
\sigma_{H,{\rm gr}}(\bar{\partial}_{\pmb{E},\bullet}): \quad 0\to \mathcal{S}(T_H\mathfrak{X};\pmb{E})\xrightarrow{\sigma_{H,{\rm gr}}(\bar{\partial}_{\pmb{E}})}\mathcal{S}(T_H\mathfrak{X};&\wedge^{0,1}T^*\mathfrak{X}\otimes\pmb{E})\xrightarrow{\sigma_{H,{\rm gr}}(\bar{\partial}_{\pmb{E}})}\cdots \\
&\cdots\xrightarrow{\sigma_{H,{\rm gr}}(\bar{\partial}_{\pmb{E}})}\mathcal{S}(T_H\mathfrak{X};\wedge^{0,n}T^*\mathfrak{X}\otimes\pmb{E})\to 0,
\end{align*}
completes to a $G$-equivariant Fredholm complex over $C^*(T_H\mathfrak{X})$ and defines a class 
$$[\sigma_{H,{\rm gr}}(\bar{\partial}_{\pmb{E},\bullet})]\in K_0^G(C^*(T_H\mathfrak{X})).$$
In both the real and complex case, the graded Heisenberg symbol does not depend on the choice of connection $\nabla_{\pmb{E}}$ so neither does the cycles $\sigma_{H,{\rm gr}}(\nabla_{\pmb{E},\bullet})$ or $\sigma_{H,{\rm gr}}(\bar{\partial}_{\pmb{E},\bullet})$ nor the associated classes in $K^0_G(T^*\mathfrak{X})$.

\begin{lemma}
\label{comaodmoadm}
Under Assumption \ref{ljknalknad}, we have the equality
$$[\sigma_{H,{\rm gr}}(\nabla_{\pmb{E}}))]=[\sigma_{H,{\rm gr}}(\mathsf{BGG}_\bullet(\nabla_{\pmb{E}})))]\in K_0^G(C^*(T_H\mathfrak{X})).$$
\end{lemma}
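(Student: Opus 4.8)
The plan is to reduce the equality of $K$-theory classes to the fact that $\mathsf{BGG}_\bullet(\nabla_{\pmb E})$ is obtained from the de Rham (resp.\ Dolbeault) complex $\nabla_{\pmb E,\bullet}$ by passing to a subquotient complex, with an acyclic complementary summand, entirely inside the graded Heisenberg calculus --- and that such a decomposition induces an isomorphism (indeed a homotopy) of $G$-equivariant $C^*(T_H\mathfrak X)$-Fredholm complexes. First I would recall the decomposition from Lemma \ref{decomkdh} and the operators $L_j$ from \eqref{lknaldnalkdjnad}: conjugation by the graded Heisenberg order $0$ invertible differential operators $L_j$ transforms $\nabla_{\pmb E}$ into the block-diagonal operator $D^{\mathrm{BGG}}_\bullet\oplus \mathfrak B_\bullet$, where $\mathfrak B_\bullet$ is the restriction of $\nabla_{\pmb E}$ to the $\ker(P_j)$-summands and fits into an acyclic complex (item (3) of Lemma \ref{decomkdh}, as already used in the proof of Proposition following Definition \ref{defbggcomp}). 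Since all of this takes place among graded Heisenberg operators of order $0$ that are moreover $G$-equivariant under Assumption \ref{ljknalknad}, applying $\mathrm{ev}_{t=0}$ and the associated graded principal symbol map produces the same block-diagonal decomposition at the level of symbols:
$$\sigma_{H,{\rm gr}}(\nabla_{\pmb E,\bullet}) \;\cong\; \sigma_{H,{\rm gr}}(\mathsf{BGG}_\bullet(\nabla_{\pmb E})) \;\oplus\; \sigma_{H,{\rm gr}}(\mathfrak B_\bullet),$$
as $G$-equivariant Fredholm complexes over $C^*(T_H\mathfrak X)$, the isomorphism being implemented by the invertible symbols $\sigma_{H,{\rm gr}}(L_j)$.

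Next I would argue that the summand $\sigma_{H,{\rm gr}}(\mathfrak B_\bullet)$ contributes the zero class in $K_0^G(C^*(T_H\mathfrak X))$. The point is that $\mathfrak B_\bullet$ is itself a graded Rockland \emph{complex} which is, up to lower-order terms, exact --- indeed $\Box_j$ acts invertibly on $\mathrm{im}(P_j)$, so the symbol complex $\sigma_{H,{\rm gr}}(\mathfrak B_\bullet)$ admits a genuine (not merely Fredholm) contracting homotopy already inside the graded symbol algebra, equivariantly by averaging / by $G$-invariance of the construction. An acyclic $G$-equivariant $B$-Fredholm complex represents $0$ in $\tilde K_0^G(B)$ by definition (see the definition preceding the isomorphism $KK_0^G(\C,B)\cong\tilde K_0^G(B)$), so $[\sigma_{H,{\rm gr}}(\mathfrak B_\bullet)]=0$. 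Combined with additivity of the class under direct sums of Fredholm complexes and the isomorphism above, this yields
$$[\sigma_{H,{\rm gr}}(\nabla_{\pmb E})] = [\sigma_{H,{\rm gr}}(\mathsf{BGG}_\bullet(\nabla_{\pmb E}))] + 0 \in K_0^G(C^*(T_H\mathfrak X)),$$
and the same argument applies verbatim in the complex case with $\nabla_{\pmb E}$ replaced by $\bar\partial_{\pmb E}$ and the exterior bundle replaced by $\wedge^{0,\bullet}$, using the Dolbeault analogue of Lemma \ref{decomkdh}.

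**Main obstacle.** The genuinely delicate point is \emph{not} the bookkeeping of the block decomposition but verifying that everything descends cleanly to the symbol level in the graded Heisenberg calculus: one must check that $\mathrm{ev}_{t=0}$ and the graded symbol map are multiplicative and send invertibles to invertibles in the relevant graded sense (so that $\sigma_{H,{\rm gr}}(L_j)$ really conjugates the symbol complexes), and that the ``acyclic up to lower order'' complex $\mathfrak B_\bullet$ has an honestly acyclic symbol complex --- this uses the invertibility statement in item (3) of Lemma \ref{decomkdh} together with the characterization of the Rockland condition from Theorem \ref{rockandhypo} and its graded version, to produce the contracting homotopy $b_\bullet$ in $\Sigma^{0}_{H,{\rm gr}}$ with $b_jd_j+d_{j-1}b_{j-1}=1$ on the nose. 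I would also need to be slightly careful that the identification of $\tilde\Sigma$-lifts with operators on the modules $\mathpzc E^s_{j,t=0}$ (Remark \ref{lfiininfiafnain}) is compatible with the $L_j$-conjugation, but this is routine once the multiplicativity of $\mathrm{ev}_{t=0}$ is in place. The $G$-equivariance throughout is automatic from Assumption \ref{ljknalknad} and, where an averaging is needed, from compactness of $G$ as in Proposition \ref{lknalkdna}.
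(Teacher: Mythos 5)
Your proposal is correct and follows essentially the same route as the paper: conjugating by the invertible order-zero operators $L_\bullet$ from \eqref{lknaldnalkdjnad} to split the symbol complex as $\sigma_{H,{\rm gr}}(\mathsf{BGG}_\bullet(\nabla_{\pmb{E}}))\oplus\mathfrak{b}_\bullet$, and then observing that $\mathfrak{b}_\bullet$ is nullhomotopic by item (3) of Lemma \ref{decomkdh}, hence contributes the zero class. The extra care you take about multiplicativity of $\mathrm{ev}_{t=0}$ and equivariance is sound but not needed beyond what the paper's one-line argument already invokes.
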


\begin{proof}
By the definition of the BGG-sequence (see Definition \ref{defbggcomp}) we can split the symbol complex $\sigma_{H,{\rm gr}}(\nabla_{\pmb{E}}))$as
$$\sigma_{H,{\rm gr}}(\nabla_{\pmb{E}}))=\sigma_{H,{\rm gr}}(L_\bullet)(\sigma_{H,{\rm gr}}(\mathsf{BGG}_\bullet(\nabla_{\pmb{E}}))\oplus \mathfrak{b}_\bullet)\sigma_{H,{\rm gr}}(L_\bullet^{-1})$$ 
for the operators $L_\bullet$ from \eqref{lknaldnalkdjnad} and a symbol complex $\mathfrak{b}_\bullet$ which lifts to a nullhomotopic complex over $\tilde{\Sigma}_H^*$ byitem (3) of Lemma \ref{decomkdh}. We conclude that the class of $\mathfrak{b}_\bullet$ in $\tilde{K}_0^G(C^*(T_H\mathfrak{X})$ vanishes and the lemma follows.
\end{proof}

The next result is the main technical result of this section and key in computing the $K$-homology class of curved BGG-sequences.

\begin{lemma}
\label{lknlknad}
Under Assumption \ref{ljknalknad} we have the equality
$$[\sigma_{H,{\rm gr}}(\mathsf{BGG}_\bullet(\nabla_{\pmb{E}})))]=\psi[\sigma(\nabla_{\pmb{E}})]\in K_0^{G}(C^*(T_H\mathfrak{X})),$$
and in the complex case 
$$[\sigma_{H,{\rm gr}}(\mathsf{BGG}_\bullet(\nabla_{\pmb{E}})))]=\psi[\sigma(\bar{\partial}_{\pmb{E}})]\in K_0^{G}(C^*(T_H\mathfrak{X})),$$
\end{lemma}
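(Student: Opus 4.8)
Under Assumption~\ref{ljknalknad}, the plan is to establish the identity by deforming the symbol complex over the adiabatic groupoid $A_H\mathfrak{X}$. By Lemma~\ref{comaodmoadm} — and, in the complex case, its evident Dolbeault analogue, where the holomorphic BGG-sequence arises as the compression of $\bar{\partial}_{\pmb{E}}$ via the complex version of Lemma~\ref{decomkdh} — we already have in $K_0^G(C^*(T_H\mathfrak{X}))$ that
$$[\sigma_{H,{\rm gr}}(\mathsf{BGG}_\bullet(\nabla_{\pmb{E}}))]=[\sigma_{H,{\rm gr}}(\nabla_{\pmb{E}})]\qquad\text{and}\qquad[\sigma_{H,{\rm gr}}(\mathsf{BGG}_\bullet(\nabla_{\pmb{E}}))]=[\sigma_{H,{\rm gr}}(\bar{\partial}_{\pmb{E}})].$$
Hence it suffices to prove $[\sigma_{H,{\rm gr}}(\nabla_{\pmb{E}})]=\psi[\sigma(\nabla_{\pmb{E}})]$ and the analogous statement for $\bar{\partial}_{\pmb{E}}$; since both are obtained by the same construction I will describe only the de Rham case.

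First I would promote the pair of symbol complexes $\sigma(\nabla_{\pmb{E}})$ and $\sigma_{H,{\rm gr}}(\nabla_{\pmb{E}})$ to a single $G$-equivariant Fredholm complex of Hilbert $C^*$-modules over $C^*(A_H\mathfrak{X}|_{[0,1]\times\mathfrak{X}})$, much as in Lemma~\ref{liftsintind} and Remark~\ref{lfiininfiafnain} but using the adiabatic groupoid (which interpolates $T\mathfrak{X}$ at $t=0$ and $T_H\mathfrak{X}$ at $t>0$) instead of the van Erp--Yuncken tangent groupoid. Concretely, $\nabla_{\pmb{E}}$ is a $G$-invariant differential operator of graded Heisenberg degree $0$, so it admits a $G$-equivariant lift to a properly supported family over $A_H\mathfrak{X}|_{[0,1]\times\mathfrak{X}}$ whose restriction to $t>0$ is, up to the $\delta_t$-rescaling isomorphism, $\sigma_{H,{\rm gr}}(\nabla_{\pmb{E}})$, while at $t=0$ the fibrewise brackets of $\mathfrak{t}_H\mathfrak{X}$ have been scaled away and one recovers the classical principal symbol complex $\sigma(\nabla_{\pmb{E}})$ over $C_0(T^*\mathfrak{X})$; this is the Carnot adiabatic limit underlying \cite{vanerpyuncken,goffkuz,baumvanerp,mohsenind}. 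Carrying this out on Hilbert $C^*$-modules built from $G$-invariant order-reducing operators as in \cite[Section 22]{goffkuz}, one obtains a $C^*(A_H\mathfrak{X}|_{[0,1]\times\mathfrak{X}})$-Fredholm complex: the defect is controlled at each $t>0$ by the graded Rockland property of the de Rham symbol recorded earlier in this subsection, and at $t=0$ by the classical ellipticity of the twisted de Rham complex.

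Then I would invoke the defining property of the deformation element $\psi$. Exactly as for the boundary class $\partial_H$ (see the remark following Proposition~\ref{bnkbhb} and Proposition~\ref{proplfiininfiafnain}), the short exact sequence used to define $\psi$ is arranged so that on $\tilde{K}_0^G(C^*(A_H\mathfrak{X}|_{[0,1]\times\mathfrak{X}}))$ one has $\tilde{K}_0(\mathrm{ev}_{t=1})=\tilde{K}_0(\mathrm{ev}_{t=0})\otimes_{C_0(T^*\mathfrak{X})}\psi$. Applying this to the class of the interpolating Fredholm complex, whose $\mathrm{ev}_{t=0}$ is $[\sigma(\nabla_{\pmb{E}})]$ and whose $\mathrm{ev}_{t=1}$ is $[\sigma_{H,{\rm gr}}(\nabla_{\pmb{E}})]$, yields $[\sigma_{H,{\rm gr}}(\nabla_{\pmb{E}})]=\psi[\sigma(\nabla_{\pmb{E}})]$. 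Combined with Lemma~\ref{comaodmoadm} this proves the real case, and the complex case follows verbatim by interpolating $\sigma(\bar{\partial}_{\pmb{E}})$ over $A_H\mathfrak{X}$ in place of the de Rham symbol.

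The step I expect to be the main obstacle is the construction of the interpolating $C^*(A_H\mathfrak{X}|_{[0,1]\times\mathfrak{X}})$-Fredholm complex itself: one must check that the rescaled Heisenberg symbols of $\nabla_{\pmb{E}}$ (and of $\bar{\partial}_{\pmb{E}}$) extend to a smooth, norm-continuous family over all of $A_H\mathfrak{X}$ with the correct classical limit at $t=0$, and — more delicately — that the $G$-invariant order-reducing operators and the Sobolev-type Hilbert $C^*$-module structures can be chosen compatibly across the deformation so that the Rockland estimates for $t>0$ and the classical elliptic estimate at $t=0$ combine into a genuine Fredholm condition over $C^*(A_H\mathfrak{X}|_{[0,1]\times\mathfrak{X}})$. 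These verifications are of the same nature as the computations in \cite{goffkuz,baumvanerp,mohsenind}, which I would adapt rather than reproduce in full.
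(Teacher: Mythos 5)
Your proposal is correct and follows essentially the same route as the paper: extend the symbol complex over the adiabatic groupoid $A_H\mathfrak{X}|_{[0,1]\times\mathfrak{X}}$, identify the evaluations at $t=0$ and $t=1$ with $[\sigma(\nabla_{\pmb{E}})]$ and $[\sigma_{H,{\rm gr}}(\nabla_{\pmb{E}})]=[\sigma_{H,{\rm gr}}(\mathsf{BGG}_\bullet(\nabla_{\pmb{E}}))]$ via Lemma~\ref{comaodmoadm}, and conclude from the defining property of $\psi$. The one step you flag as the main obstacle — the Fredholm property of the interpolating complex — is handled in the paper by taking a Heisenberg splitting of the classical symbol at $t=0$, extending it by openness to a neighbourhood of $t=0$, and gluing with splittings for $t>0$.
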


\begin{proof}
We give the proof in the real case, the complex case goes analogously. We note that we can extend $\sigma_{H,{\rm gr}}(\nabla_{\pmb{E}})$ by homogeneity to a complex defined from $\mathcal{S}(A_H\mathfrak{X},\wedge^\bullet T^*\mathfrak{X}\otimes \pmb{E}))$ with differentials being unbounded multipliers $\sigma_{A,{\rm gr}}(\nabla_{\pmb{E}})$ with $\sigma_{A,{\rm gr}}(\nabla_{\pmb{E}})|_{t=0}=\sigma(\nabla_{\pmb{E}})$. Arguing as above, we form the adiabatic graded Heisenberg symbol complex 
\begin{align*}
\sigma_{A,{\rm gr}}(\nabla_{\pmb{E},\bullet}): \quad 0\to \mathcal{S}(A_H\mathfrak{X};\pmb{E})\xrightarrow{\sigma_{A,{\rm gr}}(\nabla_{\pmb{E}})}\mathcal{S}(A_H\mathfrak{X};&T^*\mathfrak{X}\otimes\pmb{E})\xrightarrow{\sigma_{A,{\rm gr}}(\nabla_{\pmb{E}})}\cdots \\
&\cdots\xrightarrow{\sigma_{A,{\rm gr}}(\nabla_{\pmb{E}})}\mathcal{S}(A_H\mathfrak{X};\wedge^nT^*\mathfrak{X}\otimes \pmb{E})\to 0.
\end{align*}
This complex localizes at $t=0$ to the symbol complex defined from $\sigma(\nabla_{\pmb{E}})$ and at $t=1$ to the symbol complex defined from $\sigma_{H,{\rm gr}}(\nabla_{\pmb{E}})$. If we take a Heisenberg splitting $b_{\bullet,0}$ at $t=0$ to $\sigma(\nabla_{\pmb{E}})$ (using the trivial Carnot structure), then by openness we can extend $b_{\bullet,0}$ to a Heisenberg splitting $b_{\bullet}$ of $\sigma_{A,{\rm gr}}(\nabla_{\pmb{E},\bullet})$ on a small neighborhood of $t=0$. By a gluing argument, we arrive at a Heisenberg splitting  $b_{\bullet}$ of $\sigma_{A,{\rm gr}}(\nabla_{\pmb{E},\bullet})$ over the unit intervall. Therefore, $\sigma_{A,{\rm gr}}(\nabla_{\pmb{E},\bullet})$ completes to a $G$-equivariant Fredholm complex over $C^*(A_H\mathfrak{X}|_{\mathfrak{X}\times [0,1]})$ and defines a class 
$$[\sigma_{A,{\rm gr}}(\nabla_{\pmb{E}})]\in K_0^G(C^*(A_H\mathfrak{X}|_{\mathfrak{X}\times [0,1]})).$$ 
By construction, 
\begin{align*}
\mathrm{ev}_{t=0}[\sigma_{A,{\rm gr}}(\nabla_{\pmb{E}})]&=[\sigma(\nabla_{\pmb{E}})]\in K_0^G(C_0(T^*\mathfrak{X}))\quad\mbox{and}\\
 \mathrm{ev}_{t=1}[\sigma_{A,{\rm gr}}(\nabla_{\pmb{E}})]&=\sigma_{H,{\rm gr}}(\nabla_{\pmb{E}}))=[\sigma_{H,{\rm gr}}(\mathsf{BGG}_\bullet(\nabla_{\pmb{E}})))]\in K_0^G(C^*(T_H\mathfrak{X})),
 \end{align*}
where we in the last equality used Lemma \ref{comaodmoadm}. The defining property of $\psi$ is that 
$$K_0(\mathrm{ev}_{t=0})\otimes_{C^*(T_H\mathfrak{X})}\psi=K_0(\mathrm{ev}_{t=1}),$$
and the lemma follows.
\end{proof}

We will now start comparing $K$-theory classes to $K$-homology classes. Write $\mathrm{Euler}(\mathfrak{X})\in K_0^G(\mathfrak{X})$ for the class of the Euler-de Rham operator, i.e. the Dirac operator $\rd+\rd^*$ on $C^\infty(\mathfrak{X},\wedge^*T^*\mathfrak{X})$ graded by form degree modulo $2$. If $\mathfrak{X}$ is a complex manifold with a holomorphic $G$-action, we write $[\mathfrak{X}]\in K_0^G(\mathfrak{X})$ for the fundamental class of the spin$^c$-structure. The latter is defined from the Dolbeault-Dirac operator $\bar{\partial}+\bar{\partial}^*$ on $C^\infty(\mathfrak{X},\wedge^{0,*}T^*\mathfrak{X})$ graded by form degree modulo $2$, where $\wedge^{0,q}T^*\mathfrak{X}$ denotes the bundle of $(0,q)$-forms. The following result is well known. 

\begin{prop}
\label{mlngljfnljend}
Let $\mathfrak{X}$, $\pmb{E}$ and $\nabla_{\pmb{E}}$ be as in Assumption \ref{ljknalknad}. Then 
$$\mathsf{PD}^{\rm an}[\sigma(\nabla_{\pmb{E}})]=\mathrm{Euler}(\mathfrak{X})\cap [\pmb{E}].$$
and if $\mathfrak{X}$ is complex, then
$$\mathsf{PD}^{\rm an}[\sigma(\bar{\partial}_{\pmb{E},\bullet})]=[\mathfrak{X}]\cap [\pmb{E}].$$
\end{prop}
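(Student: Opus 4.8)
\textbf{Proof proposal for Proposition \ref{mlngljfnljend}.}

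The plan is to reduce the statement to a classical, well-documented fact about symbol classes of twisted de Rham and Dolbeault complexes, and then to transport it through the analytic Poincaré duality $\mathsf{PD}^{\rm an}$. First I would recall that for the trivial Carnot structure the map $\mathsf{PD}^{\rm an}:K^0_G(T^*\mathfrak{X})\to K_0^G(\mathfrak{X})$ is the usual analytic index / Poincaré duality isomorphism on the cotangent bundle: under the standard identification $K^0_G(T^*\mathfrak{X})\cong KK_0^G(C(\mathfrak{X}),\C)$ it sends the class of an elliptic (pseudo)differential complex to the $K$-homology class of the associated elliptic operator obtained by rolling up the complex, i.e. by forming $D+D^*$ on the total (even/odd) space. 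This is precisely the content of the Atiyah--Singer prescription, and the Kasparov-theoretic formulation I would cite is that $\mathsf{PD}^{\rm an}$ applied to a symbol class is the $K$-homology class of the rolled-up Dirac-type operator with that symbol.

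Second, I would observe that the symbol class $[\sigma(\nabla_{\pmb{E}})]\in K^0_G(T^*\mathfrak{X})$ does not depend on the choice of $G$-invariant filtered connection $\nabla_{\pmb{E}}$ (as noted already in the excerpt), so we may compute it with any convenient connection, e.g. a genuine $G$-invariant connection on $\pmb{E}$ forgetting the grading. Then the twisted de Rham complex $(\wedge^\bullet T^*\mathfrak{X}\otimes\pmb{E},\nabla_{\pmb{E}})$ is the standard one, its rolled-up operator is $\rd_{\nabla}+\rd_{\nabla}^*$ acting on $C^\infty(\mathfrak{X};\wedge^*T^*\mathfrak{X}\otimes\pmb{E})$, and the principal symbol of this operator agrees with the symbol of the untwisted Euler--de Rham operator $\rd+\rd^*$ tensored by $\pmb{E}$. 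Hence at the level of $K$-homology the rolled-up class is $\mathrm{Euler}(\mathfrak{X})\cap[\pmb{E}]$, where the cap product with $[\pmb{E}]\in K^0_G(\mathfrak{X})$ is exactly ``twist the Dirac-type operator by the bundle $\pmb{E}$''. Combining the two observations gives $\mathsf{PD}^{\rm an}[\sigma(\nabla_{\pmb{E}})]=\mathrm{Euler}(\mathfrak{X})\cap[\pmb{E}]$. In the complex case the argument is identical with $\rd$ replaced by $\bar\partial$: choosing a $G$-invariant Hermitian holomorphic connection, the rolled-up operator of $(\wedge^{0,\bullet}T^*\mathfrak{X}\otimes\pmb{E},\bar\partial_{\pmb{E}})$ is the $\pmb{E}$-twisted Dolbeault--Dirac operator $\bar\partial+\bar\partial^*$, whose $K$-homology class is by definition $[\mathfrak{X}]\cap[\pmb{E}]$, so $\mathsf{PD}^{\rm an}[\sigma(\bar\partial_{\pmb{E},\bullet})]=[\mathfrak{X}]\cap[\pmb{E}]$.

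The step I expect to be the only real point requiring care is the identification of $\mathsf{PD}^{\rm an}$ with the ``roll up the elliptic complex'' construction in the equivariant setting, together with the compatibility of the cap product with twisting; this is standard (it is essentially Kasparov's description of the index map, cf. the references to \cite{baumvanerp,goffkuz,mohsenind} for the Carnot version and the classical Atiyah--Singer picture for the trivial one), but one should make sure the $G$-equivariance and the conventions for the grading match those fixed earlier in the paper. Everything else — independence of the class on the connection, the equality of principal symbols of twisted and untwisted Dirac-type operators, and the fact that a nullhomotopic (acyclic) summand contributes nothing — is routine and already in place from the preceding lemmas. I would therefore keep the write-up short, citing the well-known facts and emphasizing only the symbol identification and the connection-independence.
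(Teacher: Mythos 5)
Your proposal is correct and follows essentially the same route as the paper, which simply invokes Proposition \ref{bnkbhb} for the trivial Carnot structure (equivalently, Kasparov's index theorem): the analytic Poincar\'e duality sends the symbol class of the twisted de Rham (resp.\ Dolbeault) complex to the $K$-homology class of its rolled-up operator, which is the $\pmb{E}$-twisted Euler (resp.\ Dolbeault--Dirac) operator, i.e.\ the cap product with $[\pmb{E}]$. Your additional remarks on connection-independence and symbol identification are exactly the routine points the paper leaves implicit.
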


The proposition follows from for instance Proposition \ref{bnkbhb} applied to the trivial Carnot structure, or if one prefers it also follows from Kasparov's index theorem.

\begin{thm}
\label{ljnadoajdn}
Let $\mathfrak{X}$ be a $G$-Carnot manifold and $\nabla_{\pmb{E}}$ be a connection satisfying Assumption \ref{ljknalknad}. Then in the real case, we have the equality
$$[\mathsf{BGG}_\bullet(\nabla_{\pmb{E}})]=\mathrm{Euler}(\mathfrak{X})\cap [\pmb{E}].$$
and if $\mathfrak{X}$ is complex, the following identity holds
$$[\mathsf{BGG}_\bullet(\nabla_{\pmb{E}})]=[\mathfrak{X}]\cap [\pmb{E}].$$
\end{thm}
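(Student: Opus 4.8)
The plan is to chain together the results established immediately above. The statement of Theorem~\ref{ljnadoajdn} is that $[\mathsf{BGG}_\bullet(\nabla_{\pmb{E}})]$ equals $\mathrm{Euler}(\mathfrak{X})\cap[\pmb{E}]$ (resp. $[\mathfrak{X}]\cap[\pmb{E}]$ in the complex case), and all the pieces are now in place. First I would invoke Proposition~\ref{bnkbhb}, which identifies the $K$-homology class of a graded Rockland sequence with the image of its Heisenberg symbol class under the analytic Poincaré duality map: $[\mathsf{BGG}_\bullet(\nabla_{\pmb{E}})]=\mathsf{PD}^{\mathrm{an}}_H[\sigma_{H,\mathrm{gr}}(\mathsf{BGG}_\bullet(\nabla_{\pmb{E}}))]\in K_0^G(\mathfrak{X})$. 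Here one should note that $\mathsf{BGG}_\bullet(\nabla_{\pmb{E}})$ is indeed a $G$-equivariant graded Rockland sequence under Assumption~\ref{ljknalknad}, as recalled in Subsection~\ref{lknaldkna}, so Proposition~\ref{bnkbhb} applies verbatim with $D_\bullet=\mathsf{BGG}_\bullet(\nabla_{\pmb{E}})$.

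Next I would rewrite the symbol class using Lemma~\ref{lknlknad}, which gives $[\sigma_{H,\mathrm{gr}}(\mathsf{BGG}_\bullet(\nabla_{\pmb{E}}))]=\psi[\sigma(\nabla_{\pmb{E}})]$ in the real case (resp. $=\psi[\sigma(\bar\partial_{\pmb{E}})]$ in the complex case), where $\psi\in KK_0^G(C_0(T^*\mathfrak{X}),C^*(T_H\mathfrak{X}))$ is the Connes--Thom/adiabatic comparison element. Applying $\mathsf{PD}^{\mathrm{an}}_H$ to both sides and using the commuting triangle of Theorem~\ref{aldknadjn}, namely $\mathsf{PD}^{\mathrm{an}}_H\circ\psi=\mathsf{PD}^{\mathrm{an}}$, we obtain
$$[\mathsf{BGG}_\bullet(\nabla_{\pmb{E}})]=\mathsf{PD}^{\mathrm{an}}_H[\sigma_{H,\mathrm{gr}}(\mathsf{BGG}_\bullet(\nabla_{\pmb{E}}))]=\mathsf{PD}^{\mathrm{an}}_H\psi[\sigma(\nabla_{\pmb{E}})]=\mathsf{PD}^{\mathrm{an}}[\sigma(\nabla_{\pmb{E}})],$$
and likewise with $\sigma(\bar\partial_{\pmb{E}})$ in the complex case. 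Finally, Proposition~\ref{mlngljfnljend} evaluates the right-hand side as $\mathrm{Euler}(\mathfrak{X})\cap[\pmb{E}]$ (resp. $[\mathfrak{X}]\cap[\pmb{E}]$), which is exactly the claimed identity. The complex case runs word-for-word parallel, replacing $\sigma(\nabla_{\pmb{E}})$ by $\sigma(\bar\partial_{\pmb{E}})$, $\mathrm{Euler}(\mathfrak{X})$ by $[\mathfrak{X}]$, and the de Rham symbol complex by the Dolbeault one throughout.

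Since every ingredient is quoted from a previously stated result, there is no genuine obstacle left in this particular proof — the real work was done in Lemma~\ref{lknlknad} (building the adiabatic Fredholm complex over $C^*(A_H\mathfrak{X}|_{\mathfrak{X}\times[0,1]})$ by gluing a classical Heisenberg splitting at $t=0$ to one over the rest of the interval) and in verifying the compatibility of the Poincaré duality maps in Theorem~\ref{aldknadjn}. The only point requiring a word of care in assembling the proof is checking that the hypotheses of each cited proposition are met: that Assumption~\ref{ljknalknad} guarantees $\mathsf{BGG}_\bullet(\nabla_{\pmb{E}})$ is $G$-equivariant graded Rockland (so that Proposition~\ref{bnkbhb} is applicable), and that $G$ is compact throughout this section so the equivariant $K$-homology and $KK$-groups, and the averaging arguments underlying the Heisenberg splittings, are all available. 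I would therefore write the proof as a short two-sentence chain of equalities citing Propositions~\ref{bnkbhb} and~\ref{mlngljfnljend}, Lemma~\ref{lknlknad}, and Theorem~\ref{aldknadjn}, with the complex case dispatched by the remark that it is strictly analogous.

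\begin{proof}
We treat the real case; the complex case is proved by the same argument with $\nabla_{\pmb{E}}$ replaced by $\bar\partial_{\pmb{E}}$, $\mathrm{Euler}(\mathfrak{X})$ by $[\mathfrak{X}]$ and the de Rham symbol complex by the Dolbeault symbol complex throughout. Under Assumption~\ref{ljknalknad}, the sequence $\mathsf{BGG}_\bullet(\nabla_{\pmb{E}})$ is a $G$-equivariant graded Rockland sequence on the compact $G$-Carnot manifold $\mathfrak{X}$, so Proposition~\ref{bnkbhb} gives
$$[\mathsf{BGG}_\bullet(\nabla_{\pmb{E}})]=\mathsf{PD}^{\mathrm{an}}_H[\sigma_{H,{\rm gr}}(\mathsf{BGG}_\bullet(\nabla_{\pmb{E}}))]\in K_0^{G}(\mathfrak{X}).$$
By Lemma~\ref{lknlknad} we have $[\sigma_{H,{\rm gr}}(\mathsf{BGG}_\bullet(\nabla_{\pmb{E}}))]=\psi[\sigma(\nabla_{\pmb{E}})]$ in $K_0^{G}(C^*(T_H\mathfrak{X}))$, and by the commuting triangle of Theorem~\ref{aldknadjn} we have $\mathsf{PD}^{\mathrm{an}}_H\circ\psi=\mathsf{PD}^{\mathrm{an}}$. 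Combining these,
$$[\mathsf{BGG}_\bullet(\nabla_{\pmb{E}})]=\mathsf{PD}^{\mathrm{an}}_H\,\psi[\sigma(\nabla_{\pmb{E}})]=\mathsf{PD}^{\mathrm{an}}[\sigma(\nabla_{\pmb{E}})].$$
Finally, Proposition~\ref{mlngljfnljend} identifies the right-hand side as $\mathrm{Euler}(\mathfrak{X})\cap[\pmb{E}]$, which proves the claim.
\end{proof}
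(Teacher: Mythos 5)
Your proof is correct and is essentially identical to the paper's: both chain Proposition~\ref{bnkbhb}, Lemma~\ref{lknlknad}, Theorem~\ref{aldknadjn} and Proposition~\ref{mlngljfnljend} into one string of equalities, with the paper merely writing the chain in the reverse direction (starting from $\mathrm{Euler}(\mathfrak{X})\cap[\pmb{E}]$). No gaps; the complex case is handled the same way in both.
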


\begin{proof}
We consider the real case and compute that
\begin{align*}
\mathrm{Euler}(\mathfrak{X})\cap [\pmb{E}]=\mathsf{PD}^{\rm an}[\sigma(\nabla_{\pmb{E}})]=&\mathsf{PD}^{\rm an}_H\circ \psi[\sigma(\nabla_{\pmb{E}})]=\\
=&\mathsf{PD}_H^{\rm an}[\sigma_{H,{\rm gr}}(\mathsf{BGG}_\bullet(\nabla_{\pmb{E}})))]=[\mathsf{BGG}_\bullet(\nabla_{\pmb{E}}))].
\end{align*}
where the first equality uses Proposition \ref{mlngljfnljend}, the second equality uses Theorem \ref{aldknadjn}, the third equality uses Lemma \ref{lknlknad}, and the final equality uses Proposition \ref{bnkbhb}.  The proof in the complex goes ad verbatim to the real case. 
\end{proof}

\begin{remark}
We can apply Theorem \ref{ljnadoajdn} to the $K$-equivariant geometry of the full flag manifold $\mathsf{G}/\mathsf{B}$ of a connected, complex, semisimple Lie group $\mathsf{G}$ (with maximal compact subgroup $\mathsf{K}$ and Borel subgroup $\mathsf{B}$). In this case, for any finite-dimensional $\mathsf{G}$-representation $V$ we have that 
$$[\mathsf{BGG}_\bullet(V)]=[\mathsf{G}/\mathsf{B}]\cap [\mathsf{G}\times_\mathsf{B} V]\in K_0^\mathsf{K}(\mathsf{G}/\mathsf{B}).$$
\end{remark}

\end{document}